\documentclass[letterpaper, 10pt,reqno]{amsart}

\usepackage{amssymb}
\usepackage{bbm}
\usepackage{url}
\usepackage{a4wide}
\usepackage{tikz}
\usetikzlibrary{shapes.geometric}
\usepackage[textsize=small]{todonotes}
\usepackage{enumitem}

\usepackage{mathtools}
\mathtoolsset{showonlyrefs}

\definecolor{MyDarkblue}{rgb}{0,0.08,0.50}
\definecolor{Brickred}{rgb}{0.65,0.08,0}

\usepackage{orcidlink}

\usepackage{hyperref}
\hypersetup{
colorlinks=true,       
linkcolor=MyDarkblue,          
citecolor=Brickred,        
filecolor=red,      
urlcolor=cyan           
}

\newtheorem*{theorem*}{Theorem}
\newtheorem{theorem}{Theorem}[section]
\newtheorem{lemma}[theorem]{Lemma}
\newtheorem{proposition}[theorem]{Proposition}
\newtheorem{corollary}[theorem]{Corollary}
\newtheorem{problem}[theorem]{Open Problem}

\theoremstyle{definition}
\newtheorem{definition}[theorem]{Definition}

\newtheorem{remark}[theorem]{Remark}
\newtheorem{example}[theorem]{Example}

\newenvironment{assx}{
\textbf{Assumption $\mathbf{x}$. }}{}

\renewcommand{\P}{\mathbb{P}}
\newcommand{\prob}{\mathbb{P}}
\newcommand{\Pv}{\mathbb{P}}

\newcommand{\CC}{\mathcal{C}}

\newcommand{\eps}{\varepsilon}

\newcommand{\cA}{\mathcal{A}}\newcommand{\cB}{\mathcal{B}}\newcommand{\cC}{\mathcal{C}}
\newcommand{\cD}{\mathcal{D}}\newcommand{\cE}{\mathcal{E}}\newcommand{\cF}{\mathcal{F}}
\newcommand{\cG}{\mathcal{G}}\newcommand{\cH}{\mathcal{H}}\newcommand{\cI}{\mathcal{I}}
\newcommand{\cJ}{\mathcal{J}}
\newcommand{\cO}{\mathcal{O}}
\newcommand{\cP}{\mathcal{P}}\newcommand{\cR}{\mathcal{R}}
\newcommand{\cS}{\mathcal{S}}
\newcommand{\cW}{\mathcal{W}}

\newcommand{\Var}{{\rm Var}}

\newcommand{\e}{{\mathrm e}}

\newcommand{\R}{\mathbb{R}}
\newcommand{\N}{\mathbb{N}}
\newcommand{\Z}{\mathbb{Z}}

\newcommand{\dd}{\mathrm{d}}

\renewcommand{\emptyset}{\varnothing}

\newcommand{\CI}{\mathcal {I}}
\newcommand{\CJ}{\mathcal {J}}

\newcommand*{\wt}{\widetilde}

\newcommand*{\be}{\begin{equation}}
\newcommand*{\ee}{\end{equation}}
\newcommand*{\ba}{\begin{aligned}}
\newcommand*{\ea}{\end{aligned}}
\newcommand*{\barr}{\begin{array}{c}}
\newcommand*{\earr}{\end{array}}
\def \toinp    {\buildrel {\Pv}\over{\longrightarrow}}
\def \toindis  {\buildrel {\dd}\over{\longrightarrow}}
\def \toas     {\buildrel {\mathrm{a.s.}}\over{\longrightarrow}}

\newcommand*{\ind}{\mathbbm{1}}
\makeatletter
\def\namedlabel#1#2{\begingroup
#2%
\def\@currentlabel{#2}%
\phantomsection\label{#1}\endgroup
}

\makeatother

\newcommand{\bes}{\begin{equation*}}
\newcommand{\ees}{\end{equation*}}

\renewcommand{\P}[1]{\mathbb{P}\!\left(#1\right)}
\newcommand{\E}[1]{\mathbb{E}\left[#1\right]}

\renewcommand{\N}{\mathbb{N}}

\numberwithin{equation}{section}

\renewcommand{\e}{\mathrm{e}}

\newcommand{\floor}[1]{\lfloor #1\rfloor}
\newcommand{\ceil}[1]{\lceil #1\rceil}
\newcommand{\x}{\mathbf{x}}

\newcommand{\invisible}[1]{}
\newcommand{\ensymboldremark}{\hfill$\blacktriangleleft$}

\renewcommand{\aa}{\mathrm{a}}
\newcommand{\ff}{\mathrm{f}}

\newcommand{\xref}{\hyperref[ass:A]{\x}}

\title[High-degree vertices in uniform recursive DAGs with freezing]{High-degree vertices in uniform recursive directed acyclic graphs with freezing}
\author{Rafael Engel}
\address{Universität Augsburg, Department of Mathematics, D-86135 Augsburg, Germany}
\email{rafael.engel@uni-a.de}
\author{Bas Lodewijks\orcidlink{0000-0001-5624-2410}}
\address{University of Sheffield, School of Mathematical and Physical Sciences, Sheffield, England}
\email{bas.lodewijks@sheffield.ac.uk}
\date{\today} 

\begin{document}
\maketitle 
\begin{abstract}
We study uniform recursive directed acyclic graphs with freezing. Here, a graph is built by adding vertices one-by-one and connecting a new vertex to $m\in\N$ uniformly selected vertices already present. At certain steps vertices can also be frozen, and arriving vertices are not allowed to connect to frozen vertices. This model generalises the uniform attachment tree with freezing, introduced by Bellin et.\ al (which corresponds to the case $m=1$) as well as the uniform recursive directed acyclic graph model (where no vertices are frozen). Under mild assumptions on when vertices are frozen, we study the empirical degree distribution, large degrees in the graph, and other properties of large-degree vertices such as their label and distance to the first vertex in the graph. Our work improves and/or extends various results from the literature on uniform attachment trees (with freezing) and uniform recursive directed acyclic graphs without freezing. In particular, our results show that statistics that are determined `locally' (e.g.\ the empirical degree distribution and maximum degree) are essentially unaffected by the freezing of vertices, whereas statistics that are determined `globally' (e.g.\ the length of paths between vertices) are highly affected by introducing freezing. The analysis relies on adapting the Kingman coalescent construction for uniform attachment trees to the non-tree setting.  
\end{abstract}

\section{Introduction}

Random graphs have gained significant attention as models for real-world networks in recent decades, see e.g.\ the work of Van der Hofstad~\cite{Hofstad.2017,Hof24} and the references therein for a good introduction to the field. Among the many models that exist, evolving random graphs, where a sequence of graphs is constructed recursively by adding vertices and edges sequentially, model the temporal evolution of real-world networks. One such model is the uniform recursive directed acyclic graph (URD), first introduced by Devroye and Lu~\cite{Devroye.Lu.1995}. Given a parameter $m\in\N$, one constructs a sequence of graphs by  starting with a single vertex labelled $1$, and adding vertices with labels $2,3,\ldots$ one-by-one. Each vertex $i>1$ that is added independently selects $\min\{i,m\}$ many distinct vertices that are already present  uniformly at random and connects to each of them by a directed edge. The case $m=1$ yields the uniform attachment tree, also known as the random recursive tree (RRT). 

An extension to the URD model is to incorporate freezing. That is, at certain steps, rather than adding a new vertex one freezes an existing vertex chosen uniformly at random among all non-frozen vertices. New vertices are then allowed to connect to non-frozen (called \emph{active}) vertices only. In the context of evolving real-world networks, the freezing dynamics are natural. For example, people in a social network pass away and are afterwards not able to make new social connections, scientists in a collaboration network retire and stop forming new collaborations, and papers in a citation network may lose relevance and stop being cited.

The uniform attachment tree model with freezing (i.e.\ the case $m=1$) has recently received attention~\cite{Bellin.2023,Bellin.2024,Brandenberger.2025}, though a related model studied by Deijfen~\cite{Dei10} that allows for (to some extent) more general attachment and freezing rules was introduced earlier as well. A model similar to URDs with freezing was studied by D\'iaz, Lichev, and the second author~\cite{DiazLichLod22}, where each new vertex connects to a random number of vertices chosen uniformly at random and frozen vertices are removed from the graph, together with their incident edges. 

The purpose of this article is to extend the study of uniform attachment trees with freezing  model to  the graph setting. The main tool we use is the Kingman coalescent construction of uniform attachment trees with freezing, which can be viewed as a time-reversed construction of the tree that provides several analytical advantages over other approaches. This construction has been used first in the setting without freezing~\cite{Addario.Eslava.2018,Eslava.2017,Eslava.2021,Lodewijks.2023} and has recently been adapted to the setting with freezing~\cite{Bellin.2023,Bellin.2024,Brandenberger.2025}. We further extend it to the \emph{non-tree} setting of URDs with freezing and leverage this methodology to analyse the degree distribution, fine distributional properties of the maximum degree, and other properties of high-degree vertices. 

Our results provide  extensions of results on the empirical degree distribution and high degrees of uniform attachment trees by Adarrio-Berry and Eslava~\cite{Addario.Eslava.2018}. These results are  more refined compared to known results on the maximum degree in URDs of Devroye and Lu~\cite{Devroye.Lu.1995}.  Furthermore, we  prove central limit theorems  for the labels of uniform high-degree vertices and the length of greedy longest paths of such uniform high-degree vertices to the `root' vertex labelled $1$. These extend results by Devroye and Janson~\cite{Devroye.Janson.2011} on greedy long paths in URDs as well as extend work of the second author~\cite{Lodewijks.2023} and Eslava~\cite{Eslava.2021} on properties of high-degree vertices in uniform attachment trees. Interestingly, our results show that freezing essentially does not change the behaviour of the empirical degree distribution, the distributional properties of the maximum degree, and the number vertices with (near-)maximal degree, whereas the behaviour of other properties of high-degree vertices, such as the length of long paths and their labels, is highly affected by freezing vertices.

\section{Model definition and results}

In this section, we formally introduce the URD with freezing model and state the main results.

Let $\mathbf x=(x_i)_{i\in\N}\in\{-1,1\}^\N$ be the \emph{choice sequence} and define
\be \label{eq:C}
A_n=A_n(\mathbf x) \coloneq \sum_{i=1}^nx_i\qquad\text{for }n\in\N.
\ee
We recursively construct a random directly acyclic graph, where the choice sequence $\mathbf x$ encodes at which steps we add and freeze vertices. More formally: 

\begin{definition}[URD with freezing]
\label{def:dag}
Fix $m \in \N$ and a choice sequence $\mathbf x\in \{-1,1\}^\N$. We recursively construct a sequence $(G^{(n)})_{n\in\N}$ of directed graphs. We initialise $G^{(1)}$ as the graph consisting of one \emph{active} vertex with label $(1,a)$ if $x_1=1$ and we initialise $G^{(1)}\coloneq\emptyset$ otherwise. Conditionally on $G^{(i-1)}$ for some $i\geq 2$, we construct $G^{(i)}$ from $G^{(i-1)}$ in the following manner. If $A_{i-1}>0$, then:
\begin{itemize}
\item If $x_i=-1$, choose an active vertex $(v,a)$ in $G^{(i-1)}$ uniformly at random.  \emph{Freeze} this vertex by changing its label to $(v,i)$.
\item If $x_i=1$, add an active vertex labelled $(i,a)$ to $G^{(i-1)}$. Select  $m\land A_{i-1}$ many distinct active vertices $(k_1,a),\ldots,( k_{m \land A_{i-1}},a)$ in $G^{(i-1)}$ uniformly at random, and connect $(i,a)$ to $(k_j,a)$ by a directed edge for each $j\in[m\wedge A_{i-1}]$.
\end{itemize} 
Else, if $A_{i-1}\leq0$,  set $G^{(j)} \coloneq G^{(i-1)}$ for all $j\geq i-1$ and terminate the process.
\end{definition}

\begin{remark} 
The second element of the label of vertices indicates whether a vertex is active (the letter $a$) or frozen (an integer, say $i\in\N$). For frozen vertices the second element additionally provides the step at which said vertex was frozen. \ensymboldremark 
\end{remark} 

Note that choosing $m=1$ in Definition~\ref{def:dag} results in a uniform attachment tree with freezing as introduced by Bellin et.\ al in \cite{Bellin.2023}. Choosing $\mathbf x=(1,1,\ldots)$ yields a uniform recursive DAG, as introduced by Devroye and Lu in~\cite{Devroye.Lu.1995} (for which $m=1$ is a further special case that yields the random recursive tree or uniform attachment tree model).

Before we state our results, we introduce some further notation and several (minor) assumptions on the choice sequence $\mathbf x$. For ease of writing, let us set 
\be \label{eq:theta}
\theta=\theta(m)\coloneq \frac{m+1}{m},
\ee 
which is a model parameter that governs the exponential decay of the limiting degree distribution. Given a choice sequence $\mathbf x$, we introduce 
\be \label{eq:taux}
\tau(\mathbf x)\coloneq \inf\{i\in\N\colon A_i(\mathbf x)\leq 0\}, 
\ee 
with the convention that the infimum equals $\infty$ when the set on the right-hand side is empty. By its definition, all vertices in the graph $G^{(\tau(\mathbf{x}))}$ are frozen and the process is terminated at step $\tau(\mathbf x)$ if $\tau(\mathbf x)<\infty$, and the process never terminates otherwise. For $n\in\N$, we define
\begin{equation}
\begin{aligned}
\mathbb A_n &= \mathbb A_n(\mathbf x) \coloneq \{v\colon (v,a)\in G^{(n)}\}, 
&\quad
\mathbb F_n &= \mathbb F_n(\mathbf x) \coloneq \{v\colon (v,i)\in G^{(n)}\text{ with } v,i\in[n]\},\\
\cA_n &= \cA_n(\mathbf x) \coloneq \{i\in[n]\colon x_i=1\}, 
&\quad
\cF_n &= \cF_n(\mathbf x) \coloneq \{i\in[n]\colon x_i=-1\}.
\end{aligned}
\end{equation}
$\mathbb A_n$ and $\mathbb F_n$ are the sets of active and frozen vertices in $G^{(n)}$, respectively. If $A_n>0$, note that $A_n = |\mathbb A_n| = 2|\cA_n| -n$. We also set $F_n \coloneq |\mathbb F_n| = |\cF_n| = (n-A_n)/2$ as the number of frozen vertices in $G^{(n)}$.
Furthermore, we define 
\begin{equation}
\label{eq:hn+}
h_n^+\coloneq \sum_{i=1}^n \ind_{\{x_i=1\}}\frac{1}{A_i}. 
\end{equation}
We interpret $mh_n^+$ as (an approximation of) the expected degree of vertex $1$ in $G^{(n)}$. If $\tau(\mathbf x)>n$, we have by~\cite[Lemma $16$]{Bellin.2023} the lower bound
\begin{equation}
\label{eq:hn+lb}
\qquad h_n^+\geq \log(n/2).
\end{equation}
We also allow the choice sequence to depend on $n$. That is, we construct $G^{(1)},\ldots, G^{(n)}$ using a choice sequence $\mathbf x^{(n)}$ (such that $\tau(\mathbf x^{(n)})>n-1$). 

Finally, we introduce the following assumptions on $(\mathbf x^{(n)})_{n\in\N}$. 

\begin{assx}
\label{ass:A}
The choice sequences $(\mathbf x^{(n)})_{n\in\N}$ satisfy the following statements.
\begin{enumerate}
\item \label{item:geq1} $\tau(\mathbf x^{(n)})>n-1$ for all $n\in\N$. 
\end{enumerate}
Given $\delta\in(0,1/2)$ and $\eps,\eta\in(0,1)$, there exist $I_{\xref}=I_{\xref}(n)\in\N$ and $N\in\N$ such that $I_{\xref}(n)\to\infty$ with $n$ and for all $n\geq N$, we have
\begin{enumerate}[resume]
\item\label{item:hI}  $h_{I_{\xref}}^+\leq \eps h_n^+$.
\item \label{item:lb}  $A_i(\mathbf x^{(n)})\geq i^{1/2+\delta}$ for all $I_{\xref}\leq i\leq n$.
\item \label{item:Jn} $ I_{\xref}\leq F_n^\eta$.
\end{enumerate}
\end{assx}

\begin{remark}
Note that, once Part~\ref{item:geq1} holds, Part~\ref{item:hI} is automatically satisfied for sublogarithmic sequences $I_{\xref}$ by the lower bound in~\eqref{eq:hn+lb}. In some of our results, however, we may require $I_{\xref}$ to grow faster in $n$,  for which Part~\ref{item:hI} is no longer trivially satisfied. Additionally, taking a smaller $I_{\xref}$ may not satisfy Part~\ref{item:lb}. \ensymboldremark
\end{remark}
Part~\ref{item:geq1} ensures that the recursive construction of $G^{(n)}$ does not terminate until step $n$. Since $m h_n^+$ can be interpreted as (an approximation of) the expected degree of the root (or, similarly, of a fixed vertex), Part~\ref{item:hI} implies that the (expected) degree of early vertices (added before step $I_{\xref}$) is of the same order as that of the root. Part~\ref{item:lb} ensures there are sufficiently many active vertices present at all late steps, so that correlations between statistics of distinct vertices can be controlled. Finally, Part~\ref{item:Jn} ensures that the number of frozen vertices grows sufficiently fast. Again, this is to control correlations between frozen vertices, and it is natural if one wants to say something about the behaviour of a `typical' frozen vertex.
Part~\ref{item:Jn} can be omitted if we are interested in statistics of \emph{active} vertices only. We provide several classes of choice sequences for which these assumptions are met in Section~\ref{sec:assdiscuss}. In what follows we suppress the superscript $(n)$ of $\mathbf x^{(n)}$ to ease notation, though all the results do hold for $n$-dependent choice sequences that satisfy Assumption~$\xref$.

\subsection{Statement of the main results}
\label{subsec:statement_results}

We split the presentation of our results into three parts. The first part is concerned with typical degrees of active and frozen vertices, the second with  (near)-maximal degrees of active and frozen vertices, and the third part deals with further properties of high-degree active vertices. 

\subsubsection{Empirical degree distribution.}\label{sec:degresults} Let $\deg_n(v)$ denote the \emph{in-degree} of $v$ in $G^{(n)}$ (here, connections with both active and frozen vertices count towards the degree of a vertex). Our first result concerns the distribution of the degree of typical vertices in the URD model with freezing.

\begin{theorem}
\label{theorem:joint_degree_distribution}
Fix $m\in\N$, $c\in(0,m+1)$ and $k,\ell\in\N_0$ such that $k+\ell\geq 1$. Fix a choice sequence $\mathbf x$ such that Assumption~$\xref$ is satisfied  for some $\eps<1-c/(m+1)$ in  Part~\ref{item:hI} and some $\eta<1-c/(m+1)$ in Part~\ref{item:Jn}. Let $V_1,\ldots,V_k\in\mathbb A_n$ be distinct active vertices and $W_1,\ldots,W_\ell\in\mathbb F_n$ be distinct frozen vertices selected uniformly at random from $G^{(n)}$. Then, there exists $\alpha\in(0,1)$ such that uniformly over all natural numbers $d_{a_1},\ldots,d_{a_k} < ch_n^+$ and $d_{f_1},\ldots,d_{f_\ell} < c\log F_n$,
\begin{equation}
\begin{aligned}
\P{\deg_n(V_v)\geq d_{a_v},\deg_n(W_w)\geq d_{f_w}\text{ for all }v\in[k],w\in[\ell]}= \theta^{-\sum_{v=1}^k d_{a_v}-\sum_{w=1}^\ell d_{f_w}}(1+o(I_{\xref}^{-\alpha})).
\end{aligned}
\end{equation}
\end{theorem}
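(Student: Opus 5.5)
The plan is to run the construction backwards in time, in the spirit of the Kingman coalescent, and to follow a single tagged active vertex. Take $V$ uniform in $\mathbb A_n$ and scan the steps $i=n,n-1,\dots$. At an adding step $i$ (so $x_i=1$), conditionally on $V$ not having been born after step $i$, $V$ is uniform among the $A_i$ vertices active after step $i$. Hence $V$ is the vertex born at step $i$ with probability $1/A_i$. Otherwise, with probability $\frac{A_i-1}{A_i}\cdot\frac{m}{A_{i-1}}=\frac{m}{A_i}$, vertex $V$ is one of the $m$ targets of vertex $i$, using $A_{i-1}=A_i-1$ and $m\le A_{i-1}$. At a freezing step $i$ the tagged active vertex is simply unaffected. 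Given that something happens at step $i$, the outcome is an in-degree increment with probability $m/(m+1)=\theta^{-1}$ and a birth with probability $1/(m+1)$, independently across steps. The event $\{\deg_n(V)\ge d\}$ is therefore exactly the event that the first $d$ events in the reversed scan are increments, provided at least $d$ events occur before the scan leaves the regime $A_{i-1}\ge m$.

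To control this, I split the scan at $I_{\xref}$. On steps $i\ge I_{\xref}$ the probabilities above are exact, since $A_i\ge i^{1/2+\delta}\gg m$ by Part~\ref{item:lb}. The number of events there is a sum of independent Bernoulli$((m+1)/A_i)$ variables with mean $(m+1)(h_n^+-h_{I_{\xref}}^+)\ge(m+1)(1-\eps)h_n^+$ by Part~\ref{item:hI}. Since $d<ch_n^+$ and $\eps<1-c/(m+1)$, the target $d$ lies strictly below this mean. A Chernoff bound shows that the probability of fewer than $d$ events, weighted against $\theta^{-d}$, is at most $\theta^{d}\exp(-(m+1)(1-\eps)h_n^+\,I(\cdot))$. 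Using $h_n^+\ge\log(n/2)$ and $I_{\xref}\le n$, this is $o(I_{\xref}^{-\alpha})$ for small $\alpha$. I would verify this by the exponential-tilting identity: $\P{\text{first } d \text{ events are increments}}$ equals $\theta^{-d}$ minus the probability that the process stops earlier, and the latter is bounded by the Chernoff estimate.

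For a uniform frozen vertex $W$, I first reveal its freezing step $j$, which is uniform over the $F_n$ freezing steps. At step $j$, vertex $W$ is a uniform active vertex of $G^{(j-1)}$, and I then run the same reversed scan from $j-1$ downwards. The relevant mean becomes $(m+1)h_{j}^+$. Because $j$ is uniform among $F_n$ freezing steps and $A_i$ is at least of order $i^{1/2+\delta}$, with probability $1-o(I_{\xref}^{-\alpha})$ we have $h_j^+\ge(1-\eta')\log F_n$. Part~\ref{item:Jn}, with $\eta<1-c/(m+1)$, ensures that the freezing steps below $I_{\xref}$ carry negligible mass, namely $I_{\xref}/F_n\le F_n^{\eta-1}$. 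This explains the threshold $c\log F_n$.

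For the joint statement I tag $k+\ell$ vertices simultaneously. Frozen vertices are started at their freezing steps; their distinctness is handled by conditioning on distinct freezing steps, which fails only with probability $O(1/F_n)$. At each step with $i\ge I_{\xref}$, the marginal event probabilities per tagged vertex are as above. The dependence comes from two sources: the same step affecting two tagged vertices, namely both targets or birth plus target, and sampling without replacement among active vertices. Each of these perturbs the product form by a relative error $O(A_i^{-2})$ per step. Since $\sum_{i\ge I_{\xref}}A_i^{-2}\le\sum_{i\ge I_{\xref}} i^{-1-2\delta}=O(I_{\xref}^{-2\delta})$, a coupling with independent scans gives the factorisation $\prod\theta^{-d}$ with error $o(I_{\xref}^{-\alpha})$ for some $\alpha<2\delta$. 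I expect the main obstacle to be making this coupling uniform in the $d$'s. The error must be relative to $\theta^{-\sum d}$, which is polynomially small, so I would bound the discrepancy on the likely event (all scans see at least $d$ events after $I_{\xref}$) by conditioning on the event sequence, and apply the large-deviation estimates only to the complement.
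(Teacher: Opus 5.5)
Your reverse scan is the paper's Kingman-coalescent description in other words. An ``event'' at step $i$ is $i\in\mathcal S_n(v)$, which has probability $(m+1)/A_i$, and increment/birth is winning/losing the dice roll. Your architecture (split at $I_{\mathbf x}$, concentration of the number of events, uniform freezing step, collisions costing $\sum_{i\ge I_{\mathbf x}}A_i^{-2}=O(I_{\mathbf x}^{-2\delta})$) is also the paper's. The problem is the error accounting, which is the whole content of a \emph{relative} error statement.

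\textbf{Single vertex.} Your bound $\theta^{d}\exp(-(m+1)(1-\eps)h_n^+I(\cdot))$ is not $o(I_{\mathbf x}^{-\alpha})$ on the stated range. Take $m=1$, mean $\approx 2h_n^+$ and $d=h_n^+$, so $c=1<m+1$. The Chernoff factor is $e^{-(1-\log 2)h_n^+}$, an essentially sharp exponent for such Poisson-like sums, while $\theta^{d}=e^{h_n^+\log 2}$. So your bound is $e^{(2\log 2-1)h_n^+}\to\infty$; it works only for small $c$.

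The factor $\theta^{d}$ should not be there. Use the independence you noted yourself: realise the event times as independent Bernoulli$((m+1)/A_i)$ variables, continued past the birth, independent of i.i.d.\ outcomes. Let $N$ be the number of events in $[I_{\mathbf x},n]$. On $\{N\ge d\}$ the first $d$ events lie where an increment has probability exactly $\theta^{-1}$, and every event is an increment with probability at most $\theta^{-1}$. Hence $\theta^{-d}\,\mathbb P(N\ge d)\le\mathbb P(\deg_n(V)\ge d)\le\theta^{-d}$. The relative error is then $\mathbb P(N<d)\le e^{-\beta h_n^+}\le (n/2)^{-\beta}$, which is exactly the paper's argument (Lemmas~\ref{lemma:upper_bound_tail_vertex_degrees}, \ref{lemma:lower_bound_tail_vertex_degrees} and~\ref{lemma:tail_bound_truncated_selection_set}). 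Your ``tilting identity'' should say that the deficit $\theta^{-d}-\mathbb P(\deg_n(V)\ge d)$ is at most $\theta^{-d}\,\mathbb P(N<d)$, not $\mathbb P(N<d)$.

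\textbf{Joint case.} The coupling step also fails as stated. Sampling without replacement does not perturb the product form by a relative $O(A_i^{-2})$ per step. At a step where exactly one tagged vertex has an event, the joint probability exceeds the product of marginals by a factor $1+\Theta(A_i^{-1})$; for two tagged vertices the ratio is exactly $A_i/(A_i-1)$. Only the total-variation distance is $O(A_i^{-2})$, and an absolute error $O(I_{\mathbf x}^{-2\delta})$ says nothing against $\theta^{-\sum d}$. Controlling products of such $1+O(A_i^{-1})$ factors is what Lemma~\ref{lemma:replace_selection_sets_by_independent_copies} and the sets $\mathcal G_{\beta,\gamma',\gamma''}$ do in the greedy-path theorem; it is unnecessary here.

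Drop the coupling and condition on the event sets instead.
\begin{itemize}
\item Lower bound: on the event that the sets are pairwise disjoint on $[I_{\mathbf x},n]$ and each has at least $d_v$ elements there, the relevant outcomes are independent. The conditional probability is then exactly $\theta^{-\sum d}$, \emph{whatever} the joint law of the sets. A union bound over the marginal tails plus $\mathbb P(\tau_{k,\ell}\ge I_{\mathbf x})=O(I_{\mathbf x}^{-2\delta})$ finishes it.
\item Upper bound: you need a fact absent from your plan. If $r$ tagged vertices have an event at the same step, all are increments with probability at most $(m+1-r)/(m+1)\le\theta^{-r}$. So, conditionally on \emph{any} event sets, the probability is at most $\theta^{-\sum d}$.
\end{itemize}
Your closing sentence points this way; it should replace the coupling, not patch it.

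\textbf{Frozen vertices (smaller points).} The bound $(m+1)(h^+_{j-1}-h^+_{I_{\mathbf x}-1})\ge C\log F_n$, with $C>c$, for typical freezing steps $j$ comes from the random-walk bound $h_b^+-h_{a-1}^+\ge\log(b/2a)$ (Lemma~\ref{lemma:hnlb}). It does not come from $A_i\ge i^{1/2+\delta}$, which bounds $1/A_i$ in the wrong direction. The steps to discard are the freezing steps below $F_n^{\zeta}$ with $(m+1)(\zeta-\eta)>C$, not those below $I_{\mathbf x}$. They form a proportion at most $F_n^{\zeta-1}$, which becomes a power of $I_{\mathbf x}$ via $F_n\ge I_{\mathbf x}^{1/\eta}$. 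Finally, distinct frozen vertices automatically have distinct freezing steps, so nothing fails with probability $O(1/F_n)$.
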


Theorem~\ref{theorem:joint_degree_distribution} generalises a result of Addario-Berry and Eslava~\cite{Addario.Eslava.2018} for the random recursive tree (i.e.\  the case $m=1$ and $\mathbf x=(1,1,\ldots)$). When the   $d_{a_v}$ and $d_{f_w}$ for $v\in[k]$ and $w\in[\ell]$ are not `too large', Theorem~\ref{theorem:joint_degree_distribution} can be extended to a weak law of large numbers. 

\begin{corollary}\label{cor:LLN}
Fix $m\in\N$,  a choice sequence $\mathbf x$ such that Assumption~$\xref$ is satisfied for some $\eps\in(0,1/((m+1)\log \theta))$ in  Part~\ref{item:hI} and some $\eta\in(0,1/((m+1)\log\theta))$ in Part~\ref{item:Jn}. Define for $i\in\N_0$,        \begin{align}
N^{(n)}_i &\coloneq  |\{v \in \mathbb A_n \colon \deg_n(v) =  i\}|, &&\,N^{(n)}_{\geq i} \coloneq  |\{v \in \mathbb A_n \colon \deg_n(v) \geq   i\}|,
\intertext{and}
M^{(n)}_i &\coloneq  |\{v \in \mathbb F_n \colon \deg_n(v) =    i\}|, 
&&M^{(n)}_{\geq i} \coloneq  |\{v \in \mathbb F_n \colon \deg_n(v) \geq    i\}|.
\end{align} 
Then, for any $i=i(n),j=j(n)\in\N_0$ such that $\log_\theta(A_n)-i(n)\to\infty$ and $\log_\theta(F_n)-j(n)\to\infty$ with $n$, 
we have 
\be 
\frac{ N^{(n)}_i}{(1-\theta^{-1})\theta^{-i}A_n}\toinp 1,\quad \frac{N^{(n)}_{\geq i}}{\theta^{-i}A_n}\toinp1,\quad \frac{M^{(n)}_j}{(1-\theta^{-1})\theta^{-j}F_n}\toinp 1,\quad \frac{M^{(n)}_{\geq j}}{\theta^{-j}F_n}\toinp 1.
\ee 
\end{corollary}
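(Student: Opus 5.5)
We prove the corollary by a second moment argument, taking the joint tail estimates of Theorem~\ref{theorem:joint_degree_distribution} with $k+\ell\le 2$ as input. We give the argument for active vertices; the frozen case is the same with $A_n,h_n^+$ replaced by $F_n,\log F_n$.

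\emph{First moment.} Write $N^{(n)}_{\geq i}=\sum_{v\in\mathbb A_n}\ind_{\{\deg_n(v)\ge i\}}$. Then $\E{N^{(n)}_{\geq i}}=A_n\P{\deg_n(V_1)\ge i}$ with $V_1$ uniform in $\mathbb A_n$. To apply the theorem with $k=1,\ell=0$ we need $i<ch_n^+$ for some admissible $c<m+1$. By assumption $i\le\log_\theta A_n\le\log_\theta n$, and by~\eqref{eq:hn+lb} $h_n^+\ge\log(n/2)$. So it suffices that $c>1/\log\theta$ (up to $1+o(1)$), and we also need $\eps,\eta<1-c/(m+1)$. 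The hypothesis $\eps,\eta<1/((m+1)\log\theta)$ is exactly what makes such a $c$ exist: we need $1/\log\theta<m+1$ (true, since $\log(1+1/m)>1/(m+1)$) and $1/\log\theta< c<(m+1)(1-\eps)$. I expect the arithmetic to work out; if it is slightly off, one can enlarge $c$ using $A_n\le n$. The theorem then gives $\E{N^{(n)}_{\geq i}}=\theta^{-i}A_n(1+o(1))$. The requirement $\log_\theta A_n-i\to\infty$ says exactly that this mean tends to infinity.

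\emph{Second moment.} Split $\E{(N^{(n)}_{\geq i})^2}=\E{N^{(n)}_{\geq i}}+A_n(A_n-1)\P{\deg_n(V_1)\ge i,\deg_n(V_2)\ge i}$ with $V_1,V_2$ distinct uniform active vertices. The theorem with $k=2$ gives the joint probability as $\theta^{-2i}(1+o(1))$. Hence
\[
\Var(N^{(n)}_{\geq i})\le \E{N^{(n)}_{\geq i}}+o\bigl((\theta^{-i}A_n)^2\bigr)=o\bigl(\E{N^{(n)}_{\geq i}}^2\bigr),
\]
using that the mean diverges. Chebyshev then gives $N^{(n)}_{\geq i}/(\theta^{-i}A_n)\toinp1$.

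\emph{Point counts.} For $N^{(n)}_i=N^{(n)}_{\geq i}-N^{(n)}_{\geq i+1}$, apply the previous step to $i$ and $i+1$; note that $\log_\theta A_n-(i+1)\to\infty$ still holds. Then
\[
N^{(n)}_i=\theta^{-i}A_n\bigl(1+o_{\Pv}(1)\bigr)-\theta^{-i-1}A_n\bigl(1+o_{\Pv}(1)\bigr)=(1-\theta^{-1})\theta^{-i}A_n+o_{\Pv}(\theta^{-i}A_n),
\]
and since $1-\theta^{-1}>0$ is fixed the ratio tends to $1$. The case $i$ bounded is included, with $i=0$ trivial.

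The main obstacle is the uniformity bookkeeping. The error term $1+o(I_{\xref}^{-\alpha})$ is uniform over $d<ch_n^+$, so it is $o(1)$ along any $i(n)$. The real issue is checking that $\log_\theta A_n$ and $\log_\theta F_n$ lie below $ch_n^+$ and $c\log F_n$ for an admissible $c$, which is where the constraint on $\eps,\eta$ enters.
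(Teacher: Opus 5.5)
Your argument is essentially the paper's: a second-moment and Chebyshev argument built on Theorem~\ref{theorem:joint_degree_distribution}. The paper goes through Proposition~\ref{prop:factorial_moments}, which computes $\E{N^{(n)}_i}$ and $\E{(N^{(n)}_i)_2}$ for the exact counts via the inclusion--exclusion of Lemma~\ref{lemma:inclusion_exclusion_degree}. You instead treat the tail counts with $k\le 2$ directly and recover the exact counts by differencing $N^{(n)}_{\geq i}-N^{(n)}_{\geq i+1}$. That is a harmless, slightly more economical variant, and your first-moment, variance and differencing steps are all fine.

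The step you single out as the crux, however, is miscomputed. An admissible $c$ must satisfy two things:
\begin{itemize}
\item $c>1/\log\theta$, because $i(n)$ may approach $\log_\theta A_n$, and with e.g.\ $F_n\to\infty$, $F_n=o(n)$ one has $A_n=(1-o(1))n$ and $h_n^+=(1+o(1))\log n$; the same holds for $j/\log F_n$ on the frozen side;
\item $c<(m+1)(1-\eps)$ and $c<(m+1)(1-\eta)$.
\end{itemize}
Such a $c$ exists if and only if $\eps,\eta<1-\frac{1}{(m+1)\log\theta}$. This is strictly stronger than the printed hypothesis $\eps,\eta<\frac{1}{(m+1)\log\theta}$. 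Indeed, $(m+1)\log\theta\in(1,2\log 2]$ gives $\frac{1}{(m+1)\log\theta}>\frac12>1-\frac{1}{(m+1)\log\theta}$.

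Concretely, $\eps=\frac12$ is allowed by the printed hypothesis for every $m$. It then forces $c<\frac{m+1}{2}\le m<1/\log\theta$, so no admissible $c$ exists. Enlarging $c$ goes the wrong way, since it tightens $\eps<1-c/(m+1)$; and $A_n\le n$ has already been used. So your claim that the printed hypothesis ``is exactly what makes such a $c$ exist'' is false.

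The printed condition is evidently a typo for $1-\frac{1}{(m+1)\log\theta}$, matching Theorems~\ref{theorem:poisson_convergence} and~\ref{theorem:asymptotic_normality_smaller_degrees}. The paper's own proof also needs this corrected bound, since it applies Proposition~\ref{prop:factorial_moments} with degrees up to $\log_\theta A_n$. You should state the correction explicitly rather than writing that you ``expect the arithmetic to work out.'' With the corrected hypothesis your proof is complete. Under the printed one, your argument only covers $\limsup i/h_n^+<(m+1)(1-\eps)$ and $\limsup j/\log F_n<(m+1)(1-\eta)$.
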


\begin{remark}\label{rem:degdistr}
$(i)$ For both Theorem~\ref{theorem:joint_degree_distribution} and Corollary~\ref{cor:LLN}, when one is only interested in the degrees of typical \emph{active} vertices (i.e.\ $\ell=0$ in Theorem~\ref{theorem:joint_degree_distribution}), Assumption~$\xref$\ref{item:Jn} is not required.

$(ii)$ For Theorem~\ref{theorem:joint_degree_distribution}, when one is only interested in the degree of a \emph{single} vertex, active or frozen (i.e.\ $k+\ell=1$), then Assumptions~$\xref$\ref{item:lb} and~$\xref$\ref{item:Jn} can be weakened to $\lim_{n\to\infty} A_n=\infty$ and  $\lim_{n\to\infty}F_n=\infty$, respectively.

$(iii)$ The condition on $i(n)$ and $j(n)$ in Corollary~\ref{cor:LLN} ensures that $\theta^{-i}A_n$ and $\theta^{-j}F_n$ tend to infinity with $n$. \ensymboldremark
\end{remark}

A strong law of large numbers for $N_i^{(n)}$, $N_{\geq i}^{(n)}$, $M_i^{(n)}$, and $M_{\geq i}^{(n)}$ with $i,j\in\N_0$ fixed has been obtained in the following two cases: $(1)$ Uniform recursive directed acyclic graphs without freezing (i.e. $m\in\N$ and $\mathbf x=(1,1,\ldots)$) by the second author and Ortgiese~\cite[Theorems $2.5$]{Lodewijks.Ortgiese.2020}, and $(2)$ Uniform attachment with i.i.d.\ freezing (i.e.\ he case $m=1$,  $\mathbf x$ a sequence of independent biased Rademacher random variables that equal one with probability $p\in(1/2,1]$, conditionally on $A_n(\mathbf x)>0$ for all $n\in\N$) in recent work of Desmarais~\cite{Des26}. Desmarais also proves joint asymptotic normality for $N_i^{(n)},N_{\geq i}^{(n)},M_j^{(n)}$, and $M_{\geq j}^{(n)}$ for fixed $i,j\in\N_0$. Here, we provide a more general but slightly weaker result that extends to any $m\in\N$, degrees that depend on $n$, and  more general $n$-dependent choice sequences $\mathbf x$. We believe that a strong law of large numbers should hold at least for fixed $i,j\in\N_0$ in our setting as well, but the error rate $I_{\xref}^{-\alpha}$ in Theorem~\ref{theorem:joint_degree_distribution} is not summable in $n$ and hence the result is not sufficiently strong to obtain a strong law of large numbers.

\subsubsection{(Near-)maximal  degrees} Corollary~\ref{cor:LLN} heuristically implies that the largest degree among active vertices in $G^{(n)}$ is of the order $\log_{\theta}A_n $, as $N^{(n)}_{\geq \lfloor \log_\theta(A_n)\rfloor}\approx 1$. By a similar reasoning, the largest degree among frozen vertices in $G^{(n)}$ is of the order $\log_\theta F_n$. This agrees with work of Devroye and Lu~\cite[Theorem 2]{Devroye.Lu.1995} on the largest degree in random recursive directed acyclic graphs, i.e.\ for the choice sequence $\mathbf x=(1,1,\ldots)$. Theorem~\ref{theorem:joint_degree_distribution} allows us to provide several stronger results regarding the maximal degree and `near-maximal' degrees that also hold for more general choice sequences. These are inspired by results from Addario-Berry and Eslava~\cite{Addario.Eslava.2018} for the random recursive tree.

For $i\in\Z,n\in\N$, we define the random variables 
\begin{equation}\label{eq:Xdef}
\begin{aligned}
X^{(n)}_i &\coloneq N^{(n)}_{\lfloor \log_\theta A_n\rfloor+i}= |\{v \in \mathbb A_n \colon \deg_n(v) = \lfloor \log_\theta A_n \rfloor + i\}|,\\
X^{(n)}_{\geq i} &\coloneq N^{(n)}_{\geq \lfloor \log_\theta A_n\rfloor+i}= |\{v \in \mathbb A_n \colon \deg_n(v) \geq \lfloor \log_\theta A_n \rfloor + i\}|,
\end{aligned}
\end{equation}
and, similarly,
\begin{equation}\label{eq:Ydef}
\begin{aligned}
Y^{(n)}_i &\coloneq  M^{(n)}_{\lfloor \log_\theta F_n\rfloor+i}= |\{v \in \mathbb F_n \colon \deg_n(v) = \lfloor \log_\theta F_n \rfloor + i\}|,\\
Y^{(n)}_{\geq i} &\coloneq  M^{(n)}_{\geq \lfloor \log_\theta F_n\rfloor+i}=|\{v \in \mathbb F_n \colon \deg_n(v) \geq \lfloor \log_\theta F_n \rfloor + i\}|.
\end{aligned}
\end{equation}    

Let $\Z^*\coloneq\Z\cup\{\infty\}$. We endow $\Z^*$ with the metric $d$ defined by
\begin{equation}
\label{eq:metric_d}
d(i,j)=|2^{-j}-2^{-i}|,\qquad\text{and}\qquad d(i,\infty)=2^{-i},
\end{equation}
for $i,j\in\Z$, so that $[i,\infty]$ is a compact set for any $i\in\Z$. Define $\mathcal{M}^\#_{\Z^*}$ as the set of boundedly finite measures of $\Z^*$ (i.e.\ locally finite measures on $\Z^*$). Let $\mathcal{P}$ be an inhomogeneous Poisson point process on $\R$ with rate function
\begin{equation}
\lambda(x)\coloneq\theta^{-x}\log\left(\theta\right)\qquad \text{for }x\in\R.
\end{equation}
For each $\varepsilon\in[0,1]$, let $\mathcal{P}^\varepsilon$ be the point process on $\Z^*$ given by
\begin{equation}
\mathcal{P}^\varepsilon \coloneq \sum_{x\in\mathcal{P}}\delta_{\floor{x+\varepsilon}},
\end{equation}
where $\delta$ is a Dirac measure. Similarly, for all $n\in\N$, let
\begin{equation}
\mathcal{P}_\mathrm{a}^{(n)} \coloneq \sum_{v\in\mathbb A_n}\delta_{\deg_n(v)-\floor{\log_\theta A_n}}\qquad\text{and} \qquad \cP^{(n)}_\mathrm{f}\coloneq \sum_{v\in \mathbb F_n}\delta_{\deg_n(v)-\floor{\log_\theta F_n}}.
\end{equation}
Then, for each $i\in\Z$, we have
\begin{equation}
\mathcal{P}^\varepsilon\{i\} \coloneq \mathcal{P}^\varepsilon(\{i\}) = |\{x\in\mathcal{P}\colon \floor{x+\varepsilon}=i\}|=|\{x\in\mathcal{P}\colon x\in[i-\varepsilon,i+1-\varepsilon)\}|
\end{equation}
and $\mathcal{P}^{(n)}\{i\} \coloneq \mathcal{P}^{(n)}(\{i\})=X_i^{(n)}$. By the definition of $\cP$, it is clear that for $i\in\Z$ we have $\mathcal{P}^\varepsilon\{i\}\sim\text{Poi}\left((1-\theta^{-1})\theta^{-i+\varepsilon}\right)$.

Finally, we define 
\be \label{eq:epsn}
\eps_n^\mathrm{a}\coloneq \log_\theta A_n-\lfloor\log_\theta A_n\rfloor\qquad\text{and}\qquad \eps_n^\mathrm{f}\coloneq \log_\theta F_n-\floor{\log_\theta F_n}.
\ee 
The following results shows joint weak convergence of the point processes $\cP_\aa^{(n)}$ and $\cP^{(n)}_\ff$ along subsequences $(n_\ell)$ such that $\eps_{n_\ell}^\aa$ and $\eps_{n_\ell}^\ff$ converge, which shows that the number of vertices with degrees of the order of the maximum degree (of active and frozen vertices) is asymptotically Poisson. 

\begin{theorem}
\label{theorem:poisson_convergence}
Fix $m\in\N$ and a choice sequence $\mathbf x$ such that Assumption~$\xref$ is satisfied for some $\eps<1-1/((m+1)\log\theta)$ in Part~\ref{item:hI} and some $\eta<1-1/((m+1)\log\theta)$ in Part~\ref{item:Jn}. Fix $\varepsilon^\mathrm{a},\eps^\ff\in[0,1]$ and let $\cP_\mathrm{a}\overset \dd=\cP^{\eps^\mathrm{a}}$ and $\cP_\ff\overset \dd= \cP^{\eps^\ff}$ be independent point processes. Let $(n_\ell)_{\ell\in\N}$ be a subsequence of integers such that $n_\ell\to\infty$ and $\varepsilon_{n_\ell}^\square\to\varepsilon^\square$ as $\ell\to\infty$ for $\square\in\{\aa,\ff\}$. Then, $\mathcal{P}_\mathrm{a}^{(n_\ell)}$ and $\cP_\ff^{(n_\ell)}$ jointly converge weakly to $\mathcal{P}_\mathrm{a}$ and $\cP_\ff$ as $\ell\to\infty$  in $\mathcal{M}^\#_{\Z^*}$, respectively. Equivalently, for any $i<i^\prime\in\Z$ and $j<j'\in\Z$, jointly as $\ell\to\infty$,
\be\ba\label{eq:poisson_convergence_by_continuity_sets_recovered_from_FDDs}
({}&X_i^{(n_\ell)},\ldots,X_{i^\prime-1}^{(n_\ell)},X_{\geq i^\prime}^{(n_\ell)},Y_j^{(n_\ell)},\ldots,Y_{j^\prime-1}^{(n_\ell)},Y_{\geq j^\prime}^{(n_\ell)})\\ 
&\toindis(\mathcal{P}_{\mathrm{a}}\{i\},\ldots,\mathcal{P}_{\mathrm{a}}\{i^\prime -1\},\mathcal{P}_\mathrm{a}[i^\prime,\infty),\mathcal{P}_{\mathrm{f}}\{j\},\ldots,\mathcal{P}_{\mathrm{f}}\{j^\prime -1\},\mathcal{P}_{\mathrm{f}}[j^\prime,\infty)).
\ea\ee
\end{theorem}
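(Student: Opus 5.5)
Following Addario-Berry and Eslava~\cite{Addario.Eslava.2018}, I would prove the equivalent finite-dimensional statement~\eqref{eq:poisson_convergence_by_continuity_sets_recovered_from_FDDs} by the method of factorial moments. Weak convergence in $\mathcal M^\#_{\Z^*}$ is characterised by the joint laws of counts on the continuity sets $\{i\}$ and $[i',\infty]$. Moreover, each $X^{(n)}_i$ can be written as $X^{(n)}_{\geq i}-X^{(n)}_{\geq i+1}$, and similarly for the $Y$'s. It therefore suffices to show that, for all fixed indices $i_1,\ldots,i_p$ and $j_1,\ldots,j_q$ and all fixed integers $r_1,\ldots,r_p$, $s_1,\ldots,s_q\ge 0$, the joint factorial moments
\[
\E{\prod_{t}\big(X^{(n)}_{\geq i_t}\big)_{r_t}\prod_{u}\big(Y^{(n)}_{\geq j_u}\big)_{s_u}}
\]
converge along $(n_\ell)$ to the corresponding factorial moments of the limit. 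Since $\cP_\aa$ and $\cP_\ff$ are independent Poisson processes, these limits factorise as $\prod_t \mu_{t}^{r_t}\prod_u\nu_u^{s_u}$. Here
\[
\mu_t=\theta^{-i_t+\eps^\aa}=\E{\cP_\aa[i_t,\infty)},\qquad \nu_u=\theta^{-j_u+\eps^\ff}.
\]
The Poisson limit is determined by its moments, so moment convergence gives convergence in distribution. Nested tail counts $X_{\geq i}$ with different $i$ are handled in the standard way: I expand the mixed factorial moment as a sum over ordered tuples of distinct vertices with prescribed lower degree thresholds.

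The key step is to write each such factorial moment as a sum over ordered tuples of distinct active vertices $(v_1,\ldots,v_k)$ and distinct frozen vertices $(w_1,\ldots,w_\ell)$, with $k=\sum r_t$ and $\ell=\sum s_u$, of the probability that $\deg_n(v_a)\ge d_{a}$ and $\deg_n(w_b)\ge d_{f_b}$. Here $d_{a}=\floor{\log_\theta A_n}+i_{t(a)}$, and the frozen thresholds are defined similarly. This sum equals
\[
(A_n)_k(F_n)_\ell\cdot\P{\deg_n(V_v)\ge d_{a_v},\ \deg_n(W_w)\ge d_{f_w}\ \forall v,w},
\]
where the $V$'s and $W$'s are uniform distinct vertices as in Theorem~\ref{theorem:joint_degree_distribution}. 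To apply that theorem, I need $d_{a_v}<ch_n^+$ and $d_{f_w}<c\log F_n$ for some admissible $c$. Using~\eqref{eq:hn+lb} and $A_n\le n$, we have $\log_\theta A_n\le \log n/\log\theta\le (1+o(1))h_n^+/\log\theta$. So it suffices to take $c$ slightly larger than $1/\log\theta$ (for the frozen part, slightly larger than $1/\log \theta$ as well, since $d_f\approx\log F_n/\log\theta$). These thresholds are exactly what the hypotheses $\eps,\eta<1-1/((m+1)\log\theta)$ allow: they permit some $c\in(1/\log\theta,(m+1)(1-\max\{\eps,\eta\}))$, and this interval is nonempty. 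With such a $c$, Theorem~\ref{theorem:joint_degree_distribution} gives the probability $\theta^{-\sum d}(1+o(1))$. Since $k,\ell$ are fixed and $A_n,F_n\to\infty$, we have $(A_n)_k(F_n)_\ell=A_n^kF_n^\ell(1+o(1))$. Substituting $\theta^{-\floor{\log_\theta A_n}}=\theta^{\eps_n^\aa}/A_n$ gives the limit $\prod_a\theta^{-i_{t(a)}+\eps^\aa}\prod_b\theta^{-j_{u(b)}+\eps^\ff}$ along $(n_\ell)$, by continuity of $\theta^{x}$.

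Finally, I convert tail counts to point counts. To do this I use inclusion–exclusion on the events $\{\deg\ge d\}$ and $\{\deg\ge d+1\}$, which keeps everything within the scope of Theorem~\ref{theorem:joint_degree_distribution}. The $1+o(I_{\xref}^{-\alpha})$ error is uniform, so finitely many such terms cause no issue.

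The main obstacle is checking that the threshold constant really fits. That is, one needs $\floor{\log_\theta A_n}+i<ch_n^+$ with $c<(m+1)(1-\eps)$, which requires combining the lower bound~\eqref{eq:hn+lb} with the stated bounds on $\eps,\eta$. Once this is done, and the uniformity of the error term over the finitely many thresholds is confirmed, the rest is the standard Addario-Berry–Eslava argument.
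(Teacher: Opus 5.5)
Your ingredients and constants match the paper's. The factorial moments come from Theorem~\ref{theorem:joint_degree_distribution}, exact degrees are removed by inclusion--exclusion, and the Poisson limit is identified by its factorial moments. Your choice $c\in(1/\log\theta,(m+1)(1-\max\{\eps,\eta\}))$ is precisely what lets that theorem apply to the thresholds $\floor{\log_\theta A_n}+i$ and $\floor{\log_\theta F_n}+j$.

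However, the reduction in your first two paragraphs contains a false step. For nested tail sets the counts overlap, so the limiting mixed factorial moments do not factorise, and they are not sums over distinct tuples. For instance, with $i_1<i_2$ and your $\mu_t=\theta^{-i_t+\eps^{\aa}}$,
\[
\mathbb{E}\big[\cP_{\aa}[i_1,\infty)\,\cP_{\aa}[i_2,\infty)\big]=\mu_1\mu_2+\mu_2\neq\mu_1\mu_2 .
\]
Correspondingly, $X^{(n)}_{\geq i_1}X^{(n)}_{\geq i_2}$ equals the number of ordered pairs of \emph{distinct} vertices meeting the two thresholds plus the diagonal term $X^{(n)}_{\geq i_2}$, whose mean is $\Theta(1)$. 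Independence of $\cP_{\aa}$ and $\cP_{\ff}$ gives factorisation only across the active/frozen split, not across overlapping sets of the same process. Your two errors are consistent with each other, so the computation appears to close. What it actually establishes is convergence of the expected number of ordered distinct tuples with prescribed lower thresholds, not of the mixed factorial moments you said suffice.

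The repair is what the paper does in Proposition~\ref{prop:factorial_moments}: work directly with the disjoint cells $\{i\},\ldots,\{i'-1\},[i',\infty)$ and $\{j\},\ldots,\{j'-1\},[j',\infty)$. For these cells:
\begin{enumerate}
\item The joint factorial moment genuinely equals $(A_n)_K(F_n)_L$ times the probability that $K$ distinct uniform active and $L$ distinct uniform frozen vertices have the prescribed exact or lower-bounded degrees. Distinctness across cells is automatic because the degree ranges are disjoint.
\item The Poisson factorial moments do factorise, because the cells are disjoint.
\item Each exact-degree constraint is removed by the inclusion--exclusion of Lemma~\ref{lemma:inclusion_exclusion_degree}. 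The alternating sum of the estimates from Theorem~\ref{theorem:joint_degree_distribution} gives the factor $(1-\theta^{-1})^{K'+L'}$, where $K',L'$ count the exact-degree constraints. The relative error stays $o(1)$ because $1-\theta^{-1}$ is a fixed positive constant.
\end{enumerate}
Your final paragraph points at exactly this, so the argument is easily repaired. Either compute on the disjoint cells from the start, or keep your distinct-tuple sums and pass to the cells by additivity in each coordinate. What you cannot do is rely on the factorisation you asserted for nested tails.
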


\begin{remark}
If one is interested only in the convergence of $\cP_\aa^{(n_\ell)}$, then Assumption~$\xref$\ref{item:Jn} is not required.\ensymboldremark 
\end{remark} 

For the proof of Theorem~\ref{theorem:poisson_convergence}, it suffices to show~\eqref{eq:poisson_convergence_by_continuity_sets_recovered_from_FDDs}, since weak convergence of $\mathcal{P}_\square^{(n_\ell)}$ follows from the convergence of its finite-dimensional distributions (see e.g.\ page 143 in \cite{Daley.2008}), which can be represented using the distribution of $(X_i^{(n_\ell)},\ldots,X_{i^\prime-1}^{(n_\ell)},X_{\geq i^\prime}^{(n_\ell)})$ and $(Y_j^{(n_\ell)},\ldots,Y_{j^\prime-1}^{(n_\ell)},Y_{\geq j^\prime}^{(n_\ell)})$ for $i<i^\prime, j<j'\in\Z$ fixed. 

We define the largest degree among all active and frozen vertices in $G^{(n)}$ as 
\be 
\Delta_n^\mathrm{a}\coloneq  \max\{ \deg_n(v)\colon v\in \mathbb A_n\}\qquad\text{and}\qquad \Delta_n^\mathrm{f}\coloneq \max\{ \deg_n(v)\colon v\in \mathbb F_n\},
\ee 
respectively. The following result provides the asymptotic distribution of these maximum degrees. 
\begin{theorem}
\label{theorem:maximum_degree}
Fix $m\in\N$, $c\in(0,m+1)$, and a choice sequence $\mathbf x$ such that Assumption~$\xref$ is satisfied for some $\eps<1-c/(m+1)$ in Part~\ref{item:hI} and some $\eta<1-c/(m+1)$ in Part~\ref{item:Jn}. Let $(i_n^\mathrm{a})_{n\in\N}$ and $(i_n^\mathrm{f})_{n\in\N}$ be sequences such that $i_n^\mathrm{a}+\log_\theta A_n >0$ and $i_n^\mathrm{f}+\log_\theta F_n >0$ for all $n$, and 
\be 
\limsup_{n\to\infty} \frac{i_n^\mathrm{a}+\log_\theta A_n}{h_n^+}<c\qquad\text{and}\qquad \limsup_{n\to\infty}\frac{i_n^\mathrm{f}+\log_\theta F_n}{\log F_n}<c.
\ee 
Then,
\begin{equation}
\prob(\Delta_n^\mathrm{a}\geq \floor{\log_\theta A_n}+i_n^\mathrm{a}, \Delta_n^\mathrm{f}\geq \floor{\log_\theta F_n}+i_n^\mathrm{f}) =(1+o(1))\prod_{\square\in\{\mathrm a,\mathrm f\}}\!\!\! \big(1-\exp\big(-\theta^{-i_n^\square+\eps_n^\square}\big)\big).
\end{equation}
\end{theorem}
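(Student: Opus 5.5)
We derive Theorem~\ref{theorem:maximum_degree} from the joint degree tail estimates of Theorem~\ref{theorem:joint_degree_distribution}, using the method of moments for the counts $X^{(n)}_{\geq i_n^\aa}$ and $Y^{(n)}_{\geq i_n^\ff}$. Write $d^\aa_n\coloneq \floor{\log_\theta A_n}+i_n^\aa$ and $d^\ff_n\coloneq\floor{\log_\theta F_n}+i_n^\ff$. The event in question is $\{X^{(n)}_{\geq i_n^\aa}\geq 1,\ Y^{(n)}_{\geq i_n^\ff}\geq1\}$. The hypothesis on the $\limsup$ guarantees that, for $n$ large, $d^\aa_n<c h_n^+$ and $d^\ff_n<c\log F_n$, so Theorem~\ref{theorem:joint_degree_distribution} applies uniformly to these thresholds.

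\textbf{Step 1: factorial moments.} For fixed $k,\ell\in\N_0$, the joint factorial moment $\E{(X^{(n)}_{\geq i_n^\aa})_k (Y^{(n)}_{\geq i_n^\ff})_\ell}$ equals $(A_n)_k(F_n)_\ell$ times the probability that $k$ uniformly chosen distinct active vertices and $\ell$ uniformly chosen distinct frozen vertices all have degree at least $d^\aa_n$, respectively $d^\ff_n$. By Theorem~\ref{theorem:joint_degree_distribution} this is
\be
(A_n)_k(F_n)_\ell\,\theta^{-k d^\aa_n-\ell d^\ff_n}(1+o(1))=(1+o(1))\,\mu_{n,\aa}^k\mu_{n,\ff}^\ell,
\ee
where $\mu_{n,\square}\coloneq \theta^{-i_n^\square+\eps_n^\square}$. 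This uses $\theta^{-\floor{\log_\theta A_n}}A_n=\theta^{\eps_n^\aa}$, the analogous identity for $F_n$, and $(A_n)_k\sim A_n^k$ whenever $A_n\to\infty$. The same asymptotics hold for each fixed $k,\ell$, and the factorisation is the source of the product form in the statement.

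\textbf{Step 2: from moments to the probability.} I apply inclusion--exclusion (Bonferroni) to the indicator $\ind\{X\geq1\}\ind\{Y\geq1\}=(1-\ind\{X=0\})(1-\ind\{Y=0\})$. Truncating the alternating series $\ind\{X=0\}=\sum_{k}(-1)^k\binom{X}{k}$ at even and odd levels $K$ gives upper and lower bounds, and taking expectations yields
\be
\P{X=0,Y=0}=\sum_{k,\ell\le K}\frac{(-1)^{k+\ell}}{k!\,\ell!}\,\mu_{n,\aa}^k\mu_{n,\ff}^\ell(1+o(1))+O\Big(\frac{\mu_{n,\aa}^{K}+\mu_{n,\ff}^K}{K!}\Big).
\ee
The same bound holds for the single-variable probabilities.

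\textbf{Step 3: the main obstacle.} The difficulty is that $i_n^\square$ may tend to $-\infty$, so $\mu_{n,\square}$ is unbounded, whereas Theorem~\ref{theorem:joint_degree_distribution} holds only for fixed $k,\ell$. I would split into cases along subsequences.

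If $\mu_{n,\square}$ stays bounded, letting $K\to\infty$ slowly gives $\P{X=0}\to e^{-\mu}$, together with the joint version.

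If $\mu_{n,\aa}\to\infty$, the target factor $1-\exp(-\mu_{n,\aa})$ tends to $1$, so it suffices to show $\P{X=0}\to0$. For this I use monotonicity in the threshold. Replace $i_n^\aa$ by $i'_n\geq i_n^\aa$ chosen so that $\mu'=\theta^{-i'_n+\eps_n^\aa}$ equals a large fixed constant $M$. The $\limsup$ condition still holds for $i'_n$, and $i'_n+\log_\theta A_n>0$ is preserved. Then
\be
\P{X_{\geq i_n^\aa}=0}\le\P{X_{\geq i'_n}=0}\to e^{-M}.
\ee
Sending $M\to\infty$ finishes this case, and the joint statement follows in the same way, by applying the bounded case to $(i'_n,i_n^\ff)$.

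If instead $\mu_{n,\square}\to0$, both sides are asymptotically $\mu_{n,\square}$. Here I need a relative error, so I use the first- and second-moment bounds $\E{X}-\E{X(X-1)}/2\le\P{X\geq1}\le\E{X}$, which give $\P{X\ge1}=\mu(1+o(1))$. For the joint event I additionally use $\E{(X)_2Y}$ and $\E{X(Y)_2}$, which are $O(\mu_{n,\aa}\mu_{n,\ff}(\mu_{n,\aa}+\mu_{n,\ff}))$ by Step 1.

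Since every subsequence of $n$ has a further subsequence falling into one of these cases for each $\square$, and the ratio tends to $1$ in every case, the $(1+o(1))$ product formula follows.
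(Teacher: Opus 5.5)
Your architecture matches the paper's. Joint factorial moments come from Theorem~\ref{theorem:joint_degree_distribution}, which the paper packages as Proposition~\ref{prop:factorial_moments}. For bounded $i_n^\square$ you take a Poisson limit along subsequences. When $i_n^\square\to+\infty$ you use first/second-moment bounds: Bonferroni in your case, Markov and Paley--Zygmund in the paper.

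The paper's proof treats only these two regimes, so you are right to handle $i_n^\square\to-\infty$ separately. However, your argument there has a step that fails. You replace $i_n^\aa$ by a bounded $i'_n\ge i_n^\aa$ and assert that the $\limsup$ hypothesis still holds. Monotonicity runs the wrong way: increasing $i$ increases $i+\log_\theta A_n$. For bounded $i'_n$ the hypothesis becomes $\limsup_n\log_\theta A_n/h_n^+<c$, resp.\ $1/\log\theta<c$ on the frozen side, and neither follows from the assumptions.

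Concretely, $1/\log\theta<m+1$, so every $c\in(0,1/\log\theta]$ is allowed. For such $c$:
\begin{enumerate}
\item every admissible $i_n^\ff$ satisfies $i_n^\ff\le-\kappa\log F_n$ for some $\kappa>0$ and all large $n$;
\item so no bounded frozen threshold is admissible;
\item and Theorem~\ref{theorem:joint_degree_distribution} at degrees $\approx\log_\theta F_n$ is not guaranteed under the given $\eta$.
\end{enumerate}
The same happens on the active side whenever $h_n^+=\Theta(\log n)$ (e.g.\ linear freezing) and $c$ is small. So your bounded case cannot be invoked at $i'_n$.

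The repair is to stay at the original threshold, where your Step 1 holds for any fixed number of factors. If $\mu_{n,\aa}\to\infty$, Chebyshev gives
\[
\P{X=0}\le\frac{\E{(X)_2}+\E{X}}{\E{X}^2}-1=o(1)+\cO\big(\mu_{n,\aa}^{-1}\big)\to0,
\]
exactly as in the proof of Corollary~\ref{cor:LLN}. This settles the joint statement unless simultaneously $\mu_{n,\ff}\to0$. In that case you need $\P{X=0,\,Y\ge1}=o(\mu_{n,\ff})$, which $\P{X=0}\to0$ does not give. Use instead $\ind\{X=0\}Y\le\mu_{n,\aa}^{-2}(X-\mu_{n,\aa})^2Y$, and expand $\E{Y(X-\mu_{n,\aa})^2}$ through $\E{(X)_2Y}$, $\E{XY}$ and $\E{Y}$. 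This gives $o(\mu_{n,\ff})+\cO(\mu_{n,\ff}/\mu_{n,\aa})$.

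A similar relative-error issue arises when $\mu_{n,\aa}$ stays bounded and $\mu_{n,\ff}\to0$; the paper also glosses over this case. Your additive expansion of $\P{X=0,Y=0}$ in Step 2 is too coarse there. Instead, sandwich $\ind\{X\ge1\}\ind\{Y\ge1\}$ between $(1-S_K(X))Y-\binom Y2$ and $(1-S_{K+1}(X))Y$, where $S_K(X)=\sum_{k\le K}(-1)^k\binom Xk$ and $K$ is even. Both bounds have expectation $\mu_{n,\ff}(1-e^{-\mu_{n,\aa}})(1+o(1))$ as $K\to\infty$ slowly.

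Finally, truncating a product of two alternating series does not give one-sided bounds, since two odd truncations can both be negative. In the bounded case, apply Bonferroni to $X+Y$ instead, or quote the multivariate factorial-moment criterion as the paper does.
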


\begin{remark}
$(i)$ Convergence in distribution of $\Delta_n^\mathrm{a} -\lfloor \log_\theta A_n\rfloor$ and $\Delta_n^\mathrm{f} -\floor{\log_\theta F_n}$ does not hold due to a lattice effect, caused by the floor function applied to $\floor{\log_\theta A_n}$ and $\floor{\log_\theta F_n}$.  

$(ii)$ If one is interested in the marginal distribution of $\Delta_n^\mathrm{a}$, Assumption~$\xref$\ref{item:Jn} is not required.\ensymboldremark
\end{remark}

The final result of this subsection establishes joint asymptotic normality for $X_i^{(n)}$ and $Y_i^{(n)}$ when $i$ tends to $-\infty$ with respect to $n\in\N$ at a slow enough rate, capturing the asymptotic distribution of the number of active and frozen vertices with `near-maximal' degree.
\begin{theorem}
\label{theorem:asymptotic_normality_smaller_degrees}
Fix $m\in\N$ and a choice sequence $\mathbf x$ such that Assumption~$\xref$ is satisfied for  $I_{\xref}=\lfloor n^{\eps}\rfloor$  and $0<\eps<\eta<1-1/((m+1)\log\theta)$ in Parts~\ref{item:hI} and~\ref{item:Jn}. Let $(i_n^\aa)_{n\in\N}$ and $(i_n^\ff)_{n\in\N}$ be integer-valued sequences such that $i_n^\square\to-\infty$ and $i_n^\square=o(\log n)$ for $\square\in\{\aa,\ff\}$, and let $Z_1$ and $Z_2$ be two i.i.d.\ standard normal random variables. Then,
\begin{equation}
\Big(\frac{X_{i_n^\aa}^{(n)}-(1-\theta^{-1})\theta^{-(\lfloor \log_\theta A_n\rfloor+i_n^\aa)}A_n}{\sqrt{(1-\theta^{-1})\theta^{-(\lfloor \log_\theta A_n\rfloor+i_n^\aa)}A_n}},\frac{Y_{i_n^\ff}^{(n)}-(1-\theta^{-1})\theta^{-(\lfloor \log_\theta F_n\rfloor+i_n^\ff)}F_n}{\sqrt{(1-\theta^{-1})\theta^{-(\lfloor \log_\theta F_n\rfloor+i_n^\ff)}F_n}}\Big)\toindis(Z_1,Z_2).
\end{equation}
\end{theorem}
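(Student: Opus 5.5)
We prove the joint central limit theorem by the method of moments, in the form of convergence of factorial moments. This follows the approach of Addario-Berry and Eslava for the random recursive tree, and Theorem~\ref{theorem:joint_degree_distribution} is the main input.

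Write $d^\aa_n\coloneq\lfloor\log_\theta A_n\rfloor+i^\aa_n$ and $d^\ff_n\coloneq\lfloor\log_\theta F_n\rfloor+i^\ff_n$. Let $\mu_n^\aa\coloneq(1-\theta^{-1})\theta^{-d_n^\aa}A_n$ and define $\mu_n^\ff$ analogously. Since $i_n^\square\to-\infty$, we have $\mu_n^\square=\Theta(\theta^{-i_n^\square})\to\infty$. Since $i_n^\square=o(\log n)$, we have $\mu_n^\square=n^{o(1)}$ and $d_n^\square=\log_\theta(\cdot)(1+o(1))$. With $I_{\xref}=\lfloor n^\eps\rfloor$, the lower bound~\eqref{eq:hn+lb} and $\log F_n\geq \eta^{-1}\log I_{\xref}$ show that $d_n^\aa<ch_n^+$ and $d_n^\ff<c\log F_n$ for some admissible $c$. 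The constraint $\eps,\eta<1-1/((m+1)\log\theta)$ is precisely what makes $c=1/\log\theta\cdot(1+o(1))$ admissible here.

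The first step is to express joint factorial moments as probabilities about uniformly chosen distinct vertices. For $k,\ell\in\N_0$,
\begin{equation}
\E{(X^{(n)}_{i_n^\aa})_k (Y^{(n)}_{i_n^\ff})_\ell}=(A_n)_k(F_n)_\ell\,\P{\deg_n(V_v)=d_n^\aa,\ \deg_n(W_w)=d_n^\ff\ \forall v\in[k],w\in[\ell]}.
\end{equation}
By inclusion–exclusion, the event $\{\deg=d\}$ equals $\{\deg\geq d\}\setminus\{\deg\geq d+1\}$. Applying Theorem~\ref{theorem:joint_degree_distribution} to each of the $2^{k+\ell}$ terms gives the probability
\begin{equation}
\big((1-\theta^{-1})\theta^{-d_n^\aa}\big)^k\big((1-\theta^{-1})\theta^{-d_n^\ff}\big)^\ell\big(1+O(I_{\xref}^{-\alpha})\big),
\end{equation}
where the $O(\cdot)$ term has a $k,\ell$-dependent constant. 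Consequently,
\begin{equation}
\E{(X)_k(Y)_\ell}=(\mu_n^\aa)^k(\mu_n^\ff)^\ell\big(1+O(k^2/A_n+\ell^2/F_n+I_{\xref}^{-\alpha})\big).
\end{equation}

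Factorial moments matching those of independent Poissons to relative precision $o(1)$ is not enough for a CLT when the means diverge. We therefore need the quantitative criterion: if for every fixed $k,\ell$ the relative error $\delta_n$ satisfies $\delta_n\,(\mu_n^\aa+\mu_n^\ff)^{(k+\ell)/2}\to0$, then the centred and normalised moments of order $k+\ell$ agree asymptotically with those of independent Poisson$(\mu_n^\square)$ variables. That criterion holds because central moments are linear combinations of factorial moments with binomial-type coefficients, and the cancellations leave an error of size $\delta_n\mu^{k+\ell}$ against normalisation $\mu^{(k+\ell)/2}$; this last bound is the delicate point. The required smallness is $\delta_n\mu^{(k+\ell)/2}\to0$, and it is supplied by $\delta_n=O(n^{-\eps\alpha}+1/F_n)$ together with $\mu_n^\square=n^{o(1)}$. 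Poisson variables with diverging means satisfy the CLT and their moments converge to Gaussian moments. The normal distribution is determined by its moments, so the joint moments converge to those of $(Z_1,Z_2)$, which yields the result.

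I expect the main obstacle to be this moment-comparison step. In the central moment expansion, the top-order terms $\mu^{k+\ell}$ must cancel, and the residual from the error factor must be controlled by $\delta_n\mu^{k+\ell}$. I would handle this by writing the mixed central moment as a finite sum $\sum c_{r,s}\,\E{(X)_r(Y)_s}(\mu^\aa)^{k-r}(\mu^\ff)^{\ell-s}$ with $|c_{r,s}|$ bounded, substituting the estimate above, and comparing with the same sum for independent Poissons. The difference is then at most $C\delta_n\max(\mu)^{k+\ell}$. After dividing by $\mu^{(k+\ell)/2}$, this still requires $\delta_n\mu^{(k+\ell)/2}\to0$, which uses $i_n^\square=o(\log n)$ and the polynomial rate $I_{\xref}=n^{\eps}$. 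A secondary check is that the errors in Theorem~\ref{theorem:joint_degree_distribution} hold uniformly over the degrees used, which is guaranteed by its uniformity statement.
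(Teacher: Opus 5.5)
Your proposal is correct and follows essentially the same route as the paper. The factorial-moment asymptotics you derive from Theorem~\ref{theorem:joint_degree_distribution} via inclusion--exclusion are exactly Proposition~\ref{prop:factorial_moments}, and your moment comparison with independent Poisson variables is the content of the bivariate Bollob\'as-type criterion, Theorem~\ref{thrm:multinorm}, which the paper proves in the appendix. The only difference is your criterion: you require that the relative error times $(\mu_n^\aa)^{k/2}(\mu_n^\ff)^{\ell/2}$ vanish, which is weaker than the paper's condition that the error be smaller than any negative power of the means. Both hold because the error is polynomially small in $n$ (via $I_{\xref}=\lfloor n^\eps\rfloor$, $A_n\geq n^{1/2+\delta}$, and $F_n\geq I_{\xref}^{1/\eta}$), while $\mu_n^\aa,\mu_n^\ff=n^{o(1)}$.
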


\begin{remark}
$(i)$ The additional assumption on the growth rate   $I_{\xref}$ ensures a polynomial error rate in Theorem~\ref{theorem:joint_degree_distribution}, which is necessary to deal with error terms in the normal approximation of the order $\theta^{-i_n^\square}$, which grow subpolynomially in $n$ when $i_n^\square =o(\log n)$.

$(ii)$ For a central limit theorem for $X^{(n)}_{i^\aa_n}$ only, Assumption~$\xref$\ref{item:Jn} is not required. \ensymboldremark
\end{remark}
The convergence in Theorem~\ref{theorem:asymptotic_normality_smaller_degrees} is proved using a version of the method of moments for factorial moments. This theorem complements Corollary~\ref{cor:LLN} and Theorem~\ref{theorem:poisson_convergence}, as it yields a central limit theorem for the number of active and frozen vertices attaining a `near-maximum' degree,  (with respect to the maximum degree among all active/frozen vertices). In contrast, Theorem~\ref{theorem:poisson_convergence} shows that the number of active and frozen vertices with a degree of the order $\log_\theta A_n$ and $\log_\theta F_n$, respectively, is asymptotically Poisson, and Corollary~\ref{cor:LLN} provides a first-order result only.

Interestingly, we observe that the behaviour of the empirical degree distribution and that of (near-)maximal degrees does not change under the influence of freezing. Indeed, under the relatively mild conditions in Assumption~$\xref$, the results in Theorem~\ref{theorem:joint_degree_distribution} and Corollary~\ref{cor:LLN} for the empirical degree distribution stay almost unchanged compared to the setting without freezing, i.e.\ $\mathbf x=(1,1,\ldots)$, and Theorems~\ref{theorem:poisson_convergence}, \ref{theorem:maximum_degree}, and~\ref{theorem:asymptotic_normality_smaller_degrees} for (near-)maximal degrees are in similar spirit to those for the random recursive tree (i.e.\ $m=1$ and $\mathbf x=(1,1,\ldots)$). This is in contrast with results on the local behaviour near the root, the depth of typical vertices and the height of uniform attachment trees with freezing, as shown in~\cite{Bellin.2023,Bellin.2024} (and as we will see in Theorem~\ref{theorem:label_multiple_vertices_conditional_degrees}). We believe that this should carry through for other `local' properties as well, such as the local limit (i.e.\ the distribution of the neighbourhood of typical vertices), which we leave as an open problem here. 

\begin{problem}
Show that the weak local limit of $G^{(n)}$ is the same for any choice sequence $\mathbf x$ that satisfies Assumption~$\xref$ (and perhaps additional assumptions), so in particular it is the same as the local weak limit the the uniform recursive directed acyclic graph model without freezing.
\end{problem}

At the same time, results on e.g.\ the local weak limit on a related URD with freezing model where frozen vertices and their incident edges are removed from the graph~\cite[Theorem 1.3 and Corollary 1.4]{DiazLichLod22} (thus creating a random graph with multiple components) shows that taking into account the `state' of vertices (i.e.\ active or frozen) is susceptible to freezing. We also leave an open problem in direction here, as to how freezing influences the behaviour of such statistics. 

\begin{problem}
Let the active (resp.\ frozen) in-degree denote the number of incoming edges to $v$ from active (resp.\ frozen) vertices. Determine scaling limits for the number of vertices with a given active and/or frozen in-degree, similar to Corollary~\ref{cor:LLN}.
\end{problem}

\subsubsection{The label and `depth' of high-degree active vertices.} Finally, we turn to further properties of high-degree active vertices. Given a uniform active vertex in $G^{(n)}$ such that its degree is at least $d=d(n)$, say, we are interested in the order of the \emph{label} of the vertex, and on its `\emph{distance}' to vertex $1$. That is, \emph{when} and \emph{where} was this now high-degree vertex initially introduced into the graph. Here, `distance' does not refer to the shortest or longest path between the high-degree vertex and vertex $1$, but rather the length of a greedy long path between the two vertices, whose construction we explain now. 

For each vertex $v\in G^{(n)}$ with $v\neq 1$, let $\cC(v)\subseteq [v-1]$ denote the set of all vertices that $v$ connects to when  $v$ is added to the graph, and let $v_{\max}\coloneq \max\cC(v)$ denote the vertex with the largest label that $v$ connects to. We then define the \emph{greedy longest path} $\mathrm{GP}(v)$ between $v$ and $1$ recursively as follows. We set $\mathrm{GP}(1)\coloneq \varnothing$ and 
\be 
\mathrm{GP}(v)\coloneq \{v\}\cup \mathrm{GP}(v_{\max}),\qquad\text{for }v\neq 1.
\ee 
We define
\be 
u_n(v)\coloneq |\mathrm{GP}(v)|\qquad\text{for }v\in G^{(n)}
\ee
as the length of the greedy longest path between $v$ and $1$. We observe that in the case $m=1$, i.e.\ the uniform attachment tree with freezing, the notion of the greedy longest path is equivalent to the depth of the vertex. 

The following result presents the joint normality of the length of the greedy longest path and the label of high-degree active vertices in $G^{(n)}$ (or rather, $h^+$ applied to the label), which generalises a result of the second author for the random recursive tree (i.e.\ $m=1$ and $\mathbf x=(1,1,\ldots)$) presented in~\cite[Theorem $2.4$]{Lodewijks.2023}, as well as a result for the length of greedy longest paths in URDs without freezing (i.e.\  arbitrary $m$ and $\mathbf x=(1,1,\ldots)$) by Devroye and Janson~\cite{Devroye.Janson.2011}.

\begin{theorem}
\label{theorem:label_multiple_vertices_conditional_degrees}
Fix $m,k\in\N$ and $(b_v)_{v\in [k]}\in[0,m+1)^k$. Let $V_1,\ldots,V_k\in\mathbb{A}_n$ be distinct active vertices selected uniformly at random. Let $(d_v(n))_{v\in[k]}$ be $k$ integer-valued sequences diverging to infinity such that, for all $v\in[k]$, $\lim_{n\to\infty}d_v(n)/h_n^+ = b_v$. Fix a choice sequence $\mathbf x$ such that Assumption~$\xref$\ref{item:geq1} and~$\xref$\ref{item:lb} are satisfied  with $I_{\xref}=\cO((h_n^+)^{\gamma_0(1/2-\delta)})$, where $\gamma_0\in(1,\min\{(1/2+\delta)^{-1},(1-2\delta)^{-1}\})$ and $\delta$ is as in Part~\ref{item:lb}.
Let $(M_i)_{i\in[k]}$ and $(N_i)_{i\in[k]}$ be i.i.d.\ standard normal random variables. Then, conditionally on the event $\{\deg_n(V_v)\geq d_v\text{ for all }v\in[k]\}$,
\be \ba 
\bigg({}&\frac{u_n(V_v)-(mh_n^+-\frac{m}{m+1}d_v)}{\sqrt{mh_n^+-\frac{m}{(m+1)^2}d_v}}, \frac{h_{V_v}^+-(h_n^+-\frac{1}{m+1}d_v)}{\sqrt{\frac{1}{(m+1)^2}d_v}}\bigg)_{v\in [k]}\\
&\toindis \bigg(M_i\sqrt{\frac{mb_v}{(m+1)^2-b_v}}+N_i\sqrt{1-\frac{mb_v}{(m+1)^2-b_v}},M_i\bigg)_{i\in[k]}.
\ea \ee 
\end{theorem}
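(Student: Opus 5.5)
The plan is to reduce everything to sums of independent indicators, using the fact that the two statistics of an active vertex $v$ depend on disjoint parts of the randomness. The degree $\deg_n(v)$ is determined by the choices of vertices $j>v$ together with the freezing steps after $v$. The greedy path $\mathrm{GP}(v)$ is determined only by the choices made by $v$ and by vertices with labels smaller than $v$.

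Hence, conditionally on $\{V_v=\ell\}$ and on $v$ being active at time $n$, $\deg_n(V_v)$ and $u_n(V_v)$ are conditionally independent. This is the adaptation of the Kingman coalescent decoupling used in~\cite{Lodewijks.2023} for the tree case.

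\textbf{Degree.} Given $v=\ell$ active at time $n$, every addition step $j>\ell$ picks $\ell$ independently with probability $(m\wedge A_{j-1})/A_{j-1}$. So $\deg_n(\ell)$ is a Poisson-binomial variable with mean
\[
m(h_n^+-h_\ell^+)+\cO\Big(\sum_{j\ge \ell}A_j^{-1}\Big).
\]

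\textbf{Greedy path.} I would prove a lemma in the spirit of Devroye and Janson~\cite{Devroye.Janson.2011}. It says that the indicators $\ind_{\{j\in \mathrm{GP}(\ell)\}}$, $j<\ell$ with $x_j=1$, are (nearly) independent Bernoulli variables with parameter $(m\wedge A_{j})/A_{j}$, up to $\cO(A_j^{-2})$ corrections. The argument runs the greedy path backwards. Given that the path is currently at a vertex $w$, the next vertex is the maximum of $m$ uniformly chosen active vertices at time $w-1$. Whether the path stops exactly at $j$ or jumps below it is then a record-type event, whose probability does not depend on the past of the path.

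\textbf{Label.} The label distribution of a uniform active vertex is explicit:
\[
\P{V=\ell}=A_n^{-1}\prod_{j\in\cF_n,\,j>\ell}\big(1-A_{j-1}^{-1}\big).
\]
Together with Assumption~$\xref$\ref{item:lb}, which gives $\sum_{j\ge I_{\xref}}A_j^{-2}\le \sum_j j^{-1-2\delta}\to0$, this lets me approximate every sum by its $h^+$ analogue. Labels below $I_{\xref}$ are negligible, since $h_{I_{\xref}}^+$ is small compared to the Gaussian window; this is where $I_{\xref}=\cO((h_n^+)^{\gamma_0(1/2-\delta)})$ enters.

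\textbf{Main obstacle: the tilted label distribution.} I would write
\[
\P{h^+_{V}\in \cdot \,,\,\deg_n(V)\ge d}=\sum_\ell \P{V=\ell}\,\P{\mathrm{Bin}_\ell\ge d}.
\]
I would apply a sharp (Bahadur--Rao type) large-deviation estimate for the Poisson-binomial tail with mean $\mu_\ell=m(h_n^+-h_\ell^+)$ and $d\approx b h_n^+$. Here I need $b<m+1$ so that the relevant $\mu_\ell$ stays of order $h_n^+$ and the tilt is non-degenerate. The heuristic is that, for a uniform active vertex, $h_n^+-h^+_V$ is approximately $\mathrm{Exp}(1)$. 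The tail then behaves like $\mathrm{Poi}$ with mean $m s$, where $s=h_n^+-h_\ell^+$, and the relevant integral is
\[
\int e^{-s}\,\P{\mathrm{Poi}(ms)\ge d}\,\dd s .
\]
Laplace's method shows that the conditional law of $s$ concentrates at $d/(m+1)$ with Gaussian fluctuations of variance $d/(m+1)^2$. This is the second coordinate of the limit, with limit variable $M$; note also that the normalisation reproduces $\theta^{-d}$, consistent with Theorem~\ref{theorem:joint_degree_distribution}. I expect this step to be the hardest: making the discrete sum over $\ell$ (with freezing weights) match the integral uniformly, with errors controlled by $\delta$ and $\gamma_0$.

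\textbf{First coordinate.} Conditionally on $\ell$, the CLT for the independent path indicators gives $u_n(\ell)\approx m h_\ell^+ + \sqrt{m h_\ell^+}\,N$. Substituting $h_\ell^+=h_n^+-d/(m+1)+\sqrt{d}\,M/(m+1)$ gives mean $mh_n^+-\tfrac{m}{m+1}d$. The total variance is
\[
m h_n^+-\tfrac{m}{m+1}d+\tfrac{m^2}{(m+1)^2}d=mh_n^+-\tfrac{m}{(m+1)^2}d .
\]
Normalising by its square root produces the stated mixture of $M$ and $N$ with the given coefficients, where $d/h_n^+\to b_v$.

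\textbf{Several vertices.} For $k$ vertices, I would repeat the computation with labels $\ell_1,\dots,\ell_k$. The interactions are events where two vertices choose each other, share choices, or have overlapping greedy paths. These cost $\cO(\sum_{j\ge I_{\xref}} A_j^{-2})\to0$ by Assumption~$\xref$\ref{item:lb}, which yields asymptotic independence across $v\in[k]$.
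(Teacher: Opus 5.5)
Your single-vertex argument is a genuinely different route from the paper's, and it is essentially sound. In forward time, $\mathrm{GP}(\ell)$ is determined by the construction up to time $\ell$, while $(\deg_n(\ell),\ind_{\{\ell\in\mathbb A_n\}})$ is independent of that $\sigma$-field. Your Laplace computation with $\int e^{-s}\,\mathbb P(\mathrm{Poi}(ms)\ge d)\,\dd s$ then gives the right centring, the variance $d/(m+1)^2$, and the normalisation $\theta^{-d}$. The cost is a sharp Poisson-binomial large-deviation estimate, uniform in $\ell$, plus a Riemann-sum argument over labels.

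The paper avoids large deviations altogether. In the coalescent the selection sets are independent of the dice rolls, so for $L\ge I_{\mathbf x}$ and $n$ large one has exactly $\mathbb P(\deg_n(a_1)\ge d,\ \ell_n(a_1)\ge L)=\theta^{-d}\,\mathbb P(|\cS_n^-\cap[L,n]|\ge d+\mathrm{Geo}_m)$, and an ordinary CLT suffices.

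Two repairs are needed in your single-vertex step:
\begin{itemize}
\item The error in your degree mean is $\cO(\sum_{j>\ell}A_j^{-2})$, not $\cO(\sum_{j\ge\ell}A_j^{-1})$. This matters: the tail is in the large-deviation regime, so you need an $o(1)$ error in the mean.
\item With freezing, given that the path jumps from $w$ into $[j]$, it lands on $j$ with probability $m\,\ind_{\{j\in\mathbb A_{w-1}\}}/|\mathbb A_{w-1}\cap[j]|$, which does depend on the past. So the parameter $m/A_j$ must come from exchangeability of the labels in $\mathbb A_j$ under the dynamics after time $j$ (in the coalescent, the loser at step $j$ is uniform among the $A_j$ active roots). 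It does not follow from a record argument at time $w-1$.
\end{itemize}

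The genuine gap is the passage to $k\ge2$. You need the joint probability to equal $(1+o(1))\prod_v(\cdots)$ with a \emph{relative} error, on an event of probability $\theta^{-\sum_v d_v}$, which is exponentially small in $h_n^+$. An unconditional bound $\cO(\sum_{j\ge I_{\mathbf x}}A_j^{-2})=o(1)$ on ``interaction events'' gives nothing once you divide by $\theta^{-\sum_v d_v}$.

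Worse, the dependence is not confined to rare interaction events. Each new vertex picks an $m$-set without replacement. So at every step where exactly one of $\ell_1,\dots,\ell_k$ is chosen, the joint law of the choice indicators differs from the product law by a factor $1+\Theta(A_j^{-1})$. On the conditioning event there are $\asymp h_n^+$ such steps. In your forward picture labels and degrees are strongly coupled, so you cannot discard atypical label configurations under the untilted measure.

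To close this you would need three things:
\begin{itemize}
\item A product-form upper bound valid for all labels (e.g.\ from negative association of uniform $m$-subsets). Combined with single-vertex tail estimates, it should show that configurations with some $h^+_{\ell_v}$ outside its window $h_n^+-d_v/(m+1)+\cO(\sqrt{d_v})$ contribute $o(\theta^{-\sum_v d_v})$.
\item The observation that inside the windows every choosing step has $A_j\ge\ell_v^{1/2+\delta}\gg h_n^+$, so the accumulated factor is $1+o(1)$.
\item A truncation of the greedy paths, which do merge near the root, at a level contributing $o(\sqrt{h_n^+})$. Above that level you must show the path families are asymptotically independent of each other and of the other vertices' degree events.
\end{itemize}

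This is what Section~\ref{sec:multiplevertices} does, in coalescent language where it is cleaner:
\begin{itemize}
\item The truncated selection sets on $[t_n,n]$ are independent of the dice rolls. Atypical configurations therefore cost at most $\theta^{-\sum_v d_v}\,\mathbb P(\text{atypical})$, using Lemma~\ref{lemma:upper_bound_tail_vertex_degrees}.
\item On $\cB_{\alpha,D}\cap\bigcap_\ell\cG_{\beta_\ell,\gamma_{\ell-1},\gamma_\ell}$ the events factorise exactly (Lemma~\ref{lemma:decouple_label_degree_multiple_vertices}). The likelihood ratio to independent copies is uniformly $1+o(1)$ (Lemma~\ref{lemma:replace_selection_sets_by_independent_copies}).
\item The part of the path below $t_n=\lceil(h_n^+)^{\gamma_0}\rceil$ is handled by Lemma~\ref{lemma:bound_u_n_2}.
\end{itemize}
This is where $I_{\mathbf x}=\cO((h_n^+)^{\gamma_0(1/2-\delta)})$ is actually used. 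For $k=1$ the paper needs only $I_{\mathbf x}=o(\sqrt{h_n^+})$, so attaching that hypothesis to the single-vertex step indicates that the multi-vertex step has not yet been worked out.
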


\begin{remark}\label{rem:greedy}
$(i)$ The condition that $d_v(n)$ tends to infinity with $n$ for each $v\in[k]$ can be omitted if one is interested in the asymptotic normality of the $u_n(V_v)$ only. In particular, we can take $d_v(n)=0$ for all $n$ and $v$, which for the special case $\mathbf x=(1,1,\ldots)$ recovers the result of Devroye and Janson on the greedy longest path of a typical vertex in URDs.

$(ii)$ The assumption that $I_{\xref}=\cO((h_n^+)^{\gamma_0(1/2-\delta)})$ can be weakened to $I_{\xref}=o(\sqrt{h_n^+})$ when $k=1$. In general, this additional condition which is not present in other results ensures that the contribution to the greedy longest path of vertices $V_1,\ldots, V_k$ by vertices $1,2,\ldots, I_{\xref}$ is sufficiently small, i.e.\ long paths from vertices in $[I_{\xref}]$ to $1$ are sufficiently short.

$(iii)$ Assumption~$\xref$\ref{item:hI} is not included, since the upper bound on $I_{\xref}$ and the choice of $\gamma_0$ already yield that $h_{I_{\xref}}^+\leq I_{\xref}=o(\sqrt{h_n^+})$, so that Assumption~$\xref$\ref{item:hI} is satisfied for any $\eps>0$. \ensymboldremark
\end{remark}

Theorem~\ref{theorem:label_multiple_vertices_conditional_degrees} shows that the label and greedy longest path of typical active vertices, conditionally on having a large degree, does behave significantly different under the influence of freezing. Indeed, $h_n^+=\log n+\cO(1)$ when $\mathbf x=(1,1,\ldots)$, but $h_n^+$ can grow much faster for other choice sequences (see the examples discussed in the next subsection), leading to distinctly different behaviour of the graph $G^{(n)}$, which contrasts with the `local' properties of $G^{(n)}$ presented earlier in this section. 

Similar to studying greedy long paths, one could study greedy \emph{short} paths, where the path follows the vertices with the smallest label rather than the largest label at each step, or the length of the longest or shortest path between a vertex $v$ and $1$. All these types of paths have been studied by Devroye and Janson in~\cite{Devroye.Janson.2011}. We do not investigate these paths here, as we feel the approach used here (a time-reversed construction of the URD with freezing, known as the Kingman coalescent construction) is not sufficiently tractable for a study of these paths, and other techniques are probably more appropriate.

\subsection{Examples}\label{sec:assdiscuss}

To conclude this section, we provide a range of examples of choice sequences $\mathbf x$ that satisfy Assumption~$\xref$. The variety of choice sequences supported by these classes demonstrates that our assumptions are relatively mild and natural.
\begin{example}[Bounded, sparse, or linear freezing]
\label{ex:bsl_freeze}
Let $\mathbf x$ be such that Assumption~$\xref$\ref{item:geq1} is satisfied, and that either $F_i$ is bounded in $i$ (in which case Part~\ref{item:Jn} can be omitted) or $F_i\to\infty$ as $i\to\infty$ so that $\limsup_{i\to\infty} F_i/i <1/2$. In either case, this implies that there exists $\xi>0$ and $N\in\N$ such that $A_i=i-2F_i>\xi i$ for all $i\geq N$, and that $h_n^+=\Theta(\log n)$ (the lower bound follows directly from~\eqref{eq:hn+lb}). As a result, setting $I_{\xref}=\lfloor n^\zeta\rfloor$ when $F_n$ is bounded in $n$, or $I_{\xref}=\lfloor F_n^\zeta\rfloor$ when $F_n$ is unbounded, for some sufficiently small $\zeta=\zeta(\eps,\eta)\in(0,1)$, it follows that Parts~\ref{item:hI} through~\ref{item:Jn} are also satisfied.
\end{example}

\begin{example}[All but polynomial freezing]\label{ex:poly_freeze} Fix $\alpha\in(1/2,1)$ and let $\mathbf x$ be such that Part~\ref{item:geq1} is satisfied, and that
\be 
\frac12< \liminf_{i\to\infty}\frac{\log(i/2-F_i)}{\log i}\leq \limsup_{i\to\infty}\frac{ \log(i/2-F_i)}{\log i} \leq \alpha.
\ee 
As $A_i=i-2F_i$, the bounds imply that there exist $\delta\in(0,\alpha-1/2)$, $\xi\in(0,1-\alpha)$, and $N\in\N$ so that $ i^{1/2+\delta}\leq A_i\leq i^{\alpha+\xi}$ for all $i\geq N$. Since $h_k^+\leq k$ and $|\cA_n|>n/2$, we can choose $\gamma\in(0,1-(\alpha+\xi))$ and set $I_{\xref}=\lfloor n^{\eta\wedge \gamma}\rfloor$, so that for any $\eps\in(0,1)$,
\be 
\eps h_n^+\geq \eps\sum_{i=N}^n \ind_{\{x_i=1\}}\frac{1}{i^{\alpha+\xi}}\geq \eps\sum_{i=\lceil n/2\rceil +N}^n \frac{1}{i^{\alpha+\xi}}\geq (\eps/2+o(1))n^{1-(\alpha+\xi)}\geq  I_{\xref}\geq h_{I_{\xref}}^+,
\ee 
We thus obtain that Parts~\ref{item:hI} through~\ref{item:Jn} are also satisfied.
\end{example}

The authors of~\cite{Bellin.2023} study a class of choice sequences with a linear number of active vertices, similar to the linear case in Example~\ref{ex:bsl_freeze}. In~\cite{Bellin.2024}, they extend this analysis to choice sequences with a polynomial amount of active vertices, similar to Example~\ref{ex:poly_freeze} with $\alpha\in(0,1)$ instead of $\alpha\in(1/2,1)$ as in our case. Their approach, however, requires more precise control over the sequence $(A_n)_{n\in\N}$, leading to stronger  assumptions for the choice sequence compared to Assumption~$\xref$. As a result, our theorems apply to a broader variety of choice sequences (when $\alpha\in(1/2,1)$). 

We conclude with discussing \emph{random} choice sequences. We provide examples of random choice sequences almost surely being supported by the classes in Examples~\ref{ex:bsl_freeze} and~\ref{ex:poly_freeze}.

\begin{example}[Random choice sequences]
All the results stated in this section also apply to random sequences $\mathbf X$ in $\{-1,1\}^\N$ that satisfy Assumption~$\xref$ (and any additional result-specific assumptions) almost surely. Here, Part~\ref{item:Jn} can be omitted if $\mathbf X$ contains finitely many $-1$'s almost surely. Suppose that $\mathbf X=(\mathbf X_1,\mathbf X_2,\ldots)$ satisfies that $\mathbf X_i$ is independent of $\mathbf X_j$ for all $i\neq j$ with $\P{\mathbf X_i=1}\eqcolon p_i$. The following examples satisfy Assumption~$\xref$:
\begin{itemize}
\item $\liminf_{i\to\infty}p_i>1/2$, and conditionally on the event $\{\tau(\mathbf X)>n-1 \text{ for all }n\in\N\}$. 
\item $-\frac12 <\liminf_{i\to\infty} \log(p_i-1/2)/\log i \leq \limsup_{i\to\infty} \log(p_i-1/2)/\log i\leq \alpha-1$ for some $\alpha\in (1/2,1)$, and conditionally on the event $\{\tau(\mathbf X)>n-1 \text{ for all }n\in\N\}$.
\end{itemize}
\end{example}

\textbf{Structure of the paper. } In Section~\ref{sec:king} we introduce an alternative `time-reversed' construction of the URD model with freezing, called the Kingman coalescent construction. We use this construction throughout the remainder of the paper. Section~\ref{sec:degs} provides the proofs of the results presented in Section~\ref{sec:degresults}, where Theorem~\ref{theorem:joint_degree_distribution} is proved in Section~\ref{subsec:asymptotic_joint_degree_distribution} and Corollary~\ref{cor:LLN} and Theorems~\ref{theorem:poisson_convergence}, \ref{theorem:maximum_degree}, and~\ref{theorem:asymptotic_normality_smaller_degrees} are proved in Section~\ref{subsec:large_degree_vertices}. Finally, Section~\ref{sec:singlevertex} provides the proof of Theorem~\ref{theorem:label_multiple_vertices_conditional_degrees} in the case of a single vertex (i.e.\ $k=1$) and this result is extended to arbitrarily many active vertices in Section~\ref{sec:multiplevertices}.

\section{The Kingman coalescent for the URD model with freezing}\label{sec:king}

In this section we introduce the Kingman coalescent construction of the URD model with freezing. This is a generalised version of the Kingman coalescent construction of the random recursive tree with freezing, i.e.\ when $m=1$. This construction has proved fruitful in understanding properties of the RRT model (see~\cite{Addario.Eslava.2018,Eslava.2021,Eslava.2017,Lodewijks.2023}) and the RRT model with freezing (see~\cite{Bellin.2023,Bellin.2024,Brandenberger.2025}) and here we adapt it to the URD model with freezing.

Section~\ref{chap:kingman_coalescent} presents the Kingman coalescent construction, where we show the construction yields a directed acyclic graph with the correct distribution. Section~\ref{subsec:selection_and_connection_sets} introduces several concepts related to the Kingman coalescent that we use in the analysis of the degrees, labels, and greedy longest paths of vertices.

\subsection{The Kingman coalescent}
\label{chap:kingman_coalescent}

Before we formally introduce the Kingman coalescent construction, let us introduce the following terminology for directed acyclic graphs. 

\begin{definition}
\label{def:root+in_comp}
Let $G$ be a locally finite directed acyclic graph. A vertex $v \in G$ is called a $\emph{root}$ if its out-degree equals zero. A root $v$, together with all the vertices connected to it via directed paths in $G$, forms the $\emph{in-component}$ of $v$. See Figure~\ref{fig:components_roots} for an example.
\end{definition}

\begin{figure}[h]
\centering
\begin{tikzpicture}[
vertex/.style={circle,draw,minimum size=14pt,inner sep=1pt},
vertexorange/.style={vertex, draw=orange, text=orange},
vertexpurple/.style={vertex, draw=green, text=green},
>=stealth
]

\node[vertexorange] (v1) at (-1.4,1.6) {1};
\node[vertexpurple] (v2) at (1.4,1.6) {2};
\node[vertex]       (v3) at (0,0)     {3};
\node[vertex]       (v4) at (-1.4,0)   {4};
\node[vertex]       (v5) at (1.4,0)    {5};

\draw[->] (v3) -- (v1);
\draw[->] (v3) -- (v2);
\draw[->] (v3) -- (v4);
\draw[->] (v5) -- (v3);
\draw[->] (v4) -- (v1);
\draw[->] (v5) -- (v2);

\draw[draw=green]
(-0.9,-0.35) -- (1.9,2.7) -- (1.9,-0.35) -- cycle;

\draw[draw=orange]
(-1.9,-0.5) -- (-1.9,2.6) -- (3.0,-0.5) -- cycle;

\end{tikzpicture}
\caption{An example of roots and their in-components in a directed acyclic graph. Vertices 1 and 2 are roots and the  coloured triangular boxes mark their corresponding in-components.}
\label{fig:components_roots}
\end{figure}
Fix a choice sequence $\mathbf x$ such that Assumption~$\xref$\ref{item:geq1} is satisfied and recall the sequence $(A_n)_{n\in\N}$ from~\eqref{eq:C}. Let us now present the Kingman coalescent construction for URDs with freezing.

\begin{definition}
\label{def:coalescent}
Fix $m,n\in\N$ and fix a choice sequence $\mathbf x$ such that Assumption~$\xref$\ref{item:geq1} is satisfied. The \emph{Kingman $(m,n)$-coalescent with freezing} is the sequence of random graphs $(G_n,\ldots,G_1)$, constructed as follows.\\
We initialise $G_n$ as the graph without edges consisting of $A_n$ \emph{active} vertices labelled $a_1,\ldots,a_{A_n}$ and $F_n$ \emph{frozen} vertices labelled $f_1,\ldots,f_{F_n}$. For $2\leq i\leq n$, construct $G_{i-1}$ from $G_i$ in the following way: If $x_i = -1$, select a frozen vertex uniformly at random and \emph{activate} it by relabelling it as $i$. This vertex is now considered active. If $x_i=1$, select $(m+1)\wedge A_i=(m\wedge A_{i-1})+1$ active roots uniformly at random. Then, independently of everything else, choose one of these $(m+1)\wedge A_i$ roots uniformly at random and connect it via directed edges to each of the other selected roots.
\end{definition}

\begin{remark}
The Kingman $(m,n)$-coalescent with freezing is a generalised version of the $m=1$ case of the Kingman $n$-coalescent with freezing introduced in~\cite{Bellin.2023}.  Setting $m=1$ and $\mathbf x=(1,1,\ldots)$ recovers the Kingman $n$-coalescent for the random recursive tree model.\ensymboldremark
\end{remark}

\begin{remark}
Beyond the generalisation of the  Kingman coalescent construction to the case $m>1$, compared to the construction introduced by Bellin et.\ al for uniform attachment trees with freezing~\cite{Bellin.2023} our definition of this construction has other differences as well. For example, we keep track of all frozen vertices throughout the process and label active vertices differently, whereas the construction in~\cite{Bellin.2023} simply adds an active vertex to the process whenever $x_i=-1$ and labels all active vertices $a_v$ for some $v\in[n]$. We need to keep track of this additional information throughout the construction to be able to analyse the vertex statistics of interest. \ensymboldremark
\end{remark}

See an example of the Kingman $(m,n)$-coalescent for $m=2$ and $n=6$ with $\mathbf x=(1,1,1,-1,1)$ in Figure~\ref{fig:coalescent_process}.
\begin{figure}[h]
\begin{center}
\begin{tikzpicture}[
vertex/.style={
circle,
draw,
minimum size=12pt,
inner sep=1pt,
text height=1.5ex,
text depth=.25ex,
text width=1em,
align=center},
vertexred/.style={vertex, draw=red, text=red},
vertexblue/.style={vertex, draw=blue!80!black, text=blue!80!black},
up/.style={->,>=stealth, thick},
down/.style={<-,>=stealth, thick},
upx/.style={->,>=stealth, thick, dotted},
downx/.style={<-,>=stealth, thick, dotted},
label/.style={inner sep=0pt}
]


\begin{scope}[yshift=0cm]
\node[vertexred] (a3) at (-.45,.3) {$a_3$};
\node[vertexred] (a2) [left=.65cm of a3] {$a_2$};
\node[vertexred] (a1) [left=.65cm of a2] {$a_1$};
\node[vertexred] (a4) [right=.65cm of a3] {$a_4$}
edge [upx, in=60, out=120] (a2)
edge [upx, in=60, out=120] (a1);
\node[vertexblue] (f1) [right=.65cm of a4] {$f_1$};

\node[label] at (-3.6,.3) {$g_6$:};
\end{scope}

\begin{scope}[yshift=-2.3cm]
\node[vertexred] (b3) at (0,.3) {$a_3$};
\node[vertexred] (b2) [left=.9cm of b3] {$a_2$}
edge [upx, in=120, out=60] (b3);
\node[vertex] (b5) [below left=.95cm and .4cm of b2] {$a_4$}
edge [up] (b2);
\node[vertexred] (b1) [left=.9cm of b2] {$a_1$}
edge [down] (b5)
edge [downx, in=120, out=60] (b2);
\node[vertexblue] (b4) [right=.9cm of b3] {$f_1$};

\node[label] at (-3.6,.3) {$g_5$:};
\end{scope}

\begin{scope}[yshift=-4.9cm]
\node[vertexred] (c4) at (0.6,.3) {$4$};
\node[vertexred] (c3) [left=.9cm of c4] {$a_3$};
\node[vertexred] (c1) [left=.9cm of c3] {$a_1$};
\node[vertex] (c5) [below=.95cm of c1] {$a_4$}
edge [up] (c1);
\node[vertex] (c2) [below=.95cm of c3] {$a_2$}
edge [up] (c3)
edge [up] (c1)
edge [down] (c5);

\node[label] at (-3.6,.3) {$g_4$:};
\end{scope}


\begin{scope}[xshift=6.2cm,yshift=0cm]
\node[vertexred] (d4) at (.75,.3) {$4$};
\node[vertexred] (d3) [left=.9cm of d4] {$a_3$}
edge [upx, in=120, out=60] (d4);
\node[vertexred] (d1) [left=.9cm of d3] {$a_1$}
edge [downx, in=120, out=60] (d3);
\node[vertex] (d5) [below=.95cm of d1] {$a_4$}
edge [up] (d1);
\node[vertex] (d2) [below=.95cm of d3] {$a_2$}
edge [up] (d3)
edge [up] (d1)
edge [down] (d5);

\node[label] at (-3,.3) {$g_3$:};
\end{scope}

\begin{scope}[xshift=6.4cm,yshift=-2.3cm]
\node[vertexred] (e4) at (-.1,.1) {$4$};
\node[vertexred] (e1) [left=.9cm of e4] {$a_1$}
edge [upx, in=120, out=60] (e4);
\node[vertex] (e2) [below=.95cm of e1] {$a_2$}
edge [up] (e1);
\node[vertex] (e3) [below=.95cm of e4] {$a_3$}
edge [up] (e4)
edge [up] (e1)
edge [down] (e2);
\node[vertex] (e5) [left=.9cm of e2] {$a_4$}
edge [up] (e1)
edge [up] (e2);

\node[label] at (-3.2,.1) {$g_2$:};
\end{scope}

\begin{scope}[xshift=6.2cm,yshift=-5.6cm]    
\node[vertex] (f2) at (.4,0) {$a_3$};
\node[vertex] (f4) [above left=0.75cm and 0.65 of f2] {$4$}
edge [down] (f2);
\node[vertex] (f1) [left=1.5cm of f2] {$a_1$}
edge [down] (f2)
edge [up] (f4);
\node[vertex] (f5) [below left=0.75cm and 0.65cm of f2] {$a_2$}
edge [up] (f1)
edge [up] (f2);
\node[vertex] (f3) [left=1.5cm of f5] {$a_4$}
edge [up] (f5)
edge [up] (f1);
\node[label] at (-3,1.05) {$g_1$:};
\end{scope}

\end{tikzpicture}
\end{center}
\caption{The sequence $(g_6,\ldots,g_1)$ is a possible realisation of the Kingman $(2,6)$-coalescent $(G_6,\ldots,G_1)$ with $\mathbf x=(1,1,-1,1,1)$. In each step, the active roots are red and the frozen vertices blue. The dotted lines represent the added edges.}
\label{fig:coalescent_process}
\end{figure}
We informally describe the construction process using dice rolls. In each step $i$, we either activate a frozen root if $x_i=-1$ or, if $x_i=1$, select $(m+1) \wedge A_i$ distinct active roots in $G_i$ uniformly at random to participate in a dice roll. We call these the $\emph{selected}$ roots of step $i$ and order them arbitrarily. We roll a fair $((m+1) \land A_i)$-sided dice and say the $k^{\mathrm{th}}$ selected root  \emph{loses} the dice roll if the dice comes up $k$. The other selected roots \emph{win} the dice roll. The root that lost is connected by directed edges to the winning roots. Note that by the definition of a root, the root that lost is no longer a root in the resulting graph (as its out-degree is now non-zero), whilst the roots that won continue to be roots. Consequently, there are $A_i$ active roots in $G_i$ for each $i$, and the number of active roots in the coalescent decreases by one if $x_i=1$ and increases by one if $x_i=-1$ when constructing $G_{i-1}$. 

In the remainder of this section, we prove that there exists a function $\Phi$ that maps $G_1$ onto a directed acyclic graph with relabelled vertices, in such a way that $\Phi(G_1)\overset \dd= G^{(n)}$, where we recall that $G^{(n)}$ is a uniform recursive URD with freezing on $n$ vertices. This is split among Lemmas~\ref{prop:cardinality_increasing_dags} and~\ref{lemma:coalescent_yields_random_chain} and Proposition~\ref{prop:relabelled_coalescent}. Finally, Corollary~\ref{corollary:exchangeability_degrees} described the degrees, labels, and lengths of greedy longest paths of uniform vertices in $G^{(n)}$ to those of fixed vertices in the final graph $G_1$ in the Kingman $(m,n)$-coalescent.

Fix a choice sequence $\mathbf x$ such that Assumption~$\xref$\ref{item:geq1} is satisfied. For $m,n\in\N$, let $\cI_n^{(m)}(\mathbf x)$ be the set of all possible URDs with freezing $G^{(n)}$ that can be constructed. The following result shows that $G^{(n)}$ is a uniform element of $\cI_n^{(m)}(\mathbf x)$.

\begin{lemma}
\label{prop:cardinality_increasing_dags}
Fix a choice sequence $\mathbf x$ such that Assumption~$\xref$\ref{item:geq1} is satisfied. For $m,n\in\N$, we have
\begin{equation}
|\CI_n^{(m)}(\mathbf x)| =  \prod_{\substack{2\leq i\leq n \\ x_i=1}} {A_{i-1} \choose m\wedge A_{i-1}} \prod_{\substack{2\leq i\leq n \\ x_i=-1}} A_{i-1},
\end{equation}
and the $\mathrm{URD}$ $G^{(n)}$ with choice sequence $\mathbf x$ is a uniform element of $\CI_n^{(m)}(\mathbf x)$.
\proof
In the construction of an element of $\CI_n^{(m)}(\mathbf x)$, for each step $i\in\cA_n\setminus\{1\}$, there are $A_{i-1} \choose m\wedge A_{i-1}$ possible choices of active vertices to which the newly added vertex can be connected. Additionally, in each step $j\in\cF_n$, we freeze exactly one of the $A_{j-1}$ active vertices. As the second label entry for frozen vertices is their freezing time (rather than the default value $a$ for active vertices), each frozen vertex's freezing time is uniquely identifiable in the final graph. Hence,
\begin{equation}
|\cI_n^{(m)}(\mathbf x)|= \prod_{\substack{2\leq i\leq n \\ x_i=1}} {A_{i-1} \choose m\wedge A_{i-1}} \prod_{\substack{2\leq j\leq n \\ x_j=-1}} A_{j-1}. 
\end{equation}
Since for a URD, independently at each step $i$, if $x_i=1$, the choices of the distinct vertices are uniform among the $A_{i-1}$ active vertices, and if $x_i=-1$, a uniform random vertex among the $A_{i-1}$ active vertices is frozen, a URD is a uniform element of $\cI_n^{(m)}(\mathbf x)$.\qed
\end{lemma}

We let $\CJ_n^{(m)}(\mathbf x)$ denote the set of all possible outcomes $(g_n,\ldots,g_1)$ of the Kingman $(m,n)$-coalescent $(G_n,\ldots,G_1)$ with choice sequence $\mathbf x$.

\begin{lemma}
\label{lemma:coalescent_yields_random_chain}
Fix a choice sequence $\mathbf x$ such that Assumption~$\xref$\ref{item:geq1} is satisfied. For $m,n \in \N$, we have $|\CJ_n^{(m)}(\mathbf x)| = A_n!F_n!|\CI_n^{(m)}(\mathbf x)|$ and the Kingman $(m,n)$-coalescent with freezing yields a uniform element of $\CJ_n^{(m)}(\mathbf x)$.
\proof
Let $(g_n,\ldots,g_1)$ be an element of $\CJ_n^{(m)}(\mathbf x)$. Here, $g_n$ is always the graph consisting of $A_n$ vertices labelled $a_1\ldots,a_{A_n}$ and $F_n$ vertices labelled $f_1,\ldots,f_{F_n}$ with no edges. At each step $i\geq 2$ such that $x_i=1$, there are $\binom{A_i}{(m+1)\wedge A_i}$ many possibilities to select $(m+1)\wedge A_i$ roots out of the $A_i$ active roots. Then, we choose one out of these $(m+1)\wedge A_i$ selected roots to be the losing root. As we connect the losing root to the other selected roots by directed edges, each choice of these $(m+1)\wedge A_i$ roots and a loser amongst them leads to a distinct realisation of the sequence $(g_n,\ldots,g_1)$. On the other hand, for each step $i\geq 2$ such that $x_i=-1$, we activate a uniform frozen vertex and relabel it $i$. As this relabelling does not depend on which frozen vertex we selected, the sequence $(g_n,\ldots,g_1)$ is only affected by the order in which the vertices $f_1,\ldots,f_{F_n}$ are chosen to be activated. Consequently, we have
\begin{equation}
\label{eq:coalescent_uniform_chain}
|\CJ_n^{(m)}(\mathbf x)| = F_n!\prod_{\substack{2\leq i\leq n\\ x_i=1}}((m+1)\wedge A_i)\binom{A_i}{(m+1)\wedge A_i}=F_n!\prod_{\substack{2\leq i\leq n\\ x_i=1}}A_i \binom{A_{i-1}}{m\wedge A_{i-1}},
\end{equation}
where the last step uses that $\ell \binom k\ell =k\binom{k-1}{\ell-1}$ for all $k\geq \ell\geq 1$ and that $A_i=A_{i-1}+1$ when $x_i=1$. By Lemma~\ref{prop:cardinality_increasing_dags}, to prove that $|\cJ_n^{(m)}(\mathbf x)|=A_n!F_n!|\cI_n^{(m)}(\mathbf x)|$, it suffices to show that
\begin{equation}
\label{eq:induction_card_URD_coalescent}
\prod_{\substack{2\leq i\leq n\\ x_i=1}}A_i  = A_n!\prod_{\substack{2\leq j\leq n \\ x_j=-1}} A_{j-1}=A_n!\prod_{\substack{2\leq j\leq n\\ x_j=-1}}(A_j+1).
\end{equation}
This readily follows from the fact that every step $j$ such that $x_j=-1$ can be uniquely paired with the largest step $i<j$ such that $x_i=1$ and $A_i=A_j+1$, and that the remaining $A_n$ many steps $i$ such that $x_i=1$ satisfy that the values of $A_i$ are unique and in $\{1,2,\ldots, A_n\}$. Since all selections in the Kingman $(m,n)$-coalescent are uniform, it follows that $(G_n,\ldots, G_1)$ is a uniform element of $\cJ_n^{(m)}(\mathbf x)$.\qed
\end{lemma}
Fix a choice sequence $\mathbf x$ such that Assumption~$\xref$\ref{item:geq1} is satisfied. For a graph $G$, we let $V(G)$ and $E(G)$ denote its vertex and edge set, respectively.  There is a natural mapping between $\CJ_n^{(m)}(\mathbf x)$ and $\CI_n^{(m)}(\mathbf x)$: Given $C \coloneq (g_n,\ldots,g_1)\in\CJ_n^{(m)}(\mathbf x)$, we define an edge labelling function on $g_1$ that assigns each edge the step of its addition by
\begin{equation}\label{eq:Lminus}
L_C^-\colon E(g_1) \to \cA_n\setminus\{1\},\qquad e \mapsto \min\{i \in \{2,\ldots,n\}: e \notin E(g_i)\}.
\end{equation}
Now, we define a vertex labelling function $L_C\colon V(g_1)\to \cA_n \times ({\{a\}\cup\cF_n})$ as $L_C(u) \coloneq (x,y)$, where
\be
\label{eq:relabelling_function_L_C}
x\coloneq 
\begin{cases*}
L_C^-((u,v)) \text{ for any edge } (u,v) \in E(g_1)&\mbox{if outdeg$(u)>0,$}\\
1 &\mbox{otherwise},
\end{cases*}
\ee 
where $\text{outdeg}(u)$ denotes the out-degree of the vertex $u$ in $g_1$, and
\be \label{eq:yrelabel}
y\coloneq
\begin{cases*}
u &\mbox{if $u \in \cF_n$},\\
a &\mbox{otherwise}.
\end{cases*}
\ee 
The first element of the tuple $(x,y)$ that $L_C$ assigns to $u$ is the step it lost its first dice roll, or number 1 for the unique vertex that never lost a dice roll in the coalescent process. The second element is either its activation time for a vertex that was initially frozen and has been activated during the  coalescent, or $a$ for any other vertex that was already active in $g_n$. Note that the vertex labelling $L_C(u)$ is well-defined, as the outgoing edges of a fixed vertex $u$ have all been added in the same step, so that for a fixed vertex $u$, the value $L^-((u,v))$ is the same for any edge $(u,v)\in E(g_1)$. If we consider the edges along a directed path, the edge labelling function $L_C^-$ is decreasing by construction. The first entry of the vertex labelling $L_C$ is thus also decreasing along directed paths. Hence, the relabelling $L_C$ of the vertices of the final graph in $C$ constructed by the Kingman $(m,n)$-coalescent with freezing yields a graph in $\CI_n^{(m)}(\mathbf x)$. As an example, Figure~\ref{fig:coalescent_relabelling} shows this relabelling by $L_C$ based on the realisation of Kingman's $(2,6)$-coalescent from Figure~\ref{fig:coalescent_process}.
\begin{figure}
\begin{tikzpicture}[
vertex/.style={
circle,
draw,
minimum size=12pt,
inner sep=1pt,
text height=1.5ex,
text depth=.25ex,
text width=1.5em,
align=center},
up/.style={->,>=stealth, thick},
down/.style={<-,>=stealth, thick},
label/.style={inner sep=0pt}
]


\begin{scope}[xshift=0cm]
\node[vertex] (f2) at (-.3,0) {$a_3$};
\node[vertex] (f4) [above left=0.75cm and 0.65 of f2] {$4$}
edge [down] node[auto] {3} (f2);
\node[vertex] (f1) [left=1.5cm of f2] {$a_1$}
edge [down] node[auto] {3} (f2)
edge [up] node[auto] {2} (f4);
\node[vertex] (f5) [below left=0.75cm and 0.65cm of f2] {$a_2$}
edge [up] node[auto,swap] {5} (f1)
edge [up] node[auto,swap] {5} (f2);
\node[vertex] (f3) [left=1.5cm of f5] {$a_4$}
edge [up] node[auto] {6} (f5)
edge [up] node[auto] {6} (f1);

\node[label] at (-3.8,1.1) {$g_1$:};

\end{scope}


\begin{scope}[xshift=6cm]

\node[vertex] (f2) at (-.8,0) {$3,a$};
\node[vertex] (f4) [above left=0.75cm and 0.65 of f2] {$1,4$}
edge [down] (f2);
\node[vertex] (f1) [left=1.5cm of f2] {$2,a$}
edge [down] (f2)
edge [up] (f4);
\node[vertex] (f5) [below left=0.75cm and 0.65cm of f2] {$5,a$}
edge [up] (f1)
edge [up] (f2);
\node[vertex] (f3) [left=1.5cm of f5] {$6,a$}
edge [up] (f5)
edge [up] (f1);

\node[label] at (-3.8,1.1) {$g_1$ by $L_C$:};

\end{scope}

\end{tikzpicture}
\caption{An example for the vertex relabelling $L_C$ based on the graph $g_1$ from Figure~\ref{fig:coalescent_process}. The left graph shows $g_1$ with its edges labelled by $L_C^-$ and the right graph $g_1$ relabelled by $L_C$.}
\label{fig:coalescent_relabelling}
\end{figure}

Furthermore, we define the mapping $\Phi\colon \CJ_n^{(m)}(\mathbf x) \to \CI_n^{(m)}(\mathbf x)$ as $C=(g_n,\ldots,g_1) \mapsto G$, where  $G$ equals $g_1$ with its vertices relabelled by $L_C$. In the following proposition we show that relabelling a uniformly random element of $\CJ_n^{(m)}(\mathbf x)$ by $\Phi$ leads to a uniformly random element of $\CI_n^{(m)}(\mathbf x)$, and together with Proposition~\ref{prop:cardinality_increasing_dags} we can deduce that the relabelled Kingman coalescent has the law of a URD.
\begin{proposition}
\label{prop:relabelled_coalescent}
Fix a choice sequence $\mathbf x$ such that Assumption~$\xref$\ref{item:geq1} is satisfied. For $m,n\in\N$ fixed, we have $\Phi((G_n,\ldots,G_1)) \overset \dd= G^{(n)}$.
\proof
We prove that $\Phi$ is surjective and an $A_n!F_n!$-to-$1$ mapping. Lemmas~\ref{prop:cardinality_increasing_dags} and~\ref{lemma:coalescent_yields_random_chain} then yield the desired result. 

Fix $G \in \CI_n^{(m)}(\mathbf x)$. To prove the surjectivity of $\Phi$, we construct a sequence $C\coloneq(g_n,\ldots, g_1)\in\cJ_n^{(m)}(\mathbf x)$ such that $\Phi(C)=G$. To this end, we first construct a different sequence $C'=(g_n',\ldots, g_1')$ of graphs. Here, $g_n'$ consists of $A_n+F_n$ many isolated vertices, and the vertices are labelled by the elements of $\cA_n$. Note that $C'\notin \cJ_n^{(m)}(\mathbf x)$  due to the different labelling. Using elements of $\cA_n$ as labels is, however, more convenient for the construction, and we apply a relabelling later to turn $C'$ into $C\in\cJ_n^{(m)}(\mathbf x)$. 

To construct $C'$, we create $g'_{i-1}$ from $g'_i$ for $i\in\cA_n$ by adding the $m\wedge A_{i-1}$ edges that correspond to the added edges in the construction of $G$ at step $i$, and for $i\in\cF_n$ by relabelling the vertex that corresponds to the vertex that was frozen in $G$ at step $i$, by assigning it the label $i$. First, we define the sets of edges that are added in each step $i\in\cA_n$ of the construction process of $C'$. To this end, for each $(i,x)\in V(G)$, where $i\in\cA_n$ and $x=a$ or $x\in \cF_n$, define the vertex set 
\be 
V_i \coloneq \{v\colon ((i,x),(v,y)) \in E(G) \text{ for some } (v,y)\in\cA_n \times ({\{a\}\cup\cF_n})\}.
\ee 
Here, we keep only $v$ from the tuple $(v,y)$, as the vertices in the sequence $C'$ have a number as their label (from either $\cA_n$ or $\cF_n$), whereas the vertices in the construction of $G$ have tuples as labels. Note that $|V_i|=m\wedge A_{i-1}$ for all $i \in \cA_n\setminus\{1\}$. We then set $E_i \coloneq \{(i,v)\colon v \in V_i\}$ for $i\in \cA_n\setminus\{1\}$. Similarly, for  $(i,x)\in V(G)$ such that $x\in \cF_n$ (i.e.\ all frozen vertices in $G$), we define the edge set $E_i \coloneq \{(x,v)\colon v \in V_i\}$. We have to define the edge sets for vertices in $C'$ corresponding to frozen vertices in $G$ differently since (as we will see in then next paragraph) they are relabelled in the construction process of $C'$, which corresponds to activating frozen vertices in the Kingman coalescent, and their edges are added only after their relabelling. 

We are now ready to formally write down the construction of $C' \coloneq (g'_n,\ldots,g'_1)$. Recall that $g'_n$ is the graph with $A_n+F_n$ isolated vertices labelled by the elements of $\cA_n$. For $n \geq i \geq 2$, construct $g'_{i-1}$ from $g'_i$ as follows. If $i\in\cA_n$, we construct $g'_{i-1}$ by adding all the edges in $E_{i}$ to $g'_i$. If $i\in\cF_n$, there exists a vertex $(v,i)\in V(G)$ for some $v\in\cA_n$. We then construct $g'_{i-1}$ by relabelling the vertex $v$ in $g'_i$ as $i$. Note that, by this construction, $A_n$ many vertices in $g'_1$ have a label from the set $\cA_n$, and $F_n$ vertices from $g'_1$ have a label from the set $\cF_n$. 

We abuse notation and let $L_{C'}^-$ and $L_{C'}$ denote the mappings introduced in~\eqref{eq:Lminus}, \eqref{eq:relabelling_function_L_C}, and~\eqref{eq:yrelabel}, but for the sequence $C'$ (despite that $C'\notin\cJ_n^{(m)}(\mathbf x)$). For $i\in\cA_n\setminus\{1\}$ we have $L_{C'}^-(e) = i$ for each $e\in E_i$, since the edges of set $E_i$ have all been added in step $i$, that is, when we constructed $g'_{i-1}$ from $g'_i$. Consequently, for $i\in \cA_n$ such that $(i,a)\in V(G)$ (the $A_n$ vertices  in $g_1'$ with labels from $\cA_n$), we have $L_{C'}(i) = (i,a)$, and for $j\in\cF_n$ such that $(v,j)\in V(G)$ for some $v\in \cA_n$ (the other $F_n$ vertices in $g'_1$, which have labels from $\cF_n$), we get $L_{C'}(j) = (v,j)$. Hence, the relabelling $\Phi(C')$ yields $G$. 

Now, we create an element $C\in\CJ_n^{(m)}(\mathbf x)$ from $C'$ by relabelling the vertices of $C'$. There are $A_n$ many vertices in $g_1'$ with a label from $\cA_n$. Relabel these vertices as $a_1,\ldots, a_{A_n}$ in an arbitrary order, and relabel them in $g_2', \ldots, g_n'$ with the same labels as well. There are $F_n$ many vertices in $g_1'$ with a label from $\cF_n$, say the labels $\{j_1,\ldots, j_{F_n}\}=\cF_n$, which are in some arbitrary order. The vertex in $g_1'$ with label $j_i$ for some $i\in[F_n]$ is relabelled to $f_i$  \emph{only} in $g_{j_i+k}'$ for $k\in\{1,\ldots, n-j_i\}$. This relabelling yields $C$, and one can verify that $C\in \cJ_n^{(m)}(\mathbf x)$. See Figure~\ref{fig:Cprime_to_C} for an example.

\begin{figure}
\centering
\begin{tikzpicture}[
vertex/.style={
circle,
draw,
minimum size=12pt,
inner sep=1pt,
text height=1.5ex,
text depth=.25ex,
text width=1em,
align=center},
up/.style={->,>=stealth, thick},
down/.style={<-,>=stealth, thick},
upx/.style={->,>=stealth, thick, dotted},
downx/.style={<-,>=stealth, thick, dotted},
label/.style={inner sep=0pt}
]


\begin{scope}[yshift=0cm]
\node[vertex] (a3) at (-.45,.3) {$3$};
\node[vertex] (a2) [left=.65cm of a3] {$5$};
\node[vertex] (a1) [left=.65cm of a2] {$2$};
\node[vertex] (a4) [right=.65cm of a3] {$6$};
\node[vertex] (f1) [right=.65cm of a4] {$1$};

\node[label] at (-3.6,.3) {$g_6'$:};
\end{scope}

\begin{scope}[yshift=-1.0cm]
\node[vertex] (b3) at (0,.3) {$3$};
\node[vertex] (b2) [left=.9cm of b3] {$5$};
\node[vertex] (b5) [below left=.95cm and .4cm of b2] {$6$}
edge [up] (b2);
\node[vertex] (b1) [left=.9cm of b2] {$2$}
edge [down] (b5);
\node[vertex] (b4) [right=.9cm of b3] {$1$};

\node[label] at (-3.6,.3) {$g_5'$:};
\end{scope}

\begin{scope}[yshift=-3cm]
\node[vertex] (c4) at (1.1,.3) {$4$};
\node[vertex] (c3) [left=.9cm of c4] {$3$};
\node[vertex] (c1) [left=.9cm of c3] {$2$};
\node[vertex] (c5) [below=.95cm of c1] {$6$}
edge [up] (c1);
\node[vertex] (c2) [below=.95cm of c3] {$5$}
edge [up] (c3)
edge [up] (c1)
edge [down] (c5);

\node[label] at (-3.6,.3) {$g_4'$:};
\end{scope}


\begin{scope}[xshift=6.6cm,yshift=0cm]
\node[vertex] (a3) at (-.45,.3) {$a_3$};
\node[vertex] (a2) [left=.65cm of a3] {$a_2$};
\node[vertex] (a1) [left=.65cm of a2] {$a_1$};
\node[vertex] (a4) [right=.65cm of a3] {$a_4$};
\node[vertex] (f1) [right=.65cm of a4] {$f_1$};

\node[label] at (-3.6,.3) {$g_6$:};
\end{scope}

\begin{scope}[xshift=6.6cm,yshift=-1.0cm]
\node[vertex] (b3) at (0,.3) {$a_3$};
\node[vertex] (b2) [left=.9cm of b3] {$a_2$};
\node[vertex] (b5) [below left=.95cm and .4cm of b2] {$a_4$}
edge [up] (b2);
\node[vertex] (b1) [left=.9cm of b2] {$a_1$}
edge [down] (b5);
\node[vertex] (b4) [right=.9cm of b3] {$f_1$};

\node[label] at (-3.6,.3) {$g_5$:};
\end{scope}

\begin{scope}[xshift=6.6cm,yshift=-3cm]
\node[vertex] (c4) at (1.1,.3) {$4$};
\node[vertex] (c3) [left=.9cm of c4] {$a_3$};
\node[vertex] (c1) [left=.9cm of c3] {$a_1$};
\node[vertex] (c5) [below=.95cm of c1] {$a_4$}
edge [up] (c1);
\node[vertex] (c2) [below=.95cm of c3] {$a_2$}
edge [up] (c3)
edge [up] (c1)
edge [down] (c5);

\node[label] at (-3.6,.3) {$g_4$:};
\end{scope}
\end{tikzpicture}
\caption{An example of how to construct $C\in\CJ_6^{(2)}(\mathbf x)$ from $C'$ with $C=(g_6,\ldots,g_1)$ and $\x=(1,1,-1,1,1)$ as in Figure~\ref{fig:coalescent_process} (restricted to steps $6,5$, and $4$). Note that in $g_6'$ and $g_5'$ the vertices have labels from the set $\mathcal{A}_6$ and that the in step $4$  activated vertex with label $4$ in $g_4'$ has a different label in $g_6$ and $g_5$ (namely $f_1$)  and has the same label in $g_4$.}
\label{fig:Cprime_to_C}
\end{figure}

Finally, we observe that the relabelling $\Phi$ applied to $C$ still yields $G$, as it does not depend on the labels $a_1,\ldots, a_{A_n}$ and $f_1,\ldots, f_{F_n}$, but only on the edge labelling $L_C^-$, as in~\eqref{eq:Lminus}, and the vertex relabelling in~\eqref{eq:yrelabel}. This proves the surjectivity of $\Phi$. 

The fact that the labels $a_1,\ldots, a_{A_n}$ and $f_1,\ldots, f_{F_n}$ of the vertices in $g_n$ do not influence the output $\Phi(C)$ implies that for any $C_{\sigma,\tau}$, obtained from $C$ by permuting the vertices in $C$ by the permutations $\sigma\colon \{a_1,\ldots,a_{A_n}\} \to \{a_1,\ldots,a_{A_n}\}$ and $\tau\colon \{f_1,\ldots,f_{F_n}\} \to \{f_1,\ldots,f_{F_n}\}$, we still have $\Phi(C_{\sigma,\tau})=\Phi(C)=G$. With $A_n!$ possible choices for $\sigma$ and $F_n!$ possible choices for $\tau$, we know that there are at least $A_n!F_n!$ preimages under $\Phi$ for each $G \in \CI_n^{(m)}(\mathbf x)$. We conclude that $|\Phi^{-1}(G)|=A_n!F_n!$ due to the first part of Lemma~\ref{lemma:coalescent_yields_random_chain}, so that $\Phi$ is indeed $A_n!F_n!$-to-$1$. Each element of $\CI_n^{(m)}(\mathbf x)$ having the same amount of preimages means that a uniform distribution on $\CJ_n^{(m)}(\mathbf x)$ is preserved under $\Phi$, concluding the proof.\qed
\end{proposition}

Let $m,n \in \N$. Consider the URD $G^{(n)}$ and the Kingman $(m,n)$-coalescent $C=(G_n,\ldots,G_1)$. Recall that the in-degree of a vertex in $G^{(n)}$ is denoted by $\deg_n(v)$. We let $\deg_{G_1}(v')$ denote the in-degree of $v'$, for a vertex $v'$ in $ G_1$. We let $\ell_{G_1}(v')$ denote the first element of the relabelling of $v$ after applying $\Phi$ to $C$. If we write $L_C(v')=(L_C^{(1)}(v'),L_C^{(2)}(v'))$, then 
\be 
\ell_{G_1}(v')\coloneq L_C^{(1)}(v')\qquad\text{for }v'\in V(G_1).
\ee
That is, $\ell_{G_1}(v')$ denotes the timestep at which we added the vertex to $G^{(n)}$ that corresponds to $v'$ in $G_1$. Furthermore, recall that $u_n(v)$ denotes the greedy longest path between $v$ and $1$ in $G^{(n)}$. In the Kingman coalescent, we use an analogous definition of this quantity. Namely, we set 
\be 
u_{G_1}(v')\coloneq u_n(\ell_{G_1}(v'))\qquad \text{for }v'\in V(G_1),
\ee 
where $u_n(\ell_{G_1}(v'))$ is to be interpreted with respect to the graph $\Phi(C)$.

Since the URD and the relabelled Kingman coalescent have the same law, we directly have the following corollary.
\begin{corollary}
\label{corollary:exchangeability_degrees}
For $m,n \in \N$, let $G^{(n)}$ be a URD and let $G_1$ be the resulting graph in the Kingman $(m,n)$-coalescent. Recall that $\mathbb A_n$ and $\mathbb F_n$ are the sets of active and frozen vertices of $G^{(n)}$, respectively. With $\sigma_a$ and $\sigma_f$ uniform permutations of the element of $\mathbb A_n$ and $\mathbb F_n$, respectively,
\be \ba 
\big({}&(\deg_{G_1}(a_v), \ell_{G_1}(a_v), u_{G_1}(a_v))_{v\in [A_n]},(\deg_{G_1}(f_w), \ell_{G_1}(f_w), u_{G_1}(f_w))_{w\in [F_n]}\big)\\
&\overset \dd = \big((\deg_n(\sigma_a(o)),\sigma_a(o),u_n(\sigma_a(o)))_{o\in \mathbb A_n},((\deg_n(\sigma_f(u)),\sigma_f(u),u_n(\sigma_f(u)))_{u\in \mathbb F_n})\big).
\ea \ee 
And, jointly for $i,j\in\N$, $A\subseteq [A_n]$, and $F\subseteq [F_n]$, 
\be \ba
({}&|\{v\in A: \deg_{G_1}(a_v)=i\}|,|\{w\in F: \deg_{G_1}(f_w)=j\}|)\\
&\overset \dd= (|\{o\in \mathbb A_n: \deg_n(\sigma_a(o))=i\}|,|\{u\in \mathbb F_n: \deg_n(\sigma_f(u))=j\}|).
\ea\ee
\end{corollary}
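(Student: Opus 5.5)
The corollary follows from Proposition~\ref{prop:relabelled_coalescent} combined with an exchangeability argument for the labels in $g_n$. The plan is to construct, on a single probability space, the Kingman $(m,n)$-coalescent $C=(G_n,\ldots,G_1)$ and the graph $\Phi(C)$. By Proposition~\ref{prop:relabelled_coalescent}, $\Phi(C)$ has the law of $G^{(n)}$. The map $v'\mapsto L_C(v')$ is a bijection from $V(G_1)$ onto $V(\Phi(C))$, and it sends the vertices initially labelled $a_1,\ldots,a_{A_n}$ exactly onto the active vertices of $\Phi(C)$, and $f_1,\ldots,f_{F_n}$ onto the frozen vertices; this is immediate from~\eqref{eq:yrelabel}. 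Because $\Phi(C)$ is $G_1$ with its vertices renamed, in-degrees are preserved: $\deg_{G_1}(v')=\deg_{\Phi(C)}(L_C(v'))$. By definition, $\ell_{G_1}(v')$ is the first coordinate of $L_C(v')$, and $u_{G_1}(v')=u_n(\ell_{G_1}(v'))$ computed in $\Phi(C)$. Hence the left-hand vector equals, deterministically, the vector of (degree, label, greedy path length) in $\Phi(C)$, indexed through the bijections $\pi_a\colon a_v\mapsto$ its active vertex and $\pi_f\colon f_w\mapsto$ its frozen vertex.

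The key step is to show that $\pi_a$ and $\pi_f$ are uniform bijections, independent of the graph $\Phi(C)$ (and of each other). For this I use the fact, from the proof of Proposition~\ref{prop:relabelled_coalescent}, that the fibre $\Phi^{-1}(G)$ consists exactly of the $A_n!F_n!$ sequences $C_{\sigma,\tau}$, obtained by permuting the initial labels. Permuting the initial labels by $(\sigma,\tau)$ composes $\pi_a$ with $\sigma^{-1}$ and $\pi_f$ with $\tau^{-1}$, while leaving $\Phi(C)$ unchanged. Now condition on $\Phi(C)=G$. By Lemma~\ref{lemma:coalescent_yields_random_chain}, $C$ is uniform on $\cJ_n^{(m)}(\mathbf x)$, so conditionally it is uniform on the fibre, and therefore $(\pi_a,\pi_f)$ is uniform over all pairs of bijections $\{a_v\}\to\mathbb A_n(G)$ and $\{f_w\}\to\mathbb F_n(G)$.

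The step I expect to need care is that this action is free, i.e.\ distinct $(\sigma,\tau)$ give distinct $(\pi_a,\pi_f)$. This holds because the labels are distinct, and it is what the count $|\Phi^{-1}(G)|=A_n!F_n!$ encodes. Identifying $\{a_v\}$ with $[A_n]$ and setting $\sigma_a=\pi_a$ gives the first displayed identity in distribution. The second identity is then an immediate consequence: it is the image of the first under the measurable map that counts how many indices in $A$ (respectively $F$) carry degree $i$ (respectively $j$).
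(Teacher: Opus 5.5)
Your proposal is correct and takes the same route as the paper, which states the corollary as an immediate consequence of Proposition~\ref{prop:relabelled_coalescent}. You additionally make explicit the step the paper leaves implicit: the fibres $\Phi^{-1}(G)$ are exactly the orbits $\{C_{\sigma,\tau}\}$ obtained by permuting the initial labels, so conditionally on $\Phi(C)$ the correspondences $a_v\mapsto L_C(a_v)$ and $f_w\mapsto L_C(f_w)$ form a uniform pair of bijections, and this is what produces the uniform permutations $\sigma_a,\sigma_f$ in the statement.
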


Due to Corollary~\ref{corollary:exchangeability_degrees}, it is equivalent to work with the Kingman coalescent from now on, rather than with the URD model as in Definition~\ref{def:dag}. Consequently, the results in Section~\ref{subsec:statement_results}, where we consider vertices $(V_v)_{v\in[k]}$ selected uniformly at random from $\mathbb A_n$ and $(W_w)_{w\in[\ell]}$ selected uniformly at random from $\mathbb F_n$, can now be thought of as results for fixed vertices $a_1,\ldots, a_k$ and $f_1,\ldots, f_\ell$ in the Kingman $(m,n)$-coalescent.

For ease of writing and to make dependencies clear, but abusing notation, we replace the subscript $G_1$ with $n$ in the following. That is, for a vertex $v$ in the Kingman coalescent, we let $\deg_n(v), \ell_n(v)$, and $u_n(v)$ denote its in-degree, first element of its label (after relabelling by $\Phi$), and greedy longest path, respectively. Also, for simplicity, but being slightly informal, we refer to $\ell_n(v)$ as the \emph{label} of vertex $v$.

\subsection{Selection sets and connection sets}
\label{subsec:selection_and_connection_sets}

We conclude this section by describing the statistics of interest, that is, the degree, label, and length of the greedy longest path, of a vertex in terms of the Kingman coalescent construction, which we use in the forthcoming sections. 

Let $m,n \in \N$ and $(g_n,\ldots, g_1)\in\cJ_n^{(m)}(\mathbf x)$, and recall the definition of in-components from Definition \ref{def:root+in_comp}. For each $i \in [n]$, the graph $g_i$ contains $A_i$ many in-components of active vertices that we denote by  $g_i^{(1)},\ldots,g_i^{(A_i)}$. Note that an active vertex $a_v$ can be contained in multiple in-components. We order the in-components in the following manner. For each $g_i^{(j)}$, list its vertices in increasing order, i.e.\ $(a_{\ell_{1,j}},a_{\ell_{2,j}}, \ldots, a_{\ell_{k,j}},m_{1,j},\ldots, m_{s,j})$ for some $k,s\in\N$ and indices $\ell_{1,j}<\ell_{2,j}<\cdots <\ell_{k,j}$ and $m_{1,j}<\cdots <m_{s,j}$. Here, the indices $m_{t,j}\in\cF_n$ for $t\in[s]$ denote the labels of vertices that were initially frozen in $g_n$ but have been activated, whereas the $a_{\ell_{t,j}}$ for $t\in[k]$ denote the labels of vertices that are active in $g_n$ (and thus stay active throughout the coalescent process). Then, order the components in lexicographical order of their list of vertices, where $a_i\leq a_j$ when $i\leq j$ and $a_\ell<m_s$ for any (originally) active $a_\ell$ and activated vertex $m_s$. For $i\in[n]$ and $v \in \{a_1,\ldots, a_{A_n}\}\cup ([i+1,n]\cap \cF_n) $, let $g_i(v)$ denote the in-component $g_i^{(j)}$ that contains vertex $v$, where $j$ is minimal among all indices $t\in[A_i]$ such that $g_i^{(t)}$ contains $v$. 

Let $s_{v,i}$ be the indicator that $g_i(v)$ is one of the $(m+1)\land A_i$ active in-components (each belonging to a unique active root that is) selected to coalesce at step $i$ if $x_i=1$.  When $s_{v,i}=1$, we say that vertex $v$ is $\emph{selected}$ at step $i$. Note that this definition of a vertex being selected is broader than our definition of a root being selected in Section~\ref{chap:kingman_coalescent}, as now all vertices in the same in-component are considered selected when its root is selected. As the selection of in-components to be merged in each step is independent and uniformly distributed, the variables $(s_{v,i})_{i \in \{2,\ldots,n\}}$ are independent Bernoulli random variables for each vertex $v$ with  $\prob(s_{v,i}=1) =\ind_{\{x_i=1\}} ((m+1)\wedge A_i)/A_i$. We call the set of steps in which vertex $v \in [n]$ is selected the $\emph{selection set}$ of $v$, defined by
\begin{equation}
\label{eq:definition_selection_set}
\cS_n(v) \coloneq \{i \in \{2,\ldots,n\}\colon s_{v,i} = 1\},
\end{equation}
listed as $\cS_n(v) = \{i_{v,1},\ldots,i_{v,S_n(v)}\}$ with $i_{v,1}>i_{v,2}>\ldots>i_{v,S_n(v)}$ and where $S_n(v) \coloneq |\cS_n(v)|$.
To express the degree and the label of a vertex in terms of selection sets, we introduce, for a vertex $v$ and each $i\in \cS_n(v)$, the random variable $r_{v,i}\sim \text{Ber}(1/((m+1)\wedge A_i))$, which determines whether, when $v$ (and thus also the root of $g_i(v)$) is selected, the root of $g_i(v)$ wins or loses the dice roll associated with step $i$. Here, $r_{v,i}=1$ when the root of $g_i(v)$ loses the dice roll and $r_{v,i}=0$ when the root of $g_i(v)$ wins the dice roll.  Similar to $v$ being selected when we select the root of $g_i(v)$, we then also say that vertex $v$ has won/lost the dice roll associated to step $i$ when the root of $g_i(v)$ has won/lost the dice roll. For a vertex $v$, the sequence of random variables $(r_{v,i})_{i \in \{2,\ldots,n\}}$ are mutually independent and they are also independent of $\cS_n(v)$. Using these indicator random variables, the degree of a vertex $v \in [n]$ can be written as
\begin{equation}
\label{eq:express_degree_via_selection_set}
\deg_n(v) = \max\{d \in \{0,\ldots,S_n(v)\}\colon r_{v,i_{v,1}}=\ldots=r_{v,i_{v,d}}=0\}.
\end{equation}
The degree of vertex $v$ thus equals the length of its first winning streak when selected, i.e.\ the length of the first streak of zeros of the indicators $(r_{v,i_{v,\ell}})_{\ell \in [S_n(v)]}$. Similarly, we can express the label of $v$ as the first step in which $v$ is selected and loses the associated dice roll. That is, 
\begin{equation}
\label{eq:express_label_in_terms_selection_sets}
\ell_n(v) = \max\{i \in \cS_n(v)\colon r_{v,i}=1\}=\max\{i \in \{1\}\cup \cS_n(v)\colon r_{v,i}=1\},
\end{equation}
where we set $r_{v,1}\coloneq 1$ for all vertices $v$ to avoid the technicality that the sets in~\eqref{eq:express_label_in_terms_selection_sets} are empty (which happens for the unique root vertex in $G_1$, which never loses a dice roll). Recall that exactly one root loses in each step $i$ such that $x_i=1$ (and none when $x_i=-1$), so that we have $\ell_n(v)\neq\ell_n(v^\prime)$ whenever $v\neq v^\prime$. To summarise, a vertex $v$ is selected at the steps in the set $\cS_n(v)$ and every time it is selected it participates in a dice roll, the outcome of which is determined by $r_{v,i}$ for $i\in \cS_n(v)$. The degree $\deg_n(v)$ equals the number of uninterrupted wins and the label $\ell_n(v)$ equals the step at which $v$ loses the first time, at which time its degree is also determined.  

To characterise the greedy longest path associated to a vertex $v$, we need some additional concepts and notation. We construct the greedy longest path in the coalescent step by step in the following way.  Consider a root $v$ in $g_n$ (active or frozen). At step $\ell_n(v)$ it is selected and loses the associated dice roll for the first time. At this step, it sends outgoing edges to $m\wedge (A_{\ell_n(v)}-1)$ active roots, say $u_1,\ldots, u_{m\wedge (A_{\ell_n(v)}-1)}$. The greedy longest path from $v$ to the vertex that receives label $1$ when relabelling the graph $g_1$ follows one of these $m\wedge (A_{\ell_n(v)}-1)$ edges. Namely, it follows the edge to the vertex $u_j$ that maximises $\ell_n(u_j)$, i.e.\ the vertex with the largest label after relabelling $g_1$. Equivalently, $u_j$ is the first of the roots $u_1,\ldots, u_{m\wedge (A_{\ell_n(v)}-1)}$ to lose a dice roll. Now, again, once this root has lost, it connects itself to $m\wedge (A_{\ell_n(u_j)}-1)$ roots by directed edges, and we again wait for the first loss among these $m\wedge (A_{\ell_n(u_j)}-1)$ roots. We repeat this process until we have constructed $g_1$, and the path we have followed has reached the unique root vertex in $g_1$ that receives label $1$ in the relabelling. 

Let us describe the construction of the greedy longest path (and its length) more formally. Recall the notation from Definition~\ref{def:coalescent}, and let $v\in\{a_1,\ldots, a_{A_n},f_1,\ldots, f_{F_n}\}$. For a vertex $v$ and a step $i\in\cA_n\cap [\ell_n(v)]$, we define the \emph{connection set} $\cC^{(i)}_n(v)$ of $v$ at step $i$ as follows. For each step $i$ such that $x_i=1$, let $u_{1,i},\ldots, u_{m\wedge (A_i-1),i}$ denote the $m \wedge (A_i-1)$ many active roots that are selected and win the associated dice roll in step $i$, and let $u^*_i$ denote the active root that is selected and loses the associated dice roll in step $i$. We initialise $\cC^{(\ell_n(v))}_n(v)\coloneq \{u_{1,\ell_n(v)},\ldots, u_{m\wedge( A_{\ell_n(v)}-1),\ell_n(v)}\}$. Then, given $\cC_n^{(i)}(v)$ for some $i\in \cA_n\cap [\ell_n(v)]$, we define $\cC_n^{(j)}(v)$ for $j= \max\cA_n\cap [i-1]$ as 
\be 
\cC_n^{(j)}(v)\coloneq \begin{cases}
\cC_n^{(i)}(v)&\mbox{if } u^*_j\not\in \cC_n^{(i)}(v), \\
\{u_{1,j},\ldots, u_{m\wedge (A_j-1),j}\}&\mbox{if }u^*_j\in\cC_n^{(i)}(v).
\end{cases}
\ee 
At each step $i$ in $\cA_n\cap[\ell_n(v)]$, the connection set $\cC_n^{(i)}(v)$ consists of all active roots that $v$ is connected to via directed paths, one of which will be part of the greedy longest path from $v$ to the unique root in $g_1$. As a result, we can express the length of the greedy longest path $u_n(v)$ as 
\be \label{eq:unv}
u_n(v)=1+\sum_{i=2}^{\ell_n(v)-1}\ind_{\{x_i=1\}}\sum_{w\in \cC_n^{(i)}(v)}r_{w,i}
\ee 
if $\ell_n(v)>1$ and we have $u_n(v)=0$ if $\ell_n(v)=1$. Here, we observe that the inner sum equals either $0$ or $1$ by definition, since exactly one active root loses the associated dice role in each step.

\section{The degree distribution and large degrees}\label{sec:degs}

In this chapter, we prove Theorems~\ref{theorem:joint_degree_distribution},~\ref{theorem:poisson_convergence},~\ref{theorem:maximum_degree}, and~\ref{theorem:asymptotic_normality_smaller_degrees}, and Corollary~\ref{cor:LLN}, related to the degree distribution and large degrees in the URD model with freezing. We utilise the Kingman coalescent construction introduced in the previous section. Note that these results are generalisations from results obtained in \cite{Addario.Eslava.2018} for RRTs, that is, the case $m=1$ without freezing. Section~\ref{subsec:asymptotic_joint_degree_distribution} contains the proof of   Theorem~\ref{theorem:joint_degree_distribution} and Section~\ref{subsec:large_degree_vertices} presents the proofs of Corollary~\ref{cor:LLN} and Theorems~\ref{theorem:poisson_convergence},~\ref{theorem:maximum_degree}, and~\ref{theorem:asymptotic_normality_smaller_degrees}.

\subsection{Asymptotic joint degree distribution}

\label{subsec:asymptotic_joint_degree_distribution}

To prove Theorem~\ref{theorem:joint_degree_distribution}, we investigate the joint degree distribution of the $k\in\N_0$ active vertices $a_1,\ldots,a_k$ and $\ell\in\N_0$ frozen vertices $f_1,\ldots,f_\ell$ in Kingman's coalescent (where $k+\ell\geq 1$). We start by establishing an upper and a lower bound on the tail of the joint distribution of vertex degrees.
\begin{lemma}
\label{lemma:upper_bound_tail_vertex_degrees}
Fix a choice sequence $\mathbf x$ such that Assumption~$\xref$\ref{item:geq1} is satisfied. Let $n\in\N$ and $d_{a_1},\ldots,d_{a_k},d_{f_1},\ldots,d_{f_\ell} \in [n-1]$, and recall $\theta$ from~\eqref{eq:theta}. Then,
\begin{equation}
\P{\deg_n(a_v)\geq d_{a_v}\text{ for all }v\in[k],\, \deg_n(f_w)\geq d_{f_w}\text{ for all }w\in[\ell]} \leq \theta^{-\sum_{v=1}^k d_{a_v}-\sum_{w=1}^\ell d_{f_w}}.
\end{equation}
\end{lemma}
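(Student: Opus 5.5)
We work entirely in the Kingman $(m,n)$-coalescent, which by Corollary~\ref{corollary:exchangeability_degrees} is legitimate, and use the representation \eqref{eq:express_degree_via_selection_set}. For a single vertex $v$, the event $\{\deg_n(v)\ge d\}$ says that $v$ wins each of its first $d$ dice rolls, i.e.\ $r_{v,i_{v,1}}=\dots=r_{v,i_{v,d}}=0$ (and $S_n(v)\ge d$).

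For one vertex, the bound follows from a simple estimate. Each time $v$ is selected at a step $i$, it wins with probability $1-1/((m+1)\wedge A_i)$. This probability is at most $1-1/(m+1)=\theta^{-1}$, since $(m+1)\wedge A_i\le m+1$. Moreover, the $r_{v,i}$ are independent of $\cS_n(v)$. Conditioning on $\cS_n(v)$ therefore gives $\P{\deg_n(v)\ge d}\le \theta^{-d}$.

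For the joint statement, independence across vertices fails, so we instead reveal the coalescent step by step from $i=n$ down to $2$ and use a martingale/sequential conditioning argument. Let $\mathcal{G}_i$ be the $\sigma$-algebra generated by steps $n,\dots,i+1$. At step $i$ with $x_i=1$, consider the set $R_i$ of roots of the components $g_i(v)$ containing target vertices that are still ``alive'', meaning they have not yet reached their required number of wins and have not yet lost. The event that all target vertices succeed requires that, at step $i$, every selected root in $R_i$ wins.

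If $j\ge1$ distinct roots of $R_i$ are selected at step $i$, exactly one of the $(m+1)\wedge A_i$ selected roots loses, chosen uniformly. So the conditional probability that none of these $j$ roots loses is $1-j/((m+1)\wedge A_i)$. Meanwhile the number of wins credited is at least $j$, summed over the alive vertices in the selected components. Several target vertices may share a component, and then one win counts for all of them, which only increases the credited wins. The key inequality needed is
\[
1-\frac{j}{s}\le \theta^{-j}\qquad\text{for } s\le m+1,\ 1\le j\le s.
\]
This holds by convexity: $1-jx$ versus $(1-x)^j$ with $x=1/s\ge 1/(m+1)$ gives $1-jx\le(1-x)^j\le\theta^{-j}$.

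To turn this into the theorem, define the potential
\[
M_i=\theta^{D_i}\,\ind\{\text{no target has lost before reaching its threshold by step } i\},
\]
where $D_i$ is the total number of wins credited so far, capped at the thresholds. The inequality shows $\E{M_{i-1}\mid\mathcal G_i}\le M_i$, since the credited increment is at least $j$ per step. Hence $\E{M_1}\le M_n=1$. On the target event $D_1=\sum d$, and Markov's inequality finishes the proof.

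Frozen vertices need no separate treatment. They are simply not selected until activated, and activation steps ($x_i=-1$) contribute no dice rolls. Steps with $A_i\le m$ are covered because $s=A_i<m+1$, which only decreases the win probability.

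The main obstacle is the bookkeeping when several target vertices lie in the same in-component, or when components merge. After a merge, a loser's targets are dead, and the winners' roots are distinct. The key point is that credited wins are counted per target vertex: each alive target in a winning selected component gains one win. So the exponent increases by at least the number $j$ of distinct selected roots, and the supermartingale inequality above suffices, with the cap at thresholds handled by noting that vertices already at their threshold are simply dropped from $R_i$.
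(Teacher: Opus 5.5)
\textbf{There is a gap in your key step.} Conditionally on $\mathcal G_i$ and on $J=j$ alive targets being selected at step $i$, you have $\mathbb{E}[M_{i-1}\mid \mathcal G_i, J=j]=M_i\,(1-j/s)\,\theta^{\Delta}$, where $\Delta$ is the increment of $D$ and $s=(m+1)\wedge A_i$. The inequality $1-j/s\le\theta^{-j}$ yields $\mathbb{E}[M_{i-1}\mid\mathcal G_i]\le M_i$ only if $\Delta\le j$.

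You argue the opposite direction: you say the credited increment is ``at least $j$'', and that several targets sharing a component ``only increases the credited wins''. That would be fatal, not harmless. Suppose two alive targets sat below a single selected root, so $j=1$ and $\Delta=2$. With $s=m+1$ this gives $(1-1/s)\theta^{2}=\theta>1$, and the supermartingale property fails. In such a situation the two win counts would advance together, and the joint tail would be of order $\theta^{-d}$ rather than $\theta^{-2d}$. So, as written, the inequality your whole proof rests on is not justified.

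\textbf{The fix.} A target that has not yet lost has out-degree $0$, so it is itself a root (an active one, once activated). A root lies in no other root's in-component. Hence the alive targets are pairwise distinct roots, and $R_i$ is simply the set of alive active targets. Targets that have already reached their threshold and sit inside a selected component are capped and contribute nothing. Therefore $\Delta=J$ exactly, and
\[
\mathbb{E}[M_{i-1}\mid\mathcal G_i]=M_i\,\mathbb{E}[(1-J/s)\theta^{J}\mid\mathcal G_i]\le M_i .
\]
The rest of your argument then goes through: iterate, and apply Markov's inequality to $M_1\ge \theta^{\sum_v d_v}\ind_{\{\text{target event}\}}$.

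\textbf{Comparison with the paper.} Once repaired, yours is a genuinely different route. The paper conditions on the whole family of selection sets and bounds the probability of winning given them, using the same inequality $1-r/s\le\theta^{-r}$ at joint selections. That is shorter. Your sequential version has the advantage that the selection at step $i$ is independent of $\mathcal G_i$. You therefore never need the conditional law of the dice rolls given the full joint selection sets. That law is delicate, because those sets are not independent of the rolls: whether two jointly selected targets share an in-component afterwards carries information on whether one of them lost.
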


\begin{proof} 
Let us define for a vertex $v$ in the Kingman coalescent the events 
\be 
\cE(v)\coloneq  \{|\cS_n(v)|\geq d_v\}\qquad \text{and}\qquad \cW(v)\coloneq \{v\text{ wins its first }d_v\text{ dice rolls}\}.
\ee 
Here, $v$ can initially be either active or frozen. Further, we write for $A$ a subset of active and frozen vertices, 
\be 
\cE(A)\coloneq \bigcap_{v\in A}\cE(v)\qquad \text{and}\qquad \cW(A)\coloneq \bigcap_{v\in A}\cW(v). 
\ee 
By \eqref{eq:express_degree_via_selection_set}, we have that $\{\deg_n(v)\geq d_v\}=\cE(v)\cap \cW(v)$. As a result, 
\begin{equation}
\label{eq:degree_upper_bound_lhs}
\begin{aligned}
\mathbb P({}&\deg_n(v)\geq d_{v}\text{ for all }v\in\{a_1,\ldots, a_k, f_1,\ldots, f_\ell\})\\
&=\mathbb P(\cE(\{a_1,\ldots, a_k,f_1,\ldots, f_\ell\})\cap \cW(\{a_1,\ldots, a_k, f_1,\ldots, f_\ell\}))\\
&= \E{\ind_{\cE(\{a_1,\ldots, a_k, f_1,\ldots, f_\ell\})}\mathbb P\big( \cW(\{a_1,\ldots, a_k, f_1,\ldots, f_\ell\})\,\big|\, \cS_n(a_v)_{v\in[k]},\cS_n(f_w)_{w\in[\ell]}\big)}.
\end{aligned}
\end{equation}
If the vertices $(a_v)_{v\in[k]}$ and $(f_w)_{w\in[\ell]}$ are never jointly selected in the coalescent, then the probability in the expected value is at most \be\label{eq:degree_upper_bound_all_disjoint}
\theta^{-\sum_{v=1}^k d_{a_v}-\sum_{w=1}^\ell d_{f_w}},
\ee 
as a given selected vertex wins a dice roll in step $i$ with probability $((A_i-1)\wedge m)/(A_i\wedge (m+1))\leq 1/\theta$, and the dice rolls that are associated with different steps in the coalescent are independent. Bounding the indicator random variables from above by $1$ thus establishes the desired upper bound in this case. When the selection sets $(\cS_n(a_v))_{v\in[k]}$ and $(\cS_n(f_w))_{w\in[\ell]}$ are not disjoint, we can, again, establish \eqref{eq:degree_upper_bound_all_disjoint} as an upper bound. Indeed, suppose that $v_1, v_2,\ldots, v_r$ are all jointly selected at some step $i\in \{2,\ldots,n\}$. The probability that all these vertices win the dice roll equals
\begin{equation}
\max\left\{\frac{((A_i-1)\wedge m)+1-r}{A_i \wedge (m+1)},0\right\}\leq \Big(\frac{m}{m+1}\Big)^r=\theta^{-r}.
\end{equation}
This is a direct result of the fact that left-hand side is increasing in $A_i$ and the inequality
\begin{equation}
\label{eq:product_sum_inequality}
1-\sum_{j=1}^r y_j\leq \prod_{j=1}^r (1-y_j),
\end{equation}
with $y_j\coloneq1/((A_i\wedge m)+1)$ for all $j\in\{1,\ldots,r\}$. Overall, we thus obtain 
\be  
\P{\deg_n(v)\geq d_{v}\text{ for all }v\in\{a_1,\ldots,a_k,f_1,\ldots,f_\ell\}}\leq \theta^{-\sum_{v=1}^k d_{a_v}-\sum_{w=1}^\ell d_{f_w}},
\ee 
as desired.
\end{proof}

For the lower bound, we follow the same procedure as in the proof of Lemma~\ref{lemma:upper_bound_tail_vertex_degrees}, except we introduce the restriction that the selection sets of vertices $a_1,\ldots,a_k,f_1,\ldots,f_\ell$ are disjoint for a sufficiently long time. To facilitate this, we introduce the random variable 
\begin{equation}
\label{eq:definition_tau_k}
\tau_{k,\ell}\coloneq\max\{2\leq i\leq n: s_{v,i}=s_{w,i}=1 \text{ for distinct } v,w\in\{a_1,\ldots,a_k,f_1,\ldots,f_\ell\}\},
\end{equation}
as the first step at which two vertices $v,w \in \{a_1,\ldots,a_k,f_1,\ldots,f_\ell\}$ are selected simultaneously (with $\tau_{k,\ell}\coloneq1$ if $k+\ell = 1$).
We are now ready to formulate the following lower bound on the tail of the degree distribution of vertices $a_1,\ldots,a_k,f_1,\ldots,f_\ell$.
\begin{lemma}
\label{lemma:lower_bound_tail_vertex_degrees}
Let $n\in\N$, recall $\theta$ from~\eqref{eq:theta}, and suppose the choice sequence $\mathbf x$ satisfies Assumption~$\xref$\ref{item:geq1} and $A_i\geq m+1$ for $i\in\{I,\ldots,n\}$ with $I\in[n]$. For any $d_{a_1},\ldots,d_{a_k},d_{f_1},\ldots,d_{f_\ell} \in [n-1]$,
\begin{equation}\begin{aligned}
&\P{\deg_n(a_v)\geq d_{a_v}\text{ for all }v\in[k],\, \deg_n(f_w)\geq d_{f_w}\text{ for all }w\in[\ell]} \\
&\geq \theta^{-\sum_{v=1}^k d_{a_v}-\sum_{w=1}^\ell d_{f_w}}\mathbb P(|\cS_n(v)\cap [I,n]|\geq d_{v}\text{ for all }v\in\{a_1,\ldots, a_k,f_1,\ldots, f_\ell\},\tau_{k,\ell}<I).
\end{aligned}\end{equation}
\end{lemma}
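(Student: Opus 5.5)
We follow the conditioning scheme of the proof of Lemma~\ref{lemma:upper_bound_tail_vertex_degrees}. Write $V\coloneq\{a_1,\ldots,a_k,f_1,\ldots,f_\ell\}$ and let $\cE'$ denote the event that $|\cS_n(v)\cap[I,n]|\geq d_v$ for all $v\in V$ and $\tau_{k,\ell}<I$. Since $\{\deg_n(v)\geq d_v\}=\cE(v)\cap\cW(v)$ and $\cE'\subseteq\cE(V)$, we have
\be
\P{\deg_n(v)\geq d_v\ \forall v\in V}\geq \E{\ind_{\cE'}\,\mathbb P\big(\cW(V)\,\big|\,(\cS_n(v))_{v\in V}\big)}.
\ee
Moreover, $\cE'$ is measurable with respect to the selection sets $(\cS_n(v))_{v\in V}$, since $\tau_{k,\ell}$ is a function of the indicators $s_{v,i}$. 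It therefore suffices to show that on $\cE'$ the conditional probability of $\cW(V)$ is at least $\theta^{-\sum_{v\in V}d_v}$.

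On $\cE'$, every $v\in V$ is selected at least $d_v$ times in steps $i\geq I$. These are the first $d_v$ selections of $v$, because $i_{v,1}>i_{v,2}>\cdots$ and the steps are traversed in decreasing order. In addition, no two vertices of $V$ are selected at a common step $i\geq I$, since $\tau_{k,\ell}<I$. Hence the steps at which the relevant first $d_v$ dice rolls take place are pairwise distinct across all $v\in V$, and each such step lies in $[I,n]$.

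The main point to verify is that, conditionally on the selection sets, the dice rolls at distinct steps are independent, and the outcome at step $i$ is independent of which in-components were selected. This follows from Definition~\ref{def:coalescent}: the loser is chosen uniformly among the selected roots, independently of everything else. One subtlety is that $g_i(v)$ depends on earlier outcomes. However, the event that the root of $g_i(v)$ wins at step $i$ has conditional probability $1-1/((m+1)\wedge A_i)$ given the whole past and given the selections, so the probability of $\cW(V)$ factorises by successive conditioning, going from step $n$ downward.

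Using $A_i\geq m+1$ for $i\geq I$, each factor equals
\be
1-\frac{1}{(m+1)\wedge A_i}=1-\frac{1}{m+1}=\frac{m}{m+1}=\theta^{-1}.
\ee
The conditional probability of $\cW(V)$ on $\cE'$ is therefore exactly $\theta^{-\sum_{v\in V}d_v}$. Taking expectations over $\ind_{\cE'}$ then gives the claimed lower bound.
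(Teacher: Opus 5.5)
Apart from one step, your argument is the paper's. You restrict to the event that the first $d_v$ selections of each $v\in V$ lie in $[I,n]$ with no two of these steps coinciding (which is what $\tau_{k,\ell}<I$ gives), and then multiply the winning probabilities $m/(m+1)=\theta^{-1}$.

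The problematic step is the one you flag yourself, and your justification does not cover it. Successive conditioning from step $n$ downwards shows that each dice roll is uniform \emph{given the past and the current selection}. But $(\cS_n(v))_{v\in V}$ also contains all later indicators $s_{w,j}$, $j<i$. For $m\ge 2$ their joint law depends on earlier outcomes, through the overlapping in-components and the minimal-index choice of $g_j(w)$.

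Here is a concrete instance. Suppose $a_2$ has already lost. Let $r$ be an active root whose in-component contains $a_2$ but is not $g_i(a_2)$. Suppose $r$ (but not $g_i(a_2)$) is selected together with $a_1$ at one of $a_1$'s first $d_{a_1}$ selection steps $i$.
\begin{itemize}
\item If $r$ loses ($a_1$ wins) or $a_1$ loses, some in-component contains both $a_1$ and $a_2$ from then on. Such in-components precede all others containing either vertex, since their ordered lists start with $a_1,a_2$. Hence $g_j(a_1)=g_j(a_2)$ and $s_{a_1,j}=s_{a_2,j}$ for all later $j$.
\item If one of the other $m-1$ selected roots loses, this does not happen.
\end{itemize}
So, when no other selected in-component contains $a_2$, the two selection sets become locked together with probability $1/m$ given that $a_1$ wins, and with probability $1$ given that it loses. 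The law of the later selection pattern (e.g.\ joint selections below $I$, which $\cE'$ allows) therefore depends on the outcome of this quota roll. Conditioning on the full selection sets tilts the law of the quota rolls. Neither the identity ``exactly $\theta^{-\sum_v d_v}$ on $\cE'$'' nor the pointwise lower bound you reduce to follows.

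For $m=1$ your argument is fine: in-components are then disjoint trees, and which trees merge is determined by the selections alone. The paper's proof makes the same unqualified claim that the dice rolls ``are thus independent'', so you have reproduced its argument including this shortcut.

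One way to make the argument rigorous, in the form actually needed later, is to condition on a selection device that does not see the dice.
\begin{itemize}
\item On $[I,n]$, where $A_i\ge k+\ell$ for large $n$, represent each activated $v\in V$ by a distinct active root. This is $v$ itself until its first loss; afterwards it is a proxy chosen uniformly among roots not representing another vertex of $V$, re-chosen whenever the proxy loses.
\item Let $\hat s_{v,i}$ indicate that $v$'s representative is selected at step $i$. Given the past, the law of $(\hat s_{v,i})_{v\in V}$ depends only on $A_i$ and on how many vertices of $V$ are active (fixed by the activation times, which are independent of the dice). Hence this process is independent of the losers.
\item If each representative survives its first $d_v$ selections, then $\deg_n(v)\ge d_v$ for all $v$, because up to that point the representative is $v$ itself. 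On the event that these selections lie in $[I,n]$ with no step shared by two vertices of $V$, the conditional probability of this survival event given $(\hat s_{v,i})$ is exactly $\theta^{-\sum_v d_v}$.
\end{itemize}
The probability of that event is controlled by precisely the computations of Lemmas~\ref{lemma:tail_bound_truncated_selection_set} and~\ref{lemma:tightness_tau_k}; the latter already treats the vertices of interest as distinct roots. This is all that Theorem~\ref{theorem:joint_degree_distribution} uses.
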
 

\begin{proof}
As in the proof of Lemma \ref{lemma:upper_bound_tail_vertex_degrees}, with $C_{k,\ell}\coloneq \{a_1,\ldots, a_k, f_1,\ldots, f_\ell\}$, the desired probability equals
\be \ba 
\mathbb P{}&(|\cS_n(v)|\geq d_{v}\text{ and }v\text{ wins its first }d_{v}\text{ dice rolls, for all }v\in C_{k,\ell})\\
&\geq 
\mathbb P(|\cS_n(v)\cap [I,n]|\geq d_{v}\text{ and }v\text{ wins its first }d_{v}\text{ dice rolls, for all }v\in C_{k,\ell}, \tau_{k,\ell}<I).
\ea\ee  
The event $\{\tau_{k,\ell}<I\}$ implies that all the sets $\cS_n(a_v)\cap [I,n]$ for $v\in[k]$ and $\cS_n(f_w)\cap [I,n]$ for $w\in[\ell]$ are disjoint. As a result, the dice rolls that are associated with each selection are thus independent for all vertices considered. A vertex, when selected, wins a dice roll with probability $1/\theta$ for any step $i\in\{I,\ldots, n\}$ since $A_i\geq m+1$. We can thus write this probability as 
\begin{equation}\begin{aligned} 
\theta^{-\sum_{v=1}^k d_{a_v}-\sum_{w=1}^\ell d_{f_w}}\mathbb P({}&|\cS_n(v)\cap [I,n]|\geq d_{v}\text{ for all }v\in\{a_1,\ldots, a_k, f_1,\ldots, f_\ell\},\tau_{k,\ell}<I),
\end{aligned}\end{equation} 
as desired.
\end{proof}
To make use of Lemma~\ref{lemma:lower_bound_tail_vertex_degrees}, we need tail bounds for the events $\{\tau_{k,\ell}<I\}$, $\{|\cS_n(a_v)\cap [I,n]|<d_{a_v}\}$, $\{|\cS_n(f_w)\cap [I,n]|< d_{f_w}\}$ for $v\in[k],w\in[\ell]$ and a suitable $I\geq 2$. The latter is provided by the next lemma.
\begin{lemma}
\label{lemma:tail_bound_truncated_selection_set}
Fix $c \in (0,m+1)$ and assume the choice sequence $\mathbf x$ satisfies Assumption~$\xref$\ref{item:geq1},~\ref{item:hI} with $\eps<1-c/(m+1)$, and $A_i\geq m+1$ for all $i\in\{I_{\xref},\ldots,n\}$. Then, there exists  $\beta>0$ such that
\begin{equation}
\prob(|\cS_n(a_1)\cap\{I_{\xref},\ldots,n\}|< ch_n^+) = o(n^{-\beta}).
\end{equation}
Additionally, suppose that $\mathbf x$ satisfies Assumption~$\xref$\ref{item:Jn} for some $\eta\in(0,1-c/(m+1))$. Then, there exists $\xi\in(0,1)$ such that
\be 
\P{|\cS_n(f_1)\cap \{I_{\xref},\ldots, n\}|<c\log F_n}=\cO(F_n^{-\xi}).
\ee 
\end{lemma}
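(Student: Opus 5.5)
The plan is a Chernoff lower-tail bound for a sum of independent Bernoulli variables. By Section~\ref{subsec:selection_and_connection_sets}, the indicators $(s_{a_1,i})_{i\in\{2,\ldots,n\}}$ are independent. For $i\in\{I_{\xref},\ldots,n\}$ we have $A_i\geq m+1$, so $\P{s_{a_1,i}=1}=(m+1)\ind_{\{x_i=1\}}/A_i$. Hence $S\coloneq|\cS_n(a_1)\cap\{I_{\xref},\ldots,n\}|$ has mean
\[
\mu\coloneq\E{S}=(m+1)\big(h_n^+-h_{I_{\xref}-1}^+\big)\geq (m+1)(1-\eps)h_n^+,
\]
where the inequality uses Assumption~$\xref$\ref{item:hI} (up to the single term at $I_{\xref}$, which is at most $1/A_{I_{\xref}}\leq 1$ and negligible). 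Since $\eps<1-c/(m+1)$, we get $ch_n^+\leq (1-\kappa)\mu$ with $\kappa\coloneq 1-c/((m+1)(1-\eps))\in(0,1)$. The standard Chernoff bound $\P{S\leq(1-\kappa)\mu}\leq \exp(-\mu\kappa^2/2)$ (or the sharper rate $\exp(-\mu(\kappa+(1-\kappa)\log(1-\kappa)))$), combined with $\mu\geq(m+1)(1-\eps)h_n^+$ and the bound $h_n^+\geq\log(n/2)$ from~\eqref{eq:hn+lb}, then gives
\[
\P{S<ch_n^+}\leq \exp\big(-\tfrac{\kappa^2}{2}(m+1)(1-\eps)\log(n/2)\big)=\cO(n^{-2\beta})=o(n^{-\beta})
\]
for some $\beta>0$.

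For a frozen vertex $f_1$, note that $f_1$ is not selectable before its activation step, which is a uniformly random element $J$ of $\cF_n$ (the activation order is a uniform permutation). Conditionally on $J=j$, the indicators $s_{f_1,i}$ for $i<j$ are independent Bernoulli with the same parameters as above, and they vanish for $i\geq j$. So, conditionally on $J$, $S$ has mean $(m+1)\big(h_{J-1}^+-h_{I_{\xref}-1}^+\big)$ whenever $J>I_{\xref}$. The strategy is to show that, outside an event of probability $\cO(F_n^{-\xi})$, we have $J$ large and $h_{J}^+\geq (1-\eta')\log F_n$ for some $\eta'<1-c/(m+1)$. Two facts drive this.

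First, because $J$ is uniform on the $F_n$ freezing steps, the number of freezing steps up to any time $j$ is $F_j$. Hence $\P{F_J\leq F_n^{\eta}}\leq F_n^{\eta-1}+F_n^{-1}$. On the complement, $F_J>F_n^\eta\geq I_{\xref}$ by Assumption~$\xref$\ref{item:Jn}, which in particular forces $J>I_{\xref}$.

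Second, a deterministic lower bound for $h_j^+-h_{I}^+$ in terms of $F_j$ and $F_I$ (or $I$) is needed. Here I would use the pairing argument from the proof of Lemma~\ref{lemma:coalescent_yields_random_chain}, or adapt \cite[Lemma 16]{Bellin.2023}. Each active step $i$ contributes $1/A_i$, and the levels $A_i$ are revisited at least as often as freezing steps occur. Following \eqref{eq:hn+lb}, I expect $h_j^+-h_I^+\geq \log\big((j/2)/I\big)-\cO(1)$, and since $j\geq 2F_j$, this gives $h_J^+-h_{I_{\xref}}^+\geq \log F_J-\log I_{\xref}-\cO(1)$. On the good event, using $F_J\geq F_n^{1-\zeta}$ (which holds with probability $1-\cO(F_n^{-\zeta})$ for small $\zeta$) and $\log I_{\xref}\leq\eta\log F_n$, we obtain
\[
\E{S\mid J}\geq (m+1)(1-\eta-\zeta)\log F_n-\cO(1).
\]
Since $\eta<1-c/(m+1)$, choosing $\zeta$ small makes this exceed $c\log F_n/(1-\kappa')$ for some $\kappa'>0$. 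The conditional Chernoff bound then yields $\exp(-\Omega(\log F_n))=\cO(F_n^{-\xi'})$. Taking $\xi=\min\{\zeta,\xi'\}$ completes the second claim.

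The main obstacle is this deterministic comparison of $h^+$ increments to $\log F$ increments on the random interval $[I_{\xref},J]$, since Assumption~$\xref$ gives no direct control of $h^+$ at intermediate times. I would first try proving the inequality $h_j^+-h_i^+\geq\log(j/i)-\cO(1)$ by mimicking \cite[Lemma 16]{Bellin.2023}. The idea is to sum $1/A_k$ over the unpaired active steps and use that each level is attained at the last visit. If that fails, a fallback is to use Assumption~$\xref$\ref{item:lb}, $A_i\leq i$, directly to bound $h_J^+\gtrsim \tfrac12\log(J/I_{\xref})$; this loses constants, so it would need more care.
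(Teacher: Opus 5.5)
Your proposal is correct and follows the paper's proof. The active case is the same Bernstein/Chernoff lower-tail bound with $\E{S}\geq(m+1)(1-\eps)h_n^+$ and $h_n^+\geq\log(n/2)$. The frozen case uses the same conditioning on the activation step of $f_1$, which is uniform on $\cF_n$. The paper phrases the good event through the first freezing step $J_n(C)$ at which $(m+1)(h_{j-1}^+-h_{I_{\xref}-1}^+)\geq C\log F_n$, and shows $J_n(C)\leq F_n^{\zeta}$; this is equivalent to your device of requiring $F_J\geq F_n^{1-\zeta}$ and using $J\geq 2F_J$.

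The deterministic inequality you flag as the main obstacle is exactly Lemma~\ref{lemma:hnlb}, namely $h_b^+-h_{a-1}^+\geq\log(b/(2a))$. The paper proves it not by pairing but by repeatedly swapping adjacent entries $(-1,1)$ of $\mathbf x$ into $(1,-1)$, since each swap can only decrease $\sum_{i=a}^b\ind_{\{x_i=1\}}/A_i$, until all $+1$'s precede all $-1$'s in $[a,b]$; this way no constant is lost. By contrast, your $A_i\leq i$ fallback loses a factor $\tfrac12$ and would only cover $\eta<1-2c/(m+1)$.
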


To prove Lemma~\ref{lemma:tail_bound_truncated_selection_set}, we need a generalisation of the lower bound for $h_n^+$ presented in~\eqref{eq:hn+lb}, which is the content of the following lemma. 

\begin{lemma}\label{lemma:hnlb}
Fix a choice sequence $\mathbf x$ such that Assumption~$\xref$\ref{item:geq1} is satisfied and integers $1\leq a<b<\infty$. Then, 
\be 
h_b^+(\mathbf x)-h^+_{a-1}(\mathbf x)=\sum_{i=a}^b \ind_{\{x_i=1\}}\frac{1}{A_i(\mathbf x)} \geq \log\Big(\frac{b}{2a}\Big). 
\ee 
In particular, with $a=1$ and $b=n$ we obtain~\eqref{eq:hn+lb}.
\end{lemma}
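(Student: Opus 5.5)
The plan is to prove a stronger, telescoping statement by counting added vertices rather than steps. For $i\ge 0$ let $P_i\coloneq|\cA_i|=|\{j\in[i]\colon x_j=1\}|$ be the number of $+1$ steps up to time $i$, with $P_0\coloneq 0$. Then $A_i=P_i-(i-P_i)=2P_i-i$. The intermediate claim I aim for is
\be
\sum_{i=a}^b\ind_{\{x_i=1\}}\frac{1}{A_i}\;\geq\;\log\Big(\frac{P_b}{P_{a-1}}\Big)\qquad\text{whenever }P_{a-1}\geq 1,
\ee
from which the lemma follows by bounding $P_b$ from below and $P_{a-1}$ from above.

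\textbf{Step 1 (one-step comparison).} Fix $i\in[a,b]$; since $b<\tau(\mathbf x)$, we have $A_i\ge 1$.
\begin{itemize}
\item If $x_i=-1$, the summand vanishes and $P_i=P_{i-1}$, so both sides of the claim are unchanged.
\item If $x_i=1$, then $P_i=P_{i-1}+1$. Since $P_i\le i$, we get $A_i=2P_i-i\le P_i$. Using $\log(1+y)\le y$,
\be
\log\Big(\frac{P_i}{P_{i-1}}\Big)=\log\Big(1+\frac{1}{P_{i-1}}\Big)\le\frac{1}{P_{i-1}}.
\ee
This is not yet quite $\le 1/A_i$. So I will use instead $-\log(1-1/P_i)$, and need to compare it with $1/P_i\le 1/A_i$ carefully. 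Alternatively, I shift the telescoping target to $\log\big((P_b+1)/(P_{a-1}+1)\big)$, for which $\log(1+1/(P_{i-1}+1))\le 1/P_i\le 1/A_i$ holds exactly.
\end{itemize}
Summing over $i\in[a,b]$ telescopes to
\be
\sum_{i=a}^b\ind_{\{x_i=1\}}\frac1{A_i}\;\ge\;\log\Big(\frac{P_b+1}{P_{a-1}+1}\Big),
\ee
and this form also handles $a=1$, where $P_0=0$, without a special case.

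\textbf{Step 2 (endpoint bounds).}
\begin{itemize}
\item Since $A_b\ge 1$, we have $P_b=(b+A_b)/2\ge b/2$, hence $P_b+1>b/2$.
\item Trivially $P_{a-1}\le a-1$, so $P_{a-1}+1\le a$.
\end{itemize}
Together these give the lower bound $\log\big(b/(2a)\big)$. For $a=1$ and $b=n$ this recovers~\eqref{eq:hn+lb}.

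\textbf{Main obstacle.} The one delicate point is in Step 1: the naive bound on $\sum 1/i$ only yields $\tfrac12\log$, because roughly half the steps may be freezes. The fix is to telescope in the counter $P_i$ of added vertices rather than in the time $i$, and then exploit the inequality $A_i\le P_i$. I must choose the offset (the $+1$) so that the logarithmic increment is dominated by $1/P_i$, and not by the weaker $1/P_{i-1}$. Once that choice is fixed, the rest is bookkeeping.
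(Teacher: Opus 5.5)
Your proof is correct, and it takes a different route from the paper.

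\textbf{The paper's route.} The paper uses an exchange argument. Swapping an adjacent pair $(x_{i-1},x_i)=(-1,1)$ inside the window into $(1,-1)$ can only decrease the sum. Hence one may assume that all $p_{a,b}$ additions in $[a,b]$ come first, and the sum becomes $\sum_{j=1}^{p_{a,b}}1/(A_{a-1}+j)\geq\log\frac{A_{a-1}+p_{a,b}+1}{A_{a-1}+1}$. A separate random-walk count $p_{a,b}\geq\frac12(b-a-A_{a-1})$, forced by $A_b>0$, together with $A_{a-1}\leq a-1$, then gives $\log(b/(2a))$.

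\textbf{Your route.} You telescope directly along the original sequence. At addition steps you use only $A_i\leq P_i$ and $\log(1+1/P_i)\leq 1/P_i$. The endpoint bounds $P_b\geq(b+1)/2$ and $P_{a-1}\leq a-1$ then finish at once, and even give $\log\frac{b+3}{2a}$. This is shorter and needs neither the rearrangement nor the count of $p_{a,b}$.

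\textbf{Trade-off.} The paper's route keeps an intermediate bound in terms of $A_{a-1}$ rather than $P_{a-1}\geq A_{a-1}$. That bound is sharper under heavy freezing before time $a$, although the paper discards it when it uses $A_{a-1}\leq a-1$. Your method recovers it too if you telescope the counter $A_{a-1}+P_i-P_{a-1}$ instead of $P_i$. For $i\in[a,b]$ this counter also dominates $A_i$, and it increases by exactly one at each addition.

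\textbf{One correction to the write-up.} The intermediate claim you announce first, with $\log(P_b/P_{a-1})$ on the right, is false. For $\mathbf x=(1,1,\ldots)$ and $a=2$ it reads $\sum_{i=2}^b 1/i\geq\log b$, which fails for every $b>2$. The variant via $-\log(1-1/P_i)=\log(P_i/P_{i-1})$ does not help either, since that quantity is at least $1/P_i$, not at most. Only the shifted bound $\log\frac{P_b+1}{P_{a-1}+1}$ holds. It is what your Step 1 actually proves, so state that version from the start.
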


\begin{proof} 
The proof is similar to that of~\cite[Lemma 16]{Bellin.2023}, which is the case $a=1$ and $b=n$ (up to an additive constant $1$). Let us first assume that there exists $i\in\{a+1,\ldots, b\}$ such that $x_{i-1}=-1$ and $x_i=1$. We construct the 
choice sequence $\mathbf x'$ 
\be 
\mathbf x' =(x_1', x_2',x_3',\ldots)= (x_1, x_2,\ldots, x_{i-2},1,-1,x_{i+1},\ldots). 
\ee 
It is then clear that 
\be 
\sum_{i=a}^b \ind_{\{x_i=1\}}\frac{1}{A_i(\mathbf x)}\geq \sum_{i=a}^b \ind_{\{x_i'=1\}}\frac{1}{A_i(\mathbf x')} .
\ee 
As a result, we iteratively switch entries $-1,1$ to $1,-1$ to obtain the choice sequence 
\be 
\mathbf x'' = (x_1'', x_2'',x_3'',\ldots)=(x_1, x_2,\ldots, x_{a-1}, 1,\ldots, 1,-1,\ldots, -1, x_{b+1},\ldots),
\ee 
where there are
\be 
p_{a,b}(\mathbf{x})\coloneq |\{i\in \{a,\ldots, b\}\colon x_i=1\}| 
\ee
many consecutive $1$ entries and $b-(a-1)-p_{a,b}(\mathbf x)$ many consecutive $-1$ entries. Note that $p_{a,b}(\mathbf x)=p_{a,b}(\mathbf x'')$. We thus arrive at  the lower bound 
\be 
\sum_{i=a}^b \ind_{\{x_i=1\}}\frac{1}{A_i(\mathbf x)}\geq \sum_{i=a}^b \ind_{\{x_i''=1\}}\frac{1}{A_i(\mathbf x'')}=\sum_{i=1}^{p(a,b)(\mathbf x)}\frac{1}{A_{a-1}(\mathbf x)+i} \geq \log\bigg(\frac{A_{a-1}(\mathbf x)+p_{a,b}(\mathbf x)+1}{A_{a-1}(\mathbf x)+1}\bigg).
\ee 
If there does not exist an $i\in\{a+1,\ldots, b\}$ such that $x_{i-1}=-1$ and $x_i=1$, then we observe that $\mathbf x=\mathbf x''$, so that we obtain the same lower bound. Moreover, we derive
\be 
p_{a,b}(\mathbf x)\geq \frac12\max\{0,b-a-A_{a-1}(\mathbf x)\}. 
\ee 
Since $A_i$ is positive for all $i\in[b]$, in particular $A_b>0$. Viewing $(A_i(\mathbf x))_{i\in[b]}$ as a random walk, then at step $a-1$ we are at position $A_{a-1}>0$. To be positive after $b$ steps, one can first make $A_{a-1}-1$ steps downwards, and in the remaining $(b-a-A_{a-1})$ many steps  you need to make at least half of them upwards. This is a necessary condition for $A_b$ to be positive, which yields the lower bound on $p_{a,b}(\mathbf x)$. We thus conclude that 
\be 
\sum_{i=a}^b \ind_{\{x_i=1\}}\frac{1}{A_i(\mathbf x)}\geq \max\bigg\{0,\log\bigg(\frac{b-a+A_{a-1}(\mathbf x) +1}{2(A_{a-1}(\mathbf{x})+1)}\bigg)\bigg\} \geq \log\Big(\frac{b}{2a}\Big),
\ee 
where the final inequality uses that $A_{a-1}(\mathbf x)\leq a-1$ for any choice sequence $\mathbf x$. 
\end{proof} 

We then prove Lemma~\ref{lemma:tail_bound_truncated_selection_set}.

\begin{proof}[Proof of Lemma~\ref{lemma:tail_bound_truncated_selection_set}]
First, we consider the active vertex $a_1$ (this is the same as considering any $a_v$ by exchangeability) and set $Q_n\coloneq |\cS_n(a_1)\cap [I_{\xref},n]|$.  By recalling the definition of the selection sets in~\eqref{eq:definition_selection_set}, we can represent $Q_n$ as a sum of independent Bernoulli random variables, so that
\begin{equation} 
Q_n=\sum_{j=I_{\xref}}^n \ind_{\{x_j=1\}}s_j, 
\end{equation} 
where $s_j\sim \text{Ber}((m+1)/A_j)$ as we assume that $A_j\geq m+1$ for all $I_{\xref}\leq j\leq n$. We have 
\begin{equation} \label{eq:qnlb}
\E{Q_n}=\sum_{j=I_{\xref}}^n \ind_{\{x_j=1\}}\frac{m+1}{A_j}=(m+1)(h_n^+-h_{I_{\xref}-1}^+).
\end{equation} 
By using Assumption~$\xref$\ref{item:hI} with $\eps<1-c/(m+1)$ (so that $(m+1)(1-\eps)>c$), we have  $\E{Q_n}\geq (m+1)(1-\eps)h_n^+>ch_n^+$ for all large $n$. We now apply Bernstein's inequality (see e.g.\ Exercise $5.2.1$ in~\cite{Klenke.2020}) to obtain
\begin{equation}
\label{eq:bern}
\P{Q_n<ch_n^+}\leq \exp\bigg(-\frac12 \frac{(\E{Q_n}-ch_n^+)^2}{\E{Q_n}}\bigg).
\end{equation} 
As the mapping $x\mapsto (x-s)^2/x$ is increasing on $[s,\infty)$ and $\E{Q_n}\geq (m+1)(1-\eps)h_n^+$ for all large $n$, we thus arrive at
\begin{equation} 
\P{Q_n<ch_n^+}\leq \exp\bigg(-\frac{((m+1)(1-\eps)-c)^2}{2(m+1)(1-\eps)}h_n^+(1+o(1))\bigg)=o(n^{-\beta}), 
\end{equation} 
when we choose $\beta\in(0,((m+1)(1-\eps)-c)^2/(2(m+1)(1-\eps)))$ and use \eqref{eq:hn+lb} (or Lemma~\ref{lemma:hnlb} with $a=1,b=n$) in the last step.

Now, consider the frozen vertex $f_1$ and set $Q_n\coloneq |\cS_n(f_1)\cap [I_{\xref},n]|$. We cannot directly apply the same bounds as for the active vertex $a_1$, due to the random step $U$ at which the frozen vertex $f_1$ is activated. We define the quantity
\be 
P_j\coloneq \sum_{i=I_{\xref}}^{j-1}\ind_{\{x_i=1\}}\frac{m+1}{A_i}=(m+1)(h_{j-1}^+-h_{I_{\xref}-1}^+).
\ee
Since $A_i\geq m+1$ for $i\in\{I_{\xref},\ldots,n\}$, the quantity $P_j$ equals the expected number of times vertex $f_1$ is selected in $[I_{\xref},n]$, given that it is activated at step $j$. Fix $C\in(c,(m+1)(1-\eta))$, which is possible by the choice of $\eta$. We now bound
\be \ba 
\mathbb P{}&(|\cS_n(f_1)\cap \{I_{\xref},\ldots n\}|<c\log F_n)\\
&=\E{\P{|\cS_n(f_1)\cap [I_{\xref}, U)|<c\log F_n\,|\, U}}\\
&\leq \E{\ind_{\{P_{U}< C\log F_n\}}+\ind_{\{P_{U}\geq C\log F_n\}}\P{|\cS_n(f_1)\cap  [I_{\xref}, U)|<c\log F_n\,|\, U}}.
\ea \ee 
We can bound the conditional probability from above by using Chernoff's inequality. For $\lambda>0$,
\be \ba 
\mathbb P(|\cS_n(f_1)\cap  [I_{\xref}, U)|<c\log F_n\,|\, U)&\leq \exp(\lambda c\log F_n)\prod_{i=I_{\xref}}^{U-1}\bigg(1+(\e^{-\lambda}-1) \ind_{\{x_i=1\}}\frac{\binom{A_i-1}{m\wedge (A_i-1)}}{\binom{A_i}{(m+1)\wedge A_i}}\bigg)\\
&\leq \exp\bigg(\lambda c\log F_n +(\e^{-\lambda}-1)P_{U}\bigg),
\ea \ee 
where we use that $1+x\leq \e^x$ and that the fraction on the right-hand side equals $(m+1)/A_i$ when $A_i\geq m+1$ (which holds for $i\in\{I_{\xref},\ldots,n\}$ by assumption) to obtain the final inequality. We then define
\be 
J_n(C)\coloneq \min\{j\in \cF_n\colon P_j\geq C\log F_n\}. 
\ee
By noting that $P_j$ is increasing in $j$, we can thus bound 
\be \ba
\mathbb E{}&\big[\ind_{\{P_{U}< C\log F_n\}}+\ind_{\{P_{U}\geq C\log F_n\}}\P{|\cS_n(f_1)\cap  [I_{\xref}, U)|<c\log F_n\,|\, U}\big]\\
&\leq \frac{1}{F_n}\sum_{j\in \cF_n} \Big(\ind_{\{P_j< C\log F_n\}}+\ind_{\{P_j\geq C\log F_n\}}\exp\big(\lambda c\log F_n+(\e^{-\lambda}-1)P_j\big)\Big)\\
&\leq \frac{F_{J_n(C)}}{F_n}+\frac{|\cF_n\cap [J_n(C),n]|}{F_n}\exp\big(\lambda c\log F_n+(\e^{-\lambda}-1)P_{J_n(C)}\big).
\ea \ee
As $\e^{-\lambda}-1\leq -\lambda+\lambda^2/2$ for $\lambda\geq 0$, we can set $\lambda=(P_{J_n(C)}-c\log F_n))/P_{J_n(C)}$ to obtain the upper bound
\be 
\frac{F_{J_n(C)}}{F_n}+\exp\Big(-\frac{(P_{J_n(C)}-  c\log F_n)^2}{2P_{J_n(C)}}\Big).
\ee
As the mapping $x\mapsto (x-s)^2/x$ is increasing on $(s,\infty)$, the definition of $J_n(C)$ thus yields the upper bound
\be
\label{eq:final_ub_freez_sel_sets}
\frac{F_{J_n(C)}}{F_n}+\exp\Big(-\frac{ (C-c)^2}{2C}\log F_n\Big)=\frac{F_{J_n(C)}}{F_n}+\cO(F_n^{-(C-c)^2(2C)^{-1}}).
\ee
It remains to bound the fraction. Fix $\zeta\in(\eta,1)$ close enough to $1$ such that $(m+1)(\zeta-\eta)>C$. Using that $I_{\xref}\leq F_n^\eta$ by Assumption~$\xref$\ref{item:Jn} and applying Lemma~\ref{lemma:hnlb}, we can bound 
\be 
(m+1)(h_{F_n^\zeta}^+-h_{I_{\xref}-1}^+)\geq (m+1)(h_{F_n^\zeta}^+-h_{F_n^\eta-1}^+)\geq (m+1)  \log\big(F_n^{\zeta-\eta}/2\big)  > C\log F_n,
\ee
where the final step holds for all large $n$. It follows that $J_n(C)\leq F_n^\zeta$ for all large $n$. Combined with the fact that $F_n<n/2$ by Assumption$\xref$\ref{item:geq1} and  with~\eqref{eq:final_ub_freez_sel_sets}, there exists $\xi\in(0,1)$ such that 
\be
\P{|\cS_n(f_1)\cap [I_{\xref},n]|<c\log F_n}\leq F_n^{-\xi}, 
\ee 
which concludes the proof.   
\end{proof}

For the tail bound on $\tau_{k,\ell}$ we need to prove that the probability of two vertices $v,w$ among $a_1,\ldots,a_k,f_1,\ldots,f_\ell$ being selected simultaneously early in the coalescent process is sufficiently small, which is made precise in the following lemma.
\begin{lemma}
\label{lemma:tightness_tau_k}
Fix a choice sequence $\mathbf x$ that satisfies Assumption~$\xref$\ref{item:geq1} and~\ref{item:lb}. Then, for any sequence $(I_n)_{n\in\N}$ with $I_n\geq I_{\xref}$ for all large $n$, there exists $\zeta\in(0,1)$ such that
\begin{equation}
\prob(\tau_{k,\ell}\geq I_n)= o(I_n^{-\zeta}).
\end{equation}
\proof
Let $(U_n^w)_{w\in[\ell]}$ be $\ell$ distinct uniform elements of $\cF_n$. Here, $U_n^w$ corresponds to the step at which the frozen vertex $f_w$ is activated. By conditioning on $(U_n^w)_{w\in[\ell]}$, we obtain 
\begin{equation} 
\P{\tau_{k,\ell}<I_n}=\E{\P{\tau_{k,\ell}<I_n\,|\, (U_n^{w})_{w\in[\ell]}}}=\E{\P{\tau_{k,\ell}<I_n\,\Big|\, ( U_n^{(w)})_{w\in[\ell]}}},
\end{equation} 
where the $ U_n^{(w)}$ are the order-statistics of the $U_n^w$. That is, $ U_n^{(1)}>\cdots > U_n^{(\ell)}$. Conditionally on these random variables, the event $\{\tau_{k,\ell}<I_n\}$ holds when at each step $i\in[I_n,n]$ such that $x_i=1$ and $ U_n^{(w-1)}>i> U_n^{(w)}$ for some $w\in[\ell+1]$ (where we set $ U_n^{(0)}=n+1$ and $ U_n^{(\ell+1)}=1$), the $(m+1)\wedge A_i$ many roots that are selected contain at most one of the $k+w-1$ many active roots of interest (the active roots labelled $a_1,\ldots,a_k$ and the first $w-1$ roots that are activated among $f_1,\ldots,f_\ell$). As a result, we have 
\begin{equation}\begin{aligned}
\label{eq:ptaukl}
\mathbb P\big(\tau_{k,\ell}<I_n\,\big|\, ( U_n^{(w)})_{w\in[\ell]}\big)= \prod_{w=1}^{\ell+1} \prod_{\substack{i=\max\{ U_n^{(w)}+1,I_n\}\\ x_i=1}}^{ U_n^{(w-1)}-1} \!\!\!\!\!\!\!\!\!\!\!\!\!\!\!\!\P{\text{At most one root among $k+w-1$ chosen in step $i$}}
\end{aligned}\end{equation} 
By Assumption~$\xref$\ref{item:lb},  we have $A_i\geq m+1$ for all $i\geq I_n$ and all large $n$. For $w\in[\ell+1]$, if $ U_n^{(w-1)}-1>I_n$ and $i\in[\max\{ U_n^{(w)}+1,I_n\}, U_n^{(w-1)}-1]$, we then have 
\begin{equation}\begin{aligned} 
\mathbb P({}&\text{At most one root among $k+w-1$ chosen in step $i$})\\ 
&=\frac{1}{\binom{A_i}{m+1}}\bigg(\binom{A_i-(k+(w-1))}{m+1}+(k+(w-1))\binom{A_i-(k+(w-1))}{m}\bigg)\\ 
&=1-\frac{1}{\binom{A_i}{m+1}}\bigg(\binom{A_i}{m+1}-\binom{A_i-(k+(w-1))}{m+1}-(k+(w-1))\binom{A_i-(k+(w-1))}{m}\bigg).
\end{aligned}\end{equation} 
By repeatedly applying Pascal's rule, we find that, for integers $s,t,r\geq 0$ such that $t-s\geq r$,
\begin{equation}\label{eq:pascal} 
\binom{t}{r}=\binom{t-1}{r-1}+\binom{t-1}{r}=\cdots=\binom{t-s}{r}+\sum_{\ell=1}^s \binom{t-\ell}{r-1}.
\end{equation} 
Applying this equality with $(1)$: $t=A_i$, $r=m+1$, and $s=k+(w-1)$ and $(2)$: $t=A_i-\ell$, $r=m+1$, and $s=k+(w-1)-\ell$, where $\ell\in[k+(w-1)-1]$, we arrive at 
\begin{equation}\begin{aligned} 
\binom{A_i}{m+1}{}&-\binom{A_i-(k+(w-1))}{m+1}-(k+(w-1))\binom{A_i-(k+(w-1))}{m}\\ 
={}&\sum_{\ell=1}^{k+(w-1)-1}\sum_{x=\ell+1}^{k+(w-1)}\binom{A_i-x}{m-1}\leq (k+(w-1))^2\binom{A_i-2}{m-1}\leq (k+(w-1))^2 A_i^{m-1}.
\end{aligned}\end{equation} 
As a result, we have the lower bound
\begin{equation} 
\mathbb P(\text{At most one root among $k+w-1$ chosen in step $i$})\geq 1-\frac{(k+(w-1))^2 A_i^{m-1}}{\binom{A_i}{m+1}}\geq 1-C A_i^{-2}, 
\end{equation} 
where the constant $C>0$ does not depend on $i$ and $w$. We use this in~\eqref{eq:ptaukl} to obtain 
\begin{equation} 
\P{\tau_{k,\ell}<I_n}\geq \mathbb E\Bigg[\prod_{w=1}^{\ell-1}\prod_{\substack{i=\max\{ U_n^{(w)}+1,I_n\}\\ x_i=1}}^{ U_n^{(w)-1}}\big(1-C A_i^{-2}\big)\Bigg]\geq 1-C\sum_{w=1}^{\ell-1}\mathbb E\Bigg[\sum_{\substack{i=\max\{ U_n^{(w)}+1,I_n\}\\ x_i=1}}^{ U_n^{(w)-1}} A_i^{-2}\Bigg].
\end{equation} 
We can further bound the right-hand side from below by $1-C\sum_{i=I_n}^n A_i^{-2}$. By using Assumption~$\xref$\ref{item:lb}, we finally obtain
\begin{equation} 
\P{\tau_{k,\ell}<I_n}=1-\cO(I_n^{-2\delta}).
\end{equation} 
Choosing $\zeta\in(0,2\delta)$ concludes the proof.\qed
\end{lemma}

We are now ready to prove Theorem~\ref{theorem:joint_degree_distribution}.

\begin{proof}[Proof of Theorem~\ref{theorem:joint_degree_distribution}]
We directly obtain the upper bound
\begin{equation}
\P{\deg_n(a_v)\geq d_{a_v}\text{ for all }v\in[k], \,\deg_n(f_w)\geq d_{f_w}\text{ for all }w\in[\ell]} \leq \theta^{-\sum_{v=1}^k d_{a_v}-\sum_{w=1}^\ell d_{f_w}}
\end{equation}
from Lemma~\ref{lemma:upper_bound_tail_vertex_degrees}. For a lower bound we recall $I_{\xref}$ from Assumption~$\xref$ and  apply Lemma~\ref{lemma:lower_bound_tail_vertex_degrees}  combined with a union bound  to arrive at
\begin{equation}
\label{eq:lower_bound_degree_distribution_proof}
\begin{aligned}
\mathbb P{}&(\deg_n(a_v)\geq d_{a_v}\text{ for all }v\in[k],\,\deg_n(f_w)\geq d_{f_w}\text{ for all }w\in[\ell]) \\
&\geq \theta^{-\sum_{v=1}^k d_{a_v}-\sum_{w=1}^\ell d_{f_w}}\bigg(1-\P{\tau_{k,\ell}\geq I_{\xref}}-\!\!\!\!\sum_{v\in\{a_1,\ldots, a_k,f_1\ldots, f_\ell\}}\!\!\!\!\!\!\!\! \P{|\cS_n(v)\cap [I_{\xref},n]|<d_{v}}\bigg).  
\end{aligned}
\end{equation}
Using Lemma~\ref{lemma:tightness_tau_k} and Lemma~\ref{lemma:tail_bound_truncated_selection_set} combined with Assumption~$\xref$\ref{item:Jn}, we bound the right-hand side of~\eqref{eq:lower_bound_degree_distribution_proof} from below by
\begin{equation}
\theta^{-\sum_{v=1}^k d_{a_v}-\sum_{w=1}^\ell d_{f_w}}\Big(1+\cO(n^{-\beta}+I_{\xref}^{-\zeta}+F_n^{-\xi})\Big).
\end{equation}
As $F_n\geq I_{\xref}^{1/\eta}$ for all large $n$ by Assumption~$\xref$\ref{item:Jn} and $I_{\xref}\leq n$, we can take $\alpha\in(0,\min\{\beta,\zeta,\xi/\eta\})\subset(0,1)$ to write the error term as $1+o(I_{\xref}^{-\alpha})$, which concludes the proof.
\end{proof}

\subsection{Moment estimates on the vertex count of a certain degree}
\label{subsec:moment_estimates_vertex_count_degree}

We continue with a moment estimate on the number of vertices of a fixed degree, necessary for the proofs of Theorems~\ref{theorem:poisson_convergence} and~\ref{theorem:asymptotic_normality_smaller_degrees}. Recall the random variables $X_i^{(n)},X_{\geq i}^{(n)},Y_i^{(n)}$, and $Y_{\geq i}^{(n)}$ from~\eqref{eq:Xdef} and~\eqref{eq:Ydef}. To make use of Theorem~\ref{theorem:joint_degree_distribution}, we need the following lemma for the calculation of factorial moments, which is a straightforward adaptation of Lemma 5.1 in~\cite{Addario.Eslava.2018}.
\begin{lemma}[Lemma 5.1, \cite{Addario.Eslava.2018}]
\label{lemma:inclusion_exclusion_degree}
Fix $n\in\N$ and $k,\ell\in[n]$. Fix integers $d_{a_1},\ldots ,d_{a_k}$ and $d_{f_1},\ldots, d_{f_\ell}$. Then,
\be \ba 
\mathbb P{}&\big(\deg_n(a_v)=d_{a_v} \text{ for all }v\in[k], \deg_n(f_w)=d_{f_w}\text{ for all }w\in[\ell]\big)\\
&= \sum_{R\subseteq [k]}\sum_{S\subseteq[\ell]}\!\!(-1)^{|R|+|S|}\P{\deg_n(a_v)\geq d_{a_v}+\ind_{\{v\in R\}},v\in[k], \deg_n(f_w)\geq d_{f_w}+\ind_{\{w\in S\}},w\in[\ell]}.
\ea \ee 
In particular, for integers  $k\geq k'\geq 1$ and $\ell\geq\ell'\geq 1$  and $d_{a_1},\ldots, d_{a_{k}}$ and $d_{f_1},\ldots, d_{f_{\ell}}$, 
\be\ba
\mathbb P({}&\deg_n(a_o)=d_{a_o}, \deg_n(a_v)\geq d_{a_v} \text{ for } 1\leq o\leq k'<v\leq k,\\
&\deg_n(f_u)=d_{f_u}, \deg_n(f_w)\geq d_{f_w}\text{ for }1\leq u\leq \ell'<w\leq \ell)\\
={}&\sum_{R\subseteq [k']}\sum_{S\subseteq[\ell']}\!\!(-1)^{|R|+|S|}\mathbb P(\deg_n(a_v)\geq d_{a_v}+\ind_{\{v\in R\}},v\in[k], \deg_n(f_w)\geq d_{f_w}+\ind_{\{w\in S\}},w\in[\ell]).
\ea\ee 
\end{lemma}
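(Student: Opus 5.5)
The plan is to prove the first identity by inclusion--exclusion on the events $\{\deg_n(v)=d_v\}=\{\deg_n(v)\geq d_v\}\setminus\{\deg_n(v)\geq d_v+1\}$, exactly as in Lemma 5.1 of~\cite{Addario.Eslava.2018}. The argument is purely combinatorial and uses no property of the coalescent beyond the degrees being integer valued. Write $C=\{a_1,\ldots,a_k,f_1,\ldots,f_\ell\}$. For $v\in C$ set $B_v\coloneq\{\deg_n(v)\geq d_v\}$ and $B_v^+\coloneq\{\deg_n(v)\geq d_v+1\}$, so that $B_v^+\subseteq B_v$ and
\be
\ind_{\{\deg_n(v)=d_v\}}=\ind_{B_v}-\ind_{B_v^+}.
\ee

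Since $B_v^+\subseteq B_v$, we have $\ind_{B_v^+}=\ind_{B_v}\ind_{B_v^+}$, and hence $\ind_{B_v}-\ind_{B_v^+}=\ind_{B_v}(1-\ind_{B_v^+})$. The first step is to take the product over all $v\in C$ and expand it. Expanding $\prod_{v\in C}(\ind_{B_v}-\ind_{B_v^+})$ gives a sum over subsets $T\subseteq C$ (the factors where we pick $-\ind_{B_v^+}$). Each term is $(-1)^{|T|}\prod_{v\in T}\ind_{B_v^+}\prod_{v\notin T}\ind_{B_v}$. This is precisely the indicator of $\{\deg_n(v)\geq d_v+\ind_{\{v\in T\}}\text{ for all }v\in C\}$ with sign $(-1)^{|T|}$.

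The second step is to split $T=R\cup S$ with $R$ indexing active vertices and $S$ indexing frozen vertices, so that $(-1)^{|T|}=(-1)^{|R|+|S|}$. Taking expectations and using linearity then yields the first displayed identity.

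For the second (mixed) identity, apply the same expansion only to the vertices with an equality constraint, namely $a_1,\ldots,a_{k'}$ and $f_1,\ldots,f_{\ell'}$. For the remaining vertices, keep the factor $\ind_{B_v}$ unexpanded. The product of the untouched factors $\prod_{v}\ind_{B_v}$ multiplies every term of the expansion. This gives the stated sum over $R\subseteq[k']$ and $S\subseteq[\ell']$, where the indicator shifts $\ind_{\{v\in R\}}$ and $\ind_{\{w\in S\}}$ vanish automatically for $v>k'$ and $w>\ell'$.

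There is no real obstacle. The only point requiring care is bookkeeping: the expansion must be done at the level of indicators, before taking expectations, so that joint events are correctly represented. One should also note that the identity holds for any integer thresholds. In particular it holds when some $d_v+1$ exceeds the maximal possible degree, in which case the corresponding probabilities are simply zero.
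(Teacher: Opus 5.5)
Your proof is correct: expanding $\prod_{v}(\ind_{B_v}-\ind_{B_v^+})$ over subsets at the level of indicators and then taking expectations is the inclusion--exclusion argument behind Lemma~5.1 of~\cite{Addario.Eslava.2018}. The paper does not reprove the lemma; it cites that result as a straightforward adaptation. Your treatment of the mixed identity, expanding only the equality-constrained factors and keeping $\ind_{B_v}$ for the remaining vertices, is likewise the intended argument.
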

For $r\in\R$ and $a\in\N$, write $(r)_a\coloneq r(r-1)\ldots(r-a+1)$ and $(r)_0\coloneq 1$, and recall the definition of $\eps_n^\aa,\eps_n^\mathrm{f}$ from \eqref{eq:epsn}. We then have the following result.

\begin{proposition}
\label{prop:factorial_moments}
Fix $m\in\N$, $c\in(0,m+1)$, and $K,L\in\N_0$ such that $K+L\geq 1$. Let $\mathbf x$ be a choice sequence that satisfies Assumption~$\xref$ for some $\eps<1-c/(m+1)$ in Part~\ref{item:hI} and some $\eta<1-c/(m+1)$ in Part~\ref{item:Jn}.  Let $i=i(n),i'=i'(n)$ and $j=j(n),j'=j'(n)$ be integers such that $i<i'$ and $j<j'$ for all $n$, $i+\lfloor\log_\theta (A_n)\rfloor\geq 0$ and $j+\lfloor\log_\theta F_n\rfloor\geq 0$ for all $n$, and 
\be 
\limsup_{n\to\infty} \frac{\log_\theta (A_n)+i'}{h_n^+}<c\qquad\text{and}\qquad \limsup_{n\to\infty} \frac{\log_\theta(F_n)+j'}{\log F_n}<c.
\ee 
Then, there exists $\alpha\in(0,1)$ such that for any non-negative integers $b_i,\ldots, b_{i'}$ and $c_j,\ldots, c_{j'}$ such that $b_i+\cdots+b_{i'}=K$ and $c_j+\cdots+c_{j'}=L$, we have 
\be\ba 
\mathbb E{}&\Bigg[\big(X^{(n)}_{\geq i'}\big)_{b_{i'}}\big(Y^{(n)}_{\geq j'}\big)_{c_{j'}}\prod_{i\leq k<i'}\big(X^{(n)}_k\big)_{b_k}\prod_{j\leq \ell<j'}\big(Y^{(n)}_\ell\big)_{c_\ell}\Bigg]\\
&=\Big(\big(\theta^{-i'+\eps_n^\aa}\big)^{b_{i'}}\!\!\prod_{i\leq k<i'}\!\!\big((1-\theta^{-1})\theta^{-k+\eps_n^\aa}\big)^{b_k}\Big)\Big(\big(\theta^{-j'+\eps_n^\mathrm{f}}\big)^{c_{j'}}\!\!\prod_{j\leq \ell <j'}\!\!\big((1-\theta^{-1})\theta^{-\ell+\eps_n^\mathrm{f}}\big)^{c_\ell}\Big)(1+\cO(\delta_n)),
\ea\ee 
where $\delta_n\coloneq \min\{A_n,F_n,I_{\xref}^\alpha\}^{-1}$.
\end{proposition}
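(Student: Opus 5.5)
The plan follows Addario-Berry and Eslava. We expand the mixed factorial moment as a sum over ordered tuples of distinct vertices, reduce each summand via exchangeability to a statement about fixed vertices, and then apply Lemma~\ref{lemma:inclusion_exclusion_degree} together with Theorem~\ref{theorem:joint_degree_distribution}.

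\textbf{Step 1: expansion and exchangeability.} Write $(X^{(n)}_k)_{b_k}$ as the number of ordered $b_k$-tuples of distinct active vertices with degree exactly $\lfloor\log_\theta A_n\rfloor+k$. Write $(X^{(n)}_{\geq i'})_{b_{i'}}$ as the analogous count with degree at least $\lfloor\log_\theta A_n\rfloor+i'$, and treat the $Y$'s in the same way. Tuples belonging to different factors are automatically disjoint, since the degree constraints are incompatible. The whole product is therefore a sum over ordered $K$-tuples of distinct active vertices and $L$-tuples of distinct frozen vertices, each assigned a prescribed degree constraint. By Corollary~\ref{corollary:exchangeability_degrees}, the expectation equals
\[
(A_n)_K(F_n)_L\,\mathbb P(\cdots),
\]
where the probability is that $a_1,\ldots,a_K$ and $f_1,\ldots,f_L$ satisfy the prescribed constraints ($=d$ or $\geq d$).

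\textbf{Step 2: inclusion--exclusion and the asymptotics of each term.} By the second part of Lemma~\ref{lemma:inclusion_exclusion_degree}, this probability is a signed sum, over subsets $R$ of the ``exact'' active vertices and $S$ of the ``exact'' frozen vertices, of joint tail probabilities with thresholds shifted by at most one. All thresholds are at most $\log_\theta A_n+i'+1<ch_n^+$ and at most $\log_\theta F_n+j'+1<c\log F_n$ for large $n$, by the $\limsup$ hypotheses (possibly after shrinking the margin, using the strict inequality). Theorem~\ref{theorem:joint_degree_distribution} then applies uniformly and gives $\theta^{-\sum d}(1+o(I_{\xref}^{-\alpha}))$ for each term. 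The number of terms is $2^{K+L}$, which is fixed, so the errors add to a relative error $\cO(I_{\xref}^{-\alpha})$ on each term.

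The main term of the signed sum factorises:
\[
\prod_{\text{exact}}\theta^{-d}(1-\theta^{-1})\prod_{\text{tail}}\theta^{-d}.
\]
To bound the error against this main term, note that each error term is bounded by $\theta^{-\sum d}$ times the error, and the main term is a fixed constant multiple $(1-\theta^{-1})^{\#\text{exact}}$ of $\theta^{-\sum d}$. So the relative error remains $\cO(I_{\xref}^{-\alpha})$. Care is needed here precisely because cancellation in inclusion--exclusion could otherwise inflate relative errors; this is the step where uniformity of Theorem~\ref{theorem:joint_degree_distribution} in the $d$'s is essential.

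\textbf{Step 3: assembling the constants.} Substitute $d=\lfloor\log_\theta A_n\rfloor+k$, so that
\[
\theta^{-d}=\theta^{-k+\eps_n^\aa}A_n^{-1}.
\]
This gives a factor $A_n^{-K}F_n^{-L}$ times the claimed product. Finally, use $(A_n)_K/A_n^K=1+\cO(A_n^{-1})$ and $(F_n)_L/F_n^L=1+\cO(F_n^{-1})$, since $K$ and $L$ are fixed. Combining these with the $\cO(I_{\xref}^{-\alpha})$ error yields $1+\cO(\delta_n)$.

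If $L=0$ or $K=0$, the corresponding factors are simply absent and the argument is unchanged. I expect the main obstacle to be the verification in Step 2 that every shifted threshold lies strictly inside the range of Theorem~\ref{theorem:joint_degree_distribution}, with a uniform margin, so that a single $\alpha$ works for all terms.
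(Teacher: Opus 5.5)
Your proposal is correct and follows essentially the same route as the paper: the representation of the mixed factorial moment as $(A_n)_K(F_n)_L\,\mathbb{P}(\cdot)$ via exchangeability, inclusion--exclusion from Lemma~\ref{lemma:inclusion_exclusion_degree}, Theorem~\ref{theorem:joint_degree_distribution} applied to each of the finitely many shifted tail probabilities, the binomial collapse of the main terms to a factor $(1-\theta^{-1})$ per exact-degree vertex, and finally $(A_n)_K=A_n^K(1+\cO(A_n^{-1}))$ together with $\theta^{-d}=\theta^{-k+\eps_n^\aa}A_n^{-1}$. Your explicit comparison of the summed error terms (each at most $\theta^{-\sum d}\,o(I_{\xref}^{-\alpha})$) against the non-vanishing main term is, if anything, slightly more careful than the paper's remark that the $o(I_{\xref}^{-\alpha})$ term ``does not depend on $R$ and $S$''.
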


\begin{remark}
When $L=0$,  Assumption~$\xref$\ref{item:Jn} can be omitted. When $K+L=1$, Assumption~$\xref$\ref{item:lb} and Assumption~$\xref$\ref{item:Jn} can be weakened to $\lim_{n\to\infty }A_n=\infty$ and  $\lim_{n\to\infty}F_n=\infty$, respectively.
\end{remark}

\begin{proof}
For each $k\in \{i,\ldots, i'\}$ and each integer $v$ with $\sum_{t=i}^{k-1}b_t< v\leq \sum_{t=i}^k b_t$ we set $d_{a_v}\coloneq \lfloor \log_\theta A_n\rfloor +k$. Similarly, for each $\ell\in\{j,\ldots, j'\}$ and each integer $w$ with $\sum_{t=j}^{\ell-1}c_t<w\leq \sum_{t=j}^\ell c_t$ we set $d_{f_w}\coloneq \lfloor \log F_n\rfloor +\ell$. We also define $K'\coloneq K-b_{i'}$ and $L'\coloneq L-c_{j'}$. Then, using \cite[Theorem 2.7]{Hofstad.2017}, we have for the desired factorial moments the representation
\be\ba \label{eq:fact_rep}
\mathbb E{}&\Bigg[\big(X^{(n)}_{\geq i'}\big)_{b_{i'}}\big(Y^{(n)}_{\geq j'}\big)_{c_{j'}}\prod_{i\leq k<i'}\big(X^{(n)}_k\big)_{b_k}\prod_{j\leq \ell<j'}\big(Y^{(n)}_\ell\big)_{c_\ell}\Bigg]\\
&=(A_n)_K (F_n)_L \mathbb P(\deg_n(a_v)=d_{a_v},\deg_n(a_{v'})\geq d_{a_{v'}}\text{ for }1\leq v\leq K'<v'\leq K,\\
&\hphantom{=(A_n)_K (F_n)_L \mathbb P(\, } \deg_n(f_w)=d_{f_w},\deg_n(f_{w'})\geq d_{f_{w'}}\text{ for }1\leq w\leq L'<w'\leq L).
\ea\ee 
Applying Lemma~\ref{lemma:inclusion_exclusion_degree} to the probability on the right-hand side, we can write it as 
\be\ba 
\sum_{R\subseteq [K']}\sum_{S\subseteq[L']}\!\!(-1)^{|R|+|S|}\mathbb P({}&\deg_n(a_v)\geq d_{a_v}+\ind_{\{v\in R\}},v\in [K],\deg_n(f_w)\geq d_{f_w}+\ind_{\{w\in S\}},w\in[L]).
\ea \ee 
Since $d_{a_v}\geq 0$ satisfies $\limsup_{n\to\infty} d_{a_v}/h_n^+<c<m+1$ for all $v\in[K]$ and, analogously, $d_{f_w}\geq 0$ satisfies $\limsup_{n\to\infty} d_{f_w}/\log F_n<c<m+1$ for all $w\in [L]$, we can apply Theorem~\ref{theorem:joint_degree_distribution} to each of the probabilities. As a result, we obtain 
\be \ba 
\sum_{R\subseteq [K']}\sum_{S\subseteq[L']}\!\!(-1)^{|R|+|S|} \theta^{-(|R|+|S|)-\sum_{v=1}^K d_{a_v}-\sum_{v=1}^L d_{f_w}}(1+o(I_{\xref}^{-\alpha})), 
\ea\ee 
for some $\alpha>0$, where the little $o$ term does not depend on $R$ and $S$. We  can hence use the binomial expansion to simplify this expression, to arrive at 
\be 
(1-\theta^{-1})^{K'+L'}\theta^{-\sum_{v=1}^K d_{a_v}-\sum_{v=1}^L d_{f_w}}(1+o(I_{\xref}^{-\alpha})).
\ee 
Using this in~\eqref{eq:fact_rep} thus yields
\be\ba \label{eq:fact_rep_2}
\mathbb E{}&\Bigg[\big(X^{(n)}_{\geq i'}\big)_{b_{i'}}\big(Y^{(n)}_{\geq j'}\big)_{c_{j'}}\prod_{i\leq k<i'}\big(X^{(n)}_k\big)_{b_k}\prod_{j\leq \ell<j'}\big(Y^{(n)}_\ell\big)_{c_\ell}\Bigg]\\
&=(A_n)_K(F_n)_L(1-\theta^{-1})^{K'+L'}\theta^{-\sum_{v=1}^K d_{a_v}-\sum_{v=1}^L d_{f_w}}(1+o(I_{\xref}^{-\alpha})).
\ea \ee 
We now note that $(A_n)_K=A_n^K(1+\cO(A_n^{-1}))$ and $(F_n)_L=(F_n)^L(1+\cO(F_n^{-1}))$. Furthermore, by the definition of $K'$ and $L'$, we can write 
\be 
(1-\theta^{-1})^{K'+L'}=\prod_{i\leq k<i'}(1-\theta^{-1})^{b_k}\prod_{j\leq \ell<j'}(1-\theta^{-1})^{c_\ell}.
\ee 
Finally, recalling that $\eps_n^\aa=\log_\theta A_n-\lfloor \log_\theta A_n\rfloor $ and $\eps_n^\mathrm{f}=\log_\theta  F_n-\lfloor \log_\theta F_n\rfloor$, we can write 
\begin{align} 
K\log_\theta A_n -\sum_{v=1}^K d_{a_v}&=-\sum_{v=K'+1}^K (d_{a_v}-\log_\theta A_n)-\sum_{v=1}^{K'} (d_{a_v}-\log_\theta A_n)\\
&=-(i'-\eps_n^\aa)b_{i'}-\sum_{k=i}^{i'-1} (k-\eps_n^\aa)b_k, 
\intertext{and}
L\log_\theta F_n -\sum_{w=1}^L d_{f_w}&=-\sum_{w=L'+1}^L (d_{f_w}-\log_\theta  F_n)-\sum_{w=1}^{L'} (d_{f_w}-\log_\theta  F_n)\\
&=-(j'-\eps_n^\mathrm{f})c_{j'}-\sum_{\ell=j}^{j'-1} (\ell-\eps_n^\mathrm{f})c_\ell.
\end{align} 
Combining these three observations in~\eqref{eq:fact_rep_2}, recalling that $\delta_n=\min\{A_n,F_n,I_{\xref}^\alpha\}^{-1}$, we arrive at 
\be \ba 
\mathbb E{}&\Bigg[\big(X^{(n)}_{\geq i'}\big)_{b_{i'}}\big(Y^{(n)}_{\geq j'}\big)_{c_{j'}}\prod_{i\leq k<i'}\big(X^{(n)}_k\big)_{b_k}\prod_{j\leq \ell<j'}\big(Y^{(n)}_\ell\big)_{c_\ell}\Bigg]\\
&=(1-\theta^{-1})^{K'+L'}\theta^{K\log_\theta A_n+L\log_\theta F_n -\sum_{v=1}^K d_{a_v}-\sum_{v=1}^L d_{f_w}}(1+\cO(\delta_n))\\
&=\Big(\big(\theta^{-i'+\eps_n^\aa}\big)^{b_{i'}}\!\!\prod_{i\leq k<i'}\!\!\big((1-\theta^{-1})\theta^{-k+\eps_n^\aa}\big)^{b_k}\Big)\Big(\big(\theta^{-j'+\eps_n^\mathrm{f}}\big)^{c_{j'}}\!\!\prod_{j\leq \ell <j'}\!\!\big((1-\theta^{-1})\theta^{-\ell+\eps_n^\mathrm{f}}\big)^{c_\ell}\Big)(1+\cO(\delta_n)),
\ea\ee 
as desired.
\end{proof}

\subsection{Large degree vertices}
\label{subsec:large_degree_vertices}

In this section, we prove Corollary~\ref{cor:LLN} and Theorems~\ref{theorem:poisson_convergence},~\ref{theorem:maximum_degree} and~\ref{theorem:asymptotic_normality_smaller_degrees}, starting with the corollary.

\begin{proof}[Proof of Corollary~\ref{cor:LLN}]
We prove the result for $N_i^{(n)}$, the result for the other three random variables follows in an analogous way. We observe that $N_i^{(n)}=X^{(n)}_{i-\lfloor \log_\theta A_n\rfloor}$, so that, by the conditions on $i=i(n)$ in the statement of Corollary~\ref{cor:LLN}, we can use Proposition~\ref{prop:factorial_moments} to obtain that 
\be 
\mathbb E\big[N_i^{(n)}\big]=\mathbb E\big[X^{(n)}_{i-\lfloor \log_\theta A_n\rfloor}\big]=(1+o(1))(1-\theta^{-1})\theta^{-(i-\lfloor \log_\theta A_n\rfloor)+\eps_n^\aa}=(1+o(1))(1-\theta^{-1})\theta^{-i}A_n,
\ee 
and, similarly,
\be 
\mathbb E\big[\big(N_i^{(n)}\big)_2\big]=\mathbb E\big[\big(X^{(n)}_{i-\lfloor \log_\theta A_n\rfloor}\big)_2\big]=(1+o(1))\big((1-\theta^{-1})\theta^{-i}A_n\big)^2.
\ee
As a result, since $\mathbb E\big[N_i^{(n)}\big]$ tends to infinity by the conditions on $i=i(n)$ (see Remark~\ref{rem:degdistr}$(iii)$) and the two equations imply that $\Var(N_i^{(n)})=o(\mathbb E[N_i^{(n)}]^2)$, the second moment method via Chebyshev's inequality yields the desired result.
\end{proof} 

Theorems~\ref{theorem:poisson_convergence},~\ref{theorem:maximum_degree} and~\ref{theorem:asymptotic_normality_smaller_degrees} are generalisations of three of the main results in \cite{Addario.Eslava.2018} for the random recursive tree, i.e.\ the case $m=1$ and $\mathbf x=(1,1,\ldots)$. Equipped with Proposition~\ref{prop:factorial_moments}, the proofs of the theorems are similar to those in \cite{Addario.Eslava.2018}.

\begin{proof}[Proof of Theorem~\ref{theorem:poisson_convergence}]
Recall that $\cP_\square\overset\dd=  \cP^{\eps^\square}$ for $\square \in \{\aa,\ff\}$ and for some $\eps^\aa,\eps^\ff\in[0,1]$.
As the Poisson distribution is determined by its moments (see for example Corollary 15.33 in \cite{Klenke.2020}), we can imply the convergence in distribution in~\eqref{eq:poisson_convergence_by_continuity_sets_recovered_from_FDDs} along subsequences $(n_\ell)_{\ell\in\N}$ such that $\eps_{n_\ell}^\aa\to\eps^\aa$ and $\eps_{n_\ell}^\ff\to \eps^\ff$ by showing joint convergence of the factorial moments of the random variables $X_i^{(n_\ell)},\ldots,X_{i^\prime-1}^{(n_\ell)},X_{\geq i^\prime}^{(n_\ell)}$ and $Y_j^{(n_\ell)},\ldots,Y_{j^\prime-1}^{(n_\ell)},Y_{\geq j^\prime}^{(n_\ell)}$ to the joint factorial moments of the random variables $\cP_\mathrm{a}\{i\},\ldots, \cP_\aa\{i'-1\},\cP_\aa[i',\infty)$ and $\cP_\mathrm{f}\{j\},\ldots, \cP_\ff\{j'-1\},\cP_\ff[j',\infty)$ for any fixed $i,i',j,j'\in\Z$. For any non-negative integers $b_i,\ldots,b_{i^\prime}$ and $c_j,\ldots, c_{j'}$, by applying Proposition~\ref{prop:factorial_moments} with $K=b_i+\cdots +b_{i'}$ and $L=c_j+\cdots +c_{j'}$, and since $A_{n}$, $F_n$, and $I_{\xref}$ tend to infinity with $n$, 
\be\ba 
\mathbb E{}&\Bigg[\big(X^{(n_\ell)}_{\geq i'}\big)_{b_{i'}}\big(Y^{(n_\ell)}_{\geq j'}\big)_{c_{j'}}\prod_{i\leq k<i'}\big(X^{(n_\ell)}_k\big)_{b_k}\prod_{j\leq \ell<j'}\big(Y^{(n_\ell)}_\ell\big)_{c_\ell}\Bigg]\\
&=\Big(\big(\theta^{-i'+\eps_{n_\ell}^\aa}\big)^{b_{i'}}\!\!\prod_{i\leq k<i'}\!\!\big((1-\theta^{-1})\theta^{-k+\eps_{n_\ell}^\aa}\big)^{b_k}\Big)\Big(\big(\theta^{-j'+\eps_{n_\ell}^\mathrm{f}}\big)^{c_{j'}}\!\!\prod_{j\leq \ell <j'}\!\!\big((1-\theta^{-1})\theta^{-\ell+\eps_{n_\ell}^\mathrm{f}}\big)^{c_\ell}\Big)(1+o(1))\\
&\to \Big(\big(\theta^{-i'+\eps^\aa}\big)^{b_{i'}}\!\!\prod_{i\leq k<i'}\!\!\big((1-\theta^{-1})\theta^{-k+\eps^\aa}\big)^{b_k}\Big)\Big(\big(\theta^{-j'+\eps^\mathrm{f}}\big)^{c_{j'}}\!\!\prod_{j\leq \ell <j'}\!\!\big((1-\theta^{-1})\theta^{-\ell+\eps^\mathrm{f}}\big)^{c_\ell}\Big),
\ea\ee 
as $\ell\to\infty$. On the other hand, the factorial moments of independent Poisson  random variables directly implies that
\be\ba 
\mathbb E\bigg[{}&(\mathcal{P}_\mathrm{a}[i^\prime,\infty))_{b_{i^\prime}}(\mathcal{P}_\mathrm{f}[j^\prime,\infty))_{c_{i^\prime}}\prod_{i\leq k<i^\prime}(\mathcal{P}_\aa\{k\})_{b_k}\prod_{j\leq \ell<j^\prime}(\mathcal{P}_\ff\{\ell\})_{c_k}\bigg]\\
&=\big(\theta^{-i^\prime+\varepsilon^\aa}\big)^{b_{i\prime}}\prod_{i\leq k<i^\prime}\big((1-\theta^{-1})\theta^{-k+\varepsilon^\aa}\big)^{b_k}\big(\theta^{-j^\prime+\varepsilon^\ff}\big)^{c_{i\prime}}\prod_{j\leq \ell<j^\prime}\big((1-\theta^{-1})\theta^{-\ell+\varepsilon^\ff}\big)^{c_k}.
\ea\ee 
Applying Theorem 6.10 of \cite{Janson.Luczak.Rucinski.2000} for convergence in distribution to Poisson random variables via convergence of the factorial moments then yields the joint weak convergence of $\cP_\aa^{(n_\ell)}$ and $\cP_\ff^{(n_\ell)}$ to $\cP_\aa$ and $\cP_\ff$, which concludes the proof.
\end{proof}
\begin{proof}[Proof of Theorem~\ref{theorem:maximum_degree}]
Since 
\be 
\{\Delta_n^\mathrm{a}\geq \floor{\log_\theta A_n}+i^\mathrm{a}_n\}\cap \{\Delta_n^\mathrm{f}\geq \floor{\log_\theta(F_n)}+i^\mathrm{f}_n\} = \{X_{\geq i_n^\mathrm{a}}^{(n)}>0\}\cap \{Y^{(n)}_{\geq i_n^\mathrm{f}}>0\}, 
\ee 
and both random variables on the right-hand side are non-negative, it suffices  to estimate the probability $\prob(X_{\geq i_n^\mathrm{a}}^{(n)} Y_{\geq i_n^\mathrm{f}}^{(n)}>0)$. Let us set $Z_n\coloneq X_{\geq i_n^\mathrm{a}}^{(n)}  Y_{\geq i_n^\mathrm{f}}^{(n)}$ for ease of writing. 
We split the proof into two cases.\\
\textbf{Case 1:} $i_n^\mathrm{a},i_n^\mathrm{f}=\cO(1)$. In this case, 
\be 
(1-\exp(-\theta^{-i_n^\mathrm{a}+\eps_n^\mathrm{a}}))(1-\exp\big(-\theta^{-i_n^\mathrm{f}+\eps_n^\mathrm{f}}))=\Theta(1), 
\ee 
so that showing that
\begin{equation}
\lim_{n\to\infty}(1-\exp(-\theta^{-i_n^\mathrm{a}+\eps_n^\mathrm{a}}))(1-\exp\big(-\theta^{-i_n^\mathrm{f}+\eps_n^\mathrm{f}}))-\prob(Z_n>0)=0
\end{equation}
yields the desired result. We argue by a proof by contradiction. Suppose that there exists $\delta>0$ and a subsequence $(n_k)_{k\in\N}$ for which
\be\ba 
\label{eq:delta_n_theorem_assumption}
\inf_{k\in\N}\big|(1-\exp(-\theta^{-i_{n_k}^\mathrm{a}+\eps_{n_k}^\mathrm{a}}))(1-\exp\big(-\theta^{-i_{n_k}^\mathrm{f}+\eps_{n_k}^\mathrm{f}}))-\prob(Z_{n_k}>0)\big |>\delta.
\ea\ee
Since $(\varepsilon_{n_k}^\square)_{k\in\N}$ is bounded for $\square\in\{\mathrm a,\mathrm f\}$, there exists, due to the Bolzano-Weierstrass theorem, a subsubsequence $(n_{k_\ell})_{\ell\in\N}$ such that $\varepsilon_{n_{k_\ell}}^\square\to\varepsilon^\square$ for some $\varepsilon^\square\in[0,1]$ and both $\square=\mathrm{a}$ and $\square=\mathrm f$. We stress that the subsubsequence $(n_{k_{\ell}})_{\ell\in\N}$ is the \emph{same} in both limits. Then, by Theorem~\ref{theorem:poisson_convergence},  
\be\ba 
\lim_{\ell\to\infty}(1-\exp(-\theta^{-i_{n_{k_\ell}}^\mathrm{a}+\eps_{n_{k_\ell}}^\mathrm{a}}))(1-\exp\big(-\theta^{-i_{n_{k_\ell}}^\mathrm{f}+\eps_{n_{k_\ell}}^\mathrm{f}}))-\prob(Z_{n_{k_\ell}}>0)=0.
\ea\ee
However, this contradicts assumption~\eqref{eq:delta_n_theorem_assumption}.	\\ 
\textbf{Case 2:} Both $i_n^\mathrm{a}\to\infty$ with  $n$ such that $\limsup_{n\to\infty} (i_n^\mathrm{a}+\log_\theta A_n)/h_n^+ <m+1$ and $i_n^\mathrm{f}\to\infty$ with $n$ such that $\limsup_{n\to\infty} (i_n^\mathrm{f}+\log_\theta F_n)/\log F_n<m+1$. For $\prob(Z_n>0)$ we obtain the bounds
\begin{equation}
\label{eq:delta_n_theorem_upper_lower_bound}
\frac{\E{Z_n}^2}{\E{Z_n^2}} \leq \prob(Z_n>0) \leq \E{Z_n},
\end{equation}
where the upper bound follows from Markov's inequality and the lower bound is due to Paley-Zygmund's inequality (see for example Exercise 5.1.1 in \cite{Klenke.2020}). By the conditions on $i_n^\square$ for either $\square=\mathrm{a}$ or $\square=\mathrm{f}$, we deduce from Proposition~\ref{prop:factorial_moments}, 
\begin{equation}
\E{Z_n} = \theta^{-i_n^\mathrm{a}+\eps_n^\mathrm{a}-i_n^\mathrm{f}+\eps_n^\mathrm{f}}(1+o(1))\qquad\text{and}\qquad 
\E{Z_n^2} = (1+o(1))\prod_{\square\in\{\mathrm a,\mathrm f\}}\theta^{-i_n^\square+\eps_n^\square}\big(1+\theta^{-i_n^\square+\eps_n^\square}\big).
\end{equation}
Substituting this into~\eqref{eq:delta_n_theorem_upper_lower_bound} and using that  $i_n^\mathrm{a}\to\infty$ and $i_n^\mathrm{f}\to\infty$, we conclude
\begin{equation}
\prob(Z_n>0) = \theta^{-i_n^\mathrm{a}+\eps_n^\mathrm{a}-i_n^\mathrm{f}+\eps_n^\mathrm{f}}(1+o(1)).
\end{equation}
Then, the result follows from the observation that, since $i_n^\mathrm{a}\to\infty$ and $i_n^\mathrm{f}\to\infty$ and $\theta>1$,
\begin{equation}
\theta^{-i_n^\mathrm{a}+\eps_n^\mathrm{a}-i_n^\mathrm{f}+\eps_n^\mathrm{f}} = \big(1-\exp\big(-\theta^{-i_n^\mathrm{a}+\eps_n^\mathrm{a}-i_n^\mathrm{f}+\eps_n^\mathrm{f}}\big)\big)(1+o(1)),
\end{equation}
as desired. Cases where $i_n^\aa$ is bounded and $i_n^\ff$ tends to infinity (or vice versa), or where $i_n^\square$ is bounded along subsequences only, can be dealt with using a combination of the two cases outlined above. 
\end{proof}

Before we prove Theorem~\ref{theorem:asymptotic_normality_smaller_degrees}, we state the following multivariate version of~\cite[Theorem $1.24$]{Bollobas.2001}, whose proof we defer to Appendix~\ref{sec:appendix}. 

\begin{theorem}\label{thrm:multinorm}
Let $(X_n)_{n\in\N}$ and  $(Y_n)_{n\in\N}$ be sequences of real-valued random variables,  and let $(\lambda_n)_{n\in\N}$, $(\mu_n)_{n\in\N}$ sequences of real numbers such that, for any $r,s\in\N$ and any $t\geq r,u\geq s$, 
\be \label{eq:factmeanass}
\E{(X_n)_r(Y_n)_s}-\lambda_n^r\mu_n^s=o(\lambda_n^{-t}\mu_n^{-u}). 
\ee 
Then, with $Z_1,Z_2$ two independent standard normal random variables, 
\be 
\Big(\frac{X_n-\lambda_n}{\sqrt{\lambda_n}},\frac{Y_n-\mu_n}{\sqrt{\mu_n}}\Big)\toindis (Z_1,Z_2).
\ee 
\end{theorem}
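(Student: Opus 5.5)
The plan is to reduce the statement to the method of moments for the centred, normalised pair. Write $\tilde X_n\coloneq (X_n-\lambda_n)/\sqrt{\lambda_n}$ and $\tilde Y_n\coloneq (Y_n-\mu_n)/\sqrt{\mu_n}$. The target $(Z_1,Z_2)$ is determined by its moments, so by the multivariate method of moments it suffices to show that, for all fixed $p,q\in\N_0$,
\be
\E{\tilde X_n^p\tilde Y_n^q}\to \E{Z_1^p}\E{Z_2^q}.
\ee
Implicitly $\lambda_n,\mu_n\to\infty$; otherwise the error in~\eqref{eq:factmeanass} is not small relative to the main term. I will use this throughout, and it is exactly the regime in which the result is later applied.

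Next, I express ordinary powers in terms of factorial moments. Write $x^k=\sum_{r\le k} S(k,r)(x)_r$ with Stirling numbers of the second kind, and expand binomially:
\be
(X_n-\lambda_n)^p(Y_n-\mu_n)^q=\sum_{a\le p}\sum_{b\le q}\binom pa\binom qb(-\lambda_n)^{p-a}(-\mu_n)^{q-b}\sum_{r\le a}\sum_{s\le b}S(a,r)S(b,s)(X_n)_r(Y_n)_s .
\ee
Taking expectations, I replace each $\E{(X_n)_r(Y_n)_s}$ by $\lambda_n^r\mu_n^s$. By~\eqref{eq:factmeanass}, applied with $t,u$ large enough (say $t\ge p$, $u\ge q$), each replacement costs $o(\lambda_n^{-t}\mu_n^{-u})$. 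Multiplied by the polynomial prefactors $\lambda_n^{p}\mu_n^{q}$ and divided by $\lambda_n^{p/2}\mu_n^{q/2}$, this is still $o(1)$. The cases $r=0$ or $s=0$ are handled by the single-variable version, or by reading~\eqref{eq:factmeanass} with the convention $(x)_0=1$; I will note that the hypothesis should be understood to include these.

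After the replacement, the main term factorises:
\be
\Big(\sum_{a,r}\binom pa(-\lambda_n)^{p-a}S(a,r)\lambda_n^r\Big)\Big(\sum_{b,s}\binom qb(-\mu_n)^{q-b}S(b,s)\mu_n^s\Big).
\ee
This is precisely $\E{(P_\lambda-\lambda)^p}\,\E{(P_\mu-\mu)^q}$ for independent Poisson variables $P_\lambda\sim\mathrm{Poi}(\lambda_n)$ and $P_\mu\sim\mathrm{Poi}(\mu_n)$, because Poisson factorial moments are exactly $\lambda^r$. The problem is therefore reduced to the one-dimensional fact that $(P_\lambda-\lambda)/\sqrt\lambda$ has moments converging to the Gaussian moments as $\lambda\to\infty$. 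This follows from the classical Poisson CLT together with uniform integrability, or directly from the fact that the centred Poisson moments are polynomials in $\lambda$ of degree $\lfloor p/2\rfloor$ with leading coefficient $(p-1)!!$ for $p$ even. This is the argument of~\cite[Theorem $1.24$]{Bollobas.2001}, and I would cite it.

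The main obstacle is the error bookkeeping. The expansion involves terms as large as $\lambda_n^{p}\mu_n^{q}$ that cancel massively, so the approximation of the factorial moments must be accurate to order $o(\lambda_n^{-p/2}\mu_n^{-q/2})$ relative to the cancelled size. This is why the hypothesis demands an error $o(\lambda_n^{-t}\mu_n^{-u})$ for arbitrary $t,u$. I would check carefully that choosing $t=p$ and $u=q$ suffices uniformly over the finitely many terms in the double sum.
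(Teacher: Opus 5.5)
Your proposal is correct and follows essentially the same route as the paper. Both arguments expand the normalised mixed moments into joint factorial moments, replace these by $\lambda_n^r\mu_n^s$, recognise the result as the mixed moments of normalised independent $\mathrm{Poi}(\lambda_n)$ and $\mathrm{Poi}(\mu_n)$ variables, and conclude via one-dimensional Poisson moment convergence and the method of moments. The two implicit requirements you flag, namely $\lambda_n,\mu_n\to\infty$ and the hypothesis holding for marginal factorial moments ($r=0$ or $s=0$), are also used tacitly in the paper's proof.
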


\begin{proof}[Proof of Theorem~\ref{theorem:asymptotic_normality_smaller_degrees}]
We apply Theorem~\ref{thrm:multinorm} to the random variables $X^{(n)}_{i_n^\aa}$ and $Y^{(n)}_{i_n^\ff}$ for appropriate sequences $(i_n^\aa)_{n\in\N}$ and $(i_n^\ff)_{n\in\N}$. We first note that, by the definition of $\eps_n^\aa$ and $\eps_n^\ff$ in~\eqref{eq:epsn}, 
\begin{align*}  
(1-\theta^{-1})\theta^{-(\lfloor \log_\theta A_n\rfloor+i_n^\aa)}A_n&=(1-\theta^{-1})\theta^{-i_n^\aa+\eps_n^\mathrm{a}}
\intertext{and}
(1-\theta^{-1})\theta^{-(\lfloor \log_\theta F_n\rfloor+i_n^\ff)}F_n&=(1-\theta^{-1})\theta^{-i_n^\ff+\eps_n^\mathrm{f}}.
\end{align*}
Since $\eps_n^\aa$ and $\eps_n^\ff$ are bounded and $1-\theta^{-1}$ is constant, we are thus required to prove that
\begin{equation}
\label{eq:asymptotic_normality_method_moments_condition}
\E{\left(X_{i_n^\aa}^{(n)}\right)_b\left(Y_{i_n\ff}^{(n)}\right)_c}-\big((1-\theta^{-1})\theta^{-i_n^\aa+\eps_n^\mathrm{a}}\big)^b\big((1-\theta^{-1})\theta^{-i_n^\ff+\eps_n^\ff}\big)^c=o\big(\theta^{si_n^\aa+ti_n^\ff}\big),
\end{equation}
for all fixed integers $s\geq b\geq 1$ and $t\geq c\geq 1$. Here we recall that $i_n^\square\to-\infty$ with $n$, so that the right-hand side indeed vanishes. We first rewrite
\begin{equation}
\label{eq:asymptotic_normality_parameter_estimation}
\theta^{i_n^\square} = n^{\log(\theta)i_n^\square/\log n}\qquad\text{for }\square\in\{\aa,\ff\}.
\end{equation}
We apply Proposition~\ref{prop:factorial_moments} and note that we assumed that $I_{\xref}=\floor{n^{\eps}}$ for some $\eps>0$ and, due to Assumption~$\xref$\ref{item:lb}, $A_n\geq n^{1/2+\delta}$ for large $n$. Since $\eps_n^\square$ is bounded and $i_n^\square=o(\log n)$ for $\square\in\{\aa,\ff\}$, there exists, for any $b,c\geq1$, a constant $\alpha'\in(0,(\alpha\eps)\wedge (1/2+\delta))$ such that
\begin{equation}
\E{\left(X_{i_n^\aa}^{(n)}\right)_b\left(Y_{i_n\ff}^{(n)}\right)_c}-\big((1-\theta^{-1})\theta^{-i_n^\aa+\eps_n^\mathrm{a}}\big)^b\big((1-\theta^{-1})\theta^{-i_n^\ff+\eps_n^\ff}\big)^c= o\big(n^{-\alpha'}\theta^{-b i_n^\aa-ci_n^\ff}\big)= o(n^{-\alpha'+o(1)}),
\end{equation}
where the final step uses~\eqref{eq:asymptotic_normality_parameter_estimation}. Again applying~\eqref{eq:asymptotic_normality_parameter_estimation} and using that $i_n^\square=o(\log n)$, we see that $\theta^{s i_n^\aa+t i_n^\ff}=\omega(n^{-\beta})$ for any $\beta>0$ (so in particular for any $\beta<\alpha'$) and any fixed $s,t\geq1$, from which we obtain the desired result.
\end{proof}

\section{Properties of vertices with a large degrees}\label{sec:singlevertex}

In this section, we present the proof of Theorem~\ref{theorem:label_multiple_vertices_conditional_degrees} in the case of $k=1$, which is a generalisation of a result from \cite{Lodewijks.2023} for RRTs, again using the Kingman coalescent with $m \in \N$. The general approach of the proof is based on \cite{Lodewijks.2023}, but certain parts are newly developed, since the analysis of the greedy longest path in a URD must be handled more carefully than the analysis of the depth in an RRT. In Section~\ref{chapter:analysis_ungreedy_depth_vertex}, we perform some preliminary analysis on the greedy longest path of vertices, and Section~\ref{subsec:label_ungreedy_depth_theorem} contains the actual proof of the $k=1$ case, which is the first step towards proving Theorem~\ref{theorem:label_multiple_vertices_conditional_degrees} in full generality.

\subsection{Analysis of the greedy longest path of a vertex}
\label{chapter:analysis_ungreedy_depth_vertex}
Let $v$ be an active vertex in the Kingman coalescent and recall its connection sets $\CC_n^{(i)}(v)$ for $i\in\{2,\ldots, n\}$, degree $\deg_n(v)$, label $\ell_n(v)$, and the length of its greedy longest path $u_n(v)$. In this section we analyse the label and greedy longest path of $v$, conditionally on the degree of $v$ being large. Since the active vertices in the Kingman coalescent are exchangeable due to Corrolary~\ref{corollary:exchangeability_degrees}, it suffices to think of $v= a_1$.

We recall that the degree of $v$ corresponds to the length of the winning streak of dice rolls of $v$. Its label corresponds to the step at which $v$ loses a dice roll for the first time (thus ending its winning streak). Further, once $v$ has lost a dice roll for the first time, it connects to $m\wedge A_i$ other active roots, given that this first loss occurred at step $i$. We track these $m\wedge A_i$ vertices and follow the edge to the first of them to lose a dice roll, where we denote the step number at which this vertex first loses as $j$. Again, this vertex sends an edge to $m\wedge A_j$ active roots, and we now track these $m\wedge A_j$ vertices. We repeat this process until we reach the unique root in the graph $g_1$ of the Kingman coalescent $(g_n,\ldots, g_1)$. This establishes the greedy longest path of $v$ and its length $u_n(v)$ equals, as in~\eqref{eq:unv},
\be 
u_n(v)=1+\sum_{i=2}^{\ell_n(v)-1}\ind_{\{x_i=1\}}\sum_{w\in \cC_n^{(i)}(v)}r_{w,i}.
\ee 
Here we recall that $r_{w,i}$ equals one when both the indicator $s_{w,i}$ equals one (i.e.\ when $w$ is selected at step $i$) and when $w$ loses the associated dice roll. As exactly one active vertex loses at each step $i$ such that $x_i=1$, at most one of the $r_{w,i}$ in the sum equals one. However, it is possible that $s_{w,i}=1$ for multiple $w\in \cC_n^{(i)}(v)$. To analyse $u_n(v)$, we start with the following lemma, which shows that we only need to focus on steps $i\in [I_{\xref},n]$ at which, out of all $w\in \cC^{(i)}_n(v)$, exactly one of the $s_{w,i}$ equals one. Here, we recall  $I_{\xref}$ from Assumption~$\xref$. To this end, we define for $2\leq i\leq n$ and $j\in[m]$, 
\be \label{eq:Ij}
C_i^{(j)} \coloneq \ind\{\text{At step $i$, exactly $j$ roots in $\mathcal{C}_n^{(i)}(v)$ are selected and one of them loses}\}.
\ee 
We can then rewrite 
\be \label{eq:unrewrite}
u_n(v)=1+\sum_{i=2}^{\ell_n-1}\ind_{\{x_i=1\}}\sum_{j=1}^{|\cC_n^{(i)}(v)|} C_i^{(j)},
\ee 
and introduce
\be \label{eq:wtun}
\wt u_n(v)\coloneq  \sum_{i=I_{\xref}}^{\ell_n-1} \ind_{\{x_i=1\}}C_i^{(1)}. 
\ee
Recalling $h_n^+$ from \eqref{eq:hn+}, we then have the following result.
\begin{lemma}
\label{lemma:ungreedy_depth_only_one_vertex_connections_set}
Let $\mathbf x$ be a choice sequence that satisfies Assumption~$\xref$\ref{item:geq1} is satisfied. We have
\be 
\prob(u_n(v)\leq u,\ell_n(v)\geq\ell\,|\,\deg_n(v)\geq d)\leq \prob(\widetilde{u}_n(v)\leq u,\ell_n(v)\geq\ell\,|\,\deg_n(v)\geq d).
\ee 
Now, fix $b\in(0,m+1)$, and let $d,\ell,u=d(n),\ell(n),u(n)\in\N$ such that $d< bh_n^+$ for $n\in\N$. Suppose that, additionally, the choice sequence $\mathbf x$ satisfies Assumption~$\xref$\ref{item:hI} with $\varepsilon<1-b/(m+1)$ and Part~\ref{item:lb}. Let $(c_n)_{n\in\N}$ be any sequence such that $c_n=\omega(I_{\xref})$. Then, 
\be
\prob(u_n(v)\leq u,\ell_n(v)\geq\ell\,|\,\deg_n(v)\geq d)\geq \prob(\widetilde{u}_n(v)\leq u-c_n,\ell_n(v)\geq\ell\,|\,\deg_n(v)\geq d)+o(1).  
\ee
\end{lemma}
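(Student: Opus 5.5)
The upper bound is deterministic. By~\eqref{eq:unrewrite}, $u_n(v)=1+\sum_{i=2}^{\ell_n-1}\ind_{\{x_i=1\}}\sum_{j}C_i^{(j)}$. Every term is non-negative, and $\wt u_n(v)$ keeps only the terms with $j=1$ and $i\geq I_{\xref}$. Hence $u_n(v)\geq \wt u_n(v)$ pointwise, which gives $\{u_n(v)\leq u\}\subseteq\{\wt u_n(v)\leq u\}$. Intersecting with $\{\ell_n(v)\geq \ell\}$ and conditioning on $\{\deg_n(v)\geq d\}$ yields the first inequality.

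For the lower bound, write $u_n(v)-\wt u_n(v)=1+D_1+D_2$, where
\[
D_1\coloneq\sum_{i=2}^{I_{\xref}-1}\ind_{\{x_i=1\}}\sum_j C_i^{(j)}\leq I_{\xref},
\qquad
D_2\coloneq \sum_{i=I_{\xref}}^{\ell_n-1}\ind_{\{x_i=1\}}\sum_{j\geq 2}C_i^{(j)}.
\]
Since $c_n=\omega(I_{\xref})$, we have $1+D_1\leq c_n/2$ for large $n$. Therefore
\[
\{\wt u_n\leq u-c_n\}\cap\{D_2\leq c_n/2\}\subseteq\{u_n\leq u\},
\]
and it suffices to show $\P{D_2>c_n/2\,|\,\deg_n(v)\geq d}=o(1)$. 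By Markov's inequality, it is enough to prove $\E{D_2\ind_{\{\deg_n(v)\geq d\}}}=o(c_n\,\P{\deg_n(v)\geq d})$. For the denominator, Theorem~\ref{theorem:joint_degree_distribution} (with $k=1$, $\ell=0$, and $c=b$, where Assumption~$\xref$\ref{item:hI} is used with $\eps<1-b/(m+1)$) gives $\P{\deg_n(v)\geq d}=\theta^{-d}(1+o(1))$.

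For the numerator, I would bound each summand. At a step $i\geq I_{\xref}$ with $x_i=1$, the connection set $\cC_n^{(i)}(v)$ has at most $m$ roots, and $m+1$ roots are selected uniformly out of $A_i$. So the probability that at least two elements of $\cC_n^{(i)}(v)$ are selected is at most $\binom{m}{2}\binom{m+1}{2}/\binom{A_i}{2}\leq CA_i^{-2}$. The key point is that the connection set at step $i$ is measurable with respect to the coalescent after step $i$, that is, with respect to the steps $n,\ldots,i+1$, whereas the selection at step $i$ is fresh and independent of this. The event $\{\deg_n(v)\geq d\}$, however, depends on the entire history, including step $i$, and this is the main obstacle. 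I would handle it by splitting the contributions to $D_2$: for steps $i<\ell_n(v)$, vertex $v$ is no longer a root, so its degree has already been determined by the steps $n,\ldots,\ell_n(v)$. Conditioning on the process up to and including step $\ell_n(v)$ therefore fixes $\{\deg_n(v)\geq d\}$, and the selections at later steps (that is, steps $i<\ell_n(v)$) remain uniform and independent.

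This gives $\E{D_2\ind_{\{\deg_n(v)\geq d\}}}\leq C\sum_{i\geq I_{\xref}}A_i^{-2}\,\P{\deg_n(v)\geq d}$. By Assumption~$\xref$\ref{item:lb}, $\sum_{i\geq I_{\xref}}A_i^{-2}\leq \sum_{i\geq I_{\xref}} i^{-1-2\delta}=\cO(I_{\xref}^{-2\delta})=o(1)$, which is certainly $o(c_n)$. This concludes the lower bound, with the $o(1)$ error coming from $\P{D_2>c_n/2\,|\,\deg_n(v)\geq d}$.
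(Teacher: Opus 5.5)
Your proof is correct, but it handles the conditioning on $\{\deg_n(v)\geq d\}$ differently from the paper.

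\textbf{The paper's route.} It first removes the conditioning using Lemma~\ref{lemma:only_one_vertex_connection_set_lower_bound_technical_calculation}, an FKG-based negative-correlation bound $\P{u_n(v)-\wt u_n(v)>c_n,\deg_n(v)\geq d}\leq\theta^{-d}\P{u_n(v)-\wt u_n(v)>c_n}$. It combines this with $\P{\deg_n(v)\geq d}=\theta^{-d}(1+o(1))$ from Theorem~\ref{theorem:joint_degree_distribution}. Only then does it apply Markov's inequality to the unconditional tail, absorbing the early steps $i<I_{\xref}$ into the Markov bound rather than treating them deterministically as you do with $1+D_1\leq c_n/2$.

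\textbf{Your route.} You condition step by step in the reversed-time filtration of the coalescent. On $\{i<\ell_n(v)\}$, both $\cC_n^{(i)}(v)$ and the degree event are determined by steps $n,\ldots,i+1$, while the selection at step $i$ is fresh. Hence each summand of $D_2$ contributes at most $CA_i^{-2}\P{\deg_n(v)\geq d}$. What you actually use is that $\{\deg_n(v)\geq d,\ \ell_n(v)>i\}$ and $\cC_n^{(i)}(v)$ are measurable with respect to steps $n,\ldots,i+1$ for each $i$. Phrasing this as a single conditioning at time $\ell_n(v)$ is slightly loose, since the connection set keeps evolving after that step.

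\textbf{What each buys.} In your argument the factor $\P{\deg_n(v)\geq d}$ cancels exactly, so your appeal to Theorem~\ref{theorem:joint_degree_distribution} for the denominator is unnecessary. You directly get
\[
\P{D_2>c_n/2\mid\deg_n(v)\geq d}\leq (2C/c_n)\sum_{i\geq I_{\xref}}A_i^{-2}
\]
for every $d$ with $\P{\deg_n(v)\geq d}>0$. So for this lemma you only need Part~\ref{item:lb} of Assumption~$\xref$, not the restriction $d<bh_n^+$ or the specific $\eps$ in Part~\ref{item:hI}. Your route also sidesteps the monotonicity claim behind the paper's FKG step. That claim says $u_n(v)-\wt u_n(v)$ is decreasing in the number of wins $G(v)$, with the $C_i^{(j)}$ independent of $G(v)$. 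This is delicate, because the connection sets are initialised with the winners at step $\ell_n(v)$ and so depend on $G(v)$. The paper's route, in exchange, reduces everything to a purely unconditional tail estimate once the correlation lemma and the sharp degree asymptotics are available.
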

We use this result later in the proof of Proposition~\ref{prop:depthlabelsinglevertex} (which is Theorem~\ref{theorem:label_multiple_vertices_conditional_degrees} in the case of a single vertex), where we show that, under suitable conditions, the upper and lower bound in Lemma~\ref{lemma:ungreedy_depth_only_one_vertex_connections_set} converge to the same non-zero limit. For the proof of the lemma, we need the following result on the negative correlation between the degree and the length of the greedy longest path of a vertex.
\begin{lemma}
\label{lemma:only_one_vertex_connection_set_lower_bound_technical_calculation}
Let $\mathbf x$ be a choice sequence that satisfies Assumption~$\xref$\ref{item:geq1}  and let $d=d(n),k=k(n)\in\N$.  Then,
\begin{equation}
\prob(u_n(v)-\widetilde{u}_n(v)> k,\deg_n(v)\geq d)\leq\theta^{-d} \prob(u_n(v)-\widetilde{u}_n(v)> k).
\end{equation}
\end{lemma}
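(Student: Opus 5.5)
The plan is to condition on the label $\ell_n(v)$ and separate the quantity $u_n(v)-\wt u_n(v)$, which only depends on what happens \emph{after} $v$ first loses, from the event $\{\deg_n(v)\geq d\}$, which only depends on what happens \emph{before} $v$ first loses. By~\eqref{eq:unrewrite} and~\eqref{eq:wtun}, $u_n(v)-\wt u_n(v)$ is a function of the connection sets $\cC_n^{(i)}(v)$ and the selections and dice rolls at steps $i<\ell_n(v)$ only. By~\eqref{eq:express_degree_via_selection_set} and~\eqref{eq:express_label_in_terms_selection_sets}, the pair $(\deg_n(v),\ell_n(v))$ is a function of the selections and dice rolls of $v$ at steps $i\geq \ell_n(v)$. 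Write $g(i)\coloneq \P{u_n(v)-\wt u_n(v)>k\,|\,\ell_n(v)=i}$.

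The first step is to show that
\be
\P{u_n(v)-\wt u_n(v)>k\,|\,\ell_n(v)=i,\deg_n(v)\geq d}=g(i).
\ee
Given $\ell_n(v)=i$ and the history of the coalescent down to step $i$, the $m\wedge(A_i-1)$ winners at step $i$ form a uniform subset of the other active roots. All later selections and dice rolls (steps $<i$) are fresh and independent of that history. Hence the conditional law of $u_n(v)-\wt u_n(v)$ depends on the history only through $i$ (and the deterministic sequence $\mathbf x$), not through how $v$ reached step $i$. Summing over $i$ then gives $\P{u_n(v)-\wt u_n(v)>k,\deg_n(v)\geq d}=\sum_i g(i)\P{\ell_n(v)=i,\deg_n(v)\geq d}$.

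The second step is to show that $g$ is nondecreasing in $i$ (over $i\in\cA_n$). I would compare starting steps $i'<i$ by a coupling. Running the construction from $\ell_n(v)=i$, at step $i'$ the connection set $\cC_n^{(i')}(v)$ is, by exchangeability of active roots, a uniform set of active roots of the same size that a fresh start at $i'$ would produce. The size agrees as soon as $A$ is not tiny; this is where I would have to be careful. Therefore $u_n(v)-\wt u_n(v)$ started from $i$ equals a nonnegative count over steps in $[i',i)$ plus a copy of the quantity started from $i'$, so it stochastically dominates the latter. Abel summation then gives
\be
\sum_i g(i)\P{\ell_n(v)=i,\deg_n(v)\geq d}=\sum_i\big(g(i)-g(i^-)\big)\P{\ell_n(v)\geq i,\deg_n(v)\geq d},
\ee
with nonnegative increments. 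Here $i^-$ is the previous element of $\cA_n$, and the same identity holds without the degree event. So it suffices to show
\be
\P{\ell_n(v)\geq i,\deg_n(v)\geq d}\leq \theta^{-d}\P{\ell_n(v)\geq i}\qquad\text{for all } i,
\ee
that is, $\P{\deg_n(v)\geq d\,|\,\ell_n(v)<i}\geq\theta^{-d}$ together with $\P{\deg_n(v)\geq d}\leq\theta^{-d}$. The latter is Lemma~\ref{lemma:upper_bound_tail_vertex_degrees}.

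For the former, I would condition on $\cS_n(v)$. On $\{\ell_n(v)<i\}$, $v$ wins every selection in $[i,n]$. The event $\{\deg_n(v)\geq d\}$ then additionally requires only that $v$ win its next (at most $d$) selections below $i$. Each such win has conditional probability $(m\wedge(A_j-1))/((m+1)\wedge A_j)$, independently of everything else. The main obstacle is that this probability is below $\theta^{-1}$ when $A_j\leq m$, which breaks both the monotonicity coupling (the connection set sizes differ) and the last bound. I would first try to handle it by observing that at such steps $v$'s component is selected with probability one. It is then convenient to argue directly with the joint law of (selection, win) indicators, $\P{s_{v,j}=1,r_{v,j}=0}=\ind_{\{x_j=1\}}(m\wedge(A_j-1))/A_j$. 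Conditioning on $\{\ell_n(v)<i\}$ instead of on $\cS_n(v)$ gives a product over steps $j<i$ of terms of the form $1-\P{s_{v,j}=1,r_{v,j}=1}$. I would then compare this product with the unconditional one via the same inequality~\eqref{eq:product_sum_inequality} used in Lemma~\ref{lemma:upper_bound_tail_vertex_degrees}.
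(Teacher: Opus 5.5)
Your route differs from the paper's. The paper conditions on $\cS_n(v)$ and applies FKG in the number $G(v)$ of initial wins; you instead factorise through $\ell_n(v)$ and use Abel summation. Your Step 1, the factorisation through $\ell_n(v)$ via the Markov property of the coalescent and exchangeability of active roots, is correct. It is a cleaner justification of the independence that the paper only asserts.

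\textbf{The genuine gap is the last reduction.} The bound you need is $\P{\ell_n(v)\geq i,\deg_n(v)\geq d}\leq\theta^{-d}\P{\ell_n(v)\geq i}$. This is \emph{not} equivalent to the pair ``$\P{\deg_n(v)\geq d\mid\ell_n(v)<i}\geq\theta^{-d}$ and $\P{\deg_n(v)\geq d}\leq\theta^{-d}$''. That pair is only sufficient, and its first half is false in general, for reasons unrelated to small $A_j$.
\begin{itemize}
\item Conditioning on $\{\ell_n(v)<i\}$ forces wins, but $\{\deg_n(v)\geq d\}$ also needs $|\cS_n(v)|\geq d$, and nothing makes that likely.
\item For $d\geq n$ the conditional probability is $0$.
\item For the random recursive tree ($m=1$, $\mathbf x=(1,1,\ldots)$) with $n=5$, $d=4$, $i=3$, one gets $\P{\deg_n(v)\geq 4\mid\ell_n(v)\leq 2}=1/48<1/16$. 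Yet for $k=1$ and $I_{\xref}\geq 3$ one has $g(3)=1/2>g(2)=0$, so this $i$ enters your Abel sum with positive weight.
\end{itemize}
The repair is to prove the needed bound directly, using the paper's FKG argument applied to $\{\ell_n(v)\geq i\}$ instead of $\{u_n(v)-\wt u_n(v)>k\}$. Conditionally on $\cS_n(v)$, the event $\{\deg_n(v)\geq d\}=\{G(v)\geq d\}$ is increasing in $G(v)$, and $\{\ell_n(v)\geq i\}=\{G(v)<|\cS_n(v)\cap[i,n]|\}$ is decreasing. Harris' inequality for the single variable $G(v)$, together with $\P{G(v)\geq d\mid\cS_n(v)}\leq\theta^{-d}$, then gives the bound after integrating over $\cS_n(v)$. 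This uses only the \emph{upper} bound $(m\wedge(A_j-1))/((m+1)\wedge A_j)\leq\theta^{-1}$, which holds for every $A_j$. So the small-$A$ obstacle you anticipated for this step disappears.

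Your Step 2 (monotonicity of $g$) is sound when $A_j\geq m+1$ at all steps involved, since then both tracked sets have size $m$ on entering step $i'-1$. When $A$ drops to at most $m$, care will not rescue it. The set tracked from the later start is a set of at most $m$ active roots, so its size is at most $m\wedge A_{i'-1}$, which is the size of the fresh set; the comparison goes the wrong way. Indeed $g$ can fail to be monotone: for $m=2$, $\mathbf x=(1,1,1,-1,-1,1)$, $n=6$, $k=1$, one finds $g(3)=1$ but $g(6)\leq 2/3$.

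The paper's proof relies on the same monotonicity when it asserts that $u_n(v)-\wt u_n(v)$ is decreasing in $G(v)$ given $\cS_n(v)$. In this example $\cS_n(v)=\{6,3,2\}$ is deterministic, so that assertion fails there too; the limitation is shared, not introduced by your route. With the corrected final step and $A_j\geq m+1$ where needed, your decomposition is a valid and somewhat more transparent alternative to the paper's one-shot FKG argument.
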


\begin{proof}
The proof uses a similar idea as for the upper bound presented in Lemma~\ref{lemma:upper_bound_tail_vertex_degrees} (which does not incorporate the event $\{u_n(v)-\wt u_n(v)>k\}$). Vertex $v$ attains degree at least $d$ when it is selected at least $d$ times and $v$ wins the first $d$ dice rolls associated with the first $d$ steps that $v$ is selected.    By conditioning on the selection set of $v$ and using~\eqref{eq:unrewrite} and~\eqref{eq:wtun}, we can thus write
\be \ba \label{eq:fkgprob}
\mathbb P{}&(u_n(v)-\wt u_n(v)>k, \deg_n(v)\geq d)\\
&=\E{\ind_{\{|\cS_n(v)|\geq d\}}\P{u_n(v)-\wt u_n(v)>k, \text{$v$ wins first $d$ dice rolls}\,|\,\cS_n(v)}}.
\ea\ee 
Then, given $\cS_n(v)$ we let $G(v)$ denote the number of dice rolls $v$ performs before it loses for the first time (without the step of its first loss). Clearly, 
\be 
\{v\text{ wins its first $d$ dice rolls}\}=\{G(v)\geq d\} \ \ \text{and}\ \ \ell_n(v)=\max\{i\in[n]\colon |\cS_n(v)\cap [i,n]|\geq G(v)+1\}.
\ee 
As a result, we see that
\be 
u_n(v)-\wt u_n(v)=1+\sum_{i=2}^{I_{\xref}-1}\ind_{\{x_i=1\}}\sum_{j=1}^{|\cC_n^{(i)}(v)|}C_i^{(j)}+\sum_{i=I_{\xref}}^{\ell_n(v)-1}\sum_{j=2}^{|\cC_n^{(i)}(v)|} \ind_{\{x_i=1\}}C_i^{(j)}
\ee 
is decreasing in $G(v)$, since $\ell_n(v)$ is decreasing in $G(v)$. Furthermore, the indicator random variables $C_i^{(j)}$ for $i\leq \ell_n(v)-1$ are independent of $G(v)$, conditionally on $\cS_n(v)$. We thus conclude that the conditional probability in~\eqref{eq:fkgprob} contains a decreasing and an increasing event with respect to $G(v)$. The FKG inequality thus  yields the upper bound
\be 
\E{\ind_{\{|\cS_n(v)|\geq d\}}\P{G(v)\geq d\,|\, \cS_n(v)}\P{u_n(v)-\wt u_n(v)>k\,|\, \cS_n(v)}}.
\ee 
The term $\P{G(v)\geq d\,|\, \cS_n(v)}$ can be bounded from above by $\theta^{-d}$ on the event $\{|\cS_n(v)|\geq d\}$. Indeed, when selected, vertex $v$ wins a dice roll with probability $(m\wedge (A_i-1))/((m+1)\wedge A_i)\leq \theta^{-1}$, so that $G(v)$ is stochastically dominated by $\min\{|\cS_n(v)|,G_\theta\}$, where $G_\theta$ is a geometric random variable with parameter $\theta^{-1}$ and which is independent of $\cS_n(v)$. As a result, we arrive at 
\be 
\P{u_n(v)-\wt u_n(v)>k, \deg_n(v)\geq d}\leq \theta^{-d}\P{u_n(v)-\wt u_n(v)>k}, 
\ee 
as desired.
\end{proof}
We now provide the proof of Lemma~\ref{lemma:ungreedy_depth_only_one_vertex_connections_set}.
\begin{proof}[Proof of Lemma~\ref{lemma:ungreedy_depth_only_one_vertex_connections_set}]
We directly obtain the upper bound as we have $\widetilde{u}_n(v)\leq u_n(v)$ by definition. For the lower bound, we introduce the event $\cE_n\coloneq \{u_n(v)-\wt u_n(v)\leq c_n\}$ and get 
\be
\label{eq:only_one_vertex_connection_set_lower_bound}
\prob(u_n(v)\leq u,\ell_n(v)\geq\ell\,|\,\deg_n(v)\geq d) \geq \prob(\cE_n\cap \{\widetilde{u}_n(v)\leq u-c_n,\ell_n(v)\geq\ell\}\,|\,\deg_n(v)\geq d).
\ee 
To obtain the desired result, we show that 
\be 
\prob(\cE_n\,|\,\deg_n(v)\geq d) = o(1).  
\ee 
We use Lemma~\ref{lemma:only_one_vertex_connection_set_lower_bound_technical_calculation} with $k=c_n$ and Theorem~\ref{theorem:joint_degree_distribution} to bound
\begin{equation}
\prob(\cE_n^c\,|\,\deg_n(v)\geq d) \leq  \frac{ \prob(\cE_n^c\cap\{\deg_n(v)\geq d\})}{\prob(\deg_n(v)\geq d)}\leq \frac{\theta^{-d}\prob(\cE_n^c)}{\prob(\deg_n(v)\geq d)} = \prob(\cE_n^c)(1+o(1)),
\end{equation}
so that it is sufficient to prove $\prob(\cE_n^c) = o(1)$. Recall the indicator random variables $C_i^{(j)}$ in~\eqref{eq:Ij}. On the event $\{i<\ell_n(v)\}$ and for $i$ such that $x_i=1$, we have 
\begin{equation}
\label{eq:order_of_selection_proabability_for_ungreedy_depth}
\ind_{\{i<\ell_n(v)\}}\prob(C_i^{(j)}=1\,|\, \ell_n(v)) =\ind_{\{i<\ell_n(v)\}}\frac{j}{A_i\wedge (m+1)}\frac{\binom{m}{j}\binom{A_i-m}{A_i\wedge(m+1)-j}}{\binom{A_i }{A_i\wedge(m+1)}}\leq C A_i^{-j},
\end{equation}
where $C>0$ is a universal constant that does not depend on $j$, $i$, and $A_i$. As a result, by combining~\eqref{eq:unrewrite} and~\eqref{eq:wtun} with~\eqref{eq:order_of_selection_proabability_for_ungreedy_depth} and Markov's inequality, we obtain 
\begin{equation}
\label{eq:lower_bound_more_than_one_Selected_ungreedy_depth}
\prob(\cE_n^c) =\P{u_n(v)-\wt u_n(v)>c_n}\leq \prob\bigg(I_{\xref}+\sum_{i=I_{\xref}}^{\ell_n(v)-1}\sum_{j=2}^{|\cC_n^{(i)}(v)|}C_i^{(j)}> c_n\bigg)\leq c_n^{-1}\bigg(I_{\xref}+ \sum_{i=2}^n C m A_i^{-2}\bigg),
\end{equation}
where the final step uses that $|\cC_n^{(i)}(v)|\leq m$ for all $i\in [n]$ and all vertices $v$. As we suppose that Assumption~$\xref$\ref{item:lb} is satisfied for some $\delta>0$, we can bound $A_i\geq i^{1/2+\delta}$ for all $i\geq I_{\xref}$. Since $A_i\geq 1$ for all $i<I_{\xref}$ and $I_{\xref}=o(c_n)$, the right-hand side thus tends to zero, which yields the desired result.
\end{proof}

\subsection{Label and greedy longest path of a vertex with given large degree}
\label{subsec:label_ungreedy_depth_theorem}

Lemma~\ref{lemma:ungreedy_depth_only_one_vertex_connections_set} simplifies the analysis of the greedy longest path of an active vertex $v$, since we can focus on the random variables $C_i^{(1)}$ and ignore the $C_i^{(j)}$ for $j\geq 2$. To control the sum over these random variables $C_i^{(1)}$, we need control over the subset $\cS_n^{(1)}(v)$ of steps $i$ at which exactly one root in $\cC_n^{(i)}(v)$ selected. To be able to do so, we focus on $\cS_n^{(1)}(v)\cap [I_{\xref},n]$, where we recall $I_{\xref}$ from Assumption~$\xref$. Here, we assume without loss of generality that $n$ is large enough so that $A_i\geq 2m\geq m+1$ for all $i\in\{I_{\xref},\ldots,n\}$ (which is possible by Assumption~$\xref$\ref{item:lb}). Focussing only on steps $i\in\{I_{\xref},\ldots,n\}$ simplifies the behaviour of $\cS_n^{(1)}(v)$ somewhat, since we then know that $\cC_n^{(i)}(v)$ contains exactly one root (namely $v$) at steps $i$ before $v$ loses its first dice roll, and exactly $m$ vertices at steps $i\geq I_{\xref}$ after $v$ loses its first dice roll. However, since the size of $\cC_n^{(i)}(v)$ changes once $v$ has lost its first dice roll, the analysis of $\cS_n^{(1)}(v)$ is more complicated than of $\cS_n(v)$. Indeed, where $\cS_n(v)$ can be sampled independently of the dice rolls, this is no longer the case for $\cS_n^{(1)}(v)$. To still facilitate the analysis, we first `decouple' $\cS_n^{(1)}(v)$ from the outcome of the dice rolls.   To this end, we introduce for $i\in\{I_{\xref},\ldots,n\}$ the probabilities
\begin{align}\label{eq:def_p_i^-}
p_i^-&\coloneq\frac{\binom{A_i-1}{m}}{\binom{A_i}{m+1}} \ind_{\{x_i=1\}}= \frac{m+1}{A_i}\ind_{\{x_i=1\}}\ind_{\{A_i\geq m+1\}},
\intertext{and} 
p_i^+&\coloneq \frac{\binom{m}{1}\binom{A_i-m}{m}}{\binom{A_i}{m+1}} \ind_{\{x_i=1\}} 
= \Big(\frac{m(m+1)}{A_i}+e_i\Big) \ind_{\{x_i=1\}}\ind_{\{A_i\geq 2m\}},\label{eq:def_pi_+}
\end{align}
with $e_i=\cO(A_i^{-2})$ as $i\to\infty$ (as we assume that $A_i\to\infty$ with $i$). Here, we think of $p_i^-$ as the probability to select exactly one vertex from $\cC_n^{(i)}(v)$ at a step $i$ \emph{before} $v$ has lost its first dice roll, and of $p_i^+$ as the probability to select exactly one vertex from $\cC_n^{(i)}(v)$ at a step $i$ \emph{after} $v$ has lost its first dice roll.  The difference between the two probabilities arises from the fact that $C_n^{(i)}(v)$ contains $1$ (resp.\ $m$) vertices before (resp.\ after) $v$ has lost its first dice roll. Note that here, we use that $A_i\geq 2m$ for all $i\in\{I_{\xref},\ldots,n\}$, so that the indicators in
\eqref{eq:def_p_i^-} and~\eqref{eq:def_pi_+} equal $1$. We then define two independent sequences of random variables $(s^-_{i})_{I_{\xref}\leq i\leq n}$ and $(s^+_{i})_{I_{\xref}\leq i\leq n}$ with $s^-_{i}\sim\text{Ber}(p_i^-)$ and $s^+_{i}\sim\text{Ber}(p_i^+)$. Define furthermore, analogous to~\eqref{eq:definition_selection_set}, the sets
\begin{equation}
\label{eq:definition_selection_sets_ungreedy_depth_single_vertex}
\cS^-_n(v) \coloneq \{i \in \{I_{\xref},\ldots,n\}\colon s^-_{i} = 1\}\qquad \text{and}\qquad \cS^+_n(v) \coloneq \{i \in \{I_{\xref},\ldots,n\}\colon s^+_{i} = 1\}.
\end{equation}
We intuitively think of $\cS^-_n(v)$ and $\cS_n^+(v)$ as the set of all steps $i$ at which exactly one vertex in $\cC_n^{(i)}(v)$ is selected  before  and after $v$ has lost its first dice roll, respectively. This is not exactly true, of course, since $s_i^-=s_i^+=1$ could hold for some $i$, or $s_i^-=1$ despite $v$ having lost its first dice roll already, or $s_i^+=1$ despite not $v$ having lost its first dice roll, yet.  Let us make this intuition more precise by coupling $\cS_n^{(1)}(v)$ to $\cS_n^-(v)$ and $\cS_n^+(v)$. For each $i\in\cS_n^-(v)$, let $r_{v,i}^-\sim \text{Ber}(1/(m+1))$ be a Bernoulli random variable. The $(r_{v,i}^-)_{i\in\cS_n^-(v)}$ are mutually independent and are also independent of the sets $\cS_n^-(v)$ and $\cS_n^+(v)$. These $r_{v,i}^-$ correspond to $v$ losing a dice roll at step $i$. Then, for all $i\in \{I_{\xref},\ldots,n\}$, we set
\be\label{eq:Sncoupling} 
i\in \cS_n^{(1)}(v) \text{ when}\begin{cases} 
i\in \cS_n^-(v) \text{ and } r_{v,j}^-=0\text{ for all }j\in [i+1,n]\cap \cS_n^-(v),\text{ or} \\
i\in \cS_n^+(v) \text{ and } r_{v,j}^-=1\text{ for some }j\in [i+1,n]\cap \cS_n^-(v).
\end{cases}
\ee 
Furthermore, we set $r_{v,i}=r_{v,i}^-$ for every $i\in \cS_n^{(1)}(v)\cap \cS_n^-(v)$, where we recall the random variables $(r_{v,i})_{i\in \cS_n(v)}$ from Section~\ref{subsec:selection_and_connection_sets}. This also immediately extends the coupling, in the sense that we have now coupled $\deg_n(v)$ and $\ell_n(v)$ to $\cS_n^-(v),\cS_n^+(v)$, and $(r_{v,i}^-)_{i\in \cS_n^-(v)}$ as well. This latter part holds only if $\ell_n(v)\geq I_{\xref}$, otherwise we say the coupling \emph{fails}. As we will see, the probability of the coupling failing  tends to zero with $n$.  See a graphical representation of this coupling in Figure~\ref{fig:couple}.  It is relatively straightforward to check that this coupling yields the desired distribution for the set $\cS_n^{(1)}(v)$, and hence also for $\deg_n(v)$ and $\ell_n(v)$. The advantage is that we have `decoupled` the events  $\{i\in \cS_n^{(1)}(v)\}$ and $\{i\geq \ell_n(v)\}$, since we can independently sample the sets $\cS_n^-(v)$ and $\cS_n^+(v)$ and construct $\cS_n^{(1)}(v)$ by first taking elements from $\cS_n^-(v)$ and `switching' to taking elements from $\cS_n^+(v)$ once $v$ loses its first dice roll (i.e.\ when the value of $\ell_n(v)$ is determined).

\begin{figure}[h] 
\centering 
\begin{tikzpicture}

\def\n{15}      
\def\spacing{0.6}

\foreach \y in {0,1,2} {

\draw (0,\y) -- ({(\n-1)*\spacing},\y);

\ifnum\y=2 
\foreach \i in {0,...,14} {
\draw (\i*\spacing,\y-0.2) -- (\i*\spacing,\y+0.2);
}
\foreach \i in {0,3,4,6,7,10,13,14} { 
\fill[blue] (\i*\spacing,\y) circle (2.5pt);
\ifnum\i=7 
\fill[red] (\i*\spacing,\y) circle (2.5pt);
\fi 
}
\fi 
\ifnum\y=1
\foreach \i in {0,...,14} {
\draw (\i*\spacing,\y-0.2) -- (\i*\spacing,\y+0.2);
}
\foreach \i in {7,10,13,14} { 
\fill[blue] (\i*\spacing,\y) circle (2.5pt);
\ifnum\i=7 
\fill[red] (\i*\spacing,\y) circle (2.5pt);
\fi 
}
\foreach \i in {2,3,5} {
\fill[orange] (\i*\spacing,\y) circle (2.5pt);
}
\fi 
\ifnum\y=0
\foreach \i in {0,...,14} {
\draw (\i*\spacing,\y-0.2) -- (\i*\spacing,\y+0.2);
}
\foreach \i in {2,3,5,8,10,11,14} {
\fill[orange] (\i*\spacing,\y) circle (2.5pt);
}
\fi 
\node[left,  yshift=-4pt] at (0,\y) {$I_{\xref}$};
\node[right, yshift=-4pt] at ({(\n-1)*\spacing},\y) {$n$};

\ifnum\y=2
\node[right=8pt,yshift=4pt] at ({(\n-1)*\spacing},\y) {$\cS_n^-(v)$};
\fi
\ifnum\y=1
\node[right=8pt,yshift=4pt] at ({(\n-1)*\spacing},\y) {$\cS_n^{(1)}(v)$};
\fi
\ifnum\y=0
\node[right=8pt,yshift=4pt] at ({(\n-1)*\spacing},\y) {$\cS_n^+(v)$};
\fi
}

\end{tikzpicture}
\caption{The coupling of $\cS_n^{(1)}(v)\cap [I_{\xref},n]$ with $\cS_n^-(v)$ and $\cS_n^+(v)$. The top line represents the elements of $\cS_n^-(v)$ and the red dot is the first element $i\in\cS_n^-(v)$ such that $r_{v,i}^-=1$. The bottom line represents the elements of $\cS_n^+(v)$. The set $\cS_n^{(1)}(v)$, represented by the middle line, thus contains the red dot, all blue dots in $\cS_n^-(v)$ to the right of the red dot, and all orange dots in $\cS_n^+(v)$ to the left of the (position of the) red dot in $\cS_n^-(v)$. The degree  $\deg_n(v)$ of $v$ equals the number of blue dots in $\cS_n^{(1)}(v)$, and the label $\ell_n(v)$ of $v$ equals the step $i$ such that the dot in $\cS_n^{(1)}(v)$ is red.} \label{fig:couple}
\end{figure}

Equipped with this notation and the tools developed in the previous subsection, the aim of this section is to prove  Theorem~\ref{theorem:label_multiple_vertices_conditional_degrees} in the case of a single vertex. We (re)state this result here for completeness. 
\begin{proposition}\label{prop:depthlabelsinglevertex}
Fix $m\in\N$ and $b\in[0,m+1)$. Let $d=d(n)\in\N_0$ tend to infinity with $n$ such that $\lim_{n\to\infty}d/h_n^+=b$. Let $\mathbf x$ be a choice sequence that satisfies Assumption~$\xref$\ref{item:geq1},\ref{item:hI},\ref{item:lb} for some $\varepsilon<1-b/(m+1)$ in Part~\ref{item:hI} and where $I_{\xref}=o(\sqrt{h_n^+})$. Let $M,N$ be two independent standard normal random variables.
Then, conditionally on the event $\{\deg_n(a_1)\geq d(n)\}$
\be \ba 
\bigg({}&\frac{u_n(a_1)-(mh_n^+-\frac{m}{m+1}d(n))}{\sqrt{mh_n^+-\frac{m}{(m+1)^2}d(n)}}, \frac{h_{\ell_n(a_1)}^+-(h_n^+-\frac{1}{m+1}d(n))}{\sqrt{\frac{1}{(m+1)^2}d(n)}}\bigg)\\
&\toindis \bigg(M\sqrt{\frac{mb}{(m+1)^2-b}}+N\sqrt{1-\frac{mb}{(m+1)^2-b}},M\bigg).
\ea \ee 
\end{proposition}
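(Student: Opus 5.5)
By Lemma~\ref{lemma:ungreedy_depth_only_one_vertex_connections_set}, taking $c_n$ with $I_{\xref}=o(c_n)$ and $c_n=o(\sqrt{h_n^+})$ (possible because $I_{\xref}=o(\sqrt{h_n^+})$), it suffices to prove the joint limit with $u_n(a_1)$ replaced by $\wt u_n(a_1)$. The shift by $c_n$ vanishes after normalisation by $\sqrt{h_n^+}$. I would then work entirely in the decoupled model $(\cS_n^-(v),\cS_n^+(v),(r^-_{v,i}))$ given by~\eqref{eq:Sncoupling}. In that model:
\begin{itemize}
\item $\deg_n(v)$ is the number of elements of $\cS_n^-(v)$ larger than the first element $i^*$ (scanning from $n$ downwards) with $r^-_{v,i^*}=1$;
\item $\ell_n(v)=i^*$;
\item up to an $\cO(1)$ error, $\wt u_n(v)$ equals $\sum_{i\in\cS_n^+(v),\,i<i^*} r^+_i$, where the $r^+_i\sim\mathrm{Ber}(1/(m+1))$ are independent dice outcomes for the single selected root of $\cC_n^{(i)}(v)$.
\end{itemize}
The coupling fails only if $\ell_n(v)<I_{\xref}$, which has vanishing conditional probability. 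This follows from Lemma~\ref{lemma:tail_bound_truncated_selection_set}, since $d<ch_n^+$ with $c<(m+1)(1-\eps)$.

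The next step is to condition on the degree event. Given $\{\deg_n(v)\geq d\}$, the first $d$ dice rolls of $v$ in $\cS_n^-(v)$ are wins, and after that the losing time is geometric with parameter $1/(m+1)$ along $\cS_n^-(v)$. Let $T_d$ denote the $d$-th largest element of $\cS_n^-(v)$. Conditionally on the degree event, $\ell_n(v)$ is the $(d+G)$-th largest element, with $G$ geometric and therefore $\cO_{\P{}}(1)$. So $h^+_{\ell_n(v)}=h^+_{T_d}+o_{\P{}}(\sqrt d)$, and the law of $\cS_n^-(v)$ under the conditioning is, up to $1+o(1)$ factors, its unconditional law restricted to $|\cS_n^-(v)|\geq d$, which is a high-probability event. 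Write $N_t=\sum_{i>t}s_i^-$, which has mean $(m+1)(h_n^+-h_t^+)$ and variance asymptotic to the same. A CLT for sums of independent Bernoullis then gives
\[
\frac{h^+_{T_d}-(h_n^+-d/(m+1))}{\sqrt{d}/(m+1)}\toindis M.
\]
The derivation is by inversion: $\{T_d\le t\}=\{N_t<d\}$, applied at $t$ chosen so that $h_t^+=h_n^+-d/(m+1)+x\sqrt d/(m+1)$. Such a $t$ exists up to errors of order $1/A_t=o(1)$, by Assumption~$\xref$\ref{item:lb}.

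Conditionally on $\ell_n(v)=\ell$, the variable $\wt u_n$ is a sum of independent Bernoullis with parameters $p_i^+/(m+1)=m/A_i+\cO(A_i^{-2})$ over $i\in[I_{\xref},\ell)$ with $x_i=1$. Its mean is $m(h^+_{\ell}-h^+_{I_{\xref}})+\cO(1)=mh^+_\ell+o(\sqrt{h_n^+})$, using Assumption~$\xref$\ref{item:hI} and $h^+_{I_{\xref}}\le I_{\xref}$. Its variance is $mh^+_\ell(1+o(1))$. A conditional Lindeberg CLT then gives $\wt u_n=mh^+_\ell+\sqrt{mh^+_\ell}\,N+o(\sqrt{h_n^+})$, with $N$ independent of $M$ because $\cS_n^+(v)$ and the $r^+$ are independent of $\cS_n^-(v)$. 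Substituting $h^+_\ell=h_n^+-d/(m+1)+\frac{\sqrt d}{m+1}M$ gives
\[
u_n\approx mh_n^+-\frac{m}{m+1}d+\frac{m\sqrt d}{m+1}M+\sqrt{m(h_n^+-d/(m+1))}\,N .
\]
The total variance is $\frac{m^2d}{(m+1)^2}+mh_n^+-\frac{md}{m+1}=mh_n^+-\frac{m}{(m+1)^2}d$. Normalising and using $d/h_n^+\to b$ yields coefficients $\sqrt{mb/((m+1)^2-b)}$ on $M$ and the complementary coefficient on $N$, as claimed. The joint convergence follows via characteristic functions: condition on $\cS_n^-(v)$ and $G$, use uniformity of the conditional CLT over $\ell$ in a window where $h^+_\ell\asymp h_n^+$ (note $b<m+1$ keeps $h_n^+-d/(m+1)\asymp h_n^+$), and apply dominated convergence. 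The case $b=0$ is covered as well, since then $d=o(h_n^+)$ and only the $M$-component involves $\sqrt d$.

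The main obstacle is making the decoupling rigorous. One must check that the law of $(\deg_n,\ell_n,\wt u_n)$ under the true coalescent matches the coupled model on the event $\{\ell_n(v)\ge I_{\xref}\}$. Two issues need care here: the connection set $\cC_n^{(i)}(v)$ keeps size exactly $m$ after the first loss, and the replacement of its member when that member loses keeps the relevant selection probability equal to $p_i^+$ independently of the past. Both rely on $A_i\ge 2m$ for $i\ge I_{\xref}$. I would verify this step by step using the Markov structure of the coalescent backwards in time, and bound the discrepancy $e_i$ by $\sum_i A_i^{-2}=o(1)$.
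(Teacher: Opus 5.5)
Your proposal is correct and follows essentially the same route as the paper. The shared steps are: the reduction to $\wt u_n(a_1)$ via Lemma~\ref{lemma:ungreedy_depth_only_one_vertex_connections_set} with $c_n=o(\sqrt{h_n^+})$; the decoupled $\cS_n^-/\cS_n^+$ model with a geometric number of extra rolls after the $d$ wins; an inversion CLT for $h^+_{\ell_n(a_1)}$ (the paper's $L(d,y)$ and Lemma~\ref{lemma:CLTSmin}); and a binomial CLT for the part of $\cS_n^+$ below the label (Proposition~\ref{prop:Ylim}). The only difference is how the joint limit is assembled. You condition on $\ell_n(a_1)$ exactly and need a conditional CLT that is uniform over $\ell$ in a window with $h^+_\ell\asymp h_n^+$ (e.g.\ via Berry--Esseen). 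The paper instead slices the range of the label into $\delta$-windows and sandwiches $\wt u_n$ between binomials by stochastic monotonicity, so it only needs pointwise CLTs at the fixed levels $y+j\delta$.
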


To prove Proposition~\ref{prop:depthlabelsinglevertex} we need two main ingredients: $(1)$ We need to control the number of times vertex $a_1$ is selected before it loses its first dice roll. $(2)$ We need to finely control the number of times exactly one vertex in the selection set of $a_1$ is selected and loses a dice roll, for all steps after $v$ has lost its first dice roll.  As the step at which $a_1$ loses its first dice roll (which is its label $\ell_n(v)$) is random, these are non-trivial tasks. To this end, let us make the following observation. If we define, for $d,y\in\R$, 
\be \label{eq:Ldef}
L=L(d,y)\coloneq \sup\{\ell\in\N\colon (m+1)(h_n^+-h_{\ell-1}^+)\geq d-y\sqrt d\}, 
\ee 
then 
\be \label{eq:equiv}
\frac{h_{\ell_n(a_1)}^+-(h_n^+-\frac{1}{m+1}d(n))}{\sqrt{\frac{1}{(m+1)^2}d(n)}}>y \quad \Leftrightarrow \quad \ell_n(a_1)\geq L(d(n),y).
\ee 
In accordance with Proposition~\ref{prop:depthlabelsinglevertex}, we thus obtain that $\ell_n(a_1)$ should be close to $L(d,y)$ (with positive probability). So, we can instead approximate ingredients $(1)$ and $(2)$ by substituting $L(d(n),y)$ for $\ell_n(a_1)$. The proof of the proposition then comes down to applying $(1)$ and $(2)$ to an arbitrarily fine partition of $[L(d(n),y),n]$. In the remainder of the section, we omit the argument $(n)$ from $d(n)$ for ease of writing.

Ingredient (1) is summarised in the next lemma. 

\begin{lemma}\label{lemma:CLTSmin}
Consider the same notation and conditions as in Proposition~\ref{prop:depthlabelsinglevertex}. Additionally, recall $\cS_n^-(a_1)$ from~\eqref{eq:definition_selection_sets_ungreedy_depth_single_vertex} and let $\text{Geo}_m\sim \mathrm{Geo}(1/(m+1))$ be independent of $\cS_n^-(a_1)$. Fix $y\in \R$ and define the event 
\be 
\cE_n(d,y)\coloneq \{|\cS_n^-(a_1)\cap [L(d,y),n]|\geq d+\text{Geo}_m\}. 
\ee 
Then, 
\be 
\lim_{n\to\infty}\P{\cE_n(d,y)}=1-\Phi(y),
\ee
where $\Phi$ denotes the cumulative density function of the standard normal distribution.
\end{lemma}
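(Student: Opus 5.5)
Write $Q\coloneq|\cS_n^-(a_1)\cap[L,n]|$ with $L=L(d,y)$. The plan is a central limit theorem for $Q$, with the independent geometric summand shown to be negligible on the $\sqrt d$ scale.

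First, $Q$ is a sum of independent Bernoulli variables $s_i^-$ with parameters $p_i^-=(m+1)\ind_{\{x_i=1\}}/A_i$ for $i\geq I_{\xref}$. We will check that $L\geq I_{\xref}$ for all large $n$. Since $d/h_n^+\to b<m+1$, the definition~\eqref{eq:Ldef} forces $(m+1)(h_n^+-h_{L-1}^+)\approx d<(m+1)(1-\eps)h_n^+$ when $b>0$. Assumption~$\xref$\ref{item:hI} then gives $h_{L-1}^+\geq (1-(1-\eps))h_n^+$ up to lower order terms, which exceeds $h_{I_{\xref}}^+$. If $b=0$, this is immediate because $h_{L-1}^+\sim h_n^+$. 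Hence the mean is
\[
\mu_n\coloneq\E{Q}=(m+1)(h_n^+-h_{L-1}^+).
\]
By the definition of $L$ as a supremum, $\mu_n\geq d-y\sqrt d$, while replacing $L$ by $L+1$ drops the mean below $d-y\sqrt d$. The removed term is at most $(m+1)/A_L\leq (m+1)L^{-1/2-\delta}=o(1)$ by Assumption~$\xref$\ref{item:lb}. Therefore $\mu_n=d-y\sqrt d+o(1)$.

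Second, the variance is
\[
\sigma_n^2\coloneq\sum_{i\geq L}p_i^-(1-p_i^-)=\mu_n-\sum_{i\geq L}(p_i^-)^2 .
\]
The subtracted sum is at most $(m+1)^2\sum_{i\geq I_{\xref}}i^{-1-2\delta}=o(1)$. So $\sigma_n^2=d(1+o(1))\to\infty$, because $d\to\infty$. The summands are bounded, so the Lindeberg condition is trivial, and the Lindeberg--Feller CLT gives $(Q-\mu_n)/\sqrt d\toindis Z$ with $Z$ standard normal.

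Third, the event $\cE_n(d,y)$ equals
\[
\Big\{\frac{Q-\mu_n}{\sqrt d}\geq y+\frac{\text{Geo}_m+o(1)}{\sqrt d}\Big\}.
\]
Since $\text{Geo}_m$ is almost surely finite and independent of $Q$, we have $\text{Geo}_m/\sqrt d\to0$ in probability. Slutsky's lemma then gives $(Q-\mu_n-\text{Geo}_m)/\sqrt d\toindis Z$. As $\Phi$ is continuous, $\P{\cE_n(d,y)}\to\P{Z\geq y}=1-\Phi(y)$.

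The main obstacle is the bookkeeping in the first step. We must confirm that $L\geq I_{\xref}$, so that the selection probabilities take the clean form $p_i^-=(m+1)\ind_{\{x_i=1\}}/A_i$. We must also confirm that the discreteness of $L$ shifts the mean only by $o(\sqrt d)$. Both follow from Assumption~$\xref$\ref{item:hI} and~\ref{item:lb} as indicated. For $b=0$, where $d=o(h_n^+)$, the same argument applies verbatim.
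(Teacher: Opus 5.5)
Your proof is correct and follows essentially the same route as the paper: check $L(d,y)>I_{\mathbf x}$ via Assumption~(ii), compute the mean $(m+1)(h_n^+-h_{L-1}^+)=d-y\sqrt d+o(\sqrt d)$ and a variance of the same order, and apply the Lindeberg CLT to $|\mathcal S_n^-(a_1)\cap[L,n]|$. The only difference is in handling $\text{Geo}_m$. You absorb it with Slutsky's lemma, whereas the paper sandwiches the probability between the events with threshold $d$ and $d+K$, where $\P{\text{Geo}_m\le K}\ge 1-\varepsilon$, and then lets $\varepsilon\to0$.
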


\begin{proof}
We provide an upper and lower bound to the probability that have the same limit. Let $\eps>0$ be arbitrarily small and let $K=K(\eps)\in\N$ be large such that $\P{\text{Geo}_m\leq K}\geq 1-\eps$. We then have  
\begin{align}
\P{\cE_n(d,y)}&\leq \P{|\cS_n^-(a_1)\cap [L(d,y),n]|\geq d}
\intertext{and}
\P{\cE_n(d,y)}&\geq \P{|\cS_n^-(a_1)\cap [L(d,y),n]|\geq d +K}-\eps, 
\end{align}
where we use that $\text{Geo}_m$ is independent of $\cS_n^-(a_1)$ in the lower bound. We only prove that the probability in the upper bound has the desired limit. Proving that the probability in the lower bound has the same limit for any $K=K(\eps)\in\N$ fixed follows in an analogous way, from which the desired result follows, as $\eps$ is arbitrary. 

Since Assumption~$\xref$\ref{item:hI} is satisfied with $\eps<1-b/(m+1)$ and $\lim_{n\to\infty}d/h_n^+=b$, we can take $\xi\in (b,(m+1)(1-\eps))$ and use the definition of $L(d,y)$ in~\eqref{eq:Ldef} to see that for all $n$ large, 
\be\label{eq:I<L(d,y)}
(m+1)(h_n^+-h_{I_{\xref}}^+) \geq (m+1)(1-\eps)h_n^+ > \xi h_n^+ > d-y\sqrt d > (m+1)(h_n^+-h_{L(d,y)}^+).
\ee
It follows that $I_{\xref}<L(d,y)$ for all $n$ sufficiently large, as $h_i$ is non-decreasing in $i$. By using $A_i\geq m+1$ for $i\in\{I_{\xref},\ldots,n\}$ and recalling the definition of $\cS_n^-(a_1)$ in~\eqref{eq:definition_selection_sets_ungreedy_depth_single_vertex}, we obtain 
\begin{align}
\E{|\cS_n^-(a_1)\cap [L(d,y),n]|}&=\sum_{i=L(d,y)}^n \ind_{\{x_i=1\}} \frac{m+1}{A_i},
\intertext{and}
\Var(|\cS_n^-(a_1)\cap [L(d,y),n]|)&=\sum_{i=L(d,y)}^n \ind_{\{x_i=1\}} \frac{m+1}{A_i}\Big(1-\frac{m+1}{A_i}\Big).
\end{align} 
By Assumption~$\xref$\ref{item:lb} it follows that $\Var(|\cS_n^-(a_1)\cap [L(d,y),n]|)=\E{|\cS_n^-(a_1)\cap [L(d,y),n]|}+\cO(1)$. By the definition of $L(d,y)$ in~\eqref{eq:Ldef}, we have that 
\begin{align} 
\sum_{i=L(d,y)}^n \ind_{\{x_i=1\}} \frac{m+1}{A_i}&=(m+1)(h_n^+-h_{L(d,y)-1})\geq d-y\sqrt d
\intertext{and}
\sum_{i=L(d,y)}^n \ind_{\{x_i=1\}} \frac{m+1}{A_i}&=(m+1)(h_n^+-h_{L(d,y) })+\frac{m+1}{A_{L(d,y)}}<d-(y+o(1))\sqrt d,
\end{align}
where the final step in the upper bound follows from the fact that $d$ tends to infinity with $n$ and $(m+1)/A_{L(d,y)}$ is bounded. The Lindeberg central limit theorem thus yields that 
\be
\frac{|\cS_n^-(a_1)\cap [L(d,y),n]|-\E{|\cS_n^-(a_1)\cap [L(d,y),n]|}}{\sqrt{\Var(|\cS_n^-(a_1)\cap [L(d,y),n]|)}}\toindis N, 
\ee  
with $N$ a standard normal random variable. As a result, 
\be\ba 
\lim_{n\to\infty}{}&\mathbb P\big(|\cS_n^-(a_1)\cap [L(d,y),n]|\geq d\big)\\ 
={}&\lim_{n\to\infty}\mathbb P\bigg(\frac{|\cS_n^-(a_1)\cap [L(d,y),n]|-\E{|\cS_n^-(a_1)\cap [L(d,y),n]|}}{\sqrt{\Var(|\cS_n^-(a_1)\cap [L(d,y),n]|)}}\geq \frac{d-\E{|\cS_n^-(a_1)\cap [L(d,y),n]|}}{\sqrt{\Var(|\cS_n^-(a_1)\cap [L(d,y),n]|)}}\bigg)\\
={}&\P{N\geq y}=1-\Phi(y),
\ea \ee 
as desired.
\end{proof}

The following proposition deals with ingredient $(2)$. 

\begin{proposition}\label{prop:Ylim}
Consider the same notation and conditions as in Proposition~\ref{prop:depthlabelsinglevertex}. Additionally, recall $\cS_n^+(a_1)$ from~\eqref{eq:definition_selection_sets_ungreedy_depth_single_vertex}, fix $y\in \R$ and, conditionally on $\cS_n^+(a_1)$, define the random variable
\be 
Y_n(y)\sim \mathrm{Bin}\Big(|[L_n(d,y)-1]\cap \cS_n^+(a_1)|,\frac{1}{m+1}\Big). 
\ee 
Then, with $N$ a standard normal random variable, 
\be 
\frac{Y_n(y)-(mh_n^+-\frac{m}{m+1}d)}{\sqrt{mh_n^+-\frac{m}{(m+1)^2}d}}\toindis  N\sqrt{1-\frac{mb}{(m+1)^2-b}}+y\sqrt{\frac{mb}{(m+1)^2-b}}.
\ee
\end{proposition}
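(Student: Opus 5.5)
We prove Proposition~\ref{prop:Ylim} through a conditional central limit theorem. Conditionally on $\cS_n^+(a_1)$ we apply the binomial CLT to $Y_n(y)$, and separately a Lindeberg CLT to the number of trials $T_n\coloneq|[L(d,y)-1]\cap\cS_n^+(a_1)|$. Since $T_n$ is a sum of independent $\mathrm{Ber}(p_i^+)$ variables over $i\in\{I_{\xref},\ldots,L(d,y)-1\}$ (recall that $\cS_n^+(a_1)\subseteq\{I_{\xref},\ldots,n\}$), both steps are classical. The only real work is identifying the means and variances in terms of $h_n^+$ and $d$.

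\textbf{Step 1 (mean and variance of $T_n$).} By~\eqref{eq:def_pi_+},
\[
\E{T_n}=\sum_{i=I_{\xref}}^{L(d,y)-1}\Big(\frac{m(m+1)}{A_i}+e_i\Big)\ind_{\{x_i=1\}}=m(m+1)\big(h^+_{L(d,y)-1}-h^+_{I_{\xref}-1}\big)+\cO\Big(\sum_{i\ge I_{\xref}}A_i^{-2}\Big).
\]
The error term is $o(1)$ by Assumption~$\xref$\ref{item:lb}. Moreover $h^+_{I_{\xref}}\le I_{\xref}=o(\sqrt{h_n^+})$.

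By the definition~\eqref{eq:Ldef} of $L(d,y)$ we have $(m+1)(h_n^+-h^+_{L-1})=d-y\sqrt d+\cO(1)$. Combining these gives
\[
\E{T_n}=m(m+1)h_n^+-m(d-y\sqrt d)+o(\sqrt{h_n^+}).
\]
Since $p_i^+\to0$, we also get $\Var(T_n)=\E{T_n}(1+o(1))$. For $b<m+1$ this quantity is of order $h_n^+\to\infty$, so the Lindeberg CLT gives
\[
\frac{T_n-\E{T_n}}{\sqrt{m(m+1)h_n^+-md}}\toindis N'.
\]
(If $b=0$, the $y\sqrt d$ term is negligible and the argument only simplifies.)

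\textbf{Step 2 (conditional binomial CLT).} Write
\[
Y_n-\frac{T_n}{m+1}=\sqrt{\frac{mT_n}{(m+1)^2}}\;N''_n .
\]
Given $T_n\to\infty$, the binomial CLT gives $N''_n\toindis N''$, independent of $T_n$; the independence is justified because the convergence is uniform over the conditioning. Then
\[
Y_n-\Big(mh_n^+-\frac{m}{m+1}d\Big)=\frac{T_n-\E{T_n}}{m+1}+\frac{m y\sqrt d}{m+1}+\sqrt{\frac{mT_n}{(m+1)^2}}\,N''_n+o(\sqrt{h_n^+}).
\]
We divide by $\sigma_n\coloneq\sqrt{mh_n^+-\frac{m}{(m+1)^2}d}$. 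Using $d\sim bh_n^+$, the three main terms behave as follows.

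The deterministic term gives $\frac{my\sqrt d}{(m+1)\sigma_n}\to y\sqrt{\frac{mb}{(m+1)^2-b}}$.

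The $T_n$-fluctuation term has variance ratio $\frac{m(m+1)-mb}{(m+1)^2\sigma_n^2/h_n^+}\to\frac{m(m+1-b)}{(m+1)^2m-mb}$.

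The binomial term has variance ratio $\frac{m\,\E{T_n}}{(m+1)^2\sigma_n^2}\to\frac{m^2(m+1-b)}{m((m+1)^2-b)}$.

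Adding the two variance ratios gives
\[
\frac{(m+1)(m+1-b)}{(m+1)^2-b}=1-\frac{mb}{(m+1)^2-b}.
\]
This is the claimed variance of the Gaussian part, and the independent normals $N'$ and $N''$ combine into a single $N$.

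\textbf{Main obstacle.} The delicate point is bookkeeping the error terms at scale $\sqrt{h_n^+}$. First, the contribution of $h^+_{I_{\xref}}$ must be negligible, which is where $I_{\xref}=o(\sqrt{h_n^+})$ is used. Second, the rounding in $L(d,y)$ must be controlled, which uses that $(m+1)/A_L$ is bounded. Third, $d/h_n^+\to b$ is not quantitative, so $d$ must never be replaced by $bh_n^+$ inside the centering; this is why the centering is stated in terms of $d$ itself. To make the conditional-CLT step rigorous, we compute the joint characteristic function by conditioning on $T_n$ and use that $T_n/\E{T_n}\to1$ in probability.
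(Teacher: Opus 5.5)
Your proof is correct and follows the paper's argument. The paper also writes $Y_n(y)$ as a sum of $Q_n(y)=T_n$ independent $\mathrm{Ber}(1/(m+1))$ variables, applies a Lindeberg CLT to $Q_n(y)$ with the same mean computation from the definition of $L(d,y)$ (using $I_{\xref}=o(\sqrt{h_n^+})$ and $\sum_{i\ge I_{\xref}}A_i^{-2}=o(1)$), and splits the normalised quantity into the same three pieces: binomial fluctuation, fluctuation of $Q_n(y)$, and the deterministic shift tending to $y\sqrt{mb/((m+1)^2-b)}$. The only difference is that the paper obtains joint convergence through Skorokhod's representation with an independent coupling, whereas you condition on $T_n$ and use characteristic functions; both are valid.
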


\begin{proof}
Recall that $(s_i^+)_{I_{\xref}\leq i\leq n}$ is a sequence of independent indicator random variables with 
\be 
\P{s_i^+=1}=p_i^+=\Big(\frac{m(m+1)}{A_i}+e_i\Big)\ind_{\{x_i=1\}}\ind_{\{A_i\geq 2m\}}, 
\ee  
with $e_i=\cO(A_i^{-2})$ and that $|\cS_n^+(a_1)|=\sum_{i=I}^n s_i^+$. Furthermore, if we let $(I_i^n)_{i\in[n],n\in\N}$ denote a sequence of i.i.d.\ Bernoulli random variables with parameter $(m+1)^{-1}$, we can define
\be \label{eq:Ynrewrite}
Q_n(y)\coloneq \sum_{i=I_{\xref}}^{L(d,y)-1}s_i^+\quad \text{and write}\quad  Y_n(y)=\sum_{j=1}^{Q_n(y)}I_j^{Q_n(y)}.
\ee 
Since $A_i\geq 2m$ for $i\geq I_{\xref}$ and by the fact that $I_{\xref}=o(\sqrt{h_n^+})$ and $I_{\xref}<L(d,y)$ by~\eqref{eq:I<L(d,y)}, we have
\be 
\E{Q_n(y)}=\!\!\sum_{i=I_{\xref}}^{L(d,y)-1}\! \!\Big(\frac{m(m+1)}{A_i}+e_i\Big)\ind_{\{x_i=1\}}\ind_{\{A_i\geq 2m\}}=\!\!\sum_{i=I_{\xref}}^{L(d,y)-1}\! \!\Big(\frac{m(m+1)}{A_i}+e_i\Big)\ind_{\{x_i=1\}}.
\ee 
By the definition of $L(d,y)$ and since $d=d(n)$ tends to infinity with $n$, we can write this as
\be \ba \label{eq:meanQ}
\E{Q_n(y)}&=m(m+1)(h_n^+-h_{I_{\xref}-1}^+)-m(m+1)(h_n^+-h_{L(d,y)-1})+\sum_{i=I_{\xref}}^{L(d,y)-1} \!\!\!\ind_{\{x_i=1\}}e_i\\
&=m(m+1)(h_n^+-h_{I_{\xref}-1}^+)-m(d-(y+o(1))\sqrt d) +\sum_{i=I_{\xref}}^{L(d,y)-1}\!\!\!\ind_{\{x_i=1\}}e_i.
\ea \ee 
Since $e_i=\cO(A_i^{-2})$, it follows from Assumption~$\xref$\ref{item:lb} that the sum on the right-hand side is $o(1)$, since $I_{\xref}$ tends to infinity with $n$. As $Q_n(y)$ is a sum of independent indicator random variables, similar calculations yield that 
\be \label{eq:varQ}
\Var(Q_n(y))=m(m+1)(h_n^+-h_{I_{\xref}-1}^+)-m(d-(y+o(1))\sqrt d. 
\ee 
As $\lim_{n\to\infty}d/h_n^+=b<m+1$ and $h_{I_{\xref}-1}^+\leq I_{\xref} =o(\sqrt{h_n^+})$, it follows that both the expected value and variance tend to infinity.  Applying the Lindeberg central limit theorem thus yields
\be \label{eq:CLTQ}
\frac{Q_n(y)-\E{Q_n(y)}}{\sqrt{\Var(Q_n(y))}}\toindis N', 
\ee 
with $N'$ a standard normal random variable. Furthermore, as the $(I_i^n)_{i\in[n],n\in\N}$ are i.i.d.\ Bernoulli random variables with success parameter $(m+1)^{-1}$, 
\be \label{eq:CLTind}
\frac{(m+1)\sum_{j=1}^n I_j^n -n}{\sqrt{m n}}\toindis N''
\ee 
with $N''$ a standard normal random variable that is independent of $N'$. We now write 
\be 
\frac{(m+1)\sum_{j=1}^{Q_n(y)}I_j^{Q_n(y)}-(m(m+1)h_n^+-md)}{\sqrt{m(m+1)^2h_n^+-md}}=B_n+C_n+D_n, 
\ee 
where
\be \ba 
B_n&\coloneq \frac{(m+1)\sum_{j=1}^{Q_n(y)}I_j^{Q_n(y)}-Q_n(y)}{\sqrt{mQ_n(y)}}\sqrt{\frac{Q_n(y)}{m(m+1)h_n^+-md}}\sqrt{\frac{m(m+1)h_n^+-md}{(m+1)^2 h_n^+-d}},\\
C_n&\coloneq \frac{Q_n(y)-\E{Q_n(y)}}{\sqrt{\Var(Q_n(y))}}\sqrt{\frac{\Var(Q_n(y))}{m(m+1)h_n^+-md}}\sqrt{\frac{(m+1)h_n^+-d}{(m+1)^2h_n^+-d}}, \\
D_n&\coloneq \frac{\E{Q_n(y)}-(m(m+1)h_n^+-md)}{\sqrt{d}}\sqrt{\frac{d}{m(m+1)^2h_n^+-md}}.
\ea\ee 
As $d/h_n^+\to b$, it is clear from~\eqref{eq:meanQ} that 
\be \label{eq:Cnconv}
\lim_{n\to\infty}D_n= y \sqrt{\frac{mb}{(m+1)^2-b}}.
\ee
By Skorokhod's representation theorem (see Theorem $6.7$ in \cite{Billingsley.1999}), there exists a probability space and an independent coupling of $(Q_n(y))_{n\in\N}$ and $(I_j^n)_{j\in[n],n\in\N}$ such that the convergence in~\eqref{eq:CLTQ} and~\eqref{eq:CLTind} is almost sure rather than in distribution.  In particular, we have 
\be 
\frac{Q_n(y)}{m(m+1)h_n^+-md}\toas1\qquad\text{and}\qquad Q_n(y)\toas\infty. 
\ee 
We can use this to determine that, in this probability space, 
\be 
B_n\toas N'' \sqrt{\frac{m(m+1)-mb}{(m+1)^2-b}}\qquad \text{and}\qquad C_n\toas N'\sqrt{\frac{(m+1)-b}{(m+1)^2-b}}. 
\ee 
Combined with the convergence of $D_n$ in~\eqref{eq:Cnconv}, we thus obtain that 
\be \ba 
\frac{(m+1)\sum_{j=1}^{Q_n(y)}I_j^{Q_n(y)}-(m(m+1)h_n^+-md)}{\sqrt{m(m+1)^2h_n^+-md}}\toindis{}&  N'' \sqrt{\frac{m(m+1)-mb}{(m+1)^2-b}}+ N'\sqrt{\frac{(m+1)-b}{(m+1)^2-b}}\\
&+y \sqrt{\frac{mb}{(m+1)^2-b}}.
\ea \ee 
The  independence of $N'$ and $N''$ and~\eqref{eq:Ynrewrite} thus conclude the proof.
\end{proof}

With both Lemma~\ref{lemma:CLTSmin} and Proposition~\ref{prop:Ylim} at hand, we are now ready to prove Proposition~\ref{prop:depthlabelsinglevertex}.

\begin{proof}[Proof of Proposition~\ref{prop:depthlabelsinglevertex}]
Fix $y,z\in\R$, recall $L(d,y)$  from~\eqref{eq:Ldef}, and set $\cS_n^-\coloneq \cS_n^-(a_1)$, $\cS_n^+\coloneq \cS_n^+(a_1)$, $u_n\coloneq u_n(a_1)$, $\ell_n \coloneq \ell_n(a_1)$, and $\ell\coloneq L(d,y)$ for ease of writing. Set 
\be \label{eq:u}
u=u(d,n,z)\coloneq mh_n^+-\frac{m}{m+1}d+z \sqrt{mh_n^+-\frac{m}{(m+1)^2}d}. 
\ee 
By the equivalence in~\eqref{eq:equiv} and with $M$ and $N$ i.i.d.\ standard normal random variables, we are required to prove that
\begin{equation}
\label{eq:joint_behaviour_ungreedy_label_conditional_degree}
\begin{aligned}
&\lim_{n\to\infty}\prob(u_n\leq u,\ell_n\geq\ell\,|\,\deg_n(a_1)\geq d)\\
&= \prob\left(M\sqrt{\frac{mb}{(m+1)^2-b}}+N\sqrt{1-\frac{mb}{(m+1)^2-b}}\leq z,M>y\right).
\end{aligned}
\end{equation}
We divide the proof of~\eqref{eq:joint_behaviour_ungreedy_label_conditional_degree} into an upper bound and a lower bound. We prove the upper bound first, and can then recover most of the steps for the lower bound.

\textbf{Upper bound. } Recall $  \wt u_n(a_1)$ from~\eqref{eq:wtun}. By an application of the first part of Lemma~\ref{lemma:ungreedy_depth_only_one_vertex_connections_set},
\be\ba \label{eq:begin_upper_bound_theorem_ungreedy_depth}
\prob(u_n\leq u,\ell_n\geq\ell\,|\,\deg_n(a_1)\geq d)& \leq \prob(\widetilde{u}_n\leq u,\ell_n\geq\ell\,|\,\deg_n(a_1)\geq d)\\
&= \frac{\prob(\widetilde{u}_n\leq u,\ell_n\geq\ell,\deg_n(a_1)\geq d)}{\prob(\deg_n(a_1)\geq d)}.
\ea\ee
Recall from the definition of $\cS_n^-$ at the start of this section that we use $\cS_n^-$ to determine the steps at which vertex $a_1$ is selected, prior to losing its first dice roll. The event $\{\deg_n(a_1)\geq d\}$ is equivalent to $\cS_n^-$ containing at least $d$ elements and $a_1$ winning the first $d$ dice rolls  associated with elements in  $\cS_n^-$. Let $\text{Geo}_m$ be a geometric random variable with parameter $(m+1)^{-1}$ independent of everything else. After these $d$ wins of vertex $a_1$, we use $\text{Geo}_m$ to count the number of dice rolls that $a_1$ participates in, until $a_1$ loses for the first time. Since $A_i\geq m+1$ for all $i\in\{I_{\xref},\ldots,n\}$ by Assumption~$\xref$\ref{item:lb} and $L(d,y)\geq I_{\xref}$ by Assumption~$\xref$\ref{item:hI} (see~\eqref{eq:I<L(d,y)}) and we work on the event $\{\ell_n(a_1)\geq \ell\}=\{\ell_n(a_1)\geq L(d,y)\}$, it follows that $\text{Geo}_m$ indeed has the correct distribution.  Now, the event $\{\deg_n(a_1)\geq d ,\ell_n\geq \ell\}$ is equivalent to $\cS_n^-\cap [\ell,n]$ containing at least $d+\text{Geo}_m$ many elements and $a_1$ winning the first $d$ dice rolls associated with the elements in $\cS_n^-$. After $a_1$ has lost its first dice roll, it increases its depth in $G_{\ell_n}$ by one and is connected to $m$ roots by directed edges. For each step $i\geq I_{\xref}$ such that exactly one of these $m$ roots is selected and loses the corresponding dice roll (and connects itself to $m$ new roots by directed edges), we add to the value of $\wt u_n$. The number of times exactly one of these $m$ roots is selected is given by $\cS_n^+\cap [I_{\xref},\ell_n-1]$. On the event $\{\deg_n(a_1)\geq d,\ell_n\geq \ell\}$ we define the set 
\be 
\cR_n\coloneq\{i\in [\ell,n]\colon|\cS^-_n\cap[i,n]|\geq d+\text{Geo}_m\}\cap \cS_n^+
\ee 
and partition $\cS_n^+\cap [I_{\xref},\ell_n-1]$ into 
\be 
\cS_n^+\cap [I_{\xref},\ell_n-1]=\big(\cS_n^+\cap [I_{\xref},\ell)\big)\cup (\cR_n\setminus\{\ell_n\}).
\ee 
We let $X_n$ and $Y_n$ denote the number of losses of dice rolls associated to the steps in $S_n^+\cap [I_{\xref},\ell)$ and $\cR_n\setminus\{\ell_n\}$, respectively, so that $  \wt u_n= X_n+Y_n$. Conditionally on $\cS_n^-,\cS_n^+$, and $\text{Geo}_m$, the random variables $X_n$ and $Y_n$ are independent and distributed as 
\begin{equation}
X_{n}\sim\text{Bin}\left(|\cR_n|-1,\frac{1}{m+1}\right),\quad Y_n\sim\text{Bin}\left(|\cS^+_n\cap[I_{\xref},\ell-1]|,\frac{1}{m+1}\right).
\end{equation}
By using the tower property and the fact that $\{|\cS^-_n\cap[\ell,n]|\geq d+\text{Geo}_m\}$ is measurable with respect to $\cS_n^-$ and $\text{Geo}_m$ yields
\be\ba \label{eq:properties_vertices_condition_tower_property}
\prob({}&\deg_n(a_1)\geq d,\ell_n\geq\ell,\wt u_n\leq u)\\
&=\E{\prob\left(\deg_n(a_1)\geq d, |\cS^-_n\cap[\ell,n]|\geq d+\text{Geo}_m,X_n+Y_n\leq u\,\big|\,\cS_n^-,\cS_n^+,\text{Geo}_m\right)}\\
&=\E{\ind_{\{|\cS^-_n\cap[\ell,n]|\geq d+\text{Geo}_m\}}\prob\left(\deg_n(a_1)\geq d,X_n+Y_n\leq u\,\big|\,\cS_n^-,\cS_n^+,\text{Geo}_m\right)}.
\ea\ee
Conditionally on $\cS_n^-,\cS_n^+$, and $\text{Geo}_m$, and on the event $\{|\cS_n^-\cap [\ell,n]\geq d+\text{Geo}_m\}$, the events $\{\deg_n(a_1)\geq d\}$ and $\{X_n+Y_n\leq u\}$ are independent, since they depend on the outcomes of distinct (and therefore independent) dice rolls. Furthermore, the event $\{\deg_n(a_1)\geq d\}$ has probability $\theta^{-d}$ on the event $\{|\cS_n^-\cap [\ell,n]\geq d+\text{Geo}_m\}$, since $A_i\geq m+1$ for $i\geq \ell=L(d,y)>I_{\xref}$ by Assumption~$\xref$\ref{item:hI},\ref{item:lb} and \eqref{eq:I<L(d,y)}. Hence, we obtain 
\be 
\prob(\deg_n(a_1)\geq d,\ell_n\geq\ell,\wt u_n\leq u)=\theta^{-d}\E{\ind_{\{|\cS^-_n\cap[\ell,n]|\geq d+\text{Geo}_m\}}\prob\left(X_n+Y_n\leq u\,\big|\,\cS_n^-,\cS_n^+,\text{Geo}_m\right)}.
\ee 
By Theorem~\ref{theorem:joint_degree_distribution} we have that $\P{\deg_n(a_1)\geq d}=\theta^{-d}(1+o(1))$. Using the above in~\eqref{eq:begin_upper_bound_theorem_ungreedy_depth} thus yields
\be \ba\label{eq:ubo(1)}
\mathbb P(u_n\leq u,\ell_n\geq \ell\,|\, \deg_n(a_1)\geq d)\leq{}& \mathbb E\big[\ind_{\{|\cS^-_n\cap[\ell,n]|\geq d+\text{Geo}_m\}}\prob\left(X_n+Y_n\leq u\,|\,\cS_n^-,\cS_n^+,\text{Geo}_m\right)\big]\\
&+o(1).
\ea\ee 
To obtain the correct upper bound, it thus remains to show that 
\be \ba 
\limsup_{n\to\infty}{}& \E{\ind_{\{|\cS^-_n\cap[\ell,n]|\geq d+\text{Geo}_m\}}\prob\left(X_n+Y_n\leq u\,\big|\,\cS_n^-,\cS_n^+,\text{Geo}_m\right)}\\
&=\prob\left(M\sqrt{\frac{ma}{(m+1)^2-a}}+N\sqrt{1-\frac{ma}{(m+1)^2-a}}\leq y,M>x\right).
\ea \ee 
To this end, we recall the definition of $L(d,x)$ from~\eqref{eq:Ldef}, where $x\in\R$ is a fixed constant, and define the events 
\be 
\cE_n(x)\coloneq \{|\cS_n^-\cap [L(d,x),n]|\geq d+\text{Geo}_m\}. 
\ee 
We then fix $\eps>0$ arbitrarily small and let $\delta=\delta(\eps)>0$ to be determined and  $C=C(\delta,\eps,y)>0$ be a large constant such that $1-\Phi(y+C)<\eps/2$ and $C/\delta\in\N$. Then, since $\ell=L(d,y)$ and $L(d,x)$ is non-decreasing in $x$, we can partition
\be \label{eq:Epart}
\{|\cS_n^-\cap [\ell,n]|\geq d+\text{Geo}_m\}=\cE_n(y)=\bigg(\bigcup_{j=0}^{C/\delta-1}\cE_n(y+j\delta)\cap \cE_n(y+(j+1)\delta)^c\bigg)\cup\cE_n(y+C).
\ee 
This allows us to write 
\be \ba\label{eq:splitsum}
\mathbb E\Big[{}&\ind_{\{|\cS^-_n\cap[\ell,n]|\geq d+\text{Geo}_m\}}\prob\left(X_n+Y_n\leq u\,\big|\,\cS_n^-,\cS_n^+,\text{Geo}_m\right)\Big]\\
&=\sum_{j=0}^{C/\delta-1} \E{\ind_{\cE_n(y+j\delta)\cap \cE_n(y+(j+1)\delta)^c}\prob\left(X_n+Y_n\leq u\,\big|\,\cS_n^-,\cS_n^+,\text{Geo}_m\right)}\\
&\hphantom{=}\,+\E{\ind_{\cE_n(y+C)}\prob\left(X_n+Y_n\leq u\,|\,\cS_n^-,\cS_n^+,\text{Geo}_m\right)}.
\ea\ee 
For $j\in\{0,1\ldots,C/\delta\}$, and conditionally on $\cS_n^+$, define the independent random variables  
\begin{equation}
\label{eq:definition_X_nj_and_Y_nj}
X_{n,j}\sim\text{Bin}\Big(|[\ell,L(d,y+j\delta))\cap\cS_n^+|,\frac{1}{m+1}\Big),\quad Y_{n,j}\sim\text{Bin}\Big(|[I_{\xref},L(d,y+j\delta))\cap\cS_n^+|,\frac{1}{m+1}\Big),
\end{equation}
On the event $\cE_n(y+j\delta)\cap \cE_n(y+(j+1)\delta)^c$, it follows that 
\be 
[\ell,L(d,y+j\delta)) \subseteq\{i\in[\ell,n]\colon |\cS_n^-\cap [i,n]|\geq d+\text{Geo}_m\}\setminus\{\ell_n\}, 
\ee 
so that 
\be 
[\ell,L(d,y+j\delta))\cap \cS_n^+\subseteq \cR_n\setminus\{\ell_n\}.
\ee 
As a result, $X_n$ stochastically dominates $X_{n,j}$ on the event $\cE_n(y+j\delta)\cap \cE_n(y+(j+1)\delta)^c$, and  we can bound 
\be \ba 
\ind_{\cE_n(y+j\delta)\cap \cE_n(y+(j+1)\delta)^c}{}&\prob\left(X_n+Y_n\leq u\,\big|\,\cS_n^-,\cS_n^+,\text{Geo}_m\right)\\
\leq{}& \ind_{\cE_n(y+j\delta)\cap \cE_n(y+(j+1)\delta)^c}\prob\left(X_{n,j}+Y_n\leq  u\,\big|\,\cS_n^-,\cS_n^+,\text{Geo}_m\right)\\
={}&\ind_{\cE_n(y+j\delta)\cap \cE_n(y+(j+1)\delta)^c}\prob\left(Y_{n,j}\leq  u\,\big|\,\cS_n^-,\cS_n^+,\text{Geo}_m\right).
\ea \ee 
We can now omit $\text{Geo}_m$ and $\cS_n^-$ from the conditioning, as $Y_{n,j}$ does not depend on $\text{Geo}_m$ and $\cS_n^-$. Furthermore, the event $\cE_n(y+j\delta)\cap \cE_n(y+(j+1)\delta))^c$ is measurable with respect to $\cS_n^-$ and therefore independent of $ \cS_n^+$. As a result, we obtain 
\be \ba 
\mathbb E\big[{}&\ind_{\cE_n(y+j\delta)\cap \cE_n(y+(j+1)\delta)^c}\prob\left(X_n+Y_n\leq u \,\big|\,\cS_n^-,\cS_n^+,\text{Geo}_m\right)\big]\\
&\leq \P{\cE_n(y+j\delta)\cap \cE_n(y+(j+1)\delta)^c}\P{Y_{n,j}\leq u }.
\ea\ee 
Using this in~\eqref{eq:splitsum}, we thus arrive at 
\be\ba \label{eq:limsupexp} 
\mathbb E\Big[{}&\ind_{\{|\cS^-_n\cap[\ell,n]|\geq d+\text{Geo}_m\}}\prob\left(X_n+Y_n\leq u\,\big|\,\cS_n^-,\cS_n^+,\text{Geo}_m\right)\Big]\\
&\leq \sum_{j=0}^{C/\delta-1} \P{\cE_n(y+j\delta)\cap \cE_n(y+(j+1)\delta)^c}\P{Y_{n,j}\leq u}+\P{\cE_n(y+C)}.
\ea \ee 
We then recall the definition of $Y_{n,j}$ from~\eqref{eq:definition_X_nj_and_Y_nj} and apply Proposition~\ref{prop:Ylim} to obtain for any $j\in\{0,1,\ldots, C/\delta-1\}$, 
\be \label{eq:Ynjlim}
\lim_{n\to\infty}\P{Y_{n,j}\leq u}=\P{N\sqrt{1-c_{b,m}}\leq z-(y+j\delta)\sqrt{c_{b,m}}},
\ee 
with $N$ a standard normal random variable and $c_{b,m}\coloneq mb/((m+1)^2-mb)$.  We also apply Lemma~\ref{lemma:CLTSmin} to obtain 
\be \label{eq:Enlim}
\lim_{n\to\infty}\P{\cE_n(y+j\delta)\cap \cE_n(y+(j+1)\delta)^c}= \P{M\in (y+j\delta, y+(j+1)\delta)}, 
\ee 
and $\lim_{n\to\infty}\P{\cE_n(y+C)}=1-\Phi(y+C)$, with $M$ a standard normal random variable (independent of $N$). By the choice of $C$ we have $1-\Phi(y+C)<\eps/2$. Combining both limits in~\eqref{eq:limsupexp} finally yields 
\be\ba 
\limsup_{n\to\infty}\mathbb E{}&\Big[\ind_{\{|\cS^-_n\cap[\ell,n]|\geq d+\text{Geo}_m\}}\prob\left(X_n+Y_n\leq u\,\big|\,\cS_n^-,\cS_n^+,\text{Geo}_m\right)\Big]\\
\leq \sum_{j=0}^{C/\delta-1}{}&\P{M\in (y+j\delta,y+(j+1)\delta),N\sqrt{1-c_{b,m}}\leq z-(y+j\delta)\sqrt{c_{b,m}}} +\eps/2\\
\leq  \sum_{j=0}^{C/\delta-1} {}&\P{M\in(y+j\delta,y+(j+1)\delta),N\sqrt{1-c_{b,m}}+M\sqrt{c_{b,m}}\leq z+\delta\sqrt{c_{b,m}}}+\eps/2.
\ea\ee  
As only the first event in the probabilities in the sum depends on $j$, we obtain the upper bound
\be 
\P{M>y, N\sqrt{1-c_{b,m}}+M\sqrt{c_{b,m}}\leq z+\delta\sqrt{c_{b,m}}}+\eps/2
\ee 
By the continuity of the distribution of $M$ and $N$, we can take $\delta=\delta(\eps)$ small enough such that 
\be \ba 
\mathbb P{}&(M>y, N\sqrt{1-c_{b,m}}+M\sqrt{c_{b,m}}\leq z+\delta\sqrt{c_{b,m}})+\eps/2\\
&\leq  \P{M>y, N\sqrt{1-c_{b,m}}+M\sqrt{c_{b,m}}\leq z}+\eps.
\ea \ee 
Since $\eps$ is arbitrary, this yields the desired upper bound for the limsup and concludes the proof for the upper bound, as going back to~\eqref{eq:begin_upper_bound_theorem_ungreedy_depth} yields 
\be \label{eq:limsupub}
\limsup_{n\to\infty} \P{u_n\leq u,\ell_n\geq \ell\,|\, \deg_n(a_1)\geq d}\leq  \P{M>y, N\sqrt{1-c_{b,m}}+M\sqrt{c_{b,m}}\leq z}.
\ee

\textbf{Lower bound. }We reuse the notation introduced for the upper bound. Take $(c_n)_{n\in\N}$ to be some   sequence diverging to infinity, such that $c_n=o(\sqrt{h_n^+})$ and $c_n=\omega(I_{\xref})$ as $n\to\infty$ (note that this is possible, since we assume that  $I_{\xref}=o(\sqrt{h_n^+})$). By applying the second part of Lemma~\ref{lemma:ungreedy_depth_only_one_vertex_connections_set} and as in~\eqref{eq:begin_upper_bound_theorem_ungreedy_depth}, we obtain 
\be 
\P{u_n\leq u, \ell_n\geq \ell\,|\, \deg_n(a_1)\geq d}\geq \frac{\P{\wt u_n\leq u-c_n , \ell_n\geq \ell, \deg_n(a_1)\geq d}}{\P{\deg_n(a_1)\geq d}}.
\ee 
Using the same definitions and notation as in the lower bound, we then obtain, as in~\eqref{eq:ubo(1)}, 
\be \ba 
\mathbb P(u_n{}&\leq u, \ell_n\geq \ell\,|\, \deg_n(a_1)\geq d)\\
&\geq \E{\ind_{\{\cS_n^-\cap [\ell,n]\geq d+\text{Geo}_m\}}\P{X_n+Y_n\leq u-c_n\,\big|\, \cS_n^-,\cS_n^+,\text{Geo}_m}}+o(1).
\ea \ee 
We partition the event $\{|\cS_n^-\cap [\ell,n]|\geq d+\text{Geo}_m\}=\cE_n(y)$ as in~\eqref{eq:Epart} to write the expected value on the right-hand side as a sum, similar to~\eqref{eq:splitsum} (but omitting the last term). This yields the lower bound 
\be 
\sum_{j=0}^{C/\delta-1}\E{\ind_{\{\cE_n(y+j\delta)\cap \cE_n(y+(j+1)\delta)^c\}}\P{X_n+Y_n\leq u-c_n \,\big|\, \cS_n^-,\cS_n^+,\text{Geo}_m}}+o(1).
\ee 
Recall $X_{n,j}$ and $Y_{n,j}$ for $j\in\{0,\ldots, C/\delta\}$ from~\eqref{eq:definition_X_nj_and_Y_nj}. We now use that, on the event $\cE_n(y+j\delta)\cap \cE_n(y+(j+1)\delta)^c$, the random variable $X_n$ is stochastically dominated by $X_{n,j+1}$. As a result, we can bound each term in the sum from below by 
\be \ba 
\mathbb E\big[{}&\ind_{\{\cE_n(y+j\delta)\cap \cE_n(y+(j+1)\delta)^c\}}\P{X_n+Y_n\leq u-c_n \,\big|\, \cS_n^-,\cS_n^+,\text{Geo}_m}\big]\\
&\geq \E{\ind_{\{\cE_n(y+j\delta)\cap \cE_n(y+(j+1)\delta)^c\}}\P{X_{n,j+1}+Y_n\leq u-c_n\,\big|\, \cS_n^-,\cS_n^+,\text{Geo}_m}}\\
&=\P{\cE_n(y+j\delta)\cap \cE_n(y+(j+1)\delta)^c}\P{Y_{n,j+1}\leq u-c_n },
\ea \ee 
where we use, as in the upper bound, that $X_{n,j+1}+Y_n\overset \dd= Y_{n,j+1}$, and that $Y_{n,j+1}$ depends only on $\cS_n^+$ and the event $\cE_n(y+j\delta)\cap \cE_n(y+(j+1)\delta)^c$ depends only on $\cS_n^-$ and are thus independent. As   $d/h_n^+\to b\in [0,m+1)$ and $c_n=o(\sqrt{h_n^+})$, it follows from the definition of $u$ in~\eqref{eq:u} that
\be 
u-c_n=mh_n^+-\frac{m}{m+1}d+(z+o(1))\sqrt{mh_n^+-\frac{m}{(m+1)^2}d}. 
\ee 
We can thus apply Proposition~\ref{prop:Ylim} and Lemma~\ref{lemma:CLTSmin}, as we do in~\eqref{eq:Ynjlim} and~\eqref{eq:Enlim}, respectively, to analogously obtain
\be \ba 
\liminf_{n\to\infty}{}&\P{u_n\geq u,\ell_n\geq \ell\,|\, \deg_n(a_1)\geq d}\\
\geq{}& \sum_{j=0}^{C/\delta-1} \P{M\in(y+j\delta,y+(j+1)\delta),N\sqrt{1-c_{b,m}}\leq z-(y+(j+1)\delta)\sqrt{c_{b,m}}}\\
\geq{}&\P{M>y,N\sqrt{1-c_{b,m}}+M\sqrt{c_{b,m}}\leq z-\delta\sqrt{c_{b,m}}}-\P{M\geq y+ C}.
\ea \ee 
Again, by the choice of $C$ the final term is at most $\eps/2$. We can then choose $\delta=\delta(\eps)$ small enough such to obtain the lower bound
\be 
\liminf_{n\to\infty}\P{u_n\geq u,\ell_n\geq \ell\,|\, \deg_n(a_1)\geq d}\geq \P{M>y, N\sqrt{1-c_{b,m}}+M\sqrt{c_{b,m}}\leq z}-\eps.
\ee 
As $\eps$ is arbitrary, this yields a matching lower bound to the upper bound we established in~\eqref{eq:limsupub}, so that we arrive at~\eqref{eq:joint_behaviour_ungreedy_label_conditional_degree} and thus conclude the proof.
\end{proof}

\section{Properties of multiple vertices with given large degrees}\label{sec:multiplevertices}

The objective of this section is to extend Proposition~\ref{prop:depthlabelsinglevertex} to the setting of multiple active vertices, as in Theorem~\ref{theorem:label_multiple_vertices_conditional_degrees}. This is section is therefore mainly of a technical nature, where we first gather introduce some preliminary results on the selection sets of multiple vertices in Section~\ref{subsec:selection_sets_multiple_vertices}  to deal with the dependencies of the labels and depths of the active vertices $a_1,\ldots, a_k$. We then prove Theorem~\ref{theorem:label_multiple_vertices_conditional_degrees} in Section~\ref{subsec:labels_multiple_vertices}.

\subsection{Selection sets of multiple vertices}
\label{subsec:selection_sets_multiple_vertices}

The approach to extending Proposition~\ref{prop:depthlabelsinglevertex} to Theorem~\ref{theorem:label_multiple_vertices_conditional_degrees} is to `decouple' the active vertices $a_1,\ldots, a_k$. That is, if we can show that 
\be \ba \label{eq:asympindep}
\mathbb P{}&(u_n(a_v)\leq u_v, \ell_n(a_v)\geq \ell_v, d_n(a_v)\geq d_v\text{ for all }v\in[k])\\
&= (1+o(1))\prod_{v=1}^k \P{u_n(a_v)\leq u_v, \ell_n(a_v)\geq \ell_v, d_n(a_v)\geq d_v}, 
\ea \ee 
for an appropriate choice of integers $u_v=u_v(n),\ell_v=\ell_v(n)$, and $d_v=d_v(n)$, then (the proof of) Proposition~\ref{prop:depthlabelsinglevertex} yields the desired result. 

This decoupling follows a similar approach as in Section~\ref{sec:degs}, where, among others, we proved Theorem~\ref{theorem:joint_degree_distribution} regarding degrees of typical vertices. There, we showed that  the active root vertices $a_1,\ldots, a_k$ are not selected at the same step $i$ for all large $i$. That is, the sets $\cS_n(a_v)\cap [i,n]$ with $v\in[k]$ are disjoint with high probability when $i=i(n)$ tends to infinity with $n$. This allowed us to argue that the   degrees of $a_1,\ldots,a_k$ in the Kingman coalescent are asymptotically independent. Here, we use a similar notion, but for the connection sets $(\cC_n^{(i)}(a_v))_{i\in[n], v\in[k]}$ instead. 

We recall that the connection set $\cC_n^{(i)}(a_v)$ of a vertex $a_v$ satisfies that $\cC_n^{(i)}(a_v)=\{a_v\}$ for all steps $i\in\{\ell_n(a_v),\ldots, n\}$. That is, until we reach the step $i$ when $a_v$ loses its first dice roll (and determines its label $\ell_n(a_v)$), the connection set of $a_v$ contains only $a_v$. Afterwards, in steps $i\in\{I_{\xref},\ldots, \ell_n(a_v)-1\}$, where we recall $I_{\xref}$ from Assumption~$\xref$, the connection set $\cC_n^{(i)}(a_v)$ contains $m$ vertices. Here, we use Assumption~$\xref$\ref{item:lb} and that $n$ is large enough so that $A_i\geq m+1$. Which $m$ vertices are in $\cC_n^{(i)}(a_v)$ can change throughout these steps, but there will always be exactly $m$ in the set (when $i\in\{I_{\xref},\ldots, \ell_n(a_v)-1\}$). In Section~\ref{sec:singlevertex} we used this to couple $\cS_n^{(1)}(a_v)$ to $\cS_n^-(a_v)$ and $\cS_n^+(a_v)$. Here, we instead define for $v\in[k]$,
\be 
\cS_n^{\geq 1}(a_v)\coloneq \{i\in \{I_{\xref},\ldots, n\}\colon \text{At least one root in $\cC_n^{(i)}(a_v)$ is selected}\}.
\ee 
This is slightly different from $\cS_n^{(1)}(a_v)$, which is the set of all steps at which \emph{exactly} one root in $\cC_n^{(i)}(a_v)$ is selected. As it turns out, to `decouple' the active vertices $a_1,\ldots, a_k$, it is more convenient to work with $\cS_n^{\geq1} (a_v)$ rather than with $\cS_n^{(1)}(a_v)$.  We change the definition of  $\cS_n^+(a_v)$ accordingly and then couple $\cS_n^-(a_v)$ and $\cS_n^+(a_v)$ to $\cS_n^{\geq 1}(a_v)$. For $i\in\{I_{\xref},\ldots,n\}$, we set 
\be
p_i^-\coloneq \frac{\binom{A_i-1}{m}}{\binom{A_i}{m+1}}\ind_{\{x_i=1\}}=\frac{m+1}{A_i}\ind_{\{x_i=1\}}\ind_{\{A_i\geq m+1\}}, 
\ee
and
\be \label{eq:definition_and_order_of_p_v_i^+}
p_i^+\coloneq \sum_{j=1}^{m}\frac{\binom{m}{j}\binom{A_i-m}{m-(j-1)}}{\binom{A_i}{m+1}}\ind_{\{x_i=1\}}=\Big(\frac{m(m+1)}{A_i}\ind_{\{A_i\geq 2m\}}+\wt e_i\Big)\ind_{\{x_i=1\}},
\ee 
with $\wt e_i=\cO(A_i^{-2})$ as $i\to\infty$. We see that $p_i^-$ as defined here is the same as in~\eqref{eq:def_p_i^-}, whereas $p_i^+$ differs $\cO(A_i^{-2})$ from the definition in~\eqref{eq:def_pi_+}. Then, we let $(s_{v,i}^-)_{v\in[k],I_{\xref}\leq i\leq n}$ and $(s_{v,i}^+)_{v\in[k],I_{\xref}\leq i\leq n}$ be sequences of random variables, where $s_{v,i}^-\sim \text{Ber}(p_i^-)$ and $s_{v,i}^+\sim \text{Ber}(p_i^+)$ for each $v\in[k]$ and $i\in\{I_{\xref},\ldots,n\}$, and define 
\be 
\cS_n^-(a_v)\coloneq \{i\in \{I_{\xref},\ldots, n\}\colon s_{v,i}^-=1\}\quad \text{and}\quad \cS_n^+(a_v)\coloneq \{i\in\{I_{\xref},\ldots, n\}\colon s_{v,i}^+=1\}.
\ee 
Here, $(1)$: $(s_{v,i}^-)_{v\in[k],I_{\xref}\leq i\leq n}$ is independent of $(s_{v,i}^+)_{v\in[k],I_{\xref}\leq i\leq n}$, $(2)$: For a fixed $v\in[k]$ and $\square\in\{-,+\}$, the random variables $(s^\square_{v,i})_{I_{\xref}\leq i\leq n}$ are mutually independent, and $(3)$: For $i\neq j$ and $\square\in\{-,+\}$, the random variables $(s_{v,i}^\square)_{v\in [k]}$ are independent of $(s_{v,j}^\square)_{v\in [k]}$. We intuitively think of $\cS_n^-(a_v)$ and $\cS^+_n(a_v)$ as the set of all steps $i$ at which at least one vertex in $\cC_n^{(i)}(a_v)$ is selected before and after $a_v$ has lost its first dice roll, respectively. Since $\cC_n^{(i)}(a_v)=\{a_v\}$ for all steps $i\in \{\ell_n(a_v),\ldots, n\}$, selecting at least one and exactly one element from $\cC_n^{(i)}(a_v)$ is equivalent. Hence, the definition of $\cS_n^-(a_v)$ here and in~\eqref{eq:definition_selection_sets_ungreedy_depth_single_vertex} is the same. However, selecting at least one and exactly one element from $\cC_n^{(i)}(a_v)$ when this set contains $m$ elements are not the same, hence why the definition of $\cS_n^+(a_v)$ here differs from~\eqref{eq:definition_selection_sets_ungreedy_depth_single_vertex}.

Let us now couple $\cS_n^-(a_v)$ and $\cS_n^+(a_v)$ to $\cS_n^{\geq 1}(a_v)$. This uses the same idea as in Section~\ref{sec:singlevertex}. For each $v\in[k]$ and $i\in\cS_n^-(a_v)$, let $r_{v,i}^-\sim \text{Ber}(1/(m+1))$. For each $v\in[k]$ fixed, the $(r_{v,i}^-)_{i\in\cS_n^-(a_v)}$ are mutually independent and are also independent of the sets $\cS_n^-(a_v)$ and $\cS_n^+(a_v)$. Here,  $r_{v,i}^-=1$ corresponds to $v$ losing a dice roll at step $i$. Then, we for all $i\in\{I_{\xref},\ldots, n\}$, we have 
\be 
i\in \cS_n^{\geq 1}(a_v) \text{ when}\begin{cases} 
i\in \cS_n^-(a_v) \text{ and } r_{v,j}^-=0\text{ for all }j\in [i+1,n]\cap \cS_n^-(a_v),\text{ or} \\
i\in \cS_n^+(a_v) \text{ and } r_{v,j}^-=1\text{ for some }j\in [i+1,n]\cap \cS_n^-(v).
\end{cases} 
\ee 
Furthermore, we set $r_{v,i}=r_{v,i}^-$ for every $i\in \cS_n^{\geq 1}(a_v)\cap \cS_n^-(a_v)$, where we recall the random variables $(r_{v,i})_{i\in \cS_n(a_v)}$ from Section~\ref{subsec:selection_and_connection_sets}. This also immediately extends the coupling, in the sense that we have now coupled $\deg_n(a_v)$ and $\ell_n(a_v)$ to $\cS_n^-(a_v),\cS_n^+(a_v)$, and $(r_{v,i}^-)_{i\in \cS_n^-(a_v)}$ as well. This latter part holds only if $\ell_n(a_v)\geq I_{\xref}$, otherwise we say the coupling \emph{fails}. As we will see, the probability of the coupling failing tends to zero with $n$.

We have not (yet) specified the correlations of the random variables $s^\square_{1,i},\ldots,s^\square_{k,i}$ for $\square\in\{-,+\}$ and the random variables $r^-_{1,i},\ldots, r^-_{k,i}$, and  we have only defined the marginals of $\cS_n^-(a_v)$ and $\cS_n^+(a_v)$ for each $v\in [k]$. In Lemma~\ref{lemma:replace_selection_sets_by_independent_copies}, we shall see that these correlations are sufficiently weak so that we  obtain the asymptotic independence, as in~\eqref{eq:asympindep}, required to prove Theorem~\ref{theorem:label_multiple_vertices_conditional_degrees}. 

Recall the random variable $ \tau_k\coloneq\tau_{k,0}$ from~\eqref{eq:definition_tau_k}. By the definition of $\tau_k$,  the selection sets $\cS_n(a_1),\ldots, \cS_n(a_k)$ of the vertices $a_1,\ldots,a_k$ are disjoint up to step $\tau_k$. Analogously, we define
\begin{equation}
\!\tau_k^* \coloneq \max\{ i\in\{I_{\xref},\ldots, n\}\colon s_{v,i}^*=s_{w,i}^{\dagger}=1 \text{ for some $*,\dagger\in\{-,+\}$ and distinct } v,w \in  [k]\},
\end{equation}
with the convention that we set $\tau_k^*=I_{\xref}-1$ when the maximum is over an empty set. By its definition, it follows that 
\be 
\bigcap_{\square\in\{-,+\}}\bigcap_{v\in[k]}\cS^\square_n(a_v)\cap [\tau_k^*+1,n]=\varnothing.
\ee 
It is for this reason that we couple the set $\cS_n^{\geq 1}(a_v)$ to $\cS_n^-(a_v)$ and $\cS_n^+(a_v)$, rather than the set $\cS_n^{(1)}(a_v)$ (and with $\cS_n^+(a_v)$ defined as in~\eqref{eq:definition_selection_sets_ungreedy_depth_single_vertex}). The definition of $\tau_k^*$ implies that the elements of $\cS^\square_n(a_v)\cap [\tau_k^*+1,n]$ are distinct for different $v\in[k]$. Moreover, conditionally on $\cS^-_n(a_v)\cap [\tau_k^*+1,n]$  and $\cS^+_n(a_v)\cap [\tau_k^*+1,n]$ for $v\in[k]$, this implies that the evolution of the degree, label, and greedy longest path of $a_v$ during steps in the sets $\cS^-_n(a_v)\cap [\tau_k^*+1,n]$  and $\cS^+_n(a_v)\cap [\tau_k^*+1,n]$ (as coupled to $\cS_n^{\geq 1}(a_v)$) is independent among the vertices $a_1,\ldots, a_k$, since these evolutions depend on the independent dice rolls at these steps only, and we know that $\cC_n^{(i)}(a_v)\cap \cC_n^{(i)}(a_w)=\varnothing$ for $v\neq w\in [k]$ and $i>\tau_k^*$.   The same is not (necessarily) true if we would instead use $\tau_k'$, the first step $i$ at which \emph{exactly one} vertex from $\cC_n^{(i)}(a_v)$ and \emph{exactly one} vertex from $\cC_n^{(i)}(a_w)$ (with $v\neq w\in[k]$) is selected. Indeed, it is then still possible that as some earlier step $j>i$, for example  one vertex $u_1$ from $\cC_n^{(j)}(a_1)$ is selected and two vertices (if $m\geq2$) $u_{2,1}$ and $u_{2,2}$ from $\cC_n^{(j)}(a_2)$ are selected. Suppose that $u_{2,1}$ loses and connects to $u_{1}$ and $u_{2,2}$ by directed edges. Now, $\cC_n^{(j-1)}(a_1)\cap \cC_n^{(j-1)}(a_2)\neq \varnothing$, so that the evolution of the greedy longest paths from $a_1$ and $a_2$ are now correlated from step $j-1$ onwards (as both can increase at the same step when $u_1$ is selected and loses the dice roll).

Observe that $\tau_k^*$ is stochastically dominated by $\tau_{mk,0}$ (as in~\eqref{eq:definition_tau_k}), which yields that  $\tau_k^*$ is tight under Assumption~$\xref$\ref{item:lb} by Lemma~\ref{lemma:tightness_tau_k}. That is,  $\prob(\tau_k^*<t_n)=1-o(1)$ if the choice sequence $\x$ satisfies Assumption~$\xref$\ref{item:lb} and $t_n\geq I_{\xref}$ tends to infinity with $n$. As such, for a sequence $(t_n)_{n\in\N}$ that we refer to as the \emph{truncation sequence}, we define the \emph{truncated selection sets}
\begin{equation}
\cS^-_{n,1}(a_v) \coloneq \{i\in\Omega_1\colon s^-_{a_v,i}=1\}, \quad \cS^+_{n,1}(a_v) \coloneq \{i\in\Omega_1\colon s^+_{a_v,i}=1\}
\end{equation}
for $v\in [k]$ with  $\Omega_1=\Omega_1(\mathbf x)\coloneq \{t_n,\ldots, n\}\cap \cA_n$. Clearly, we require $I_{\xref}\leq t_n$ for all $n$ for this definition to make sense. Restricting to the truncated selection sets avoids correlations between the degree, label, and greedy longest path of the active vertices $a_1,\ldots, a_k$,  that occur at later steps of the Kingman coalescent (that is, after step $\tau^*_k$). It is still required to justify why we can ignore all steps $i<t_n$ (for $t_n$ appropriately chosen), which we defer to Lemma~\ref{lemma:bound_u_n_2}. For ease of writing, we set 
\be 
\overline{\cS^-_{n,1}}\coloneq(\cS^-_{n,1}(a_v))_{v\in [k]} \quad \text{and}\quad \overline{\cS^+_{n,1}}\coloneq(\cS^+_{n,1}(a_v))_{v\in [k]}. 
\ee
We also write $\overline{J^-}\coloneq(J_v^-)_{v\in [k]},\overline{J^+}\coloneq(J_v^+)_{v\in [k]}$ with $J^-_v,J^+_v\subseteq \Omega_1$ and $v\in [k]$.  Additionally, we define the \emph{truncated greedy longest path} $u_{n,1}(a_v)$ as the contribution to the greedy longest path of vertex $a_v$ in steps $\ell_n(a_v)\vee t_n,\ldots,n$ and $u_{n,2}(a_v)$ as the contribution to the greedy longest path of vertex $a_v$ in steps $2,\ldots,(\ell_n(a_v)\wedge t_n)-1$. That is, using the notation in~\eqref{eq:unrewrite} and that $|\cC_n^{(i)}(a_v)|\leq m$ for all $i\geq t_n\geq I_{\xref}$,
\be\ba
u_{n,1}(a_v)&\coloneq \ind_{\{\ell_n(a_v)>t_n\}}+\sum_{i=t_n}^{\ell_n(a_v)-1}\ind_{\{x_i=1\}}\sum_{j=1}^m C_i^{(j)},\\
u_{n,2}(a_v)&\coloneq u_n(a_v)-u_{n,1}(a_v) = \ind_{\{\ell_n(a_v)\leq t_n\}}+\sum_{i=2}^{(\ell_n(a_v)\wedge t_n)-1}\ind_{\{x_i=1\}}\sum_{j=1}^m C_i^{(j)},
\ea\ee 
where we use the convention that an empty sum equals $0$. Though we already implicitly considered different choices of truncation sequences in the proofs of Lemma~\ref{lemma:tail_bound_truncated_selection_set} and Theorem~\ref{theorem:joint_degree_distribution}, and some of the results presented here are formulated more generally, it is sufficient for us to consider
\be
\label{eq:def_tngamma}
t_n:=\ceil{(h_n^+)^{\gamma_0}} \quad \text{with} \quad \gamma_0\in(1,\min\{(1/2+\delta)^{-1},(1-2\delta)^{-1}\}),
\ee
where $\delta\in(0,1/2)$ is the constant in Assumption~$\xref$\ref{item:lb}. Notice that, since $\gamma_0<(1-2\delta)^{-1}$, we have that $t_n=\lceil (h_n^+)^{\gamma_0}\rceil=\cO(\max\{I_{\xref}^{\gamma_0},\sqrt n\})$ when Assumption~$\xref$\ref{item:lb} is satisfied. When $I_{\xref}=\cO(\sqrt n)$ is satisfied and since $\gamma_0<2$, we thus obtain that $t_n=o(n)$.

With these definitions at hand, we first show that the contribution of $u_{n,2}(a_v)$ to the greedy longest path of vertex $a_v$ is small. 
\begin{lemma}
\label{lemma:bound_u_n_2}
Fix $k\in\N$. Suppose that the choice sequence $\mathbf x$ satisfies Assumption~$\xref$\ref{item:geq1} and~\ref{item:lb}, where $I_{\xref}=\cO((h_n^+)^{\gamma_0(1/2-\delta)})$ and $\gamma_0$ is as in~\eqref{eq:def_tngamma}. For $v\in [k]$, let $d_v=d_v(n),\ell_v=\ell_v(n)\in\N_0$ such that $\ell_v> t_n$ with $t_n\coloneq\ceil{(h_n^+)^{\gamma_0}}$ and $d_v(n)\to\infty$ with $n$. Then, for any sequence $r_n=\omega((h_n^+)^{\gamma_0(1/2-\delta)})$, as $n\to\infty$,
\begin{equation}
\prob(\{\exists v\in [k]\colon u_{n,2}(a_v)>r_n\}\cap \{\ell_n(a_v)\geq\ell_v,d_n(a_v)\geq d_v\text{ for all } v\in [k]\}) = o\big(\theta^{-\sum_v d_v}\big).
\end{equation}
\end{lemma}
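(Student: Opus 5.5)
By a union bound over $v\in[k]$ it suffices to show, for each fixed $v$,
\[
\prob\big(u_{n,2}(a_v)>r_n,\ \ell_n(a_w)\geq \ell_w,\ \deg_n(a_w)\geq d_w \text{ for all } w\in[k]\big)=o\big(\theta^{-\sum_w d_w}\big).
\]
Since $\ell_v>t_n$, on the event in question we have $\ell_n(a_v)>t_n$. Hence $u_{n,2}(a_v)=\sum_{i=2}^{t_n-1}\ind_{\{x_i=1\}}\sum_{j}C_i^{(j)}$, which depends only on the steps $i<t_n$, that is, on what happens after all vertices have determined their labels.

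The plan is to decouple this quantity from the degree events, in the spirit of Lemma~\ref{lemma:only_one_vertex_connection_set_lower_bound_technical_calculation}. We condition on the selection sets $(\cS_n(a_w))_{w}$ and on the dice rolls at steps $i\geq t_n$. Given this information, the degree events and $\{\ell_n(a_w)\geq\ell_w\}$ are determined. The variable $u_{n,2}(a_v)$, by contrast, is determined only by the dice rolls and selections of roots in the connection sets at steps below $t_n$.

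At those steps the connection set $\cC_n^{(i)}(a_v)$ has at most $m$ roots. By \eqref{eq:order_of_selection_proabability_for_ungreedy_depth}, the conditional probability that some root of $\cC_n^{(i)}(a_v)$ is selected and loses at step $i$ is at most $C m/A_i$, uniformly in the conditioning. This bound uses only the uniformity of the selection at step $i$, which is independent of the past of the coalescent, i.e.\ of the larger steps.

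Markov's inequality then gives the bound
\[
\prob\big(u_{n,2}(a_v)>r_n \,\big|\, \cdot\,\big)\leq r_n^{-1}\Big(1+I_{\xref}+\sum_{i=I_{\xref}}^{t_n} \frac{Cm}{A_i}\Big).
\]
To control the sum we use Assumption~$\xref$\ref{item:lb}, $A_i\geq i^{1/2+\delta}$, which yields $\sum_{i\leq t_n}A_i^{-1}=\cO(t_n^{1/2-\delta})=\cO\big((h_n^+)^{\gamma_0(1/2-\delta)}\big)$. Together with $I_{\xref}=\cO\big((h_n^+)^{\gamma_0(1/2-\delta)}\big)$ and $r_n=\omega\big((h_n^+)^{\gamma_0(1/2-\delta)}\big)$, the whole right-hand side is $o(1)$, deterministically in the conditioning.

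Multiplying by the probability of the degree and label events and applying Lemma~\ref{lemma:upper_bound_tail_vertex_degrees}, which bounds that probability by $\theta^{-\sum_w d_w}$, gives the claim.

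The main obstacle is justifying the conditional independence rigorously. The issue is that the connection set of $a_v$ at step $t_n-1$ is a function of the conditioning. We therefore need the selections and dice rolls at steps $i<t_n$ to be independent of all information at steps $\geq t_n$. This holds because the coalescent picks fresh uniform roots and dice at each step. The bound on $\prob(C_i^{(j)}=1\mid\cdot)$ then holds for any fixed set of at most $m$ roots, and the sum over steps can be handled stepwise as a supermartingale-type bound on expectations. For this we may prefer to bound $\E{u_{n,2}(a_v)\mid \mathcal F_{\geq t_n}}$ directly rather than use a full conditional law.
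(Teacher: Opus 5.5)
Your proof is correct, and it decouples $u_{n,2}(a_v)$ from the label/degree event by a different mechanism than the paper.

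\textbf{The paper's route.} It notes that on the event, $u_{n,2}(a_v)>r_n$ forces $|\cS_n^{\geq 1}(a_v)\cap[t_n-1]|>r_n$. Via the coupling of Section~\ref{subsec:selection_sets_multiple_vertices}, this forces $|\cS_n^+(a_v)\cap[t_n-1]|>r_n$. Since $\cS_n^+(a_v)$ is independent of the $\cS_n^-$-sets and dice rolls that determine labels and degrees, the probability factorises exactly. Markov's inequality and Lemma~\ref{lemma:upper_bound_tail_vertex_degrees} then finish.

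\textbf{Your route.} You work with the natural time-reversed filtration $\mathcal F_{\geq t_n}$ of the coalescent.
\begin{itemize}
\item Because $\ell_w>t_n$ for all $w$, the event $\{\ell_n(a_w)\geq \ell_w,\deg_n(a_w)\geq d_w \text{ for all } w\}$ is $\mathcal F_{\geq t_n}$-measurable.
\item The connection set in force at a step $i<t_n$ is $\mathcal F_{\geq i+1}$-measurable and consists of at most $m$ active roots.
\item The loser at step $i$ is marginally a uniform active root, independent of $\mathcal F_{\geq i+1}$. So it lies in that set with conditional probability exactly $|\cC_n^{(i)}(a_v)|/A_i\leq m/A_i$, and you do not need \eqref{eq:order_of_selection_proabability_for_ungreedy_depth}.
\item The tower property and Markov's inequality then bound $\prob(u_{n,2}(a_v)>r_n\mid\mathcal F_{\geq t_n})$ on the event by a deterministic $o(1)$.
\end{itemize}
The skeleton is the same as the paper's: union bound, Markov, the estimate $\sum_{i=I_{\xref}}^{t_n}A_i^{-1}=\cO(t_n^{1/2-\delta})$, and Lemma~\ref{lemma:upper_bound_tail_vertex_degrees}.

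\textbf{What each buys.} Your argument is self-contained. It does not depend on the multi-vertex coupling with $\cS_n^\pm$, whose joint law across vertices the paper leaves partly unspecified. That coupling also lives only on $\{I_{\xref},\ldots,n\}$, while the paper's Markov bound sums from $i=2$. The paper's route gives an exact factorisation and reuses machinery it needs anyway for Lemma~\ref{lemma:decouple_label_degree_multiple_vertices} and the main proof.

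\textbf{A point of wording to fix.} Condition on $\mathcal F_{\geq t_n}$, i.e.\ on $\cS_n(a_w)\cap[t_n,n]$ and everything at steps $\geq t_n$. Do not condition on the full selection sets $\cS_n(a_w)$, as your second paragraph literally says.
\begin{itemize}
\item The part of $\cS_n(a_v)$ below $t_n$ records selections of an in-component containing $a_v$ whose root can lie in $\cC_n^{(i)}(a_v)$. For example, right after $a_v$ loses, that root is one of its $m$ targets.
\item Conditioning on $i\in\cS_n(a_v)$ then makes the probability that the step-$i$ loser lies in $\cC_n^{(i)}(a_v)$ at least $1/(m+1)$. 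Your ``uniformly in the conditioning'' bound would fail.
\item Likewise, the degree events on their own are not $\mathcal F_{\geq t_n}$-measurable; only their intersection with the label events is, and that is all you use.
\end{itemize}
Your final paragraph makes clear this is what you intend, so this is a matter of stating the conditioning precisely rather than a gap.
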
 

\begin{remark}\label{rem:rn}
Since $\gamma_0<(1-2\delta)^{-1}$ and $h_n^+\leq n$, we have $(h_n^+)^{\gamma_0(1/2-\delta)}=o(\sqrt{h_n^+})=o(\sqrt n)$, so that, by the argument above Lemma~\ref{lemma:bound_u_n_2},  choosing $I_{\xref}=\cO((h_n^+)^{\gamma_0(1/2-\delta)})$ implies that $t_n=o(n)$. It also implies that we can choose $r_n$ such that $r_n=o(\sqrt{h_n^+})$, so that the  result implies that $u_{n,2}$ is with high probability negligible compared to the Gaussian fluctuations of the greedy longest path (see either Proposition~\ref{prop:depthlabelsinglevertex} or Theorem~\ref{theorem:label_multiple_vertices_conditional_degrees}).\ensymboldremark 
\end{remark}

\begin{proof}
A union bound yields  
\begin{equation}
\begin{aligned}
&\prob(\{\exists v\in [k]\colon u_{n,2}(a_v)>r_n\}\cap \{\ell_n(a_v)\geq\ell_v,d_n(a_v)\geq d_v\text{ for all }v\in [k]\})\\
&\leq \sum_{v\in [k]} \prob(\{u_{n,2}(a_v)>r_n\}\cap\{\ell_n(a_v)\geq\ell_v,d_n(a_v)\geq d_v\text{ for all }v\in [k]\}).
\end{aligned}
\end{equation}
Then, as we have $\ell_n(a_v)\geq \ell_v>t_n$, vertex $a_v$ loses its first dice roll before step $t_n$. Hence, the contribution to $u_{n,2}(a_v)$ comes from steps in $[t_n-1]$, and it is thus necessary that $|\cS_n^{\geq1}(a_v)\cap[t_n-1]|>r_n$. Again using that $a_v$ loses its first dice roll before step $t_n$ and leveraging the coupling of $\cS_n^{\geq1}(a_v)$ with $\cS_n^+(a_v)$ and $\cS_n^-(a_v)$, it follows that it is necessary that $|\cS_n^+(a_v)\cap [t_n-1]|>r_n$.  As a result, we have the upper bound
\be \ba 
\sum_{v\in[k]}{}&\prob(\{|\cS_n^+(a_v)\cap[t_n-1]|>r_n\}\cap \{\ell_n(a_v)\geq\ell_v,d_n(a_v)\geq d_v\text{ for all }v\in [k]\})\\
&= k\prob(|\cS_n^+(a_1)\cap[t_n-1]|>r_n)\P{\ell_n(a_v)\geq\ell_v,d_n(a_v)\geq d_v\text{ for all }v\in [k]}.
\ea \ee 
The final equality is due to independence of $\cS_n^+(a_v)$ and the random variables $(\ell_n(a_v))_{v\in[k]}$, $(d_n(a_v))_{v\in[k]}$, since $(\ell_n(a_v))_{v\in[k]}$ and $(d_n(a_v))_{v\in[k]}$  depend  only on $(\cS_n^-(a_v))_{v\in[k]}$ and the dice rolls, and the fact that $\cS_n^+(a_v)$ has the same distribution for each $v\in[k]$. It then follows from Lemma~\ref{lemma:upper_bound_tail_vertex_degrees} that it suffices to show that the first probability on the final line tends to zero with $n$. By Markov's inequality, we derive
\begin{equation}
\prob(|\cS_n^+(a_1)\cap[t_n-1]|>r_n)\leq \frac{\E{|\cS_n^+(a_1)\cap[t_n-1]|}}{r_n} = \frac{1}{r_n}\bigg(\sum_{i=2}^{I_{\xref}}p^+_{i}+\sum_{i=I_{\xref}+1}^{t_n-1}p^+_{i}\bigg).
\end{equation}
For the first sum, our assumptions on $I_{\xref}$ and $r_n$ directly yield that 
\be 
\sum_{i=2}^{I_{\xref}}p^+_{i}\leq I_{\xref}=o(r_n).
\ee 
For the second sum, we obtain by~\eqref{eq:definition_and_order_of_p_v_i^+} and Assumption~$\xref$\ref{item:lb} that, for some $C>0$,
\begin{equation}
\sum_{i=I_{\xref}+1}^{t_n-1}p^+_{i}\leq 	\sum_{i=I_{\xref}+1}^{t_n-1}\frac{C}{i^{1/2+\delta}}=\cO(t_n^{1/2-\delta})=o(r_n),
\end{equation}
where the final step again follows from the choice of $t_n$ and $r_n$. Hence,  $\prob(|\cS_n^+(a_1)\cap[t_n-1]|>r_n)=o(1)$, which concludes the proof.
\end{proof}
Let $\cP(\cdot)$ denote the power set of a set. We write $\overline J \in \cP(\Omega_1)^k$ to denote that $\overline J=(J_1,\ldots, J_k)$ is a tuple of $k$ elements $J_1,\ldots, J_k$, each of which is a subset of $\Omega_1$.  The following lemma shows that the evolutions of degree, label, and greedy longest path of vertices $a_1,\ldots,a_k$ are independent if the sets in $\overline \cS_{n,1}^\square$ are disjoint, for both $\square\in\{-,+\}$. It is based on a result for random recursive trees without freezing from \cite{Lodewijks.2023}.
\begin{lemma}		\label{lemma:decouple_label_degree_multiple_vertices}
Suppose that Assumption~$\xref$\ref{item:geq1} is satisfied. Fix $k\in\N$ and consider any truncation sequence $(t_n)_{n\in\N}$ such that $t_n\leq n$ for all $n\in\N$. Let $n\in\N$ and, for $v\in [k]$, let $u_v,d_v\in\N_0$, $\ell_v\in\Omega_1$. Furthermore, let $\overline{J^-},\overline{J^+}\in{\cP(\Omega_1)}^k$ such that $J^-_v\cup J^+_v$ and $J^-_w\cup J^+_w$ are disjoint for $v\neq w\in[k]$. Then,
\begin{equation}
\begin{aligned}
&\prob(u_{n,1}(a_v)\leq u_v,\ell_n(a_v)\geq \ell_v,\deg_n(a_v)\geq d_v,v\in [k]\,|\,\overline{\cS^-_{n,1}}=\overline{J^-},\overline{\cS^+_{n,1}}=\overline{J^+})\\
&= \prod_{v=1}^k\prob(u_{n,1}(a_v)\leq u_v,\ell_n(a_v)\geq \ell_v,\deg_n(a_v)\geq d_v\,|\,\cS^-_{n,1}(a_v)=J^-_v,\cS^+_{n,1}(a_v)=J^+_v).
\end{aligned}
\end{equation}
\end{lemma}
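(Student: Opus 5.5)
Write $E_v\coloneq\{u_{n,1}(a_v)\leq u_v,\ \ell_n(a_v)\geq \ell_v,\ \deg_n(a_v)\geq d_v\}$. The plan is to show that, conditionally on $\overline{\cS^-_{n,1}}=\overline{J^-}$ and $\overline{\cS^+_{n,1}}=\overline{J^+}$, each event $E_v$ is a deterministic function of a family of randomness $\mathcal R_v$ attached to vertex $a_v$. The families $\mathcal R_1,\ldots,\mathcal R_k$ will be independent, and the conditional law of $\mathcal R_v$ will depend only on $(J^-_v,J^+_v)$. The factorisation then follows at once. The marginal on the right-hand side is the conditional probability given only $\cS^-_{n,1}(a_v)=J^-_v$ and $\cS^+_{n,1}(a_v)=J^+_v$, so we must also check that conditioning on the other vertices' sets does not change the law of $\mathcal R_v$.

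\textbf{Step 1: identify $\mathcal R_v$.} Via the coupling of $\cS_n^{\geq1}(a_v)$ with $\cS_n^-(a_v)$, $\cS_n^+(a_v)$, the quantities $\ell_n(a_v)$ and $\deg_n(a_v)$, restricted to the event $\{\ell_n(a_v)\geq \ell_v\}$ with $\ell_v\in\Omega_1$, are determined by $J^-_v$ and the dice outcomes $(r^-_{v,i})_{i\in J^-_v}$. Indeed, $\deg_n(a_v)$ is the initial run of zeros and $\ell_n(a_v)$ is the first $i\in J^-_v$ with $r^-_{v,i}=1$. The event $\{\ell_n(a_v)\geq\ell_v\}$ only concerns steps in $[\ell_v,n]\subseteq\Omega_1$. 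On this event, $u_{n,1}(a_v)$ counts the steps $i\in J^+_v\cap[t_n,\ell_n(a_v))$ at which the selected root(s) of $\cC_n^{(i)}(a_v)$ lose. Given that $i\in J^+_v$, this is decided by the dice roll at step $i$ together with which of the $m$ roots of $\cC_n^{(i)}(a_v)$ were selected. So $\mathcal R_v$ consists of the dice/selection outcomes at steps in $J^-_v\cup J^+_v$, read from the perspective of $a_v$'s connection set. On the complementary event $\{\ell_n(a_v)<\ell_v\}$ the indicator of $E_v$ is $0$, so no further information is needed there.

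\textbf{Step 2: independence across $v$.} At each step $i\in\cA_n$ the dice roll and root selection are independent of all other steps. Since $J^-_v\cup J^+_v$ and $J^-_w\cup J^+_w$ are disjoint for $v\neq w$, the families $\mathcal R_v$ use outcomes at pairwise disjoint sets of steps. They are therefore conditionally independent, given the selection sets.

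\textbf{Step 3: conditional law is marginal.} We need that the conditional law of $\mathcal R_v$ given all of $\overline{J^\pm}$ coincides with its law given $(J^-_v,J^+_v)$ alone. At step $i\in J^\square_v$, the event that some $a_w$-related root is \emph{not} selected at $i$ (because $i\notin J^\pm_w$ for $w\neq v$) could bias the outcome. I expect this to be the main obstacle. I would handle it through exchangeability of the uniform $(m+1)$-subset: given which roots of $\cC_n^{(i)}(a_v)$ are in the selected set and which roots of the other connection sets are excluded, the loser is uniform among the selected roots. Hence the conditional probability that a selected root of $a_v$ loses stays $1/(m+1)$, as does the distribution of which root of $\cC_n^{(i)}(a_v)$ is involved, and this is exactly the marginal law.

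One issue must be settled here: the connection sets $\cC_n^{(i)}(a_v)$ and $\cC_n^{(i)}(a_w)$ must be disjoint for the steps in question. I would argue this by backward induction over $i$ from $n$ down to $t_n$. Merging of connection sets can only occur at a step where roots from two connection sets are selected together, that is, at some $i\in (J^-_v\cup J^+_v)\cap(J^-_w\cup J^+_w)=\varnothing$. This is consistent with the discussion of $\tau_k^*$ preceding the lemma. Combining Steps 1–3 and integrating over $\mathcal R_1,\ldots,\mathcal R_k$ yields the product formula.
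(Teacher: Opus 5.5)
Your outline is the paper's proof. Conditionally on the truncated selection sets, each event is a function of the dice rolls at the steps of $J^-_v\cup J^+_v$; these step sets are pairwise disjoint; and rolls at distinct steps are independent. Your backward-induction argument that the connection sets stay disjoint is fine and matches the discussion of $\tau_k^*$. The paper simply \emph{asserts} your Step~3: that, given $\cS^\pm_{n,1}(a_v)=J^\pm_v$, the event for $a_v$ is independent of the other vertices' set events. In effect it treats the dice outcomes at the steps of $\cS^\pm_{n,1}(a_v)$ as independent variables of the coupling whose law is fixed vertex by vertex. The exchangeability argument you give to \emph{derive} this from the coalescent fails for $m\geq 2$, so your claim ``this is exactly the marginal law'' is false.

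At a step $i\in J^+_v$ after $a_v$'s first loss, $u_{n,1}(a_v)$ increases exactly when the loser lies in $\cC^{(i)}_n(a_v)$. This has conditional probability $\mathbb E[N_v\mid N_v\geq 1]/(m+1)$, where $N_v$ is the hypergeometric number of selected roots among the $m$ roots of $\cC^{(i)}_n(a_v)$. Uniformity of the loser given the selected set fixes the loss probability per selected root, but not the law of $N_v$. Conditioning also on $i\notin J^-_w\cup J^+_w$ for $w\neq v$ removes the roots of $\cC^{(i)}_n(a_w)$ from the population, and this changes the law of $N_v$ given $N_v\geq1$. For example, take $m=k=2$, both vertices past their first loss, and $A_i=A$. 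The increment probability is $\frac{A-1}{3(A-2)}$ under the marginal conditioning but $\frac{A-3}{3(A-4)}$ under the joint one. The number of removed roots depends on whether the other vertices have already lost (connection set of size $1$ or $m$), so the conditional independence in your Step~2 is also only approximate. Pre-loss steps, where $\cC^{(i)}_n(a_v)=\{a_v\}$, are unaffected, and for $m=1$ your argument goes through as written.

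In the actual coalescent you can prove the identity up to a factor $\prod_{i\geq t_n}(1+\cO(A_i^{-2}))=1+o(1)$. This holds uniformly in $(\overline{J^-},\overline{J^+})$ when $t_n\geq I_{\xref}$ and $t_n\to\infty$, using Assumption~$\xref$\ref{item:lb}. That weaker statement is all the proof of Theorem~\ref{theorem:label_multiple_vertices_conditional_degrees} needs. Exact equality requires either the paper's idealised coupling or $m=1$.
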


\begin{proof}
For each $v\in [k]$, define the event $A_{n,v} \coloneq \{u_{n,1}(a_v)\leq u_v,\ell_n(a_v)\geq \ell_v,\deg_n(a_v)\geq d_v\}$. We rewrite $J^-_v=\{j^-_{v,1},\ldots,j^-_{v,|J_v^-|}\}$ and $J^+_v=\{j^+_{v,1},\ldots,j^+_{v,|J^+_v|}\}$, where $j^-_{v,1}>\cdots>j^-_{v,|J_v^-|}$ and $j^+_{v,1}>\cdots >j^+_{v,|J^+_v|}$ for $v\in [k]$. For each $v\in [k]$ we have that, conditionally on $\{\overline{\cS^-_{n,1}}=\overline{J^-}\}$, the event $\{\deg_n(a_v)\geq d_v\}$ occurs if and only if $|J^-_v|\geq d_v$ and $a_v$ wins the dice rolls at steps $j^-_{v,1},\ldots,j^-_{v,d_v}$, analogous to~\eqref{eq:express_degree_via_selection_set}. Additionally, analogous to~\eqref{eq:express_label_in_terms_selection_sets}, for each $v\in [k]$, conditionally on $\{\overline{\cS^-_{n,1}}=\overline{J^-}\}$ the event $\{\ell_n(a_v)\geq \ell_v\}$ occurs if and only if $a_v$ does not win all dice rolls at steps $\{j^-_{v,i}\colon j^-_{v,i}\geq\ell_v\text{ with }1\leq i \leq |J_v^-|\}$. For each $v\in [k]$, the truncated greedy longest path of $a_v$ is determined by the outcome of the dice rolls at steps $\{j^+_{v,i}\colon j^+_{v,i}\in[t_n, \ell_n(a_v)]\text{ with }1\leq i \leq |J_v^+|\}$. As $\ell_v\geq t_n$ and $J^+_v\subset\Omega_1$, the event $\bigcap_{v\in [k]}A_{n,v}$ is thus fully determined by the steps in $\Omega_1$. In particular, conditionally on $\{\overline{\cS^-_{n,1}}=\overline{J^-},\overline{\cS^+_{n,1}}=\overline{J^+}\}$, the event $\bigcap_{v\in [k]}A_{n,v}$ depends solely on $(J^-_v)_{v\in [k]},(J^+_v)_{v\in [k]}$ and the associated dice rolls. Since the sets $J^-_1\cup J^+_1,\ldots,J^-_k\cup J^+_k$ are pairwise disjoint, the occurrence of the events $(A_{n,v})_{v\in [k]}$ depends on disjoint sets of independent random variables (namely the dice rolls associated with the sets $(J^-_v\cup J^+_v$ for $v\in [k]$). Therefore,
\be \ba
\prob\bigg(\bigcap_{v\in [k]}A_{n,v}\,\bigg|\,\overline{\cS^-_{n,1}}=\overline{J^-},\overline{\cS^+_{n,1}}=\overline{J^+}\bigg) &=\prod_{v=1}^k \prob \Big( A_{n,v}\,\Big|\,\overline{\cS^-_{n,1}}=\overline{J^-},\overline{\cS^+_{n,1}}=\overline{J^+} \Big)\\
&=\prod_{v=1}^k\prob(A_{n,v}\,|\,\cS^-_{n,1}(a_v)=J^-_v,\cS^+_{n,1}(a_v)=J^+_v),
\ea\ee 
where the last step follows from the fact that $A_{n,v}$, conditionally on $\cS_{n,1}^-(a_v)=J_v^-$ and $\cS_{n,1}^+(a_v)=J^+_v$, is independent of the events $\{\cS_{n,1}^-(a_u)=J_u^-\text{ and }\cS_{n,1}^+(a_u)=J^+_u\}$ for all $u\neq v$.
\end{proof}

Next, we state several results about the truncated selection sets that allow us to apply Lemma~\ref{lemma:decouple_label_degree_multiple_vertices} in the analysis of the behaviour of multiple active vertices. For $\overline{d}=(d_{v})_{v\in [k]}\in\N_0^k$, $\alpha\in(0,m+1)$, $D>m(m+1)$, $\beta\in(0,1)$, and $\gamma',\gamma''\in(1,2]$ with $\gamma'<\gamma''$, we define
\begin{equation}
\begin{aligned}
\cD_{\overline{d}}\coloneq\{&(\overline{J^-},\overline{J^+})\in{\cP(\Omega_1)}^{2k}\colon \prob(\overline{\cS^-_{n,1}}=\overline{J^-},\overline{\cS^+_{n,1}}=\overline{J^+},\deg_n(a_v)\geq d_{v} \text{ for all }v\in [k])>0\},\\
\cB_{\alpha,D}\coloneq\{&(\overline{J^-},\overline{J^+})\in{\cP(\Omega_1)}^{2k}\colon J^-_v\cup J^+_v\text{ and $J^-_w\cup J^+_w$ disjoint for $v\neq w\in[k]$, and}\\
&||J^-_v|-(m+1)h_n^+|\leq\alpha h_n^+,|J^+_v|\leq D h_n^+\text{ for all }v\in [k]\},\\
\cG_{\beta,\gamma',\gamma''}\coloneq\{&(\overline{J^-},\overline{J^+})\in{\cP(\Omega_1)}^{2k}\colon|J_v^\square\cap [(h_n^+)^{\gamma'},(h_n^+)^{\gamma''}]|\leq (h_n^+)^\beta\text{ for all }v\in [k],\square\in\{-,+\}\}.
\end{aligned}
\end{equation}
In words, $\cD_{\overline{d}}$ consists of all possible outcomes of the truncated selection sets that enable the event $\{\deg_n(a_v)\geq d_{v},\text{ for all }v\in [k]\}$. Then, $\cB_{\alpha,D}$ contains all truncated selection sets of `typical' sizes which enable the decoupling of the label, degree, and greedy longest path  of the vertices $a_1,\ldots,a_k$, as follows from Lemma~\ref{lemma:decouple_label_degree_multiple_vertices}. Finally, $\cG_{\beta,\gamma',\gamma''}$ contains all truncated selection sets with that do not contain `too many' elements on the scale $[(h_n^+)^{\gamma'},(h_n^+)^{\gamma''}]$. This provides us with a finer control compared to the bounds in the set $\cB_{\alpha,D}$, which we leverage later to bound error probabilities.

We now present some results related to the sets $\cD_{\overline{d}},\,\cB_{\alpha,D},$ and $\cG_{\beta,\gamma',\gamma''}$. In the next lemma, we show that the conditions to be in the set $\cB_{\alpha,D}$ (as well as to be in the set $\cG_{\beta,\gamma',\gamma''}$ for certain $\beta,\gamma',\gamma''$) are met by $\overline{\cS_{n,1}^-}$ and $\overline{\cS_{n,1}^+}$ with high probability.

\begin{lemma}
\label{lemma:truncated_selection_sets_decouple_label_degree_whp}
Fix $k\in\N$, $\alpha\in(0,m+1)$, and $D>m(m+1)$. Let the choice sequence $\mathbf x$ satisfy Assumption~$\xref$\ref{item:geq1} and~$\xref$\ref{item:lb} with $I_{\xref}=\cO((h_n^+)^{\gamma_0(1/2-\delta)})$ and let $(t_n)_{n\in\N}$ be a truncation sequence such that $  I_{\xref}\leq t_n\leq n$ for all $n\in\N$. Let $\beta\in(0,1)$, and $\gamma',\gamma''\in(1,2]$ such that  $\gamma'<\gamma''$ and $\beta>\gamma''(1/2-\delta)$. Then,
\begin{equation}
\prob((\overline{\cS^-_{n,1}},\overline{\cS^+_{n,1}})\in\cB_{\alpha,D}) = 1-o(1),\quad \prob((\overline{\cS^-_{n,1}},\overline{\cS^+_{n,1}})\in \cG_{\beta,\gamma',\gamma''}) = 1-o(1).
\end{equation}
\end{lemma}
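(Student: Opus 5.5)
We prove both statements by a union bound over the finitely many conditions defining $\cB_{\alpha,D}$ and $\cG_{\beta,\gamma',\gamma''}$; each condition holds with probability $1-o(1)$. There are four types of condition: size bounds on $|\cS^-_{n,1}(a_v)|$, size bounds on $|\cS^+_{n,1}(a_v)|$, disjointness of $J^-_v\cup J^+_v$ across $v$, and the sparsity bounds on the window $[(h_n^+)^{\gamma'},(h_n^+)^{\gamma''}]$. Since $k$ is fixed and the marginal laws are the same for all $v$, it suffices to treat $v=1$ (and a fixed pair $v\neq w$ for disjointness).

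\textbf{Sizes.} The set $|\cS^-_{n,1}(a_1)|=\sum_{i\in\Omega_1}s^-_{1,i}$ is a sum of independent Bernoulli$(p_i^-)$ variables with $p_i^-=(m+1)/A_i$ on $\Omega_1$. Its mean is $(m+1)(h_n^+-h_{t_n-1}^+)$. To show this is $(m+1)h_n^+(1+o(1))$ we need $h^+_{t_n}=o(h_n^+)$, which we obtain by splitting at $I_{\xref}$. First, $h^+_{I_{\xref}}\le I_{\xref}=o(h_n^+)$ by the assumption on $I_{\xref}$. Second, Assumption~$\xref$\ref{item:lb} gives $\sum_{i=I_{\xref}}^{t_n}1/A_i=\cO(t_n^{1/2-\delta})$. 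For the truncation $t_n=\lceil (h_n^+)^{\gamma_0}\rceil$ relevant later this is $o(h_n^+)$ because $\gamma_0(1/2-\delta)<1$. For a general truncation sequence one would instead use the bound the statement actually needs; if $t_n$ is allowed arbitrary, I would restrict to $t_n$ with $t_n^{1/2-\delta}=o(h_n^+)$, which is implicit in the intended use. The variance is at most the mean, so Chebyshev (or the Bernstein bound~\eqref{eq:bern}) gives $||\cS^-_{n,1}(a_1)|-(m+1)h_n^+|\le\alpha h_n^+$ with probability $1-o(1)$. Similarly, $\E{|\cS^+_{n,1}(a_1)|}\le m(m+1)h_n^++\sum_i\wt e_i=m(m+1)h_n^++\cO(1)$ by~\eqref{eq:definition_and_order_of_p_v_i^+} and Assumption~$\xref$\ref{item:lb}. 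Since $D>m(m+1)$, Chebyshev again gives $|\cS^+_{n,1}(a_1)|\le Dh_n^+$ with high probability.

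\textbf{Disjointness.} If $J^-_v\cup J^+_v$ and $J^-_w\cup J^+_w$ intersect at some $i\in\Omega_1$, then $s^*_{v,i}=s^\dagger_{w,i}=1$ for some $*,\dagger$, so $\tau_k^*\ge t_n\ge I_{\xref}$. As noted before Lemma~\ref{lemma:bound_u_n_2}, $\tau_k^*$ is stochastically dominated by $\tau_{mk,0}$. Lemma~\ref{lemma:tightness_tau_k} applied with $I_n=t_n$ then gives probability $o(t_n^{-\zeta})=o(1)$, provided $t_n\to\infty$, which holds since $t_n\ge I_{\xref}\to\infty$. This is the step where the unspecified joint law of the $s^\square_{v,i}$ matters. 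I would make explicit that the coupling is realised through the actual coalescent selections, so that simultaneous membership genuinely forces simultaneous selection of roots from the connection sets $\cC_n^{(i)}(a_v)$ and $\cC_n^{(i)}(a_w)$. These sets contain at most $mk$ roots in total, which is exactly why the domination by $\tau_{mk,0}$ holds.

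\textbf{Window bound.} By Markov's inequality and Assumption~$\xref$\ref{item:lb}, for $\square\in\{-,+\}$,
\[
\E{|\cS^\square_{n,1}(a_1)\cap[(h_n^+)^{\gamma'},(h_n^+)^{\gamma''}]|}\le\sum_{i\le (h_n^+)^{\gamma''}}\frac{C}{i^{1/2+\delta}}+\cO(I_{\xref})=\cO\big((h_n^+)^{\gamma''(1/2-\delta)}\big)+\cO(I_{\xref}).
\]
Both terms are $o((h_n^+)^\beta)$: the first because $\beta>\gamma''(1/2-\delta)$, the second by the assumption $I_{\xref}=\cO((h_n^+)^{\gamma_0(1/2-\delta)})$. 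For the second, one should check that $\gamma_0\le\gamma''$ or otherwise note that the window starts above $I_{\xref}$ for large $n$ (as $\gamma'>1$ and $I_{\xref}=o(h_n^+)$), in which case that term is absent. Markov's inequality then gives the sparsity condition with probability $1-o(1)$, and the union bound over $v\in[k]$ and $\square\in\{-,+\}$ concludes. The main obstacle is justifying the disjointness step rigorously via the coupling; the size bounds are routine once $h^+_{t_n}=o(h_n^+)$ is established.
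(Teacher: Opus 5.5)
Your argument is correct and follows essentially the paper's route. The pieces match: a union bound over $v\in[k]$ and $\square\in\{-,+\}$; tightness of $\tau_k^*$ via domination by $\tau_{mk,0}$ and Lemma~\ref{lemma:tightness_tau_k} for disjointness; concentration of $|\cS^\pm_{n,1}(a_1)|$ for the size conditions; and a tail bound for the window counts. For the window counts your Markov bound $\cO((h_n^+)^{\gamma''(1/2-\delta)-\beta})$ can replace the paper's Chernoff bound, since only $o(1)$ is needed. The $\cO(I_{\xref})$ term is vacuous anyway, because $\Omega_1\subseteq[t_n,n]\subseteq[I_{\xref},n]$.

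Your caveat about $t_n$ is well founded. The $\cB_{\alpha,D}$ claim genuinely needs $h^+_{t_n}=o(h_n^+)$; it fails, for example, for $t_n=n$, where $|\cS^-_{n,1}(a_1)|\le 1$. The paper's appeal to ``a Bernstein type inequality as in Lemma~\ref{lemma:tail_bound_truncated_selection_set}'' uses this condition implicitly, and it does hold for the choice $t_n=\lceil (h_n^+)^{\gamma_0}\rceil$ used later.
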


\begin{proof}
We first recall that $\mathbf x$ satisfies  Assumption~$\xref$\ref{item:hI}   for any $\eps>0$ and all $n$ large  by the condition that $I_{\xref}=\cO((h_n^+)^{\gamma_0(1/2-\delta)})$ (see Remark~\ref{rem:greedy}$(iii)$). To prove the first statement, we use a union bound to obtain
\begin{equation}
\begin{aligned}
\prob((\overline{\cS^-_{n,1}},\overline{\cS^+_{n,1}})\in\cB_{\alpha,D}) \geq {}&1-\prob(\tau_k^*>t_n)-k\prob(|\cS^-_{n,1}(a_1)-(m+1)h_n^+|>\alpha  h_n^+)\\
&-k\prob(|\cS^+_{n,1}(a_1)|>D h_n^+).
\end{aligned}
\end{equation}
Due to the tightness of $\tau_k^*$ by Lemma~\ref{lemma:tightness_tau_k} and applying a Bernstein type inequality as in the proof of Lemma~\ref{lemma:tail_bound_truncated_selection_set} to the last two probabilities, we obtain a lower bound $1-o(1)$, as desired. For the second statement, we use Chernoff's inequality and Assumption~$\xref$\ref{item:lb} to derive for any constant $C>(\e-1)m(m+1)$ and with $\square\in\{-,+\}$ that 
\begin{equation}
\begin{aligned}
\P{|\cS_{n,1}^\square(a_v)\cap [(h_n^+)^{\gamma'},(h_n^+)^{\gamma''}]|\geq (h_n^+)^\beta}&\leq \exp(-(h_n^+)^\beta)\prod_{\substack{i=(h_n^+)^{\gamma'}\\ x_i=1}}^{(h_n^+)^{\gamma''}}\Big(1+\frac{C}{A_i}\Big)\\
&\leq \exp\bigg(-(h_n^+)^\beta + C\sum_{i=(h_n^+)^{\gamma'}}^{(h_n^+)^{\gamma''}} i^{-(1/2+\delta)}\bigg)\\
&\leq \exp\bigg(-(h_n^+)^\beta +\frac{C}{1/2-\delta} (h_n^+)^{\gamma''(1/2-\delta)}\bigg).
\end{aligned}
\end{equation}
Since $\beta>\gamma''(1/2-\delta)$ and $h_n^+$ tends to infinity with $n$ by~\eqref{eq:hn+lb}, this probability tends to zero as $n\to\infty$. A union bound thus yields the second result.
\end{proof}

The following lemma is a generalisation of a result for random recursive trees without freezing from~\cite{Eslava.2021}, which tells us that for values $d_v$ that are not too large, the set $\cB_{\alpha,D}$ is a subset of $\cD_{\overline d}$.
\begin{lemma}
\label{lemma:disjoint_selection_sets_enable_event}
Fix $\alpha\in(0,m+1), \,D>m(m+1)$, and let the choice sequence $\mathbf x$ satisfy Assumption~$\xref$\ref{item:geq1}. Let $(t_n)_{n\in\N}$ be any truncation sequence such that we have $A_i\geq (m+1)k+1$ for $i\in\{t_n,\ldots,n\}$. If $\overline{d}=(d_v)_{v\in [k]}\in\N_0^k$ satisfies $d_v<(m+1-\alpha) h_n^+$ for all $v\in [k]$, then $\cB_{\alpha,D}\subset\cD_{\overline{d}}$.
\end{lemma}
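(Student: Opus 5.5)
To prove $\cB_{\alpha,D}\subset\cD_{\overline d}$, we fix $(\overline{J^-},\overline{J^+})\in\cB_{\alpha,D}$ and exhibit one realisation of the Kingman coalescent of positive probability on which $\overline{\cS^-_{n,1}}=\overline{J^-}$, $\overline{\cS^+_{n,1}}=\overline{J^+}$ and $\deg_n(a_v)\ge d_v$ for all $v\in[k]$. Since every individual choice in the coalescent (which roots are selected, who loses) and every auxiliary Bernoulli variable in the coupling has a nondegenerate law on $\Omega_1$, it suffices to show that the required outcomes at each step are \emph{compatible}, i.e.\ that no step imposes contradictory demands. Then the probability is a finite product of positive factors.

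First, the degree requirement depends only on the $J^-_v$. Because $||J^-_v|-(m+1)h_n^+|\le\alpha h_n^+$, we have $|J^-_v|\ge (m+1-\alpha)h_n^+>d_v$. Through the coupling of $\cS_n^{\geq1}(a_v)$ with $\cS_n^-(a_v),\cS_n^+(a_v)$, we let $a_v$ win the dice rolls at the first $d_v$ elements of $J^-_v$, i.e.\ set $r^-_{v,i}=0$ there. We then prescribe a loss ($r^-_{v,i}=1$) at the next element, so that $\ell_n(a_v)\ge t_n$ and the coupling does not fail. Each of these prescriptions has probability at least $(m/(m+1))^{d_v}(m+1)^{-1}>0$, since $A_i\ge m+1$ on $\Omega_1$.

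Next, we realise the selections themselves. At a step $i\in\Omega_1$, the disjointness of the sets $J^-_v\cup J^+_v$ for distinct $v$ means that at most one vertex index $v$ demands something at step $i$. If $i\in J^-_v$, or $i\in J^+_v$ after $a_v$'s loss, we select the root of $a_v$, or exactly one or at least one root of $\cC_n^{(i)}(a_v)$. All remaining selected roots are then chosen among roots not in $\bigcup_w \cC_n^{(i)}(a_w)$. This union contains at most $mk$ roots, so at least $A_i-mk\ge k+1\ge m+1$ others remain \emph{if} $m\le k$. In general, the condition $A_i\ge(m+1)k+1$ guarantees that after excluding the at most $mk$ tracked roots we still have at least $k+1$ free roots. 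We must check that this is $\ge (m+1)\wedge A_i-1=m$ non-tracked roots when needed, which I expect to be the main obstacle. If $k<m$ the count seems insufficient, so I would redo the bookkeeping more carefully: the at most $mk$ tracked roots together with the $m+1$ roots needed for a selection number at most $mk+m+1\le (m+1)k+1$ when $k\ge1$, which is exactly the hypothesis. With this count, we also avoid selecting any tracked root at steps where $i\notin J^\square_w$, which realises $s^\square_{w,i}=0$ for all other $w$. The steps with $i\notin\bigcup_v(J^-_v\cup J^+_v)$ are handled the same way, by selecting only untracked roots.

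Finally, the bound $|J^+_v|\le Dh_n^+$ plays no role beyond ensuring the configuration is well-defined, and steps outside $\Omega_1$ are unconstrained. Multiplying the positive probabilities over the finitely many steps of the configuration gives positive probability, hence $(\overline{J^-},\overline{J^+})\in\cD_{\overline d}$.
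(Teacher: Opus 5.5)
Your strategy is the same as the paper's: show that every per-step requirement has positive probability and multiply. The paper writes $\mathbb P(\overline{\cS^-_{n,1}}=\overline{J^-})$ and $\mathbb P(\overline{\cS^+_{n,1}}=\overline{J^+})$ as explicit products over $i\in\Omega_1$, uses independence of $\overline{\cS^-_{n,1}}$ and $\overline{\cS^+_{n,1}}$, and gets the degree part from $|J^-_v|>d_v$ plus the decoupling lemma.

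\textbf{The gap is in the counting step you flagged yourself.} Since $(m+1)k+1=mk+k+1$, your inequality $mk+m+1\le (m+1)k+1$ is equivalent to $m\le k$. So it fails exactly in the case $k<m$ it was meant to rescue. No bookkeeping can fix this. At a step $i\in\Omega_1$ with $\sigma_i^+=0$, none of the $mk$ tracked roots may be among the $m+1$ selected roots. That event has probability $\binom{A_i-mk}{m+1}/\binom{A_i}{m+1}$, which vanishes unless $A_i\ge m(k+1)+1$.

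Here is a concrete counterexample. Take $k=1$, $m\ge 2$, and some $i\in\Omega_1$ with $m+2\le A_i\le 2m$; this is allowed by the hypothesis $A_i\ge(m+1)k+1$. Then $p_i^+=1-\binom{A_i-m}{m+1}/\binom{A_i}{m+1}=1$, so your claim that all auxiliary Bernoulli variables are nondegenerate is false. Any element of $\cB_{\alpha,D}$ with $i\notin J^+_1$ (for instance $J^+_1=\emptyset$) then lies outside $\cD_{\overline d}$. Under the hypothesis as literally stated, the conclusion can fail when $k<m$.

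The paper's proof has the same hidden requirement. It asserts that the factor $\binom{A_i-mk}{m+1}$ in its formula for $\mathbb P(\overline{\cS^+_{n,1}}=\overline{J^+})$ is positive because $A_i\ge(m+1)k+1$, but that only works for $k\ge m$. The right repair is to assume $A_i\ge m(k+1)+1$ on $\{t_n,\ldots,n\}$. This costs nothing where the lemma is used, since there $A_i\ge i^{1/2+\delta}\ge t_n^{1/2+\delta}\to\infty$ on $\Omega_1$. With that hypothesis your free-root count is immediate and the rest of your argument goes through.

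A smaller point of framing: membership in $\overline{\cS^{\pm}_{n,1}}$ is decided by the auxiliary Bernoulli variables $s^{\pm}_{v,i}$, not by selections in the actual coalescent, and $\overline{\cS^-_{n,1}}$ is independent of $\overline{\cS^+_{n,1}}$. It is therefore cleaner to check positivity of the per-step joint law of $(s^{\pm}_{v,i})_{v\in[k]}$ directly, as the paper does. Your version only realises the elements of $J^+_v$ that lie after $a_v$'s first loss.
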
 

\begin{proof}
Let $(\overline{J^-},\overline{J^+})\in\cB_{\alpha,D}$. For $i\in\Omega_1$ and $\square\in\{-,+\}$, let $\sigma_i^\square\coloneq\sum_{v\in [k]}\ind_{\{i\in J_v^\square\}}$, where we observe that $\sigma_i^\square\in\{0,1\}$, since $(\overline{J^-},\overline{J^+})\in\cB_{\alpha,D}$ and thus the sets $\overline J^-_v$ (resp.\ $\overline J^+_v$) are mutually disjoint for $v\in[k]$.  Let $\cE_i^- \coloneq \{s^-_{v,i}=\ind_{\{i\in J_v^-\}}\text{ for all }v\in[k]\}$. Then, we have
\be \label{eq:S-prob}
\prob(\overline{\cS^-_{n,1}}=\overline{J^-}) = 
\prod_{i\in\Omega_1} \prob(\cE_i^-) = \prod_{i\in\Omega_1}  \Bigg[ \frac{\binom{A_i-k}{m+1}}{\binom{A_i}{m+1}}\ind_{\{\sigma_i^-=0\}}+\frac{\binom{A_i-k}{m}}{\binom{A_i}{m+1}}\ind_{\{\sigma_i^-=1\}}\Bigg].
\ee 
Similarly, we have 
\be \label{eq:S+prob} 
\P{\overline{\cS^+_{n,1}}=\overline{J^+}}=\prod_{i\in\Omega_1}\Bigg[ \frac{\binom{A_i-mk}{m+1}}{\binom{A_i}{m+1}}\ind_{\{\sigma_i^+=0\}}+\frac{m\binom{A_i-m(k-1)}{m+1}}{\binom{A_i}{m+1}}\ind_{\{\sigma_i^+=1\}}\Bigg],
\ee 
As $\overline{\cS^-_{n,1}}$ is independent of $\overline{\cS^+_{n,1}}$, we thus obtain that the probability of the event $\{\overline{\cS_{n,1}^-}=\overline{J^-}, \overline{\cS_{n,1}^+}=\overline{J^+}\}$ is strictly positive, as $A_i\geq (m+1)k+1$ for $i\in\{t_n,\ldots, n\}$, so that all terms in the products in~\eqref{eq:S-prob} and~\eqref{eq:S+prob} are positive.  Furthermore, for $v\in[k]$,
\begin{equation}
\prob(\deg_n(a_v)\geq d_v\,|\,\cS^-_{n,1}(a_v)=J^-_v,\cS^+_{n,1}(a_v)=J^+_v)=\prob(\deg_n(a_v)\geq d_v\,|\,\cS^-_{n,1}(a_v)=J^-_v)>0,
\end{equation}
since $|J^-_v|\geq d_v$ and the event $\{\deg_n(a_v)\geq d_v\}$ is independent of $\cS_n^+(a_v)$. Now, by an analogous argument as in (the proof of)  Lemma~\ref{lemma:decouple_label_degree_multiple_vertices}, we derive for $(\overline{J^-},\overline{J^+)}\in \cB_{\alpha,D}$ that
\begin{equation}
\begin{aligned}
&\prob(\deg_n(a_v)\geq d_v\text{ for all }v\in[k]\,|\,\overline{\cS^-_{n,1}}=\overline{J^-},\overline{\cS^+_{n,1}}=\overline{J^+})\\
&= \prod_{v=1}^k\prob(\deg_n(a_v)\geq d_v\,|\,\cS^-_{n,1}(a_v)=J^-_v,\cS^+_{n,1}(a_v)=J^+_v)>0,
\end{aligned}
\end{equation}
and we thus conclude $\cB_{\alpha,D}\subset\cD_{\overline{d}}$.
\end{proof}

We conclude this subsection with a comparison of the sets $\overline{\cS_{n,1}^\square}$ and independent copies of $\cS_{n,1}^\square(a_1)$. Let $\overline{\cR^\square_{n,1}}\coloneq(\cR^\square_{n,1}(1),\ldots,\cR^\square_{n,1}(k))$ be $k$ independent copies of $\cS^\square_{n,1}(a_1)$ for $\square\in\{-,+\}$. Fix $\delta\in(0,1/2)$, and let $\gamma_0$ as in~\eqref{eq:def_tngamma}. For $\ell\in\N$, we define  
\be \label{eq:L}
\gamma_\ell\coloneq \gamma_0(1-2\delta)^{-\ell}\wedge 2\qquad \text{and}\qquad \Lambda=\Lambda(\delta,\gamma_0)\coloneq \inf\big\{\ell\in\N\colon \gamma_\ell\big(\tfrac12+\delta\big)>1\big\}. 
\ee
We then let $(\beta_\ell)_{\ell\in[\Lambda]}$ be a sequence such that 
\be \label{eq:betagammaineq}
\gamma_\ell\big(\tfrac12-\delta\big)<\beta_\ell<\gamma_{\ell-1}\big(\tfrac12+\delta\big)\qquad\text{for all }\ell\in[\Lambda].
\ee
By the definition of $\Lambda$ and the fact that $\gamma_0<(\tfrac12+\delta)^{-1}$, it follows that $\beta_\ell<1$ for all $\ell\in[\Lambda]$. Further, we observe by the definition of $\gamma_\ell$ and the fact that $\delta\in(0,1/2)$ that  the interval $(\gamma_\ell(\frac12-\delta),\gamma_{\ell-1}(\frac12+\delta))$ is non-empty and thus such a  $\beta_\ell$ exists for all $\ell\in[\Lambda]$. The exact choice of $\beta_\ell$ is not relevant, only that the inequalities in~\eqref{eq:betagammaineq} are satisfied. Furthermore, since $\gamma_0<\gamma_\Lambda\leq 2$, it follows from the same argument above Lemma~\ref{lemma:bound_u_n_2} that $(h_n^+)^{\gamma_\Lambda}=o(n)$ when Assumption~$\xref$\ref{item:lb} is satisfied with $I_{\xref}=o(\sqrt n)$. In particular, $I_{\xref}=\cO((h_n^+)^{\gamma_0(1/2-\delta)})$ suffices (see Remark~\ref{rem:rn}).

We then have the following result, which we prove in Appendix~\ref{sec:appendix}.
\begin{lemma}
\label{lemma:replace_selection_sets_by_independent_copies}
Fix $k\in\N$, $\alpha\in(0,m+1)$, $D>m(m+1)$, and set $t_n\coloneq \lceil(h_n^+)^{\gamma_0}\rceil$. Let $\mathbf x$ be a choice sequence that satisfies Assumption~$\xref$\ref{item:geq1} and~\ref{item:lb} for $I_{\xref} \leq t_n$. Uniformly over $(\overline{J^-},\overline{J^+})\in\cB_{\alpha,D}\cap\bigcap_{\ell=1}^\Lambda\cG_{\beta_\ell,\gamma_{\ell-1},\gamma_\ell}$,
\begin{equation}
\prob(\overline{\cS^-_{n,1}}=\overline{J^-})=\prob(\overline{\cR^-_{n,1}}=\overline{J^-})(1+o(1)) \qquad\text{and}\qquad \prob(\overline{\cS^+_{n,1}}=\overline{J^+})=\prob(\overline{\cR^+_{n,1}}=\overline{J^+})(1+o(1)).
\end{equation}
\end{lemma}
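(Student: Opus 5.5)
The plan is to compare the two probabilities step by step, using the explicit product formulas, and to show that the accumulated multiplicative error is $\exp(o(1))$ uniformly over the admissible $(\overline{J^-},\overline{J^+})$. For such $(\overline{J^-},\overline{J^+})\in\cB_{\alpha,D}$, the sets $J^\square_1,\ldots,J^\square_k$ are pairwise disjoint, so $\sigma_i^\square\in\{0,1\}$. The formulas~\eqref{eq:S-prob} and~\eqref{eq:S+prob} then write $\prob(\overline{\cS^\square_{n,1}}=\overline{J^\square})$ as a product over $i\in\Omega_1$ of a factor $q^\square_{i,0}$ (when $\sigma^\square_i=0$) or $q^\square_{i,1}$ (when $\sigma^\square_i=1$). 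Since $\overline{\cR^\square_{n,1}}$ consists of independent copies with marginal parameters $p_i^\square$, we get
\be
\prob(\overline{\cR^\square_{n,1}}=\overline{J^\square})=\prod_{i\in\Omega_1}\Big[(1-p_i^\square)^k\ind_{\{\sigma_i^\square=0\}}+p_i^\square(1-p_i^\square)^{k-1}\ind_{\{\sigma_i^\square=1\}}\Big].
\ee
It therefore suffices to bound the ratios $\rho^\square_{i,0}\coloneq q^\square_{i,0}/(1-p_i^\square)^k$ and $\rho^\square_{i,1}\coloneq q^\square_{i,1}/(p_i^\square(1-p^\square_i)^{k-1})$, and then to show that $\sum_{i\in\Omega_1}|\log\rho^\square_{i,\sigma_i^\square}|=o(1)$.

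\textbf{Step 1 (local expansions).} For $i\geq t_n$ we have $A_i\to\infty$ uniformly, by Assumption~$\xref$\ref{item:lb}. Expand each binomial ratio as a finite product of the form $\prod_{j}(A_i-k-j)/(A_i-j)$.
\begin{itemize}
\item For the ``$-$'' sets, $q^-_{i,0}=\binom{A_i-k}{m+1}/\binom{A_i}{m+1}=1-k(m+1)/A_i+\cO(A_i^{-2})$. This agrees with $(1-(m+1)/A_i)^k$ up to $\cO(A_i^{-2})$, so $\rho^-_{i,0}=1+\cO(A_i^{-2})$.
\item Similarly, $q^-_{i,1}=\binom{A_i-k}{m}/\binom{A_i}{m+1}=\frac{m+1}{A_i}(1+\cO(A_i^{-1}))$, so $\rho^-_{i,1}=1+\cO(A_i^{-1})$.
\item For the ``$+$'' sets we use $p_i^+=m(m+1)/A_i+\cO(A_i^{-2})$ from~\eqref{eq:definition_and_order_of_p_v_i^+}. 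The same computation gives $q^+_{i,0}=1-km(m+1)/A_i+\cO(A_i^{-2})$, hence $\rho^+_{i,0}=1+\cO(A_i^{-2})$, and likewise $\rho^+_{i,1}=1+\cO(A_i^{-1})$.
\end{itemize}
All implicit constants depend only on $k$ and $m$.

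\textbf{Step 2 (steps with $\sigma=0$).} The $\cO(A_i^{-2})$ errors are summable over all of $\Omega_1$, regardless of $\overline J$:
\be
\sum_{i\geq t_n}A_i^{-2}\leq\sum_{i\geq t_n}i^{-1-2\delta}=\cO(t_n^{-2\delta})=o(1).
\ee

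\textbf{Step 3 (steps with $\sigma=1$; the main obstacle).} Here the error per step is only $\cO(A_i^{-1})$, and there are up to $kDh_n^+$ such steps. We must therefore show that $\sum_{i\in\cup_vJ^\square_v}A_i^{-1}=o(1)$ uniformly. This is exactly what the sets $\cG_{\beta_\ell,\gamma_{\ell-1},\gamma_\ell}$ are designed for, and we split $\Omega_1$ into scales.
\begin{itemize}
\item On $[(h_n^+)^{\gamma_{\ell-1}},(h_n^+)^{\gamma_\ell}]$, for $\ell\in[\Lambda]$ (the first scale starting at $t_n=\lceil (h_n^+)^{\gamma_0}\rceil$), there are at most $k(h_n^+)^{\beta_\ell}$ indices of $\cup_vJ^\square_v$. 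Each satisfies $A_i^{-1}\leq(h_n^+)^{-\gamma_{\ell-1}(1/2+\delta)}$. Since $\beta_\ell<\gamma_{\ell-1}(1/2+\delta)$ by~\eqref{eq:betagammaineq}, this scale contributes $o(1)$.
\item On $[(h_n^+)^{\gamma_\Lambda},n]$ we have $A_i^{-1}\leq(h_n^+)^{-\gamma_\Lambda(1/2+\delta)}$, and $\gamma_\Lambda(1/2+\delta)>1$ by the definition of $\Lambda$. Combined with $|J^\square_v|\leq (m+1+\alpha)h_n^+\vee Dh_n^+$ from $\cB_{\alpha,D}$, this gives a contribution of $\cO((h_n^+)^{1-\gamma_\Lambda(1/2+\delta)})=o(1)$.
\end{itemize}
Here one must check that the top scale is well defined when $\gamma_\Lambda=2$ is attained by the cap; it is, because $(h_n^+)^{\gamma_\Lambda}=o(n)$ was noted above the lemma.

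\textbf{Step 4 (conclusion).} Combining Steps 2 and 3, $\prod_i\rho^\square_{i,\sigma_i^\square}=\exp(o(1))=1+o(1)$ for both $\square\in\{-,+\}$. The bounds depend on $\overline J$ only through the constraints defining $\cB_{\alpha,D}\cap\bigcap_\ell\cG_{\beta_\ell,\gamma_{\ell-1},\gamma_\ell}$, which yields the required uniformity.
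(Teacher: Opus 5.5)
Your proposal is correct and is essentially the paper's proof. It uses the same per-step ratios, $1+\cO(A_i^{-2})$ when $\sigma_i^\square=0$ (summable over $i\geq t_n$) and $1+\cO(A_i^{-1})$ when $\sigma_i^\square=1$, controls the latter sum scale by scale via $\cG_{\beta_\ell,\gamma_{\ell-1},\gamma_\ell}$ and on $[(h_n^+)^{\gamma_\Lambda},n]$ via the size bounds in $\cB_{\alpha,D}$, and differs only cosmetically in expanding the binomial ratios directly as finite products where the paper first rewrites them with Pascal's rule. One small caveat: your ``likewise'' for $\rho^+_{i,1}$ needs the correct $\sigma_i^+=1$ factor $\big[\binom{A_i-m(k-1)}{m+1}-\binom{A_i-mk}{m+1}\big]/\binom{A_i}{m+1}$, not the expression as printed in~\eqref{eq:S+prob}, which tends to $m$ and is evidently a typo.
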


\subsection{Label and greedy longest path of active vertices with large degrees}
\label{subsec:labels_multiple_vertices}

Equipped with the preliminary results of Section~\ref{subsec:selection_sets_multiple_vertices}, we are ready to prove Theorem~\ref{theorem:label_multiple_vertices_conditional_degrees}.
\begin{proof}[Proof of Theorem~\ref{theorem:label_multiple_vertices_conditional_degrees}]
The case $k=1$ follows directly from Proposition~\ref{prop:depthlabelsinglevertex}, so we consider only the case $k\geq 2$.  Recall the definitions of $u=u(d,n,z)$ from~\eqref{eq:u} and $L=L(d,y)$ from~\eqref{eq:Ldef}, and that $b_v\coloneq \lim_{n\to\infty} d_v(n)/h_n^+$. For $v\in[k]$ we set $u_v\coloneq u(d_v,n,z_v)$ for $z_v\in\R$ and take $y_v\in \R$.  By the equivalence in~\eqref{eq:equiv}, it suffices to prove that
\begin{equation}
\label{eq:multiple_vertices_proof_rephrasement}
\begin{aligned}        
\prob{}&(u_n(a_v)\leq u_v,\ell_n(a_v)\geq L(d_v,y_v),\deg_n(a_v)\geq d_v\text{ for all }v\in [k])\\
&= (1+o(1))\theta^{-\sum_{v\in [k]} d_v}\!\!\!\prod_{v\in [k]}\prob\Bigg(M\sqrt{\frac{mb_v}{(m+1)^2-b_v}}+N\sqrt{1-\frac{mb_v}{(m+1)^2-b_v}}\leq z_{a_v},M>y_v\Bigg),
\end{aligned}
\end{equation}
since then, by Theorem~\ref{theorem:joint_degree_distribution},
\begin{equation}
\begin{aligned}
\lim_{n\to\infty}{}&\prob(u_n(a_v)\leq  u_v,\ell_n(a_v)\geq L(d_v,y_v)\text{ for all }v\in[k]\,|\,\deg_n(a_v)\geq d_v\text{ for all }v\in[k])\\
&= \prod_{v\in [k]}\prob\left(M\sqrt{\frac{mb_v}{(m+1)^2-b_v}}+N\sqrt{1-\frac{mb_v}{(m+1)^2-b_v}}\leq z_{a_v},M>y_v\right),
\end{aligned}
\end{equation}
which is equivalent to the theorem statement by the choice of $u_v$, by using~\eqref{eq:equiv}, and by Corollary~\ref{corollary:exchangeability_degrees}. We split the proof of~\eqref{eq:multiple_vertices_proof_rephrasement} into an upper and a lower bound and prove the upper bound first. The lower bound follows analogously up to the first steps.\\
\textbf{Upper bound. } By the definition of $u_{n,1}(a_v)$, we have
\be\ba 
\prob{}&(u_n(a_v)\leq u_v,\ell_n(a_v)\geq L(d_v,y_v),\deg_n(a_v)\geq d_v\text{ for all }v\in [k])\\
&\leq \prob(u_{n,1}(a_v)\leq u_v,\ell_n(a_v)\geq L(d_v,y_v),\deg_n(a_v)\geq d_v\text{ for all }v\in [k]).
\ea\ee
Define, for $\overline{J^-},\overline{J^+}\in\cP(\Omega_1)^k$, 
\be\ba f_n{}&(\overline{J^-},\overline{J^+})\\
&\coloneq\prob\big(u_{n,1}(a_v)\leq u_v,\ell_n(a_v)\geq L(d_v,y_v),\deg_n(a_v)\geq d_v\text{ for all }v\in[k]\,\big|\,\overline{\cS^-_{n,1}}=\overline{J^-},\overline{\cS^+_{n,1}}=\overline{J^+}\big),\\
g_n{}&(\overline{J^-},\overline{J^+})\\
&\coloneq\prod_{v\in [k]}\prob(u_{n,1}(a_v)\leq u_v,\ell_n(a_v)\geq L(d_v,y_v),\deg_n(a_v)\geq d_v\,|\,\cS^-_{n,1}(a_v)=J^-_v,\cS^+_{n,1}(a_v)=J^+_v),
\ea\ee
and define $\cH\coloneq \cB_{\alpha,D}\cap\bigcap_{\ell=1}^\Lambda\cG_{\beta_\ell,\gamma_{\ell-1},\gamma_\ell}$, where $\gamma_\ell$ and $\Lambda$ are  as in \eqref{eq:L}. Take $D>m(m+1)$, $c\in(\max_{v\in [k]}b_v,m+1)$, and set $\alpha\coloneq m+1-c$ so that $\cH\subset\cB_{\alpha,D}\subset\cD_{\overline{d}}$ by Lemma~\ref{lemma:disjoint_selection_sets_enable_event}. Using the tower property, we deduce
\begin{equation}
\label{eq:multiple_vertices_decompose_probability_two_expectations}
\begin{aligned}
\prob{}&(u_{n,1}(a_v)\leq u_v,\ell_n(a_v)\geq L(d_v,y_v),\deg_n(a_v)\geq d_v\text{ for all }v\in[k]) \\
&= \E{f_n(\overline{\cS^-_{n,1}},\overline{\cS^+_{n,1}})}\\
&= \E{f_n(\overline{\cS^-_{n,1}},\overline{\cS^+_{n,1}})\ind_{\left\{(\overline{\cS^-_{n,1}},\overline{\cS^+_{n,1}})\in\cH\right\}}}+\E{f_n(\overline{\cS^-_{n,1}},\overline{\cS^+_{n,1}})\ind_{\left\{(\overline{\cS^-_{n,1}},\overline{\cS^+_{n,1}})\in\cD_{\overline{d}}\setminus\cH\right\}}},
\end{aligned}
\end{equation}
where the last step follows from the fact that, for $(\overline{J^-},\overline{J^+})\notin\cD_{\overline{d}}$, we have $f_n(\overline{J^-},\overline{J^+})=0$ or $\prob(\overline{\cS^-_{n,1}}=\overline{J^-},\overline{\cS^+_{n,1}}=\overline{J^+})=0$.  Now, consider the first term on the right-hand side. The truncated selection sets in $\cH$ are disjoint by definition, and therefore we have $f_n(\overline{J^-},\overline{J^+})=g_n(\overline{J^-},\overline{J^+})$ for all $(\overline{J^-},\overline{J^+})\in\cH$ by Lemma~\ref{lemma:decouple_label_degree_multiple_vertices}. Together with Lemma~\ref{lemma:replace_selection_sets_by_independent_copies} and by the independence of $\overline{S_{n,1}^-}$ and $\overline{S_{n,1}^+}$, this yields
\begin{equation}	\label{eq:multiple_vertices_deal_with_expectation_B_nd}
\begin{aligned}
\E{f_n(\overline{\cS^-_{n,1}},\overline{\cS^+_{n,1}})\ind_{\left\{(\overline{\cS^-_{n,1}},\overline{\cS^+_{n,1}})\in\cH\right\}}} &= \E{g_n(\overline{\cS^-_{n,1}},\overline{\cS^+_{n,1}})\ind_{\left\{(\overline{\cS^-_{n,1}},\overline{\cS^+_{n,1}})\in\cH\right\}}}\\
&=\E{g_n(\overline{\cR^-_{n,1}},\overline{\cR^+_{n,1}})\ind_{\left\{(\overline{\cR^-_{n,1}},\overline{\cR^+_{n,1}})\in\cH\right\}}}(1+o(1)).
\end{aligned}
\end{equation}
Moreover, since $f_n(\overline{J^-},\overline{J^+})\leq \theta^{-\sum_{v\in [k]} d_v}$ and $g_n(\overline{J^-},\overline{J^+})\leq\theta^{-\sum_{v\in [k]} d_v}$ by Lemma~\ref{lemma:upper_bound_tail_vertex_degrees}, we can apply Lemmas~\ref{lemma:truncated_selection_sets_decouple_label_degree_whp} and~\ref{lemma:replace_selection_sets_by_independent_copies} to derive
\begin{equation}
\label{eq:multiple_vertices_deal_with_expectation_A_d_B_nd}
\begin{aligned}
&\left|\E{f_n(\overline{\cS^-_{n,1}},\overline{\cS^+_{n,1}})\ind_{\left\{(\overline{\cS^-_{n,1}},\overline{\cS^+_{n,1}})\in\cD_{\overline{d}}\setminus\cH\right\}}} - \E{g_n(\overline{\cR^-_{n,1}},\overline{\cR^+_{n,1}})\ind_{\left\{(\overline{\cR^-_{n,1}},\overline{\cR^+_{n,1}})\in\cD_{\overline{d}}\setminus\cH\right\}}}\right|\\
&\leq\theta^{-\sum_{v\in [k]} d_v}\left(\prob((\overline{\cS^-_{n,1}},\overline{\cS^+_{n,1}})\in\cD_{\overline{d}}\setminus\cH)+\prob((\overline{\cR^-_{n,1}},\overline{\cR^+_{n,1}})\in\cD_{\overline{d}}\setminus\cH)\right)\\
&\leq\theta^{-\sum_{v\in [k]} d_v}\left(2-2\prob((\overline{\cS^-_{n,1}},\overline{\cS^+_{n,1}})\in\cH)(1+o(1))\right)\\
&=o\left(\theta^{-\sum_{v\in [k]} d_v}\right),
\end{aligned}
\end{equation}
where we use the first inequality in \eqref{eq:betagammaineq} for the application of Lemma~\ref{lemma:truncated_selection_sets_decouple_label_degree_whp}. Thus, combining~\eqref{eq:multiple_vertices_decompose_probability_two_expectations},~\eqref{eq:multiple_vertices_deal_with_expectation_B_nd} and~\eqref{eq:multiple_vertices_deal_with_expectation_A_d_B_nd}, we arrive at
\be\ba 
\label{eq:multiple_vertices_probability_as_expectation_of_independent_rvs}
\prob{}&(u_n(a_v)\leq u_v,\ell_n(a_v)\geq L(d_v,y_v),\deg_n(a_v)\geq d_v\text{ for all }v\in [k])\\
&\leq \E{g_n(\overline{\cR^-_{n,1}},\overline{\cR^+_{n,1}})}(1+o(1))+o\big(\theta^{-\sum_{v\in [k]} d_v}\big).
\ea\ee 
Since the elements of $\overline{\cR^\square_{n,1}}$ are i.i.d.\ for both $\square\in \{-,+\}$ and, $\overline{\cR^-_{n,1}}$ is independent of $\overline{\cR^+_{n,1}}$,
\begin{equation}\label{eq:gn}
\E{g_n(\overline{\cR^-_{n,1}},\overline{\cR^+_{n,1}})} = \prod_{v\in [k]}\prob(u_{n,1}(a_v)\leq u_v,\ell_n(a_v)\geq L(d_v,y_v),\deg_n(a_v)\geq d_v).
\end{equation}
To conclude the upper bound, we want to replace the random variable $u_{n,1}(a_v)$ in the above probabilities with $u_n(a_v)$. Define the events $\cE_{n,v}\coloneq\{u_{n,2}(a_v)\leq r_n\}$ for $v\in [k]$, where $(r_n)_{n\in\N}$ is an integer-valued sequence such that $r_n=\omega((h_n^+)^{\gamma_0(1/2-\delta)})$ and $r_n=o(\sqrt{h_n^+})$ (this is possible as $\gamma_0<1/(1-2\delta)$). Since we set the truncation sequence as $t_n=\lceil (h_n^+)^{\gamma_0}\rceil$, we have $h_{t_n}^+=\cO(\max\{I_{\xref},t_n^{1/2-\delta}\})$ by Assumption~$\xref$\ref{item:lb}. As we assume that $I_{\xref}= \cO((h_n^+)^{\gamma_0(1/2-\delta)})$, we obtain  $h_{t_n}^+=\cO((h_n^+)^{\gamma_0(1/2-\delta)})$. We  further set $\gamma_0<(1-2\delta)^{-1}$, so that $h_{t_n}^+\leq \eps h_n^+$ for any $\eps>0$ and all $n$ large. With similar computations as in~\eqref{eq:I<L(d,y)}, we thus see that $t_n<L(d_v,y_v)$ for all $n$ large, for any $v\in[k]$ and any choice of $y_v\in\R$ and $d_v=d_v(n)$ such that $\lim_{n\to\infty}d_v/h_n^+=b_v\in[0,m+1)$. As a result, we can use Lemma~\ref{lemma:bound_u_n_2} to bound each term in the product from above by
\be\ba 				
\prob{}&(\cE_{n,v}\cap\{u_{n,1}(a_v)\leq u_v,\ell_n(a_v)\geq L(d_v,y_v),\deg_n(a_v)\geq d_v\})+o\left(\theta^{-d_v}\right)\\
&\leq \prob(\cE_{n,v}\cap\{u_n(a_v)\leq u_v+r_n,\ell_n(a_v)\geq L(d_v,y_v),\deg_n(a_v)\geq d_v\})+o\left(\theta^{-d_v}\right)\\
&\leq  \prob(u_n(a_v)\leq u_v+r_n,\ell_n(a_v)\geq L(d_v,y_v)\,|\,\deg_n(a_v)\geq d_v)\,\prob(\deg_n(a_v)\geq d_v) +o\left(\theta^{-d_v}\right).
\ea\ee 
Combined with~\eqref{eq:multiple_vertices_probability_as_expectation_of_independent_rvs} and~\eqref{eq:gn}, Theorem~\ref{theorem:joint_degree_distribution}, Proposition~\ref{prop:depthlabelsinglevertex}, and the fact that $r_n=o(\sqrt{h_n^+})$,
\begin{equation}
\begin{aligned}
\prob{}&(u_n(a_v)\leq u_v,\ell_n(a_v)\geq L(d_v,y_v),\deg_n(a_v)\geq d_v\text{ for all }v\in[k]) \\ 
&\leq (1+o(1))\theta^{-\sum_{v\in [k]} d_v}\!\!\prod_{v\in [k]}\prob\Bigg(M\sqrt{\frac{mb_v}{(m+1)^2-b_v}}+N\sqrt{1-\frac{mb_v}{(m+1)^2-b_v}}\leq z_{a_v},M>y_v\Bigg).
\end{aligned}
\end{equation}
\textbf{Lower bound. }Let $\cE_n\coloneq\bigcap_{v\in [k]}\cE_{n,v}$. Using Lemma~\ref{lemma:bound_u_n_2}, we deduce
\begin{equation}
\begin{aligned}
&\prob(u_n(a_v)\leq u_v,\ell_n(a_v)\geq L(d_v,y_v),\deg_n(a_v)\geq d_v\text{ for all }v\in [k])\\
&\geq \prob(\cE_n\cap\{u_n(a_v)\leq u_v,\ell_n(a_v)\geq L(d_v,y_v),\deg_n(a_v)\geq d_v\text{ for all }v\in [k]\})\\
&\geq  \prob(u_{n,1}(a_v)\leq u_v-r_n,\ell_n(a_v)\geq L(d_v,y_v),\deg_n(a_v)\geq d_v\text{ for all }v\in[k])+o\big(\theta^{-\sum_{v\in [k]} d_v}\big).
\end{aligned}
\end{equation}
Now, with similar steps as in the upper bound and by the choice of $r_n$, we obtain
\begin{equation}
\begin{aligned}
&\prob(u_{n,1}(a_v)\leq u_v-r_n,\ell_n(a_1)\geq L(d_v,y_v),\deg_n(a_v)\geq d_v\text{ for all }v\in[k])\\
&\geq  (1+o(1))\theta^{-\sum_{v\in [k]} d_v}\!\!\prod_{v\in [k]}\prob\Bigg(M\sqrt{\frac{mb_v}{(m+1)^2-b_v}}+N\sqrt{1-\frac{mb_v}{(m+1)^2-b_v}}\leq z_{a_v},M>y_v\Bigg).
\end{aligned}
\end{equation}
Combined with the upper bound, this yields~\eqref{eq:multiple_vertices_proof_rephrasement} and concludes the proof.	
\end{proof}

\appendix 

\section{}\label{sec:appendix} 

\begin{proof}[Proof of Theorem~\ref{thrm:multinorm}]
The proof is an adaptation of the case for a single sequence of random variables, as in~\cite[Theorem $1.24$]{Bollobas.2001}. Fix $r,s\in\N$ and write 
\be \ba \label{eq:powertofac}
\mathbb E\bigg[{}&\Big(\frac{X_n-\lambda_n}{\sqrt{\lambda_n}}\Big)^r\Big(\frac{Y_n-\mu_n}{\sqrt{\mu_n}}\Big)^s\bigg]\\
&=\E{\bigg(\sum_{k_1=0}^r \sum_{\ell_1=0}^r c_{r,k_1,\ell_1} (X_n)_{k_1}\lambda_n^{\ell_1-r/2}\bigg)\bigg(\sum_{k_2=0}^s \sum_{\ell_2=0}^s c_{s,k_2,\ell_2} (Y_n)_{k_2}\mu_n^{\ell_2-s/2}\bigg)}\\
&=\sum_{k_1=0}^r \sum_{\ell_1=0}^r\sum_{k_2=0}^s \sum_{\ell_2=0}^s  c_{r,k_1,\ell_1} c_{s,k_2,\ell_2} \E{(X_n)_{k_1}(Y_n)_{k_2}}\lambda_n^{\ell_1-r/2}\mu_n^{\ell_2-s/2},
\ea \ee 
where the $c_{r,k,\ell}$ are constants that do not depend on $n$.  Now, let $P_{1,n}$ and $P_{2,n}$ be independent Poisson random variables with mean $\lambda_n$ and $\mu_n$, respectively, for all $n\in\N$. We set 
\be 
Z_{1,n}\coloneq \frac{P_{1,n}-\lambda_n}{\sqrt{\lambda_n}}\qquad \text{and}\qquad Z_{2,n}\coloneq \frac{P_{2,n}-\mu_n}{\sqrt{\mu_n}}, 
\ee 
and conclude that $(Z_{1,n},Z_{2,n})\toindis (N_1,N_2)$, where $N_1$ and $N_2$ are two independent standard normal random variables. As a result, for any $r,s\in\N$ by using the same steps as in~\eqref{eq:powertofac},
\be 
\E{Z_{1,n}^rZ_{2,n}^s}=\sum_{k_1=0}^r \sum_{\ell_1=0}^r\sum_{k_2=0}^s \sum_{\ell_2=0}^s c_{r,k_1,\ell_1}c_{s,k_2,\ell_2}\lambda_n^{k_1+\ell_1-r/2}\mu_n^{k_2+\ell_2-s/2}, 
\ee 
and also
\be 
\lim_{n\to\infty} \E{Z_{1,n}^rZ_{2,n}^s}=\E{N_1^r}\E{N_2^s}\eqcolon m_rm_s, 
\ee 
where the latter follows from the convergence of the moment-generating function of $Z_{i,n}$ to that of a standard normal random variable and the independence of $Z_{1,n}$ and $Z_{2,n}$. By the assumption on the joint factorial means of $X_n$ and $Y_n$ in~\eqref{eq:factmeanass}, we arrive at 
\be 
\lim_{n\to\infty}\mathbb E\bigg[\Big(\frac{X_n-\lambda_n}{\sqrt{\lambda_n}}\Big)^r\Big(\frac{Y_n-\mu_n}{\sqrt{\mu_n}}\Big)^s\bigg]=m_rm_s,
\ee 
for any fixed $r$ and $s$. As the normal distribution is characterised by its moments, an application of the method of moments yields the desired joint convergence. 
\end{proof}

\begin{proof}[Proof of Lemma~\ref{lemma:replace_selection_sets_by_independent_copies}]

Recall the expression in~\eqref{eq:S-prob}. Without loss of generality we assume that $n$ is large enough so that $A_i\geq (m+1)k+1$ for all $i\in\{I_{\xref},\ldots,n\}$, so that this expression can be used here as well.  Since $ \overline{\cR^-_{n,1}}$ consists of $k$ independent copies of $\cS^-_{n,1}(a_1)$,  we obtain
\be\ba \label{eq:ratiominus}
\frac{\prob(\overline{\cS^-_{n,1}}=\overline{J^-})}{\prob(\overline{\cR^-_{n,1}}=\overline{J^-})}=\prod_{i\in \Omega_1}\Bigg[ \frac{\binom{A_i-k}{m+1}\binom{A_i}{m+1}^{k-1}}{\binom{A_i-1}{m+1}^k}\ind_{\{\sigma_i^-=0\}}+\frac{\binom{A_i-k}{m}\binom{A_i}{m+1}^{k-1}}{\binom{A_i-1}{m}\binom{A_i-1}{m+1}^{k-1}}\ind_{\{\sigma_i^-=1\}}\Bigg].
\ea \ee 
By applying~\eqref{eq:pascal} with $t=A_i-1$, $s=k-1$, and $r=m+1$, as well as with $t=A_i$, $s=1$, and $r=m+1$, we obtain 
\be 
\frac{\binom{A_i-k}{m+1}\binom{A_i}{m+1}^{k-1}}{\binom{A_i-1}{m+1}^k}=\Bigg(1+\sum_{j=2}^k \frac{\binom{A_i-j}{m}}{\binom{A_i-k}{m+1}}\Bigg)^{-1} \Big(1+\frac{m+1}{A_i-(m+1)}\Big)^{k-1}.
\ee 
Bounding each $j$ from above by $k$ and using that $(1+x)^{k-1}=1+(k-1)x+\cO(x^2)$ as $x\downarrow 0$ yields
\be 
\frac{\binom{A_i-k}{m+1}\binom{A_i}{m+1}^{k-1}}{\binom{A_i-1}{m+1}^k}\leq \Big(1+\frac{(m+1)(k-1)}{A_i-(k+m)}\Big)^{-1} \Big(1+\frac{m+1}{A_i-(m+1)}\Big)^{k-1}=1+\cO(A_i^{-2}).
\ee 
Similarly, bounding each $j$ from below by $2$ yields 
\be \ba 
\frac{\binom{A_i-k}{m+1}\binom{A_i}{m+1}^{k-1}}{\binom{A_i-1}{m+1}^k}&\geq \Big(1+\frac{(m+1)(k-1)}{A_i-k}\prod_{\ell=1}^m \frac{A_i-1-\ell}{A_i-k-\ell}\Big)^{-1}\Big(1+\frac{(m+1)(k-1)}{A_i}+\cO(A_i^{-2})\Big)\\
&= 1-\cO(A_i^{-2}). 
\ea \ee 
We obtain with similar computations that 
\be 
\frac{\binom{A_i-k}{m}\binom{A_i}{m+1}^{k-1}}{\binom{A_i-1}{m}\binom{A_i-1}{m+1}^{k-1}}=\Bigg(1+\sum_{j=2}^k \frac{\binom{A_i-j}{m-1}}{\binom{A_i-k}{m}}\Bigg)^{-1}\Big(1+\frac{m+1}{A_i-(m+1)}\Big)^{k-1},
\ee
so that we arrive at the upper bound 
\be 
\frac{\binom{A_i-k}{m}\binom{A_i}{m+1}^{k-1}}{\binom{A_i-1}{m}\binom{A_i-1}{m+1}^{k-1}}\leq \Big(1+\frac{m(k-1)}{A_i-(k+m-1)}\Big)^{-1} \Big(1+\frac{m+1}{A_i-(m+1)}\Big)^{k-1}=1+\cO(A_i^{-1}),
\ee 
and the lower bound 
\be 
\frac{\binom{A_i-k}{m}\binom{A_i}{m+1}^{k-1}}{\binom{A_i-1}{m}\binom{A_i-1}{m+1}^{k-1}}\geq \Big(1+\frac{m(k-1)}{A_i-k}\prod_{\ell=1}^{m-1} \frac{A_i-1-\ell}{A_i-k-\ell}\Big)^{-1}\Big(1+\frac{(m+1)(k-1)}{A_i}+\cO(A_i^{-2})\Big)\geq 1, 
\ee 
where the final lower bound is satisfied for $i$ (and thus $A_i$) sufficiently large by Assumption~$\xref$\ref{item:lb}. Using these upper and lower bounds in
~\eqref{eq:ratiominus}, we conclude 
\be 
\frac{\prob(\overline{\cS^-_{n,1}}=\overline{J^-})}{\prob(\overline{\cR^-_{n,1}}=\overline{J^-})}=\prod_{\substack{i\in \Omega_1\\ \sigma_i^-=0}}\big(1+\cO(A_i^{-2})\big)\prod_{\substack{i\in \Omega_1\\ \sigma_i^-=1}}\big(1+\cO(A_i^{-1})\big).
\ee 
By analogous computations and using~\eqref{eq:S+prob}, we also obtain that
\be 
\frac{\prob(\overline{\cS^+_{n,1}}=\overline{J^+})}{\prob(\overline{\cR^+_{n,1}}=\overline{J^+})}=\prod_{\substack{i\in \Omega_1\\ \sigma_i^-=0}}\big(1+\cO(A_i^{-2})\big)\prod_{\substack{i\in \Omega_1\\ \sigma_i^-=1}}\big(1+\cO(A_i^{-1})\big). 
\ee 
We stress that in all the bounds, the constants that appear in the $\cO$ terms do not depend on $i$, but on $m$ and $k$ only. As $1+x\leq \e^x$ for any $x\in\R$, Assumption~$\xref$\ref{item:lb} yields that, for $\square \in \{-,+\}$,
\be \ba 
\frac{\prob(\overline{\cS^\square_{n,1}}=\overline{J^\square})}{\prob(\overline{\cR^\square_{n,1}}=\overline{J^\square })}&= \exp\bigg(\cO\bigg(\sum_{ i\in \Omega_1}\ind_{\{ \sigma_i^\square =0\}} A_i^{-2}\bigg) +\cO\bigg(\sum_{i\in \Omega_1}\ind_{\{ \sigma_i^\square =1\}} A_i^{-1}\bigg)\bigg)\\
&=\exp\bigg(\cO\bigg(\sum_{ i\in \Omega_1}\ind_{\{ \sigma_i^\square =0\}} i^{-(1+2\delta)}\bigg) +\cO\bigg(\sum_{i\in \Omega_1}\ind_{\{ \sigma_i^\square =1\}} i^{-(1/2+\delta)}\bigg)\bigg).
\ea \ee 
The first sum tends to zero with $n$, since $\Omega_1=\{t_n,\ldots, n\}\cap \cA_n$ and $t_n$ tends to infinity with $n$. If we also show that the second sum tends to zero, the proof is complete. To this end, we use that $(\overline J^-,\overline J^+)\in \cap_{\ell=1}^\Lambda \cG_{\beta_\ell,\gamma_{\ell-1},\gamma_\ell}$ to bound 
\be \ba 
\sum_{i\in\Omega_1}\ind_{\{\sigma_i^\square=1\}}i^{-(1/2+\delta)}&\leq \sum_{\ell=1}^\Lambda \sum_{i\in \Omega_1\cap [(h_n^+)^{\gamma_{\ell-1}},(h_n^+)^{\gamma_\ell}]}\!\!\!\!\!\!\!\!\!\!\ind_{\{\sigma_i^\square=1\}}i^{-(1/2+\delta)} +\sum_{i\in \Omega_1\cap [ (h_n^+)^{\gamma_\Lambda},n]}\!\!\!\!\!\!\!\!\!\!\ind_{\{\sigma_i^\square=1\}}i^{-(1/2+\delta)}\\        &\leq \sum_{\ell=1}^\Lambda (h_n^+)^{-\gamma_{\ell-1}(1/2+\delta)}\!\!\!\!\!\!\!\!\!\!\!\!\!\!\! \sum_{i\in \Omega_1\cap [(h_n^+)^{\gamma_{\ell-1}},(h_n^+)^{\gamma_\ell}]}\!\!\!\!\!\!\!\!\!\!\ind_{\{\sigma_i^\square=1\}} +(h_n^+)^{-\gamma_\Lambda(1/2+\delta)}\!\!\!\!\!\!\!\!\!\!\!\!\!\sum_{i\in \Omega_1\cap [ (h_n^+)^{\gamma_\Lambda},n]}\!\!\!\!\!\!\!\!\!\!\!\!\! \ind_{\{\sigma_i^\square=1\}}.
\ea \ee 
We note that we can indeed partition $\Omega_1=\{t_n,\ldots, n\}$ this way, since $I_{\xref}\leq t_n$ and $t_n=(h_n^+)^{\gamma_0}\leq (h_n^+)^{\gamma_\Lambda}=o(n)$, as stated above Lemma~\ref{lemma:replace_selection_sets_by_independent_copies}.  By the definition of $\cG_{\beta_\ell,\gamma_{\ell-1},\gamma_\ell}$,  it follows   that 
\be
\sum_{i\in\Omega_1\cap[(h_n^+)^{\gamma_{\ell-1}},(h_n^+)^{\gamma_\ell}]}\ind_{\{\sigma_i^\square=1\}}=\sum_{v\in[k]}|J_v^\square\cap [(h_n^+)^{\gamma_{\ell-1}},(h_n^+)^{\gamma_\ell}]|\leq k(h_n^+)^{\beta_\ell}.
\ee
Also, from $(\overline{J^-},\overline{J^+})\in\cB_{\alpha,D}$, we derive $\sum_{i\in\Omega_1} \ind_{\{\sigma_i^-=1\}}=\sum_{v\in [k]}|J^-_v| \leq k(m+1+\alpha) h_n^+$ and $\sum_{i\in\Omega_1} \ind_{\{\sigma_i^+=1\}}\leq kDh_n^+$. We thus arrive at the final upper bound 
\be 
\sum_{i\in\Omega_1}\ind_{\{\sigma_i^\square=1\}}i^{-(1/2+\delta)}\leq \sum_{\ell=1}^\Lambda k(h_n^+)^{\beta_\ell-\gamma_{\ell-1}(1/2+\delta)}+k\max\{(m+1+\alpha),D\}(h_n^+)^{1-\gamma_\Lambda(1/2+\delta)}=o(1), 
\ee
where the final step follows from the second inequality in~\eqref{eq:betagammaineq}, satisfied by $\beta_\ell$ and $\gamma_{\ell-1}$, and by the definition of $\Lambda$ in~\eqref{eq:L}.
\end{proof}

\textbf{Acknowledgements}

BL has received funding from the European Union’s Horizon 2022 research and innovation programme under the Marie Sk\l{}odowska-Curie grant agreement no.~$101108569$, ``DynaNet".

\bibliographystyle{abbrv}
\bibliography{literature}

\end{document}